\numberwithin{equation}{section}
\newenvironment{sis}{\left\{\begin{aligned}}{\end{aligned}\right.}
\theoremstyle{plain}
\newtheorem{thm}{Theorem}[section]
\newtheorem{theoremalpha}{Theorem}
\newtheorem{lemma}[thm]{Lemma}
\newtheorem{prop}[thm]{Proposition}
\newtheorem{cor}[thm]{Corollary}
\newtheorem*{corspec}{Corollary F}
\newtheorem{fact}[thm]{Fact}
\newtheorem{lemdef}[thm]{Lemma-Definition}
\theoremstyle{definition}
\newtheorem{defi}[thm]{Definition}
\theoremstyle{remark}
\newtheorem{remark}[thm]{Remark}
\newtheorem{example}[thm]{Example}
\newcommand{\wt}{\widetilde}
\newcommand{\wh}{\widehat}
\newcommand{\ov}{\overline}
\newcommand{\un}{\underline}
\newcommand{\val}{\operatorname{val}}
\newcommand{\m}{\mathfrak{m}}
\def\X{\mathcal X}
\def\L{\mathcal L}
\def\N{\mathcal N}
\def\M{\mathcal M}
\def\O{\mathcal O}
\def\bO{\mathbb O}
\def\bN{\mathbb N}
\def\D{\mathcal D}
\def\E{\mathcal E}
\def\P{\mathbb P}
\def\cA{\mathcal A}
\def\cP{\mathcal P}
\def\cC{\mathcal C}
\def\cE{\mathcal E}
\def \bX{\mathfrak X}
\def\NS{{\operatorname{N}} _{\XS}}
\def\int{M_X}
\newcommand{\Z}{\mathbb{Z}}
\newcommand{\Q}{\mathbb{Q}}
\newcommand{\R}{\mathbb{R}}
\newcommand{\C}{\mathbb{C}}
\newcommand{\Div}{\operatorname{Div}}
\newcommand{\Aut}{\operatorname{Aut}}
\newcommand{\Hom}{\operatorname{Hom}}
\newcommand{\Spec}{\operatorname{Spec}}
\newcommand{\End}{\operatorname{End}}
\newcommand{\con}{\operatorname{con}}
\newcommand{\sm}{\operatorname{sm}}
\renewcommand{\Im}{\operatorname{Im}}
\newcommand{\id}{\operatorname{id}}
\newcommand{\Var}{\operatorname{Var}}
\newcommand{\supp}{\operatorname{supp}}
\def\I{\mathcal I}
\DeclareMathOperator{\Def}{Def}
\newcommand{\Gr}{\operatorname{Gr}}
\newcommand{\Pic}{\operatorname{Pic}}
\newcommand{\Gm}{\mathbb{G}_m}
\newcommand{\XS}{X_S^{\nu}}
\newcommand{\Xsing}{X_{\rm{sing}}}
\newcommand{\NF}{\rm NF}
\DeclareMathOperator{\TF}{TF}
\DeclareMathOperator{\Simp}{Simp}
\DeclareMathOperator{\PIC}{Pic}
\renewcommand{\SS}{\operatorname{\mathcal{SS}}}
\newcommand{\SSc}{\operatorname{\mathcal{SS}}_{\rm con}}
\newcommand{\ST}{\operatorname{\mathcal{ST}}}
\newcommand{\BD}{\operatorname{BD}}
\newcommand{\bE}{\mathbb{E}}
\newcommand{\bC}{\mathbb{C}}
\newcommand{\be}{\mathbb{e}}
\begin{document}


\title[Classification of fine compactified Jacobians]{On the classification of fine compactified Jacobians of nodal curves}

\author{Filippo Viviani}
\address{Filippo Viviani, Dipartimento di Matematica, Universit\`a di Roma ``Tor Vergata'', Via della Ricerca Scientifica 1, 00133 Roma, Italy}
\email{filippo.viviani@gmail.it}


\keywords{Fine compactified Jacobians, nodal curves, N\'eron models.}

\subjclass[msc2000]{{14H10}, {14H40}, {14D22}.}

\begin{abstract}
We introduce a new class of fine compactified Jacobians for nodal curves, that we call  fine compactified Jacobians of vine type, or simply \emph{fine V-compactified Jacobians}.
This class is strictly larger than the class of fine classical compactified Jacobians, as constructed by Oda-Seshadri, Simpson, Caporaso and Esteves.
Inspired by a recent preprint of Pagani-Tommasi, we characterize fine V-compactified Jacobians as the fine compactified Jacobians that can arise as limits of Jacobians of smooth curves under a one-parameter smoothing of the nodal curve or over its semiuniversal deformation space.  Furthermore, fine V-compactified Jacobians are exactly the ones that give a compactification of the N\'eron model of the Jacobian of the generic fiber of a one-parameter regular smoothing of the nodal curve. Finally, we give a combinatorial characterization of fine V-compactified Jacobians in terms of their poset of orbits. 
\end{abstract}

\maketitle

\tableofcontents

\section*{Introduction}

The aim of this paper is to give a complete answer to the following natural

\vspace{0.1cm}

\textbf{Question:} Suppose that a smooth (projective, connected) curve  degenerates to a nodal (projective, connected) curve  in one parameter family, how can we degenerate "modularly" the Jacobian of the generic smooth curve?

\vspace{0.1cm}

This question has been raised by Pagani-Tommasi in the inspiring recent preprint \cite{PT2}, and our work can been seen as a completion of their work. 

\vspace{0.2cm}

Let us now clarify, following \cite{PT1} and \cite{PT2}, what do we mean by modular degeneration in the above Question. Given a nodal curve $X$ over an algebraically closed field $k=\ov k$, a \emph{fine compactified Jacobian}\footnote{The word fine refers to the fact that there is a universal sheaf and the word compactified Jacobian refers to the fact that the smooth locus is a disjoint union of several copies of the generalized Jacobian of $X$.} of $X$ of degree $d$ is a connected, open and proper substack  of the $\Gm$-rigidified stack $\TF_X$ of rank-$1$, torsion-free sheaves of degree $d$ on $X$ (see Section \ref{Sec:sheaves} for more details). Hence, the above question means: find all the degree-$d$ fine compactified Jacobians of the central nodal curve that can arise as limit of the degree-$d$ Jacobian of the generic smooth curve.

The problem of finding fine compactified Jacobians is very old and it goes back to the work of Igusa in \cite{igusa}  and Mayer-Mumford in \cite{MM}
in the 50's--60's. Since then, several authors have worked on the problem using different techniques and all of them came up with a class of fine compactified Jacobians, that we baptize \emph{fine classical compactified Jacobians}: Oda-Seshadri \cite{OS} using graph theory and toric geometry; Simpson \cite{simpson} using slope stability;  Caporaso \cite{caporaso} using GIT of embedded curves; Esteves \cite{est1} using vector bundle stability. We refer to \cite[\S 1,2]{alexeev}, \cite[\S 2]{MV}, \cite[\S 2.2]{CMKV}, \cite[\S 2.2]{MRV}  for an account on the way the different constructions relate to one another.

The main novelty in our work (and in the work of Pagani-Tommasi \cite{PT2}) is that we aim at classifying \emph{all} possible solution to the above Question. And, in doing so, we discover \emph{new} (i.e. non classical) fine compactified Jacobians, as we now explain.

\vspace{0.2cm}

\subsection*{The results}

We now describe our results. 

First of all, we study the properties that are common to every fine compactified Jacobian of a given nodal connected curve $X$.  
In order to state these properties, recall  that the stack $\TF_X$ of rank-$1$, torsion-free sheaves on $X$ admits an action of the generalized Jacobian $\PIC^{\un 0}(X)$ (which is the semiabelian variety that parametrizes line bundles on $X$ of degree $0$ on each irreducible component), whose orbits are given by $\TF_X=\coprod_{(G,D)} \TF(G,D)$, where $\TF(G,D)$ is the locally closed substack consisting of sheaves whose non-singular graph is the spanning subgraph $G$ of the dual graph $\Gamma_X$, and whose multidegree is the divisor $D$  on $\Gamma_X$ (see Section \ref{Sec:sheaves} for more details). Equivalently, $\TF(G,D)$ is the locus of sheaves that are push-forward of line bundles of multidegree $D$ on the partial normalization of $X$ whose dual graph is $G$. Recall also that given a connected graph $G$, its complexity $c(G)$ is  the number of spanning tres of $G$.


\begin{theoremalpha}(see Fact \ref{F:Simploc}, Lemma \ref{L:prop-fcJ}, Proposition \ref{P:fcJ-orb}, Theorem \ref{T:bound-orb}, Corollary \ref{C:irr-fcJ}) 
Let $\ov J_X$ be a fine compactified Jacobian of $X$. 
\begin{enumerate}
\item $\ov J_X$ is a connected proper $k$-variety (i.e. reduced scheme of finite type over $k$)  whose singularities are \'etale-locally a product of nodes and whose normalization is smooth.

In particular, $\ov J_X$ has locally complete intersection singularities. 
\item The smooth locus $\ov J_X$ is isomorphic to a disjoint union of at least $c(\Gamma_X)$  copies of the generalized Jacobian $\PIC_X^{\un 0}$  of $X$. 

In particular, $\ov J_X$ has pure dimension $g(X)$ and it has at least $c(\Gamma_X)$ irreducible components. 

\item $\ov J_X$ is the union of finitely many $\PIC^{\un 0}(X)$-orbits and, for any spanning subgraph $G$ of $\Gamma_X$, we have that 
$$|\{D\in \Div(\Gamma_X)\: : \TF(G,D)\subset \ov J_X\}|
\begin{cases} 
=0 & \text{ if } G \text{  is not connected, }\\
\geq c(G) & \text{ if } G \text{  is  connected. }
\end{cases}$$
\end{enumerate}
\end{theoremalpha}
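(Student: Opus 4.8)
The plan is to assemble the statement from its four parts, each of which is attributed to a separate result cited in the statement, so the work is really to explain how those pieces fit together rather than to prove everything from scratch. The backbone is the orbit decomposition $\TF_X=\coprod_{(G,D)}\TF(G,D)$ under the action of $\PIC^{\un 0}(X)$, together with the elementary geometry of the individual strata $\TF(G,D)$: each such stratum, when $G$ is connected, is a torsor under (a quotient of) $\PIC^{\un 0}(X)$ of dimension $g(X)$, and when $G$ is disconnected it has strictly smaller dimension and its closure contains no $g(X)$-dimensional stratum parametrizing a sheaf with connected non-singular graph. The first step is therefore to recall from Section~\ref{Sec:sheaves} (via Fact~\ref{F:Simploc}) the local structure of $\TF_X$ and of its stratification, in particular the fact that the stratum $\TF(G,D)$ with $G=\Gamma_X$ (the line bundle locus) is smooth of dimension $g(X)$ and open dense in any component it meets, and that passing from $G$ to a subgraph $G'\subsetneq G$ corresponds \'etale-locally to smoothing some of the nodes, which costs one dimension per smoothed node.

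Next I would prove part~(1). Since $\ov J_X$ is by definition an open substack of $\TF_X$, étale-locally at any point it looks like an open subset of $\TF_X$, whose singularities are products of nodes with the generalized Jacobian direction: this is exactly Fact~\ref{F:Simploc}, which records that $\TF_X$ has normal crossings type singularities with smooth normalization. Properness is part of the definition of a fine compactified Jacobian; reducedness and finite-typeness follow from the corresponding properties of $\TF_X$ restricted to an open substack; and "\'etale-locally a product of nodes" together with "normalization smooth" immediately yields the lci conclusion. So part~(1) is essentially a transcription of the local model for $\TF_X$ — I expect this to be routine given Section~\ref{Sec:sheaves}.

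For part~(2): the smooth locus of $\ov J_X$ is, by part~(1), exactly the locus where $\ov J_X$ is smooth, which by the local model is precisely the union of the open strata $\TF(G,D)\cap \ov J_X$ with $G$ connected and $D$ such that the whole $g(X)$-dimensional stratum lies in $\ov J_X$ (the partially-normalized, i.e. non-line-bundle, strata can contribute smooth points too, provided $G$ is connected). Each such stratum is a $\PIC^{\un 0}(X)$-orbit, hence isomorphic to $\PIC_X^{\un 0}$ — here I would use that $\PIC^{\un 0}(X)$ acts with trivial stabilizers on these top-dimensional strata after $\Gm$-rigidification. Counting: by properness $\ov J_X$ must contain, for the subgraph $G=\Gamma_X$ (line bundles on $X$), enough multidegrees $D$ that the corresponding translates of $\jac$ glue into something proper; the minimal such number is $c(\Gamma_X)$, and this is where Theorem~\ref{T:bound-orb} enters, giving the inequality $|\{D: \TF(\Gamma_X,D)\subset\ov J_X\}|\ge c(\Gamma_X)$. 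The purity of dimension and the lower bound on the number of irreducible components then follow since each such copy of $\jac$ is a $g(X)$-dimensional locally closed smooth subset and $\ov J_X$ is equidimensional of dimension $g(X)$ (Corollary~\ref{C:irr-fcJ}).

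Finally, part~(3) is the combinatorial heart and is quoted directly from Proposition~\ref{P:fcJ-orb} and Theorem~\ref{T:bound-orb}. I would argue: finiteness of the number of orbits follows because $\ov J_X$ is quasi-compact and the $\TF(G,D)$ form a locally finite stratification with finitely many strata in $\TF_X$ restricted to any bounded-degree locus (there are only finitely many pairs $(G,D)$ with $D$ of fixed total degree by boundedness of the stratification, once one fixes the degree $d$). The dichotomy is then: if $G$ is disconnected, no sheaf in $\TF(G,D)$ can lie in $\ov J_X$, because such a sheaf would lie in the closure of the lower stratum but properness together with the fact that $\ov J_X$ is a \emph{connected} compactified Jacobian forces it to be "reachable" by smoothing to a connected configuration — more precisely, $\ov J_X$ being open and the smooth locus being a union of copies of $\jac$ over \emph{connected} $G$ means a disconnected $G$ would force $\ov J_X$ to be disconnected or non-proper, contradiction (this is the content of Lemma~\ref{L:prop-fcJ}). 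If $G$ is connected, then the set of $D$ with $\TF(G,D)\subset\ov J_X$ must have at least $c(G)$ elements: here one restricts $\ov J_X$ to the partial normalization $\nu_G\colon X_G\to X$ with dual graph $G$, observes that the induced object is a fine compactified Jacobian of $X_G$ for the sheaves supported there, and applies part~(2) (the $c(-)$ bound) to $X_G$, whose dual graph is $G$, giving $c(G)$.

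The step I expect to be the main obstacle is the proof of the $c(G)$ lower bound in parts~(2) and~(3) — that is, Theorem~\ref{T:bound-orb} — since this is not formal: one must show that a proper open substack of $\TF_X$ is forced, by properness alone, to contain at least $c(\Gamma_X)$ distinct multidegree-translates of the generalized Jacobian. The natural approach is a valuative/degeneration argument: take a one-parameter family of line bundles on a smoothing whose central limit is forced, by the orbit structure and the stability-type combinatorics encoded in $\Div(\Gamma_X)$ modulo the image of the boundary map, to land in a definite number of distinct cells — equivalently, one counts the cells of a complete fan (or the chambers of a hyperplane arrangement) refining the period lattice of $\Gamma_X$, and the number of maximal cells modulo the relevant lattice is exactly $c(\Gamma_X)$ by the Matrix-Tree theorem. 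Making this counting precise, and checking that every fine compactified Jacobian's multidegree set must surject onto such a set of chambers, is the technical crux; everything else is bookkeeping with the stratification and the local model for $\TF_X$.
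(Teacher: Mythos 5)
Your overall assembly (part (1) from the local model, the $c(\Gamma_X)$ bound from Theorem \ref{T:bound-orb}, finiteness of orbits from Proposition \ref{P:fcJ-orb}) follows the paper's architecture, but several steps as you state them are wrong or question-begging. First, your description of the smooth locus in part (2) is incorrect: you allow non-locally-free strata $\TF(G,D)$ with $G$ connected, $G\subsetneq \Gamma_X$, to contribute smooth points. By Fact \ref{F:Simploc} the completed local ring at any sheaf $I$ with $\NF(I)\neq\emptyset$ contains a factor $k[[x,y]]/(xy)$, so the smooth locus of $\Simp_X$ (hence of the open subscheme $\ov J_X$) is exactly the line-bundle locus $\PIC_X$; only the strata $\TF(\Gamma_X,D)$ are smooth (Proposition \ref{P:sm-irr}). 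With your larger "smooth locus" the assertion that it is a disjoint union of copies of $\PIC^{\un 0}_X$ would in fact be false, since those lower strata are torsors under generalized Jacobians of partial normalizations, of smaller dimension. Second, your justification of finiteness of orbits is false as stated: for fixed total degree $d$ there are infinitely many pairs $(G,D)$, and indeed $\Simp_X^d$ is not of finite type; finiteness comes from $\ov J_X$ itself being of finite type together with the fact that it is a union of $\PIC^{\un 0}_X$-orbits (Proposition \ref{P:fcJ-orb}, quoted from Pagani--Tommasi), not from any bound on the ambient stratification. Third, the reason a disconnected $G$ cannot occur is not a connectedness/properness dichotomy for $\ov J_X$: it is that separatedness forces every $I\in\ov J_X$ to be simple (trivial automorphisms, Lemma \ref{L:prop-fcJ}), and simplicity is equivalent to connectedness of the non-free graph.

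The main gap is that the actual content of the theorem, the bound $|\{D:\TF(G,D)\subset\ov J_X\}|\geq c(G)$ of Theorem \ref{T:bound-orb}, is not proved by your sketch. Your proposed reduction "restrict to the partial normalization $X_G$ and apply part (2) there" presupposes that $(\nu_G)_*^{-1}(\ov J_X)$ is non-empty, which is precisely the hard point; and your chamber-counting heuristic (chambers of a hyperplane arrangement modulo the period lattice, Matrix--Tree) essentially asserts, without argument, that the multidegree set of an arbitrary fine compactified Jacobian surjects onto a set of $c(\Gamma_X)$ chambers -- but the whole point of the paper is that fine compactified Jacobians need not arise from numerical polarizations, and they may have strictly more than $c(\Gamma_X)$ line-bundle strata (the $n$-cycle examples), so no polarization-chamber structure is available a priori. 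The paper's proof is genuinely different: (a) a purely combinatorial input, Yuen's bound $|\BD_I(G)|\geq c(G)$ for BD-sets attached to an arbitrary tree function (Theorem \ref{T:BD-Ntype}), which converts mere non-emptiness of the spanning-tree strata into the $\geq c(G)$ count; and (b) a geometric non-emptiness argument, reducing to graphs $\Gamma_X\setminus\{e\}$ for non-separating $e$ and then using the valuative criterion of properness together with the $1$-cochain $\gamma(\I)$ of Lemma \ref{L:Io-Ieta} and the $C^0(\Gamma_X,\Z)$-action of Proposition \ref{P:non-sepa} (which governs the non-separatedness of $\TF_X$), concluding in the non-loop case by an iteration along a path that would contradict $\ov J_X$ being of finite type. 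None of this is replaced by your sketch, so the crux remains unproved.
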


Some of the above results were already known: the description of the singularities of $\ov J_X$ follows from the computation by Casalaina-Martin-Kass-Viviani in \cite[\S 3]{CMKV} of the semiuniversal deformation ring of a torsion-free, rank-$1$ sheaf on a nodal curve; the fact that the normalization of $\ov J_X$ is smooth follows from \cite[Cor. 13.3]{OS}; the fact that $\ov J_X$ is a union of $\PIC^{\un 0}(X)$-orbits was shown by Pagani-Tommasi in \cite[Lemma 7.2]{PT2}.

\vspace{0.1cm}

We now focus on the Question we started with. We first introduce the following new stability condition on nodal curves (see Definition \ref{D:Vstab} and Remark \ref{R:Vstab-cur}) 
A \emph{general stability condition of vine type} (or simply a \textbf{general V-stability condition)} of degree $d$ on a nodal curve $X$ (or rather on its dual graph\footnote{This should be called a V-stability condition on the dual graph $\Gamma_X$ of $X$. A slightly different definition of V-stability condition on a reduced curve will be introduced in \cite{FPV1} in terms of the Euler characteristic rather than the degree. However, for the sake of simplicity, we will abuse terminology and call such a collection $\frak n$ a V-stability condition on the curve $X$.}) is an assignment of integers 
$$
\frak n=\{\frak n_Y \: : \: Y\subset X\:  \text{ is a biconnected and non-trivial subcurve}\}
$$
satisfying the following properties:
\begin{enumerate}
\item for any $Y\subset X$ biconnected and non-trivial, we have 
\begin{equation*}
\frak n_Y+\frak n_{Y^c}=d+1-|Y\cap Y^c|;
\end{equation*}
\item  for any disjoint $Y_1,Y_2\subset X$ without common irreducible components such that $Y_1,Y_2$ and $Y_1\cup Y_2$ are biconnected and non-trivial, we have that 
\begin{equation*}
-1\leq \frak n_{Y_1\cup Y_2}-\frak n_{Y_1}-\frak n_{Y_2}-|Y_1\cap Y_2|\leq 0.
\end{equation*}
\end{enumerate}
In the above definition, a subcurve $Y\subset X$ is a union of some of the irreducible components of $X$, and we say that $Y$ is non-trivial if $Y\neq \emptyset, X$ and biconnected if both 
$Y$ and its complementary subcurve $Y^c:=\ov{X\setminus Y}$ are connected.  The terminology of vine type (or V-type for short) is explained in Remark \ref{R:Vtype}. 

\vspace{0.1cm}

To any general V-stability condition on $X$, we can associated a fine compactified Jacobian of $X$ as it follows.

\begin{theoremalpha}\label{ThmA}(see Theorem \ref{T:fcJ-vine}, Corollary \ref{C:unique-n})
For any general V-stability condition $\frak n$ on $X$ of degree $d$, then 
\begin{equation*}
\ov J_X(\frak n):=\{I \in \TF_X^d\: : \:  \deg_Y(I)\geq \frak n_Y \quad   \text{ for any biconnected and non-trivial subcurve } Y\subset X\}.
\end{equation*}
is a fine compactified Jacobian of $X$ of degree $d$, called the \textbf{fine V-compactified Jacobian} associated to $\frak n$. 

Moreover, $\frak n$ is uniquely determined by $\ov J_X(\frak n)$. 
\end{theoremalpha}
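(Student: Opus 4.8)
The plan is to verify, in order, the three conditions that define a fine compactified Jacobian — that $\ov J_X(\frak n)$ is an \emph{open} substack of $\TF_X^d$, that it is \emph{proper} over $k$, and that it is \emph{connected} — and then to recover $\frak n$ from $\ov J_X(\frak n)$ by the formula $\frak n_Y=\min\{\deg_Y(I)\: :\: I\in\ov J_X(\frak n)\}$. Openness and finite type are the routine steps. For a fixed biconnected non-trivial $Y\subset X$ the value $\deg_Y(I):=\sum_{v\in Y}D(v)$ on the orbit $\TF(G,D)$ is constant on orbits, constructible on every finite-type open of $\TF_X^d$, and can only drop under specialization (a limiting sheaf is only less locally free, and breaking a node of $Y\cap Y^c$ decreases $\deg_Y$ by at most one while breaking an internal node leaves it unchanged); hence $\{\deg_Y\ge\frak n_Y\}$ is stable under generization and therefore open, and $\ov J_X(\frak n)$, being the intersection of these over the finitely many biconnected non-trivial $Y$, is open. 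For finite type: the pair of inequalities $\deg_Y(I)\ge\frak n_Y$, $\deg_{Y^c}(I)\ge\frak n_{Y^c}$ together with $\deg_Y(I)+\deg_{Y^c}(I)\le d$ bounds each $\deg_Y(I)$ from above and below, a short graph-theoretic argument then bounds the whole multidegree of $I$ to a finite set, and since there are finitely many spanning subgraphs $G$ of $\Gamma_X$ and each $\TF(G,D)$ is of finite type, $\ov J_X(\frak n)$ is of finite type.

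The heart of the matter is properness, which I would prove by the valuative criterion: given a discrete valuation ring $R$ over $k$ with fraction field $K$ and a $K$-point $I_K$ of $\ov J_X(\frak n)$, one must produce, uniquely up to isomorphism, an $R$-flat extension on $X_R$ whose special fibre lies in $\ov J_X(\frak n)$. Starting from any $R$-flat extension, with special fibre $I_0\in\TF(G_0,D_0)$, one modifies $I_0$ by \emph{twisters}: over a regular model $\X\to\Spec R$ of $X_R$ obtained by blowing up nodes, tensoring by a line bundle supported on the special fibre $\X_0$ changes $D_0$ by the image of the intersection (Laplacian) matrix of $\X_0$ without changing $I_K$, and when $I_0$ is not locally free there is an analogous operation that also modifies $G_0$ by breaking or smoothing nodes. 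Everything reduces to the combinatorial assertion that the set of orbits $\TF(G,D)$ contained in $\ov J_X(\frak n)$ is a \emph{fundamental domain} for the action of these twisters on $\TF_X^d$. Property (2) of a V-stability condition — the concavity $-1\le\frak n_{Y_1\cup Y_2}-\frak n_{Y_1}-\frak n_{Y_2}-|Y_1\cap Y_2|\le 0$ — is exactly what is needed to run a discrete descent on $\sum_Y\max(\frak n_Y-\deg_Y(I_0),0)$, which after finitely many twists drives the special fibre into $\ov J_X(\frak n)$ (existence of a limit); properties (1) and (2) together show that no nontrivial twister preserves membership in $\ov J_X(\frak n)$, so two such extensions agree on the special fibre up to isomorphism (separatedness). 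This is the point where the definition of a general V-stability condition is used in full, and it is a reworking, for this more flexible notion, of the core lemma behind the Oda--Seshadri, Caporaso and Esteves constructions; the step I expect to be the main obstacle is precisely this fundamental-domain claim, above all the bookkeeping that matches twister moves across the non-locally-free strata $\TF(G,D)$, $G\subsetneq\Gamma_X$, with the Laplacian lattice acting on $\Div^d(\Gamma_X)$.

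It then remains to treat connectedness and the reconstruction of $\frak n$. The locus of line bundles in $\ov J_X(\frak n)$ is the dense open $\coprod_{D\in B(\frak n)}\TF(\Gamma_X,D)$, where $B(\frak n):=\{D\in\Div^d(\Gamma_X)\: :\:\deg_Y(D)\ge\frak n_Y\ \forall Y\}$ is nonempty and finite and each $\TF(\Gamma_X,D)\cong\PIC^{\un 0}(X)$ is connected; using property (2) one shows that any two elements of $B(\frak n)$ are joined by a chain in $B(\frak n)$ whose consecutive terms differ by a twist across a single node, and for such a pair the stratum obtained by breaking that node lies in both orbit closures and in $\ov J_X(\frak n)$, so $\bigcup_{D\in B(\frak n)}\overline{\TF(\Gamma_X,D)}$ is connected; since, by the fundamental-domain picture, every orbit of $\ov J_X(\frak n)$ with $G\subsetneq\Gamma_X$ is a specialization of some $\TF(\Gamma_X,D)$ with $D\in B(\frak n)$, this union exhausts $\ov J_X(\frak n)$, which is therefore connected. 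Finally, in the reconstruction formula the inequality $\frak n_Y\le\min\{\deg_Y(I)\}$ is the defining condition, and for the reverse one exhibits $I\in\ov J_X(\frak n)$ with $\deg_Y(I)=\frak n_Y$: starting from any $D\in B(\frak n)$ and twisting repeatedly across the boundary nodes of $Y$ — which by property (2) keeps $D$ in $B(\frak n)$ — one lowers $\deg_Y(D)$ to its minimum over $B(\frak n)$, and property (1), which ties $\frak n_Y$ to $\frak n_{Y^c}$, forces this minimum to equal $\frak n_Y$. Hence $\frak n$ is determined by $\ov J_X(\frak n)$, which is Corollary~\ref{C:unique-n}.
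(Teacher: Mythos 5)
Your outline of openness and finite type is fine, and your reconstruction formula $\frak n_Y=\min\{\deg_Y(I)\,:\,I\in\ov J_X(\frak n)\}$ is correct in substance; but the proof has a genuine gap exactly where you flag "the main obstacle", and it is not a bookkeeping issue that can be waved through. Your properness (and connectedness, and separatedness) argument rests entirely on the assertion that the orbits $\TF(G,D)$ contained in $\ov J_X(\frak n)$ form a fundamental domain for the twister action, and that conditions (1) and (2) of a general V-stability let you run a descent on the potential $\sum_Y\max(\frak n_Y-\deg_Y(I_0),0)$. You give no argument that a single twist strictly decreases this potential; a twist across one cut changes $\deg_Y$ for many $Y$ at once, raising some and lowering others, and for classical stability conditions the analogous descent works only because there is an ambient convex functional coming from the numerical polarization $\phi$. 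For general V-stabilities no such convex structure exists -- this is the whole point of the paper, and Example \ref{Ex:msa-not} exhibits a V-stability that comes from no polarization -- so the Oda--Seshadri/Caporaso/Esteves "core lemma" you propose to rework cannot simply be transplanted. The same applies to your separatedness claim ("no nontrivial twister preserves membership"), which corresponds to Proposition \ref{P:Bn-card}\eqref{P:Bn-card2} and is proved in the paper by a careful Laplacian argument on the set of minimizing vertices, not by citing conditions (1)--(2). Finally, the attainment of the minimum in your reconstruction of $\frak n_Y$ needs the nonemptiness of $\cP_{\frak n}(T)$ for a spanning tree adapted to $Y$ (or an equivalent statement), which is again the unproven core.

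The paper takes a different route precisely to avoid the argument you sketch: it first proves the combinatorial facts that substitute for convexity -- Lemma \ref{L:STnonem} (each spanning tree carries exactly one $\frak n$-stable divisor, proved by an induction contracting an end edge and using \eqref{E:Vstab2} in an essential way), Yuen's counting theorem (Theorem \ref{T:BD-Ntype}) for generalized break divisors, and the Main Theorem \ref{T:main-comb} identifying $\cP_{\frak n}$ with a BD-set of numerical N-type and hence an upper subset of sN-type. Properness and connectedness of $\ov J_X(\frak n)$ are then obtained not by a direct valuative-criterion computation but by invoking Pagani--Tommasi's result that an sN-type open subset is a (smoothable) fine compactified Jacobian (Theorem \ref{T:PT-thm}, Corollary \ref{C:PT-cor}), and uniqueness of $\frak n$ follows from the bijection $\frak n\leftrightarrow I_{\frak n}$, i.e.\ from $\frak n_W=I_{\frak n}(T)_W+b_1(\Gamma[W])$ for adapted trees (Corollary \ref{C:unique-n}), rather than from a minimization over $\ov J_X(\frak n)$. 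To repair your proposal you would either have to prove your fundamental-domain claim directly -- which would amount to reproving Lemma \ref{L:STnonem} and Proposition \ref{P:Bn-card} with an honest combinatorial argument replacing the asserted descent -- or follow the paper and route the geometric input through the sN-type characterization and \cite{PT2}.
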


The fine classical compactified Jacobians (constructed in \cite{OS}, \cite{simpson},  \cite{caporaso} and  \cite{est1}) are special cases of fine V-compactified Jacobians, as we now explain. 

If $\phi$ is a  \emph{general numerical polarization} on $\Gamma$ of degree $d\in \Z$, i.e. a real divisor $\phi\in \Div(\Gamma)_{\R}:=\Div(\Gamma)\otimes \R=\R^{V(\Gamma)}$ whose degree $|\phi|$ is equal to $d$, such that 
\begin{equation*}
\phi_W-\frac{\val(W)}{2}=\sum_{v\in W} \phi_v-\frac{\val(W)}{2}\not\in \Z \: \text{ for every  biconnected non-trivial  } W\subset V(\Gamma),
\end{equation*}
 then the collection 
 \begin{equation*}
 \frak n(\phi):=\{\frak n(\phi)_W:=\lceil \phi_W-\frac{\val(W)}{2} \rceil  \: \text{ for any  biconnected and non-trivial } W\}
 \end{equation*}
 is a general V-stability condition of degree $d$, called the general V-stability condition associated to $\phi$. 
 Then the fine V-compactified Jacobian associated to  $\frak n(\phi)$ is the \textbf{fine classical compactified Jacobian} associated to $\phi$:  
\begin{equation}\label{E:fcJphi}
\ov J_X(\frak n(\phi)):=\ov J_X(\phi)=\left\{I \in \TF_X^d\: : \:  \deg_Y(I)\geq \phi_Y-\frac{|Y\cap Y^c|}{2}\right\}.
\end{equation}

In the next Theorem, we clarify the relation between fine compactified Jacobians and the two special subclasses of classical type and of V-type.  

\begin{theoremalpha}\label{ThmB}
Let $X$ be a nodal curve.
\begin{enumerate}
\item \label{ThmB1} The inclusions 
\begin{equation}\label{E:inc-fcJ}
\left\{\begin{aligned}
\text{Fine classical compactified} \\
\text{Jacobians of } X
\end{aligned}\right\} \hookrightarrow
\left\{\begin{aligned}
\text{Fine V-compactified} \\
\text{Jacobians of } X
\end{aligned}\right\} \hookrightarrow
\left\{\begin{aligned}
\text{Fine compactified} \\
\text{Jacobians of } X
\end{aligned}\right\} 
\end{equation}
are strict in general. 
Namely, there exist nodal curves $X$ for which the first inclusion is strict, and others for which the second inclusion is strict. 
\item \label{ThmB2} The fine V-compactified Jacobians of $X$ (and hence also the fine classical compactified Jacobian) are finite up to translation, while the fine compactified Jacobians are not finite up to translation in general. 
\end{enumerate}
\end{theoremalpha}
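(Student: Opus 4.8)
The plan is to split the two items and treat each subclass inclusion separately, then handle the finiteness dichotomy.

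For item \eqref{ThmB1}, the strategy is to produce explicit nodal curves witnessing each strict inclusion. For the strictness of the second inclusion (fine V-compactified Jacobians $\subsetneq$ fine compactified Jacobians), I would exhibit a nodal curve $X$ together with a fine compactified Jacobian $\ov J_X$ that is not of the form $\ov J_X(\frak n)$ for any general V-stability condition $\frak n$. The natural way to detect this is through the combinatorial characterization of fine V-compactified Jacobians in terms of their poset of orbits (promised later in the paper, and which I would invoke here): a fine compactified Jacobian is of V-type precisely when its poset of $\PIC^{\un 0}(X)$-orbits satisfies the relevant combinatorial condition, and by Theorem \ref{ThmA} the stability data $\frak n$ is then uniquely determined. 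So I would pick $X$ with enough components that the degree/multidegree constraints defining a general fine compactified Jacobian (via Theorem A(3)) admit a solution violating the $V$-poset condition — a curve with a cycle of $3$ or $4$ components is the first place this can happen — and check directly that no V-stability condition reproduces those orbits. For the strictness of the first inclusion (classical $\subsetneq$ V-type), I would similarly take a curve (again a polygon of rational curves is the canonical example) and write down a general V-stability condition $\frak n$ whose associated inequalities $\frak n_{Y_1\cup Y_2}-\frak n_{Y_1}-\frak n_{Y_2}-|Y_1\cap Y_2|$ are not all realized by the rounding formula $\lceil \phi_W-\val(W)/2\rceil$ coming from any numerical polarization $\phi$; the obstruction is that the classical construction forces a cocycle-type compatibility among the $\frak n_W$ that a general V-stability condition need not satisfy. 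Both examples I expect to carry out on small cyclic curves where the full poset can be enumerated by hand.

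For item \eqref{ThmB2}, the finiteness up to translation of fine V-compactified Jacobians follows from Theorem \ref{ThmA} together with a counting argument: the uniqueness clause in Theorem A says $\frak n \mapsto \ov J_X(\frak n)$ is injective, and translation by a line bundle of multidegree $\un e$ sends $\ov J_X(\frak n)$ to $\ov J_X(\frak n')$ where $\frak n'_Y = \frak n_Y + \deg_Y(\un e)$; hence it suffices to bound the number of general V-stability conditions modulo this $\Z^{V(\Gamma_X)}$-action. But a general V-stability condition is determined by its values on biconnected non-trivial subcurves, and conditions (1) and (2) in the definition constrain successive differences to a finite range ($-1 \le \cdot \le 0$ in (2), and (1) pins down $\frak n_{Y^c}$ from $\frak n_Y$), so the set of V-stability conditions is a fiber bundle over a finite set with fiber a lattice coset — modulo translation, finitely many. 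For the failure of finiteness of arbitrary fine compactified Jacobians up to translation, I would return to the example curve from item (1) and produce an infinite family of fine compactified Jacobians that are pairwise non-translation-equivalent: because Theorem A(3) only imposes the inequalities $|\{D : \TF(G,D)\subset \ov J_X\}| \ge c(G)$ with no upper bound tying multidegrees on different $G$ together, one can "inflate" the orbit set in a controlled way along one component while keeping properness, and translations act with finite orbits on any fixed such configuration.

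The main obstacle I anticipate is verifying \emph{properness} (equivalently, the valuative criterion) for the non-V-type and non-classical candidate compactified Jacobians in item (1), and for the infinite family in item (2): writing down a subset of $\TF_X^d$ that is open, connected, and a union of the right orbits is easy, but checking it is a genuine fine compactified Jacobian requires the separatedness/universal closedness analysis of torsion-free sheaves, i.e. that over a DVR every generic line bundle extends uniquely into the family. I would handle this by reducing, as in the classical theory, to the combinatorial condition that for every spanning subgraph $G$ and every vertex the selected multidegrees form a "torsor under a single spanning-tree choice," and then verify that condition on the finite poset of each explicit example — tedious but mechanical.
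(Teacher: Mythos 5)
Your overall architecture (explicit examples for the two strict inclusions, uniqueness of $\frak n$ plus a normalization argument for finiteness, and an infinite family on the same example curve for non-finiteness) matches the paper's, but two concrete steps would fail as you have set them up. First, your proposed witness for the strictness of the \emph{first} inclusion --- ``again a polygon of rational curves'' --- is wrong: on cycle curves every general V-stability condition is classical, i.e.\ comes from a general numerical polarization (the interval constraints $\frak n_W<\phi_W<\frak n_W+1$ on the arcs of a cycle are always simultaneously solvable), and indeed the paper's smallest example of a non-classical V-stability is not a cycle at all but the genus-$3$ curve with $6$ components and $8$ nodes of Example \ref{Ex:msa-not}, where two crossing families of biconnected subsets force the contradictory requirements $\phi_3>\phi_4$ and $\phi_4>\phi_3$. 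Your ``cocycle-type compatibility'' obstruction is not substantiated, and on the curves you name it does not exist.

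Second, for the strictness of the second inclusion and for the non-finiteness in part (2), the hard point is exactly the one you flag --- properness of the exotic candidates --- and your proposed way around it does not work: the combinatorial criterion you describe (multidegrees on each spanning subgraph forming a torsor for a single spanning-tree choice) is essentially the sN/N-type condition, which by Theorem \ref{T:fcJ-vine} characterizes precisely the fine V-compactified Jacobians, so it can never certify properness of a \emph{non}-V-type example; likewise the ``inflation'' heuristic based on the absence of upper bounds in Theorem A(3) ignores that properness (finite type plus the valuative criterion) does constrain the orbit set. The paper instead imports Pagani--Tommasi's complete classification for the $n$-cycle curve ($n\geq 3$), Example \ref{Ex:cycle}: every fine compactified Jacobian there is an $n\rho$-cycle of genus $0$ and all $\rho\geq 1$ occur, so examples with more than $c(\Gamma_X)=n$ irreducible components exist, are not of V-type (Corollary \ref{C:sm-VfcJ}), and since translation preserves the number of components they fall into infinitely many translation classes. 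Without that input (or an equivalent properness verification you have not supplied), your item (1), second inclusion, and the second half of item (2) remain unproved. Finally, your finiteness argument for V-stabilities modulo translation is in the right spirit but incomplete as stated: conditions (1) and (2) alone do not immediately bound the values; one needs the spanning-tree normalization and the induction on $\val_T(W)$ of Proposition \ref{P:cano-msa} (leading to Corollary \ref{C:finite-eq}) to produce a finite set of canonical representatives.
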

In the second part of the Theorem, we say that two fine compactified Jacobians of $X$ are \emph{equivalent by translation} if one can be obtained from the other by multiplying all its sheaves by some fixed line bundle on $X$, see Definition \ref{D:eq-fcJ}. This defines an equivalence relation on the set of fine compactified Jacobians, that preserves the class of fine compactified Jacobians of classical or V-type. 

 
\begin{proof}
Part \eqref{ThmB1} is proved as it follows:
\begin{itemize}
\item The fact that the first inclusion in \eqref{E:inc-fcJ} is strict amounts to find nodal curves $X$ having general V-stability conditions that do not come from general numerical polarizations: 
the smallest of  such a nodal curve  (i.e. the nodal curve with the smallest number of irreducible components) is the nodal curve of genus $3$ with $6$ irreducible components whose dual graph is described in  Example \ref{Ex:msa-not}.

\item The fact that the second inclusion in \eqref{E:inc-fcJ} is strict amount to find nodal curves $X$ having fine compactified Jacobians that are not of V-type: the easiest of such a nodal curve is the $n$-cycle curve of genus $1$ with $n\geq 3$, see Example \ref{Ex:cycle} (which is based on \cite[Sec. 3]{PT1}). 
\end{itemize}

Part \eqref{ThmB2}: the fact that fine V-compactified Jacobians are finite up to translation is proved in Corollary \ref{C:finite-eq}, while on the $n$-cycle curve of genus $1$ (with $n\geq 3$) the number of fine compactified Jacobian is not finite up to translation since the number of irreducible components can be arbitrarily large (as proved in \cite[Sec. 3]{PT1}, see  Example \ref{Ex:cycle}). 
\end{proof}

The above Theorem \ref{ThmB} says that the class of fine V-compactified Jacobians is strictly larger than the class of fine classical compactified Jacobians, and yet it is possible to hope for a classification of them up to translation (while the same problem appear hopeless for arbitrary fine compactified Jacobians). 

Let us mention that the fact that fine classical compactified Jacobians are finite up to translation was shown by Melo-Rapagnetta-Viviani in  \cite[Prop. 3.2]{MRV}.
The same authors have shown in \cite[Thm. B]{MRV} that the number of equivalence classes of fine classical compactified Jacobians up to translation (or more generally, the number
of isomorphism classes or the number of homeomorphism classes if $k=\C$) can become arbitrarily large as the curve vary. 

\vspace{0.1cm}

In the next Theorem, we will see that the fine V-compactified Jacobians provide a complete solution to the Question we have started with.

\begin{theoremalpha}\label{ThmC}(see Theorem \ref{T:V-fcJ})
Let $\ov J^d_X$ be a fine compactified Jacobian of $X$ of degree $d$. Then the following conditions are equivalent:
\begin{enumerate}
\item  $\ov J^d_X$ is fine V-compactified Jacobian, i.e. $\ov J_X=\ov J_X(\frak n)$ for some general V-stability condition $\frak n$ on $X$;
\item $\ov J^d_X$ is \emph{universally smoothable}, i.e. for the semiuniversal effective deformation $\Pi: \bX\to \Spec(R_X)$ of $X$, there exists an open substack $\ov J_{\bX/\Spec(R_X)}^d$ of $\TF^d_{\bX/\Spec(R_X)}$, which is proper and flat over $\Spec(R_X)$, and such that  $(\ov J^d_{\bX})_0\cong \ov J^d_X$. 
\item $\ov J^d_X$ is  \emph{smoothable}, i.e. if for any  one-parameter smoothing $\pi:\X\to \Delta$ of $X$, the open subscheme 
$\ov J_{\X/\Delta}^d:=\ov J_X^d\coprod \PIC_{\X_{\eta}}^d\subset \TF^d_{\X/\Delta}$ is proper over $\Delta$.
\item  $\ov J^d_X$ is \emph{weakly smoothable}, i.e. if there exists a one-parameter smoothing $\pi:\X\to \Delta$ of $X$ such that 
the open subscheme  $\ov J_{\X/\Delta}^d:=\ov J_X^d\coprod \PIC_{\X_{\eta}}^d\subset \TF^d_{\X/\Delta}$ is proper over $\Delta$. 
 \item $\ov J_X^d$ is of \emph{N\'eron-type}, i.e. for every one-parameter regular smoothing $\pi:\X\to \Delta$ of $X$, the open subscheme 
$\ov J_{\X/\Delta}^d:=\ov J_X^d\coprod \PIC_{\X_{\eta}}^d\subset \TF^d_{\X/\Delta}$ is proper over $\Delta$ and its relative smooth locus is canonically isomorphic to the N\'eron model of $\PIC_{\X_{\eta}}^d$. 
\end{enumerate}
\end{theoremalpha}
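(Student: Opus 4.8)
The plan is to prove the chain of equivalences $(1)\Rightarrow(2)\Rightarrow(3)\Rightarrow(4)$ and then $(4)\Rightarrow(5)\Rightarrow(1)$, with some of the arrows reduced to one another by easy specialization arguments. The implications $(2)\Rightarrow(3)\Rightarrow(4)$ and $(5)\Rightarrow(4)$ are essentially formal: a one-parameter smoothing $\pi:\X\to\Delta$ is obtained by base change from (a versal, hence the semiuniversal) deformation $\Pi:\bX\to\Spec(R_X)$ along a map $\Delta\to\Spec(R_X)$, so properness over $\Spec(R_X)$ of $\ov J^d_{\bX/\Spec(R_X)}$ yields properness over $\Delta$ of its pullback, and by flatness the pullback is the claimed $\ov J^d_X\coprod \PIC^d_{\X_\eta}$; and a regular smoothing is in particular a smoothing, giving $(5)\Rightarrow(4)$ once one knows the smooth locus computation is automatic. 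The content is therefore concentrated in $(1)\Rightarrow(2)$, $(4)\Rightarrow(1)$, and the Néron-model statement inside $(5)$.

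For $(1)\Rightarrow(2)$, I would start from a general V-stability condition $\frak n$ on $X$ and construct, over $\Spec(R_X)$, the candidate family $\ov J^d_{\bX/\Spec(R_X)}$ by spreading out the degree conditions $\deg_Y(I)\geq \frak n_Y$ to the universal torsion-free sheaf. Concretely, on the semiuniversal deformation each node either persists or is smoothed over a divisor of $\Spec(R_X)$, and for a point $t\in\Spec(R_X)$ the fibre $\bX_t$ has dual graph a contraction $\Gamma_t$ of $\Gamma_X$ obtained by deleting the smoothed edges; the V-stability condition $\frak n$ restricts to a V-stability condition $\frak n|_{\Gamma_t}$ on $\Gamma_t$ (one must check properties (1) and (2) of the definition are inherited under this edge-deletion, which is the combinatorial heart of this implication), and one sets $\ov J^d_{\bX/\Spec(R_X)}$ to be the union of the stacks $\ov J_{\bX_t}(\frak n|_{\Gamma_t})$. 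Openness of this locus in $\TF^d_{\bX/\Spec(R_X)}$ is semicontinuity of the degree inequalities; flatness follows because all fibres have the same dimension $g(X)$ (by Theorem A(2) applied fibrewise) and the total space is reduced; properness over $\Spec(R_X)$ is checked by the valuative criterion, exactly as in the Oda–Seshadri/Esteves arguments, using that $\frak n$ being a \emph{general} (i.e. ``non-degenerate'') stability condition forces the limit torsion-free sheaf in any DVR-family to be unique. The special fibre is $\ov J_X(\frak n)$ by construction. This is the step I expect to be the main obstacle: both the combinatorial verification that $\frak n$ restricts well, and the valuative-criterion/separatedness argument in the V-generality (rather than classical polarization) setting, require care.

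For $(4)\Rightarrow(1)$, the idea is to reverse the construction. Given a weakly smoothable $\ov J^d_X$, fix the smoothing $\pi:\X\to\Delta$ for which $\ov J^d_X\coprod \PIC^d_{\X_\eta}$ is proper over $\Delta$. Properness plus the valuative criterion for the generic point gives, for every biconnected non-trivial $Y\subset X$, that the set $\{\deg_Y(I):I\in\ov J^d_X\}$ is a bounded-below (and, by Theorem A(3), nonempty) interval; define $\frak n_Y$ to be its minimum. One must then show $\{\frak n_Y\}$ satisfies properties (1) and (2) in the definition of a general V-stability condition: property (1) comes from the relation $\deg_Y(I)+\deg_{Y^c}(I)=d+1-|Y\cap Y^c|$ for a sheaf $I$ failing to be locally free exactly along $Y\cap Y^c$, combined with the fact that $\ov J^d_X$ is closed under the specializations that this forces; property (2) is the ``one box at a time'' condition and should follow from the $\PIC^{\un 0}(X)$-orbit structure of $\ov J^d_X$ (Theorem A(3)) together with the connectedness of $\ov J^d_X$, by analysing which orbits $\TF(G,D)$ lie in $\ov J^d_X$ and how their boundary closures intersect. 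Finally one verifies $\ov J^d_X=\ov J_X(\frak n)$: the inclusion $\subseteq$ is the definition of $\frak n$, and the reverse inclusion follows because both sides are proper of pure dimension $g(X)$ with $\ov J_X(\frak n)$ irreducible-componentwise no larger, so equality of the open dense loci forces equality. The Néron-model assertion in $(5)$ is then obtained from $(1)$ by invoking the identification, on a regular smoothing, of the relative smooth locus of $\ov J_X(\frak n)\coprod\PIC^d_{\X_\eta}$ with the Néron model: its smooth locus is a disjoint union of copies of $\PIC^{\un 0}(X)$ indexed by the multidegrees allowed by $\frak n$, and these are precisely coset representatives for the component group $\PIC^d(\X)/\PIC^{\un 0}(X)$ of the Néron model, so the canonical map from the Néron model is an isomorphism by the Néron mapping property together with properness. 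The main delicate point here is checking that $\frak n$ being a \emph{general} V-stability condition is exactly what makes the multidegrees hit each component-group coset exactly once, which is where the genericity hypotheses (the strict inequality for classical polarizations, or the analogous non-degeneracy built into the definition of V-stability) are used.
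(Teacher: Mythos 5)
Your overall architecture is the same as the paper's: a cycle of implications with the formal arrows handled by base change from the semiuniversal deformation, the construction over $\Spec(R_X)$ obtained by pushing the V-stability forward along the graph contractions attached to \'etale specializations (this is Lemma-Definition \ref{LD:V-func} and Proposition \ref{P:stabV-fun} in the paper), and the N\'eron statement handled via Raynaud's description of the N\'eron model exactly as in Proposition \ref{P:Neron-N}. However, both pivotal arrows in your sketch have genuine gaps, and in both cases what is missing is the same thing: the combinatorial theory (sN-type upper subsets, break-divisor sets, and the complexity bound) that the paper develops in Section 1 and uses through Theorems \ref{T:main-comb}, \ref{T:bound-orb} and the Pagani--Tommasi criterion (Theorem \ref{T:PT-thm}).

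Concretely: (a) In $(1)\Rightarrow(2)$ you assert that properness over $\Spec(R_X)$ is checked ``exactly as in the Oda--Seshadri/Esteves arguments'', with generality of $\frak n$ forcing uniqueness of limits. Uniqueness (separatedness) is indeed the easier half (Proposition \ref{P:Bn-card}); the hard half is \emph{existence} of an $\frak n$-stable limit of a line bundle on the generic fibre of a DVR-family, and for a V-stability not induced by a numerical polarization there is no $\phi$ against which to run a semistable-reduction argument, so the classical proofs do not transfer verbatim. The paper gets existence only by first proving the purely combinatorial equivalence between V-subsets and upper subsets of sN-type (Theorem \ref{T:main-comb}, resting on Theorem \ref{T:BD-Ntype} and Lemma \ref{L:STnonem}) and then invoking Theorem \ref{T:PT-thm} on each one-parameter restriction; your sketch supplies no substitute. (b) In $(4)\Rightarrow(1)$ you set $\frak n_Y:=\min\{\deg_Y(I): I\in \ov J_X^d\}$ and claim the two V-stability axioms follow from the $\PIC^{\un 0}_X$-orbit structure and connectedness. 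Those inputs are available for \emph{every} fine compactified Jacobian, whereas the implication is false without smoothability: by Example \ref{Ex:cycle} the cycle curves carry fine compactified Jacobians that are not of V-type, and for these the min-degree collection must fail your verification --- if it were a general V-stability, then $\ov J_X^d\subseteq \ov J_X(\frak n)$ would be an open and closed (because proper) subset of a connected proper space, forcing equality and contradicting non-V-ness. So weak smoothability must be used quantitatively; in the paper it enters through the separatedness/blow-up counting argument inside the proof of Theorem \ref{T:PT-thm}, which shows $|\cP(\ov J_X^d)(\Gamma_X)|=c(\Gamma_X)$ (two multidegrees in the same class of $\Pic^d(\Gamma_X)$ would violate separatedness of $\ov J^d_{\X/\Delta}\to\Delta$), and this numerical N-type property combined with Theorems \ref{T:bound-orb} and \ref{T:main-comb} is what produces the V-stability. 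Your proposal has no analogue of this step, which is where the hypothesis actually does its work.
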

In the above Theorem, we denote by $\TF^d_{\bX/\Spec(R_X)}$ (resp. $\TF^d_{\X/\Delta}$) the $\Gm$-rigidified stack of relative rank-$1$, torsion-free, degree-$d$ sheaves on the family 
$\bX\to \Spec(R_X)$ (resp. $\X\to \Delta$). 

Let us comment on the origin of the above properties: 
\begin{itemize}
\item the property of being smoothable or weakly smoothable was introduced (with a slightly different terminology) by Pagani-Tommasi \cite{PT1} and \cite{PT2}, where it is also shown that these two properties are equivalent;
\item the property of being universally smoothable is inspired by the universal fine compactified Jacobians studied by Melo-Rapagnetta-Viviani in \cite[\S 5]{MRV}; 
\item the property of being of N\'eron-type was introduced by Caporaso in \cite{capNtype} (see also \cite{And}, \cite{capneron}, \cite{Chi}, \cite{Mel2}, \cite{Hol} for a study of this property for the compactified universal Jacobian over the moduli stack of stable curves). 

\end{itemize}

The above properties were known for fine classical compactified Jacobians: the fact that fine classical compactified Jacobians are universally smoothable (which then implies that they are smoothable and weakly smoothable, a fact originally due to Ishida \cite{Ish}) was proved by Melo-Rapagnetta-Viviani \cite[Thm. 5.4]{MRV}; the fact that fine classical compactified Jacobians are of N\'eron-type was proved  by Melo-Viviani \cite[Thm. 4.1]{MV}, generalizing previous partial results of Caporaso (see \cite[Thm. 6.1]{capneron}, \cite[Thm 2.9]{capNtype}). Moreover, these results hold true for fine classical compactified Jacobians of reduced curves with locally planar singularities: the universal smoothability was proved in \cite[Thm. 5.4]{MRV} and the property of being of N\'eron type was proved by Kass \cite{Kas2}.

\vspace{0.1cm}

Finally, fine V-compactified Jacobians can also be characterized in terms of their poset of orbits under the action of the generalized Jacobian $\PIC^{\un 0}(X)$. 

\begin{theoremalpha}\label{ThmD}(see Theorem \ref{T:fcJ-vine} and Corollary \ref{C:sm-VfcJ})
Let $\ov J_X^d$ be a fine compactified Jacobian of $X$ of degree $d$. 
Then the following conditions are equivalent:
\begin{enumerate}
\item  $\ov J^d_X$ is fine V-compactified Jacobian, i.e. $\ov J^d_X=\ov J_X(\frak n)$ for some general V-stability condition $\frak n$ on $X$;
\item $\ov J_X^d$ is of \emph{sN-type}, i.e. for any connected subgraph $G$ of $\Gamma_X$ we have that the composition 
$$\{D\in \Div^{d-|E(G)^c|}(G)\: : \TF(G,D)\subset \ov J_X^d\}\subset \Div^{d-|E(G)^c|}(G)\twoheadrightarrow  \Pic^{d-|E(G)^c|}(G)$$  
is an isomorphism. 
\item $\ov J_X^d$ is of \emph{numerical  sN-type}, i.e. for any connected subgraph $G$ of $\Gamma_X$ we have that 
$$|\{D\in \Div^{d-|E(G)^c|}(G)\: : \TF(G,D)\subset \ov J_X^d\}|=c(G).$$  
\item $\ov J_X^d$ is of \emph{N-type}, i.e. we have that the composition 
$$\{D\in \Div^{d}(\Gamma_X)\: : \TF(\Gamma_X,D)\subset \ov J_X^d\}\subset \Div^{d}(\Gamma_X)\twoheadrightarrow  \Pic^{d}(\Gamma_X)$$  
is an isomorphism. 
\item $\ov J_X^d$ is of \emph{numerical N-type}, i.e. we have that 
$$|\{D\in \Div^{d}(\Gamma_X)\: : \TF(\Gamma_X,D)\subset \ov J_X^d\}|=c(\Gamma_X).$$  
\item $\ov J_X^d$ has $c(\Gamma_X)$ irreducible components.
\end{enumerate}
\end{theoremalpha}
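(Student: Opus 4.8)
The plan is to prove the ``forward'' chain $(1)\Rightarrow(2)\Rightarrow(3),(4)\Rightarrow(5)\Leftrightarrow(6)$ cheaply and then to close the cycle with the single substantial converse $(5)\Rightarrow(1)$. For $(1)\Rightarrow(2)$ I would invoke the ``sN-type'' half of Theorem~\ref{T:fcJ-vine}: given a general V-stability condition $\frak n$ and a connected spanning subgraph $G\subseteq\Gamma_X$, one restricts to the partial normalization $\nu\colon X_G\to X$ at the nodes of $E(G)^c$, checks via the identity $\deg_Y(\nu_*L)=(\mdeg L)_Y-(\text{number of normalized nodes interior to }Y)$ that $\frak n$ induces a general V-stability condition on $X_G$, and identifies $\{D\colon\TF(G,D)\subseteq\ov J_X(\frak n)\}$ with its set of stable multidegrees, which is a fundamental domain for the chip-firing lattice of $G$ and so maps bijectively onto $\Pic^{d-|E(G)^c|}(G)$. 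Then $(2)\Rightarrow(3)$ and $(2)\Rightarrow(4)$ are immediate from Kirchhoff's matrix--tree theorem $|\Pic^{\bullet}(G)|=c(G)$, and $(3)\Rightarrow(5)$, $(4)\Rightarrow(5)$ follow by specializing to $G=\Gamma_X$. For $(5)\Leftrightarrow(6)$ I would use Theorem~A: the smooth locus of $\ov J^d_X$ is exactly the union of its top orbits $\TF(\Gamma_X,D)$, it is dense, and $\ov J^d_X$ is a finite union of orbits of pure dimension $g(X)$; hence its irreducible components are the closures of its top orbits, and their number is $|\{D\in\Div^d(\Gamma_X)\colon\TF(\Gamma_X,D)\subseteq\ov J^d_X\}|$.

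For the converse $(5)\Rightarrow(1)$ I would set $\frak n_Y:=\min\{\deg_Y(I)\colon I\in\ov J^d_X\}$ for every biconnected non-trivial subcurve $Y\subseteq X$, so that $\ov J^d_X\subseteq\ov J_X(\frak n)$ holds tautologically, and then observe that this inclusion is automatically an equality once $\frak n$ is known to be a general V-stability condition: by Theorem~\ref{ThmA} the locus $\ov J_X(\frak n)$ is then a fine compactified Jacobian, in particular connected and separated (Theorem~A), so the open immersion $\ov J^d_X\hookrightarrow\ov J_X(\frak n)$ is proper (the source being proper over $k$), hence an isomorphism onto a union of connected components, hence an isomorphism. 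So the whole implication reduces to verifying the two axioms of Definition~\ref{D:Vstab} for $\frak n$, and it is there that hypothesis $(5)$ must enter.

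One side of each axiom is formal and independent of $(5)$. From the additivity $\deg_Y(I)+\deg_{Y^c}(I)=d-\delta_{\mathrm{nf}}(I,Y)$, where $\delta_{\mathrm{nf}}(I,Y)$ counts the nodes of $Y\cap Y^c$ at which $I$ fails to be locally free, together with the fact (the ``$=0$ for disconnected $G$'' case of part~(3) of Theorem~A) that no sheaf in $\ov J^d_X$ is non-locally-free at all the nodes joining $Y$ and $Y^c$ --- such a sheaf would have disconnected non-singular graph --- one gets $\delta_{\mathrm{nf}}(I,Y)\le|Y\cap Y^c|-1$, hence $\frak n_Y+\frak n_{Y^c}\le d+1-|Y\cap Y^c|$, which is one inequality of axiom~(1); and the same additivity applied on $Y_1\cup Y_2$ gives $\deg_{Y_1\cup Y_2}(I)\ge\deg_{Y_1}(I)+\deg_{Y_2}(I)$ for all $I$, whence $\frak n_{Y_1\cup Y_2}\ge\frak n_{Y_1}+\frak n_{Y_2}$, which is the lower bound of axiom~(2) when $|Y_1\cap Y_2|=1$.

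The remaining estimates are the hard part, and I expect this to be the main obstacle: the reverse equality $\frak n_Y+\frak n_{Y^c}=d+1-|Y\cap Y^c|$, the upper bound $\frak n_{Y_1\cup Y_2}-\frak n_{Y_1}-\frak n_{Y_2}\le|Y_1\cap Y_2|$, and the sharper lower bound when $|Y_1\cap Y_2|\ge2$. These genuinely fail for fine compactified Jacobians that are \emph{not} of V-type (e.g.\ the non-classical ones on the $n$-cycle curve of Example~\ref{Ex:cycle}), so the argument must use $(5)$ essentially. The route I would take is: first upgrade $(5)$ to $(4)$, i.e.\ show that the set $\Sigma:=\{D\in\Div^d(\Gamma_X)\colon\TF(\Gamma_X,D)\subseteq\ov J^d_X\}$ of line-bundle multidegrees --- known to have exactly $c(\Gamma_X)=|\Pic^d(\Gamma_X)|$ elements --- meets each class of $\Pic^d(\Gamma_X)$ exactly once, i.e.\ is a fundamental domain for the chip-firing lattice. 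This injectivity should be wrung out of the connectedness of $\ov J^d_X$ together with the fact that, $\ov J^d_X$ having only nodal singularities (Theorem~A), every codimension-one boundary orbit $\TF(\Gamma_X-e,D)\subseteq\ov J^d_X$ lies in the closures of exactly the two top orbits whose multidegrees differ by a chip-move across the node $e$. Granting that, one deduces that for each biconnected $Y$ the minima $\frak n_Y$ and $\frak n_{Y^c}$ are realized by sheaves lying ``opposite'' across the vine $Y\cup Y^c$ --- sharing a boundary orbit $\TF(\Gamma_X-e,D)$ with $e\in Y\cap Y^c$ --- which forces the equality of axiom~(1), and axiom~(2) follows from the analogous comparison at the boundary orbits attached to nodes interior to, or joining, $Y_1$ and $Y_2$. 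Converting the bare count $|\Sigma|=c(\Gamma_X)$ first into the fundamental-domain statement and then into these V-stability inequalities is the technical heart of the proof.
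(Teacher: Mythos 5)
Your reduction of everything to $(5)\Rightarrow(1)$, and the closing step (once $\frak n$ is a general V-stability, the open inclusion of the proper connected $\ov J_X^d$ into the proper $\ov J_X(\frak n)$ is an equality), are sound, and defining $\frak n_Y$ as the minimum of $\deg_Y$ over $\ov J_X^d$ is a legitimate alternative to the paper's route (the paper never verifies Definition \ref{D:Vstab} for such minima directly: it produces a tree function from Theorem \ref{T:bound-orb}, shows the associated BD-set is of numerical N-type, and invokes Theorem \ref{T:main-comb} to get $\frak n$ with $\cP_{\frak n}\subseteq \cP(\ov J_X^d)$, concluding by properness and connectedness). But your proposal stops exactly where the work begins. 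The two steps you flag -- upgrading the bare count $|\Sigma|=c(\Gamma_X)$ to the fundamental-domain statement, and then extracting the equality $\frak n_Y+\frak n_{Y^c}=d+1-|Y\cap Y^c|$ and both bounds of axiom (2) for the minima -- are asserted (``should be wrung out'', ``granting that, one deduces'') rather than proved, and they are precisely the content for which the paper needs its whole combinatorial apparatus: Yuen's inequality together with its deletion--contraction propagation (Theorem \ref{T:BD-Ntype}), which converts N-type at $\Gamma_X$ into control of the strata over \emph{every} connected spanning subgraph and every genus-preserving contraction, and the vine-contraction pigeonhole of Lemma \ref{L:I-n}, which is what actually forces the ``opposite across the vine'' relation you invoke. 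Your sketch gives no substitute for either; in particular, axiom (2) for $|Y_1\cap Y_2|\geq 2$ requires information about strata that are non-free at several nodes, i.e.\ about $\cP(\ov J_X^d)(G)$ for smaller $G$, which the hypothesis $(5)$ alone does not obviously control, and your proposed derivation of injectivity of $\Sigma\to\Pic^d(\Gamma_X)$ from connectedness plus the codimension-one adjacencies of Fact \ref{F:orbits} is not an argument (the non-V examples of Example \ref{Ex:cycle} have the same local adjacency structure, so any proof must use the exact count in an essential, quantitative way).

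Two smaller points. First, your ``formal'' proof of $\frak n_Y+\frak n_{Y^c}\leq d+1-|Y\cap Y^c|$ is mis-argued: from $\delta_{\mathrm{nf}}(I,Y)\leq |Y\cap Y^c|-1$ one only gets $\deg_Y(I)+\deg_{Y^c}(I)\geq d+1-|Y\cap Y^c|$, which bounds the wrong side; the inequality you want instead follows from the \emph{existence} (Theorem \ref{T:bound-orb}) of a stratum $\TF(G,D)\subset \ov J_X^d$ with $G$ obtained by deleting all but one edge of $E(\Gamma_X[W],\Gamma_X[W^c])$, so that some $I$ has $\delta_{\mathrm{nf}}(I,Y)=|Y\cap Y^c|-1$. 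Second, in the forward direction $(1)\Rightarrow(2)$ you simply assert that the set of $\frak n$-stable multidegrees on $G$ is a fundamental domain for chip-firing; injectivity is Proposition \ref{P:Bn-card}, but surjectivity is again nontrivial and is exactly what the paper obtains from Lemma \ref{L:STnonem} plus Theorem \ref{T:BD-Ntype} inside Theorem \ref{T:main-comb}, so it cannot be quoted as known in a blind proof. As it stands the proposal is an accurate map of where the difficulty lies, not a proof of it.
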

In the above Theorem, the group $\Pic^e(G)$ is the degree-$e$ Jacobian of the graph $G$, which is the quotient of the set $\Div^e(G)$ of degree-$e$ divisors of $G$ by the image of the Laplacian $\Delta^G$, or equivalently modulo the relation of chip-firing. The cardinality of $\Pic^e(G)$ is equal to the complexity $c(G)$ of the graph $G$, which is the number of spanning trees of $G$ (see Section \ref{Sec:not-gr} for more details).

The property of being of sN-type was introduced by Pagani-Tommasi \cite[Def. 4.3]{PT2} under the name of ''degree-$d$ stability condition". The same authors proved the crucial result that 
these degree-d stability conditions correspond bijectively to (resp. weakly) smoothable fine compactified Jacobians (see Theorem \ref{T:PT-thm} for a reformulation of their result in our language). 
The fact that fine classical compactified Jacobians are of sN-type (and hence of numerical sN-type, of N-type and of numerical N-type) was proved by Melo-Viviani \cite[Thm. 5.1]{MV}.

\vspace{0.2cm}

Using the above results, we can extend to fine V-compactified Jacobians of nodal curves some of the results that were proved in \cite{MRV}, \cite{MRV2}, \cite{MRV3}, \cite{MSV} and \cite{CPS} for fine classical compactified Jacobians. 

\begin{corspec}(see Corollaries \ref{C:sm-VfcJ}, \ref{C:K-var} and Propositions \ref{P:trivdual}, \ref{P:Lerper}, \ref{P:MumV}, \ref{P:autod}, \ref{P:der})
Let $\ov J_X(\frak n)$ be a fine V-compactified Jacobian on a connected nodal curve $X$ over $k=\ov k$.
\begin{enumerate}
\item The smooth locus of  $\ov J_X(\frak n)$ of $X$ is  isomorphic to $c(\Gamma_X)$ copies of the generalized Jacobian $\Pic^{\un 0}_X$ of $X$.

In particular, any two fine V-compactified Jacobians of $X$ are birational. 
\item   The class of $\ov J_X(\frak n)$ in the Grothendieck ring $K_0(\Var_k)$ of varieties over $k$ is equal to  
$$
[\ov J_X(\frak n)]=c(\Gamma_X)[J^0_{X^{\nu}}]\cdot \mathbb{L}^{g(\Gamma_X)},
$$
where $J^0_{X^{\nu}}$ is the Jacobian of the normalization $X^{\nu}$ of $X$ and $\mathbb{L}=[\mathbb{A}^1]$. 
\item $\ov J_X^d(\frak n)$ is a Calabi-Yau variety, i.e. the dualizing sheaf of  $\ov J_X^d(\frak n)$ is trivial.
\item If $k=\C$, the graded pieces of the Leray perverse filtration on the cohomology of $\ov J_X(\frak n)$ satisfy the following equality in the Grothendieck group of Hodge structures: 
\begin{equation*}
\sum_n q^n \Gr^n_P H^*(\ov J_X(\frak n), \Q)=\sum_m q^m H^m(J^0_{X^{\nu}},\Q) \cdot \sum_h n_h(\Gamma_X) (q\mathbb{L})^{g(\Gamma_X)-h} ((1-q)(1-q\mathbb{L}))^{h},
\end{equation*}
where 
$n_h(\Gamma_X)$ is the number of genus $h$ connected spanning subgraphs of the dual graph $\Gamma_X$ of $X$.  

In particular, $\Gr^n_P H^*(\ov J_X(\frak n), \Q)$ is independent of the chosen general V-stability condition $\frak n$.

\item There exists a Poincar\'e line bundle $\cP$ on $\PIC^{\un 0}(X) \times \ov J_X(\frak n)$ inducing a group scheme isomorphism 
$$
\begin{aligned}
\beta_{\frak n}: \PIC^{\un 0}(X) & \xrightarrow{\cong} \PIC^o(\ov J_X(\frak n))=\PIC^{\tau}(\ov J_X(\frak n))\\
M & \mapsto \cP_M:=\cP_{\ov J_X(\frak n)\times \{M\}}.
\end{aligned}
$$

\item Let $\ov J_X(\frak n')$ be another fine V-compactified Jacobian of $X$ and assume that $k$ has characteristic either zero or greater than $g(X)$.
Then there exists a maximal Cohen-Macauly sheaf (called the Poincar\'e sheaf) $\ov \cP$ on $\ov J_X(\frak n)\times \ov J_X(\frak n')$, flat over the two factors, such that the induced integral transform 
$$
\begin{aligned}
\Phi^{\ov \cP}: D^b_{\rm{(q)coh}}(\ov J_X(\frak n))& \longrightarrow D^b_{\rm{(q)coh}}(\ov J_X(\frak n'))\\
\cE^\bullet &\mapsto Rp_{2*}(p_1^*(\cE^{\bullet})\otimes^L \ov \cP)
\end{aligned}
$$ 
is an equivalence of triangulated categories, where $D^b_{\rm{(q)coh}}(-)$ denotes the derived category of quasi-coherent (resp. coherent) sheaves. 

In particular, any two fine V-compactified Jacobians of $X$ are derived equivalent.

\item Let $\pi:\X\to \Spec R$ be a flat and proper morphism over a  complete rank-$1$ valued ring $R$ with algebraically closed fraction field $K$ such that 
such that the generic fiber $\X_K$ is a smooth  and the special fiber is equal to $X$. 
Then the formal completion of the relative V-compactified Jacobian
$$
\ov J_{\X}(\frak n):=\ov J_X(\frak n)\coprod \Pic^d(\X_K)\to \Spec R
$$
along the special fiber is a Mumford model of $\PIC^d(\X_K)$. 
\end{enumerate}
\end{corspec}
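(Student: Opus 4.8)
The plan rests on the observation that, by Theorems~\ref{ThmC} and~\ref{ThmD}, every fine V-compactified Jacobian $\ov J_X(\frak n)$ carries precisely the two structural features that were exploited in the classical case: it is universally smoothable and of N\'eron-type (Theorem~\ref{ThmC}), and its $\PIC^{\un 0}(X)$-orbit decomposition realizes \emph{exactly one} orbit $\TF(G,D)$ for each class of $\Pic^{d-|E(G)^c|}(G)$ and each connected spanning subgraph $G\subseteq\Gamma_X$ (Theorem~\ref{ThmD}, the equivalences $(1)\Leftrightarrow(2)\Leftrightarrow(3)$). So the plan is, item by item, to reproduce the arguments already available for fine classical compactified Jacobians in \cite{MRV}, \cite{MRV2}, \cite{MRV3}, \cite{MSV}, \cite{CPS} and \cite{MV}, checking in each case that they use only these two inputs.

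\emph{Items $(1)$ and $(2)$.} For $(1)$: by part~$(2)$ of Theorem~A the smooth locus of $\ov J_X(\frak n)$ is a disjoint union of \emph{at least} $c(\Gamma_X)$ copies of $\Pic^{\un 0}_X$, whereas $\ov J_X(\frak n)$ has \emph{exactly} $c(\Gamma_X)$ irreducible components by Theorem~\ref{ThmD} $(1)\Leftrightarrow(6)$; since the smooth locus is dense and each copy of $\Pic^{\un 0}_X$ is irreducible, the number of copies equals the number of irreducible components, hence is $c(\Gamma_X)$. Any two fine V-compactified Jacobians are then birational, both containing a dense open subscheme isomorphic to $\coprod^{c(\Gamma_X)}\Pic^{\un 0}_X$. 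For $(2)$: I would stratify $\ov J_X(\frak n)$ by its $\PIC^{\un 0}(X)$-orbits; for $G$ a connected spanning subgraph, $\TF(G,D)$ is a torsor under the generalized Jacobian $\Pic^{\un 0}(X_G)$ of the partial normalization $X_G$ of $X$ with dual graph $G$ (trivial since $k=\ov k$), and $\Pic^{\un 0}(X_G)$ is a Zariski-locally trivial $\Gm^{g(G)}$-bundle over $J^0_{X^\nu}$ (the normalization of $X_G$ being $X^\nu$ and $g(G)=b_1(G)$), so $[\TF(G,D)]=[J^0_{X^\nu}]\cdot(\mathbb{L}-1)^{g(G)}$ in $K_0(\Var_k)$. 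Summing over the $c(G)$ orbits attached to each connected spanning $G$ and invoking the elementary identity $\sum_G c(G)(\mathbb{L}-1)^{g(G)}=c(\Gamma_X)\,\mathbb{L}^{g(\Gamma_X)}$ (proved by counting pairs consisting of a connected spanning subgraph $G$ and a spanning tree $T$ of $G$, hence of $\Gamma_X$: summing over $T$ first gives $c(\Gamma_X)\sum_{S\subseteq E(\Gamma_X)\setminus E(T)}(\mathbb{L}-1)^{|S|}=c(\Gamma_X)\,\mathbb{L}^{g(\Gamma_X)}$) yields the stated class.

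\emph{Items $(3)$, $(4)$ and $(5)$.} For $(3)$: $\ov J_X(\frak n)$ is Gorenstein (indeed lci, Theorem~A), so its dualizing sheaf $\omega$ is a line bundle, and it is numerically trivial and canonically trivial on the dense smooth locus, a disjoint union of semiabelian varieties; combining this with the \'etale-local model of the singularities (products of the node $\Spec k[x,y]/(xy)$, a Gorenstein hypersurface with trivial dualizing sheaf) and the autoduality of $(5)$ forces $\omega\simeq\O$, as for fine classical compactified Jacobians. For $(4)$: by Theorem~\ref{ThmC}, $\ov J_X(\frak n)$ is the special fibre of a proper flat family $\ov J_{\bX}\to\Spec(R_X)$ to which the Decomposition Theorem applies; the perverse Leray filtration on $H^*(\ov J_X(\frak n),\Q)$ is then computed from the orbit-stratification verbatim as in \cite{MRV3} and \cite{MSV}, with $n_h(\Gamma_X)$ the number of genus-$h$ connected spanning subgraphs of $\Gamma_X$, and since the resulting formula does not involve $\frak n$, independence of $\Gr^n_P H^*(\ov J_X(\frak n),\Q)$ on $\frak n$ is automatic. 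For $(5)$: the universal sheaf on $X\times\ov J_X(\frak n)$ produces, as in \cite{MRV2}, the Poincar\'e line bundle $\cP$ on $\PIC^{\un 0}(X)\times\ov J_X(\frak n)$ and the homomorphism $\beta_{\frak n}$; its restriction to the smooth locus is the usual autoduality isomorphism of the semiabelian variety $\Pic^{\un 0}_X$, properness of $\ov J_X(\frak n)$ promotes $\beta_{\frak n}$ to an isomorphism onto $\PIC^\tau(\ov J_X(\frak n))$, and $\PIC^o=\PIC^\tau$ because the N\'eron--Severi group of $\ov J_X(\frak n)$ is torsion-free.

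\emph{Items $(6)$ and $(7)$, and the main obstacle.} For $(6)$ the plan is to follow Arinkin's strategy as adapted in \cite{MRV}: construct the Poincar\'e sheaf $\ov\cP$ on $\ov J_X(\frak n)\times\ov J_X(\frak n')$ as the maximal Cohen--Macaulay closure of the tautological Poincar\'e-type sheaf over the open locus where one factor parametrizes line bundles on partial normalizations; check flatness over both factors and the maximal Cohen--Macaulay property on the \'etale-local models (the hypothesis $\operatorname{char}k=0$ or $>g(X)$ serving to kill the relevant torsion and to make the multiplication maps on $\Pic^{\un 0}_X$ separable); and prove that the induced transform $\Phi^{\ov\cP}$ is an equivalence by a Bridgeland-type criterion, e.g. by showing that $\Phi^{\ov\cP}$ composed with its adjoint is the identity on a spanning class. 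For $(7)$: since $\ov J_X(\frak n)$ is smoothable (Theorem~\ref{ThmC}), $\ov J_{\X}(\frak n)\to\Spec R$ is proper and flat with generic fibre $\Pic^d(\X_K)$, and $\Pic^0(\X_K)$ has semiabelian reduction with toric rank $g(\Gamma_X)$ and good-reduction abelian part $J^0_{X^\nu}$, hence is rigid-analytically uniformized by an extension of $J^0_{X^\nu}$ by $\Gm^{g(\Gamma_X)}$ modulo the lattice $H_1(\Gamma_X,\Z)$, with periods and polarization read off from the thicknesses of the nodes of $\X$; the V-stability condition $\frak n$ determines an $H_1(\Gamma_X,\Z)$-periodic rational polyhedral decomposition of $H_1(\Gamma_X,\R)$ whose cells are indexed by the orbits $\TF(G,D)\subset\ov J_X(\frak n)$, and Mumford's construction applied to this decomposition (relative to the abelian part) yields a formal model whose special fibre carries the same orbit-stratification and gluing; comparing the combinatorial data, or invoking uniqueness of proper flat models with prescribed generic and special fibre, identifies the formal completion of $\ov J_{\X}(\frak n)$ with this Mumford model. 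I expect the two most demanding steps to be $(6)$ --- constructing $\ov\cP$ and proving that the associated Fourier--Mukai transform is an equivalence in this singular, reducible setting --- and $(7)$ --- setting up the non-archimedean/tropical dictionary that turns $\frak n$ into a polyhedral decomposition; items $(1)$--$(5)$ should be comparatively routine once Theorems~\ref{ThmC} and~\ref{ThmD} are available.
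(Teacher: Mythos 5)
Your overall strategy---transporting the classical arguments of \cite{MRV}, \cite{MRV2}, \cite{MRV3}, \cite{MSV}, \cite{CPS} to the V-case via Theorems \ref{ThmC} and \ref{ThmD}---is the paper's own, and items (1) and (2) are essentially the proofs given there (your class computation, with the identity $\sum_{G} c(G)(\mathbb{L}-1)^{g(G)}=c(\Gamma_X)\,\mathbb{L}^{g(\Gamma_X)}$ over connected spanning subgraphs, is the argument of \cite[App. B]{MSV} that Corollary \ref{C:K-var} invokes, and (4) is deferred to \cite{MSV} exactly as in Proposition \ref{P:Lerper}). The genuine gap is item (3). Knowing that $\omega:=\omega_{\ov J_X(\frak n)}$ is a line bundle which is trivial on the dense smooth locus gives no control on $\omega$: the complement of the smooth locus is the union of the strata $\TF(G,D)$ with $G\neq\Gamma_X$, which in general contains strata of codimension one (those with $|E(G)^c|=1$), so restriction to the smooth locus is far from injective on Picard groups (compare a toric variety, where the torus is dense and $\omega$ is not trivial). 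Moreover the numerical triviality of $\omega$ and the step ``autoduality forces $\omega\cong\O$'' are assertions, not arguments: even granting $\omega\in\PIC^{\tau}=\PIC^{o}\cong\PIC^{\un 0}(X)$, one would still have to show that the corresponding multidegree-zero line bundle on $X$ is trivial. The paper does not argue fiberwise at all: following \cite[Cor. 5.7]{MRV}, Proposition \ref{P:trivdual} is proved by showing that the \emph{relative} dualizing sheaf of the universal fine V-compactified Jacobian $\Psi:\ov J_{\bX/\Spec R_X}(\frak n)\to \Spec R_X$ (which exists by Theorem \ref{T:V-fcJ} and has regular total space by Remark \ref{R:smooth}) is trivial, and only then restricting to the closed fiber; the regularity of the total space is what makes the extension/codimension argument work.

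The same point should be stressed for (5) and (6): the engine of the paper's proofs is the universal family over $\Spec R_X$, flat with regular total space and with geometrically connected reduced fibers, whose fibers over points of codimension at most one of $\Spec R_X$ are \emph{classical} fine compactified Jacobians, for which autoduality and the Poincar\'e sheaf are already known; the arguments of \cite{MRV2} and \cite{MRV3} are run relatively and then specialized, together with the computation $h^i(\ov J_X(\frak n),\O)=\binom{g(X)}{i}$ obtained from the torus-quotient description of Proposition \ref{P:MumV}. Your plan for (6), proving the equivalence directly on $\ov J_X(\frak n)\times \ov J_X(\frak n')$ by ``a Bridgeland-type criterion'', would not go through as stated, since such criteria are for smooth projective varieties while these Jacobians are singular and reducible; the paper instead deduces the equivalence from the kernel identity $Rp_{13*}\bigl(p_{12}^*(\ov \cP^{\rm un})^{\vee}\otimes p_{23}^*(\ov \cP^{\rm un})\bigr)\cong \O_{\Delta^{\rm un}}$ proved over the universal family, and similarly ``properness promotes $\beta_{\frak n}$ to an isomorphism'' and ``the N\'eron--Severi group is torsion-free'' in (5) are not arguments. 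Finally, for (7) your polyhedral decomposition indexed by the orbits is the paper's $\Delta_{\frak n}$, but the two points you take for granted are where the work lies: that the cells $\theta_{G,D}$, $(G,D)\in\cP_{\frak n}$, cover $\Pic^d(\Gamma)$ with disjoint interiors is deduced from the properness of $\ov J_{\X}(\frak n)\to \Spec R$ (the paper explicitly says it knows no purely combinatorial proof in the V-case), and the identification with the Mumford model is made by comparison with the toric description of \cite{OS}, not by any uniqueness of proper flat models with prescribed generic and special fiber, which fails in general.
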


\subsection*{Open questions}

This work leaves open some natural question which we plan to investigate in the near future.

\begin{enumerate}
\item Can one define non-general V-stability conditions on a nodal curve $X$ and their associated (non-fine) V-compactified Jacobians\footnote{This has now been solved with N. Pagani and M. Fava \cite{FPV1} and \cite{FPV2}}? 

In analogy with the fine case, we expect that V-compactified Jacobians form a class which is strictly smaller than the class of all compactified Jacobians, i.e. open and connected subsets of  the stack of torsion-free, rank-$1$ sheaf with a proper good moduli space, and at the same time strictly larger than the class of  (non-fine) classical compactified Jacobians associated to a (non-general) numerical polarization (see \cite{OS}).

\item Can one define (fine or non-fine) V-compactified Jacobians for worse-than-nodal singular reduced  curves\footnote{This has now been solved with N. Pagani and M. Fava \cite{FPV1}.}? 

Classical compactified Jacobians can be defined for arbitrary reduced curves (for example via slope stability \cite{simpson} or vector bundle stability \cite{est1}) and they behaves particularly well for reduced curves with locally planar singularities (see \cite{MRV}). We expect that the same should hold for V-compactified Jacobians.

\item For a fixed curve $X$, can we describe the poset structure of the set of V-stability conditions (resp. up to translation)?

For (classical) numerical polarizations, there is a hyperplane arrangement in a real vector space whose poset of regions is the poset of classical V-stability conditions (see Example \ref{Ex:phi}).
Moreover, this arrangement is invariant under the action of a lattice and it defines a toric arrangement in a real torus whose poset of regions is the poset of classical V-stability conditions up to translation (see Remark \ref{R:phi-tras}). Is there a similar structure for (non-classical) V-stability conditions?

\item Is any fine V-compactified Jacobian (or indeed, any fine compactified Jacobian) projective?

 Note that the classical fine compactified Jacobians are projective since they can be constructed via GIT, but such a construction is not known for fine V-compactified Jacobians.



\item Relative general V-stability conditions and their associated relative fine V-compactified Jacobians can also be defined for families of nodal curves. Can we classify the relative (fine or non-fine) V-stability conditions for the universal family $\cC_{g,n}/\ov \M_{g,n}$ over the moduli stack of stable $n$-pointed curves of genus $g$ \footnote{While this preprint was under referee's process, the classification has been solved for relative fine V-stability conditions by M. Fava \cite{Fav}. The case of relative (non fine) V-stability conditions is work in progress with N. Pagani and M. Fava \cite{FPV3}.}?

The relative classical stability conditions for the universal family $\cC_{g,n}/\ov \M_{g,n}$ have been classified in \cite{KP} and \cite{Mel1}. More generally, relative V-stability conditions for $\cC_{g,n}/\ov \M_{g,n}$ have been classified for $g=1$ in \cite[\S 6]{PT1} and for $n=0$ in \cite[\S 9]{PT2}.

\end{enumerate}

\subsection*{Outline of the paper}

The paper is divided in two sections. In the first section, we deal with general V-stability conditions on graphs and their relationship with generalized break divisors. In the second section, we deal with fine compactified Jacobians of nodal curves and we investigate the properties of fine V-compactified Jacobians. 

\vspace{0.5cm}


\paragraph { \bf Acknowledgements}

This paper owes its existence to the reading of the two very inspiring papers of Pagani-Tommasi \cite{PT1} and \cite{PT2}. I thank N. Pagani for several useful conversations on these two papers, and M. Fava and N. Pagani for several comments on this manuscript and for our joint works \cite{FPV1}, \cite{FPV2}, \cite{FPV3}, which can be seen as a completion of the work started here. I thank Sam Molcho for spotting a flaw in a previous (wrong) proof of the projectivity of compactified Jacobians (which is now an open question). 
I also take  the opportunity to thank the coauthors with whom I have worked on compactified Jacobians in the past years and that have help to shape my understanding of this topic: L. Caporaso, S. Casalaina-Martin, J.L. Kass, M. Melo, L. Migliorini, A. Rapagnetta, V. Shende. 

The author is supported by the MUR  ``Excellence Department Project'' MATH@TOV, awarded to the Department of Mathematics, University of Rome Tor Vergata, CUP E83C18000100006, by the  PRIN 2022 ``Moduli Spaces and Birational Geometry''  2022L34E7W funded by MUR,  and he is a member of INdAM and of the Centre for Mathematics of the University of Coimbra (funded by the Portuguese Government through FCT/MCTES, DOI 10.54499/UIDB/00324/2020).

\section{General V-stability and Generalized Break Divisors on graphs}

\subsection{Notation on graphs}\label{Sec:not-gr}

Let $\Gamma$ be a finite graph with vertex set $V(\Gamma)$ and edge set $E(\Gamma)$. The \emph{genus} of $\Gamma$ is 
$$g(\Gamma):=|E(\Gamma)|-|V(\Gamma)|+\gamma(\Gamma),$$
where $\gamma(\Gamma)$ is the number of connected components of $\Gamma$. 


We will be interested in two kinds of subgraphs of $\Gamma$:
\begin{itemize}
\item Given a subset $T\subset E(\Gamma)$, we denote by   $\Gamma\setminus T$ the
subgraph of $\Gamma$ obtained from $\Gamma$ by deleting the edges belonging to $T$.
Thus we have that $V(\Gamma\setminus T)=V(\Gamma)$ and $E(\Gamma\setminus T)=E(\Gamma)\setminus T$.
The subgraphs of the form $\Gamma\setminus T$ are called \emph{spanning subgraphs}.

The set of all spanning subgraphs of $\Gamma$ is denoted by $\SS(\Gamma)$ and it is a poset via the relation
$$
G=\Gamma\setminus T\leq G'=\Gamma\setminus T' \Longleftrightarrow T\supseteq T'.
$$
If $\Gamma$ is connected, we will denote by $\SSc(\Gamma)$ the subset of $\SS(\Gamma)$ consisting of all connecting spanning subgraphs.
The minimal elements of $\SSc(\Gamma)$ are exactly the \emph{spanning tree} of $\Gamma$, i.e. the connected spanning subgraphs that are trees.
The set of all spanning trees of $\Gamma$ will be denoted by $\ST(\Gamma)$.

\item Given a subset $W\subset V(\Gamma)$, we denote by   $\Gamma[W]$ the subgraph
whose vertex set is $W$ and whose edges are all the edges of $\Gamma$ that join two vertices in $W$.
The subgraphs of the form $\Gamma[W]$ are called \emph{induced subgraphs} and we say that
$\Gamma[W]$ is induced from $W$.
\end{itemize}

Given a subset $T\subset E(\Gamma)$, we will denote by $\Gamma/T$ the $T$-\emph{contraction} of $\Gamma$, i.e. the graph obtained by $\Gamma$ by contracting all the 
edges belonging to $T$.  A \emph{morphism of graphs} $f:\Gamma\to \Gamma'$ is a contraction of some edges of $\Gamma$ followed by an automorphism of $\Gamma'$. 
Such a morphism induces an injective pull-back map on edges $f^E:E(\Gamma')\hookrightarrow E(\Gamma)$ and a surjective push-forward map on vertices $f_V:V(\Gamma)\twoheadrightarrow V(\Gamma')$.
Note that the complement $(\Im f^E)^c$ of the image of $f^E$ consists of the edges that are contracted by $f$.

Fix a subset $S\subseteq E(\Gamma)$. For any pair $(W_1,W_2)$ of disjoint subsets of $V(\Gamma)$, we denote the \emph{$S$-valence} of $(W_1,W_2)$ to be 
$$\val_S(W_1,W_2):=|S\cap E(\Gamma[W_1],\Gamma[W_2])|,$$
where $E(\Gamma[W_1], \Gamma[W_2])$ is the subset of $E(\Gamma)$ consisting of all the edges
of $\Gamma$ that join some vertex of $W_1$ with some vertex of $W_2$. As a special case, the $S$-valence of a subset $W\subseteq V(\Gamma)$ is 
$$\val_S(W):=\val_S(W,W^c),$$
where $W^c:=V(\Gamma)\setminus W$ is the complementary subset of $W$.  For any subset $W\subseteq V(\Gamma)$, we denote the \emph{S-degree} of $W$ to be 
$$
e_S(W):=|S\cap E(\Gamma[W])|.
$$
In the special case $S=E(\Gamma)$, we set 
$$
e_{\Gamma}(W):=e_{E(\Gamma)}(W) \text{ and } \val_{\Gamma}(W_1,W_2):=\val_{E(\Gamma)}(W_1,W_2),
$$
and we call them the $\Gamma$-degree and the $\Gamma$-valence, respectively. When the graph $\Gamma$ is understood, we set 
$$
e(W):=e_{\Gamma}(W) \text{ and } \val(W_1,W_2):=\val_{\Gamma}(W_1,W_2).
$$
Note that, using the identification $V(\Gamma)=V(\Gamma\setminus S)$, we have that 
 \begin{equation}\label{E:val-valS}
e_{\Gamma}(W)=e_{S}(W)+e_{\Gamma\setminus S}(W) \text{ and } \val_{\Gamma}(W_1,W_2)=\val_{S}(W_1,W_2)+\val_{\Gamma\setminus S}(W_1,W_2).
\end{equation}
Given pairwise disjoint subsets $W_1, W_2, W_3\subseteq V(\Gamma)$, we have the following formulas
\begin{equation}\label{add-val}
\begin{sis}
& \val_S(W_1\cup W_2,W_3)=\val_S(W_1,W_3)+\val_S(W_2,W_3),\\
& \val_S(W_1\cup W_2)=\val_S(W_1)+\val_S(W_2)-2\val_S(W_1,W_2),\\
& e_S(W_1\cup W_2)=e_S(W_1)+e_S(W_2)+\val_S(W_1,W_2),\\
& |S|=e_S(W_1)+e_S(W_1^c)+\val_S(W_1).
\end{sis}
\end{equation}

We will denote by $\Div(\Gamma):=\Z^{V(\Gamma)}$ the group of \emph{divisors} on the graph $\Gamma$.
Given $D\in \Div(\Gamma)$, we will set $D_v:=D(v)$ and we will write $D$ as an integral linear combination 
$$
D=\sum_{v\in V(\Gamma)} D_v \cdot v.
$$ 
Given a subset $W\subseteq V(\Gamma)$, we will set 
$D_W:=\sum_{v\in V(\Gamma)} D_v.$
The degree of a divisor $D$ is given by 
$$
\deg(D)=|D|:=D_{V(\Gamma)}=\sum_{v\in V(\Gamma)} D_v.
$$
For any $d\in \Z$, we will denote by $\Div^d(\Gamma)$ the set of all divisors of degree $d$.

The space of divisors on $\Gamma$ is endowed with an endomorphism (called \emph{Laplacian}):  
\begin{equation}\label{Lapla}
\Delta^{\Gamma}(D)_v:=-D_v \val_{\Gamma}(v) +\sum_{v\neq w\in V(\Gamma)} D_w \val_{\Gamma}(v,w).
\end{equation}
We will set $\Delta=\Delta^{\Gamma}$ if the graph $\Gamma$ is understood. 

It turns out that $\Im(\Delta^{\Gamma})\subset \Div^0(\Gamma)$ and that, if $\Gamma$ is connected, then the kernel of $\Delta^{\Gamma}$ consists of the constant divisors, i.e. those divisors $D$ such that $D_v=d$ for some $d\in \Z$. Therefore, if $\Gamma$ is connected, the quotient
\[\Pic^0(\Gamma):=\frac{\Div^0(\Gamma)}{\Im(\Delta^{\Gamma})}\]
is a finite group, called the \emph{Jacobian group} of $\Gamma$ (also known as degree class group, or  sandpile group, or  critical
group of the graph $\Gamma$), see e.g. \cite{BdlHN}, \cite{BN}, \cite{BMS2}. The Kirchhoff's matrix-tree theorem implies that the cardinality of $\Pic(\Gamma)$ is 
equal to the \emph{complexity} $c(\Gamma)$ of $\Gamma$, which is the number of spanning trees of $\Gamma$.

For any $d\in \Z$, the set $\Div^d(\Gamma)$ is a torsor for the group $\Div^0(\Gamma)$.
Therefore, the subgroup $\Im(\Delta^{\Gamma})$ acts on $\Div^d(\Gamma)$ and the quotient 
\begin{equation}\label{lat-class0}
\Pic^d(\Gamma)=\frac{\Div^d(\Gamma)}{\Im(\Delta^{\Gamma})}
\end{equation}
is a torsor for $\Pic^0(\Gamma)$.


\subsection{The poset of divisors on spanning subgraphs}\label{S:poset-C0}

In this subsection we introduce a poset parametrizing divisors on spanning subgraphs of a given graph $\Gamma$.

In order to define this poset, we need to introduce indegree divisors. Given a partial orientation $\O$ of $\Gamma$, i.e. an orientation of a subset $S\subseteq E(G)$ denoted by $\supp(\O)$ and called the support of the orientation $\O$, we denote by  $\D(S,\O)$, or simply $\D(\O)$, the \emph{indegree divisor} of the orientation $\O$, i.e. 
the divisor on $\Gamma$ whose value at a vertex $v$ is the number of oriented edges of $\O$ whose end point is $v$. Note that $\D(\O)$ has total degree $|\supp(\O)|$. 
We denote the collection of all indegree divisors associated to a given $S\subseteq E(\Gamma)$ by  
\begin{equation}\label{E:DS}
\D(S):=\{\D(S, \O)\: : \: \O \text{ is an orientation of the edges of } S\}\subseteq \Div^{|S|}(\Gamma). 
\end{equation}
Given two spanning subgraphs $G\geq G'$ of $\Gamma$, we set 
$$
\begin{sis}
& \D(G-G',\O):=\D(E(G)-E(G'),\O) \text{ for any orientation } \O \text{ of } E(G)-E(G'), \\
 & \D(G-G'):=\D(E(G)-E(G')). 
\end{sis}
$$


\begin{defi}\label{D:posetO}
Let $\Gamma$ be a graph and let $d\in \Z$. The poset of \emph{divisors on spanning subgraphs} of degree $d$ is the set 
$$
\bO^d(\Gamma):=\{(G,D)\: :\: G\in \SS(\Gamma),  D\in \Div^{d-|E(G)^c|}(G)=\Div^{d-|E(G)^c|}(\Gamma)\}, 
$$
where $E(G)^c=E(\Gamma)-E(G)$, endowed with the following partial order
\begin{equation}\label{E:posetO}
(G,D)\geq (G',D') \Leftrightarrow 
 \begin{sis} 
 & G\geq G' \\
 & D\in D'+\D(G-G').
 \end{sis}
 \end{equation}
 
If $\Gamma$ is connected, then the sub-poset of \emph{divisors on connected spanning subgraphs} of degree $d$ is given by 
 $$
 \bO^d_{\con}(\Gamma):=\{(G,D)\in \bO^d(\Gamma) : G \text{ is connected}\}.
 $$
 
 We also set 
  $$
 \bO(\Gamma):=\coprod_{d\in \Z}\bO^d(\Gamma) \: \text{ and }  \bO_{\rm con}(\Gamma):=\coprod_{d\in \Z}\bO_{\rm con}^d(\Gamma),
 $$
 endowed with the disjoint poset structure.  
\end{defi}
Note that there are surjective and order-preserving forgetful maps 
$$
\bO^d(\Gamma) \to \SS(\Gamma) \quad \text{ and } \quad \bO_{\con}^d(\Gamma)\to \SSc(\Gamma).
$$
The poset $\bO^d(\Gamma)$ is graded with respect to the rank function
$$
\begin{aligned}
\rho: \bO^d(\Gamma)  \longrightarrow  \SS(\Gamma) & \longrightarrow \bN, \\
(G, D) \mapsto G & \mapsto |E(G)|. 
\end{aligned}
$$
In particular, $\bO^d(\Gamma)$ has rank equal to $|E(\Gamma)|$. 
Similarly,  $\bO_{\con}^d(\Gamma)$ is ranked with the respect to the restriction of the function $\rho$, and it has rank equal to $b_1(\Gamma)$.

Given $G\in \SS(\Gamma)$ (resp. $G\in \SSc(\Gamma)$), then there is a natural inclusion of posets 
\begin{equation}\label{E:incl-O}
\bO^{d-|E(G)^c|}(G)\hookrightarrow \bO^d(\Gamma) \quad (\text{resp. } \bO_{\con}^{d-|E(G)^c|}(G)\hookrightarrow \bO_{\con}^d(\Gamma)).
\end{equation}

We now study the behavior of the poset $\bO^d(\Gamma)$ under maps of graphs. Given a morphism $f:\Gamma \to \Gamma'$ of graphs, we define the push-forward along $f$ as
\begin{equation}\label{E:f*-O}
\begin{aligned}
f_*: \bO^d(\Gamma) & \longrightarrow \bO^d(\Gamma')\\
(\Gamma\setminus S, D) & \mapsto f_*(\Gamma\setminus S,D):= \left(f_*(\Gamma\setminus S), f_*(D)\right),
\end{aligned}
\end{equation}
where $f_*(\Gamma\setminus S):=\Gamma'\setminus (f^E)^{-1}(S)$ and $f_*(D)$ is defined by 
$$
 f_*(D):=\left\{f_*(D)_v:=D_{f_V^{-1}(v)}+e_{S\cap (\Im f^E)^c}(f_V^{-1}(v)) \right\}_{v\in V(\Gamma')}
$$
Note that this is well-defined since 
$$
|f_*(D)|=|D|+|S\cap (\Im f^E)^c|=d-|S|+|S\cap (\Im f^E)^c|=d-|(f^E)^{-1}(S)|.
$$
Moreover, the map $f$ induces a map of graphs 
$$
\Gamma\setminus (S\cap \Im(f^E)) \to \Gamma'\setminus (f^E)^{-1}(S)=f_*(\Gamma\setminus S)
$$
and $\Gamma\setminus S$ is a spanning subgraph of $\Gamma\setminus (S\cap \Im(f^E))$. 
Therefore, if $\Gamma\setminus S$ is connected, then also $f_*(\Gamma\setminus S)$ is connected, which implies that $f_*$ induces a map 
\begin{equation}\label{E:f*-Ocon}
f_*: \bO_{\con}^d(\Gamma)  \longrightarrow \bO_{\con}^d(\Gamma').
\end{equation}

\begin{prop}\label{P:mapsO}
Le $f:\Gamma \to \Gamma'$ be a map of graphs. The push-forward map $f_*:\bO^d(\Gamma)\to \bO^d(\Gamma')$ is such that:
\begin{enumerate}[(i)]
\item \label{P:mapsO1} it is surjective;
\item \label{P:mapsO2} it is order-preserving, i.e. if $(G_1,D_1)\geq (G_2,D_2)$ then $f_*(G_1,D_1)\geq f_*(G_2,D_2)$;
\item \label{P:mapsO3} it has the upper lifting property, i.e. for any $(G_1',D_1')\geq (G_2',D_2')$ in $\bO^d(\Gamma')$ and any $(G_2,D_2)\in f_*^{-1}(G_2',D_2')$, there exists 
$(G_1,D_1)\in f_*^{-1}(G_1',D_1')$ such that $(G_1,D_1)\geq (G_2,D_2)$. 
\end{enumerate}
The same properties hold true for the restriction $f_*:\bO^d_{\con}(\Gamma)\to \bO^d_{\con}(\Gamma')$.
\end{prop}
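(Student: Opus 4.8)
The plan is to verify the three properties of the push-forward map $f_*$ directly from the explicit formula \eqref{E:f*-O}, reducing first to the case where $f$ is a single edge contraction (since any morphism of graphs is a composition of edge contractions followed by an isomorphism, and isomorphisms trivially preserve all three properties, being order-isomorphisms). So from now on assume $f:\Gamma\to \Gamma'$ contracts a single edge $e$, identifying two vertices $v_1,v_2\in V(\Gamma)$ into a vertex $\bar v\in V(\Gamma')$, with $f^E$ the inclusion $E(\Gamma')=E(\Gamma)\setminus\{e\}\hookrightarrow E(\Gamma)$.

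For surjectivity \eqref{P:mapsO1}: given $(G',D')\in \bO^d(\Gamma')$ with $G'=\Gamma'\setminus S'$, I would distinguish whether $e\in S'$ is asked; since $e\notin E(\Gamma')$, what matters is whether $e$ is ``deleted'' or ``contracted'' in the preimage. Setting $S:=S'$ (so $e\notin S$), the subgraph $G:=\Gamma\setminus S$ has $f_*(\Gamma\setminus S)=\Gamma'\setminus(f^E)^{-1}(S)=\Gamma'\setminus S'=G'$ since $(\Im f^E)^c=\{e\}$ and $e\notin S$. Then I must produce $D\in\Div^{d-|E(G)^c|}(\Gamma)$ with $f_*(D)=D'$; since $e\notin S$ we have $S\cap(\Im f^E)^c=\emptyset$, so $f_*(D)_v=D_{f_V^{-1}(v)}$, and one simply splits the value $D'_{\bar v}$ arbitrarily between $v_1$ and $v_2$ (e.g. $D_{v_1}=D'_{\bar v}$, $D_{v_2}=0$) and sets $D_v=D'_v$ for the other vertices; the degree bookkeeping matches because $|E(G)^c|=|S|=|S'|=|E(G')^c|$ here. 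This shows $f_*$ is surjective. (The connected case is immediate: if $G'$ is connected then the $G$ just built is connected, as contracting $e$ from a graph whose contraction is connected and which contains $e$... — more carefully, $\Gamma\setminus S$ maps onto $\Gamma'\setminus S'$ as a graph map contracting only $e$, and a graph surjecting onto a connected graph via an edge-contraction is connected.)

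For order-preservation \eqref{P:mapsO2}: given $(G_1,D_1)\geq(G_2,D_2)$ in $\bO^d(\Gamma)$, so $G_1\geq G_2$ and $D_1\in D_2+\D(G_1-G_2)$, I must show $f_*G_1\geq f_*G_2$ and $f_*(D_1)\in f_*(D_2)+\D(f_*G_1-f_*G_2)$. Writing $G_i=\Gamma\setminus S_i$ with $S_1\subseteq S_2$, we get $(f^E)^{-1}(S_1)\subseteq(f^E)^{-1}(S_2)$, hence $f_*G_1\geq f_*G_2$. For the divisor condition, fix an orientation $\O$ of $E(G_1)-E(G_2)=S_2\setminus S_1$ with $D_1=D_2+\D(\O)$; I would push $\O$ forward to an orientation $f_*\O$ of $(f^E)^{-1}(S_2)\setminus(f^E)^{-1}(S_1)=(S_2\setminus S_1)\setminus\{e\}$ by simply forgetting the orientation of $e$ if $e\in S_2\setminus S_1$, and keeping it otherwise; then a direct vertex-by-vertex computation — carefully handling the correction terms $e_{S_i\cap(\Im f^E)^c}$ in the definition of $f_*$ — shows $f_*(D_1)=f_*(D_2)+\D(f_*\O)$. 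The bookkeeping point is that the difference in correction terms, $e_{S_2\cap\{e\}}-e_{S_1\cap\{e\}}$ evaluated on $f_V^{-1}(\bar v)=\{v_1,v_2\}$, exactly accounts for the indegree contributed by $e$ at $v_1,v_2$ that is ``lost'' when $e$ is contracted rather than oriented, so the two sides agree.

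For the upper lifting property \eqref{P:mapsO3} — which I expect to be the main obstacle — given $(G_1',D_1')\geq(G_2',D_2')$ in $\bO^d(\Gamma')$ and $(G_2,D_2)\in f_*^{-1}(G_2',D_2')$, I need to construct $(G_1,D_1)\in f_*^{-1}(G_1',D_1')$ with $(G_1,D_1)\geq(G_2,D_2)$. Write $G_2=\Gamma\setminus S_2$, $G_i'=\Gamma'\setminus S_i'$ with $S_1'\subseteq S_2'$; the condition $f_*G_2=G_2'$ forces $(f^E)^{-1}(S_2)=S_2'$, i.e. $S_2\cap E(\Gamma')=S_2'$, so $S_2$ is either $S_2'$ or $S_2'\cup\{e\}$. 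I will set $S_1:=S_1'\cup(S_2\cap\{e\})$, so that $S_1\subseteq S_2$ (giving $G_1\geq G_2$) and $(f^E)^{-1}(S_1)=S_1'$ (giving $f_*G_1=G_1'$). It remains to choose $D_1\in\Div(G_1)$ lifting $D_1'$ with $D_1\in D_2+\D(G_1-G_2)$; here $E(G_1)-E(G_2)=S_2\setminus S_1=S_2'\setminus S_1'$ (note $e$ has been absorbed into $S_1$ whenever it was in $S_2$). Fix an orientation $\O'$ of $E(G_1')-E(G_2')=S_2'\setminus S_1'$ realizing $D_1'=D_2'+\D(\O')$; since $S_2'\setminus S_1'$ contains no contracted edge, $\O'$ lifts verbatim to an orientation $\O$ of $S_2\setminus S_1$ on $\Gamma$, and I define $D_1:=D_2+\D(\O)$. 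By construction $(G_1,D_1)\geq(G_2,D_2)$. The last thing to check is $f_*(D_1,G_1)=(D_1',G_1')$, i.e. $f_*(D_1)=D_1'$: compute $f_*(D_1)=f_*(D_2+\D(\O))$; using additivity of the vertex-formula in $D$ and the fact that the correction term for $S_1$ may differ from that for $S_2$ by the indegree of $e$ at $\{v_1,v_2\}$ (precisely when $e\in S_2$), a short calculation gives $f_*(D_1)=f_*(D_2)+\D(\O')+(\text{correction}) = D_2'+\D(\O')=D_1'$, where the correction term vanishes against the change in the $e_{S_i\cap(\Im f^E)^c}$ contributions. The bulk of the work — and where care is required — is precisely matching these correction terms, but in each of the two cases ($e\in S_2$ or not) it is a finite check. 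The connected-case assertions follow because every $G$ produced in the constructions above is connected whenever the relevant $G'$ is, by the remark (already in the text preceding Proposition \ref{P:mapsO}) that the push-forward of a connected spanning subgraph is connected and, conversely, an edge-contraction with connected target and with the contracted edge present has connected source.
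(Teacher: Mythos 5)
Your proof is correct and, in substance, uses the same constructions as the paper: for surjectivity you lift $S'$ inside $\Im f^E$ and spread $D'$ over a section of $f_V$ (the paper places $D'$ on a set of representatives $W$ and zero elsewhere, which is exactly your splitting of $D'_{\bar v}$); for order-preservation you push the orientation forward; and for the upper lifting property you take $S_1=S_2\setminus f^E\bigl(E(G_1')\setminus E(G_2')\bigr)$, lift the orientation verbatim and set $D_1=D_2+\D(\O)$, which is precisely the paper's choice, the whole content being the bookkeeping of the correction term $e_{S\cap(\Im f^E)^c}$ in \eqref{E:f*-O}. The one genuine difference is organizational: you first reduce to a single edge contraction, while the paper argues directly for an arbitrary morphism. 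That reduction is legitimate but costs two small checks you did not record: (a) functoriality of the push-forward, $(f_2\circ f_1)_*=(f_2)_*\circ (f_1)_*$, without which "surjective, order-preserving and upper-lifting compose" cannot be invoked; this is a short computation with the correction terms, of roughly the same size as the direct argument it replaces; and (b) the case where the contracted edge is a loop, which the paper's notion of morphism allows (then $f_V$ is a bijection and only the correction term moves) and which your "identify $v_1,v_2$" setup silently excludes. Both are routine, so your argument is complete once they are added; the paper's direct treatment simply bypasses them at the price of slightly heavier notation. Your justification of the connected case (an edge contraction with the contracted edge present preserves connectedness in both directions, and $G_1\supseteq G_2$ with $G_2$ connected forces $G_1$ connected) matches the remark preceding the proposition.
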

\begin{proof}
Part \eqref{P:mapsO1}: let $(\Gamma'\setminus S',D')\in \bO^d(\Gamma')$. Consider the subset $S:=f^E(S')\subseteq E(\Gamma)$. Pick a subset $W\subseteq V(\Gamma)$ such that 
$f_V$ induces a bijection between $W$ and $V(\Gamma')$ and define $D\in \Div(\Gamma)$ by 
$$
D_v:=
\begin{cases}
D'_{f_V(v)} & \text{ if } v\in W, \\
0 & \text{ otherwise.}
\end{cases}
$$
Notice that $(\Gamma\setminus S,D)\in \bO^d(\Gamma)$ since $|D|=|D'|=d-|S'|=d-|S|$, and we have that  $f_*(\Gamma\setminus S,D)=(\Gamma'\setminus S',D')$ by construction. 

Part \eqref{P:mapsO2}: let $(G_1,D_1)\geq (G_2,D_2)$ in $\bO^d(\Gamma)$. By definition of the poset, we have that 
$$D_1=D_2+\D(\O) \text{ for some orientation $\O$ of } G_1-G_2.$$
The orientation $\O$ induces an orientation $\O'$ of $f_*(G_1)-f_*(G_2)$ with respect to which we have that 
$$f_*(D_1)=f_*(D_2)+\D(\O').$$ 
Hence we get that 
$$f_*(G_1,D_1)=(f_*(G_1), f_*(D_1))\geq (f_*(G_2), f_*(D_2))=f_*(G_2,D_2).$$ 

Part \eqref{P:mapsO3}: let $(G_1',D_1')\geq (G_2',D_2')$ in $\bO^d(\Gamma')$. By definition of the poset, we have that 
$$D_1'=D_2'+\D(\O') \text{ for some orientation $\O'$ of } G_1'-G_2'.$$
Let $(G_2,D_2)\in f_*^{-1}(G_2',D_2')$. Define $S_1:=E(G_2)-f^E(E(G_2')-E(G_1'))$ and set $G_1:=\Gamma\setminus S_1$. 
Since $f^E$ induces a bijection between $E(G_2')-E(G_1')$ and $E(G_2)-E(G_1)$, the orientation $\O'$ on $G_1'-G_2'$ induces an orientation $\O$ on $G_1-G_2$. Define $D_1:=D_2+\D(\O)$. By construction, we have that $(G_1,D_1)\geq (G_2,D_2)$
and 
$$
f_*(G_1,D_1)=(\Gamma'\setminus (f^E)^{-1}(S_1),f_*(D_2+\D(\O)))=(G_1', D_2'+\D(\O')=D_1'),
$$
which concludes the proof.
\end{proof}

In what follows, we will be interested in the following types of subsets of $\bO_{\con}^d(\Gamma)$.

\begin{defi}\label{D:upper-sN}
Let $\Gamma$ be a connected graph and let $d\in \Z$. Let $\cP$ be an upper subset of  $\bO_{\con}^d(\Gamma)$, i.e. a subset such that 
$$(G_1,D_1)\geq (G_2,D_2) \text{ and } (G_2,D_2)\in \cP \Rightarrow (G_1,D_1)\in \cP.$$
We say that $\cP$ is 
\begin{enumerate}
\item an \emph{upper subset of sN-type} of degree $d$ on $\Gamma$ if, for any $G\in \SSc(\Gamma)$, the composition
$$
\cP(G):=\{D\in \Div^{d-|E(G)^c|}(G)\: : (G,D)\in \cP\}\subset \Div^{d-|E(G)^c|}(G)\twoheadrightarrow \Pic^{d-|E(G)^c|}(G)
$$
is a bijection.
\item an \emph{upper subset of numerical sN-type} of degree $d$ on $\Gamma$ if, for any $G\in \SSc(\Gamma)$, we have that 
$$
|\cP(G)|=c(G). 
$$
\item an \emph{upper subset of N-type} of degree $d$ on $\Gamma$ if the composition
$$
\cP(\Gamma):=\{D\in \Div^{d}(\Gamma)\: : (\Gamma,D)\in \cP\}\subset \Div^{d}(\Gamma)\twoheadrightarrow \Pic^{d}(\Gamma)
$$
is a bijection.
\item an \emph{upper subset of numerical N-type} of degree $d$ on $\Gamma$ if we have that 
$$
|\cP(\Gamma)|=c(\Gamma). 
$$
\end{enumerate}
\end{defi}
An upper subset $\cP$ of $\bO_{\con}^d(\Gamma)$ of sN-type is the same a degree-$d$ stability condition on $\Gamma$ in the sense of \cite[Def. 4.3]{PT2}.

Note that, since the cardinality of $\Pic^e(G)$ is equal to the complexity $c(G)$ of $G$ for any connected graph and any $e\in \Z$, we have the following implications
$$
\xymatrix{
\left\{\text{Upper subsets of sN-type}\right\} \ar@{=>}[r] \ar@{=>}[d]  & \left\{\text{Upper subsets of N-type}\right\} \ar@{=>}[d] \\
\left\{\text{Upper subsets of numerical sN-type}\right\} \ar@{=>}[r] & \left\{\text{Upper subsets of numerical N-type}\right\}.
}
$$
A few words about the terminology: the term N-type comes from the relation between fine compactified Jacobians of N-type and of N\'eron-type  (see Definition \ref{D:Neron} and Proposition \ref{P:Neron-N}), the term sN-type means \emph{strongly} N-type, the term \emph{numerical} N-type (resp. sN-type) refers to the fact that it is a numerical version of the property of being  N-type (resp. sN-type).


\subsection{General V-stability conditions and V-upper subsets}

In this subsection, we introduce general V-stability conditions on graphs and their associated V-subset.

Let $\Gamma$ be a connected graph. A subset $W\subseteq V(\Gamma)$ is said to be \emph{connected} if the induced subgraph $\Gamma[W]$ is connected, and it is said to be 
\emph{biconnected} if both $W$ and $W^c$ are connected. A subset $W\subseteq V(\Gamma)$ is said to be non-trivial if $W\neq \emptyset, V(\Gamma)$. Note that a non-trivial subset 
$W\subseteq V(\Gamma)$ is biconnected if and only if $E(\Gamma[W],\Gamma[W^c])$ is a bond, i.e. a minimal cut, of $\Gamma$. 

\begin{defi}\label{D:Vstab}
A \textbf{general\footnote{The $V$-stability conditions that we define in this paper are called general since they are the "general" case of a broader notion of $V$-stability conditions, that we will define in \cite{FPV1} and \cite{FPV2}.} $V$-stability condition} (or simply a general V-stability) of degree $d\in \Z$ on $\Gamma$ is an assignment of integers 
$$
\frak n=\{\frak n_W \: : \: W\:  \text{ is biconnected and non-trivial}\}
$$
satisfying the following properties:
\begin{enumerate}
\item for any $W\subseteq V(\Gamma)$ biconnected and non-trivial, we have 
\begin{equation}\label{E:Vstab1}
\frak n_W+\frak n_{W^c}=d+1-\val(W);
\end{equation}
\item  for any disjoint $W_1,W_2\subseteq V(\Gamma)$ such that $W_1,W_2$ and $W_1\cup W_2$ are biconnected and non-trivial, we have that 
\begin{equation}\label{E:Vstab2}
-1\leq \frak n_{W_1\cup W_2}-\frak n_{W_1}-\frak n_{W_2}-\val(W_1,W_2)\leq 0.
\end{equation}
\end{enumerate}
We set $|\frak n|:=d$ and we call it the degree of the general V-stability $\frak n$. 
\end{defi}

A source of general V-stability comes from general numerical polarizations on the graph.

\begin{example}\label{Ex:phi}
Let $\phi$ be a  \emph{general numerical polarization} on $\Gamma$ of degree $d\in \Z$, i.e. a real divisor $\phi\in \Div(\Gamma)_{\R}:=\Div(\Gamma)\otimes \R=\R^{V(\Gamma)}$ whose degree $|\phi|$ is equal to $d$, such that 
\begin{equation}\label{E:gen-q}
\phi_W-\frac{\val(W)}{2}=\sum_{v\in W} \phi_v-\frac{\val(W)}{2}\not\in \Z \: \text{ for every  biconnected non-trivial  } W\subset V(\Gamma).
\end{equation}
 Then the collection 
 \begin{equation}\label{E:V-q}
 \frak n(\phi):=\{\frak n(\phi)_W:=\lceil \phi_W-\frac{\val(W)}{2} \rceil  \: \text{ for any  biconnected and non-trivial } W\}
 \end{equation}
 is a general V-stability condition of degree $d$, called the general V-stability condition associated to $\phi$. A general V-stability condition $\frak n$ is called \emph{classical} if it is of the form $\frak n=\frak n(\phi)$ for some 
 general numerical polarization $\phi$.
 
This follows from the two identities 
$$
\begin{sis}
& \left(\phi_W-\frac{\val(W)}{2} \right)+\left(\phi_{W^c}-\frac{\val(W^c)}{2} \right)=d-\val(W),\\
& \left(\phi_{W_1\cup W_2}-\frac{\val(W_1\cup W_2)}{2} \right)-\left(\phi_{W_1}-\frac{\val(W_1)}{2} \right)-\left(\phi_{W_2}-\frac{\val(W_2)}{2} \right)-\val(W_1,W_2)=0,\\
\end{sis}
$$ 
by passing to the upper integral parts and using condition \eqref{E:gen-q}.

Consider the arrangement of hyperplanes in the affine space $\Div^d(\Gamma)_{\R}$ of numerical polarizations of degree $d$ (see \cite[\S 7]{OS} and \cite[\S 3]{MRV}):
\begin{equation}\label{E:arr-hyper}
\cA_{\Gamma}^d:=\left\{\phi_W-\frac{\val(W)}{2}=n\right\}_{W\subset V(\Gamma), n\in \Z}
\end{equation}
where $W$ varies among the biconnected non-trivial subsets of $V(\Gamma)$. We get an induced wall and chamber decomposition of $\Div^d(\Gamma)_{\R}$ such that 
\begin{itemize}
\item $\phi\in \Div^d(\Gamma)_{\R}$ is a general numerical polarization if and only if $\phi$ does not lie on any wall;
\item two general numerical polarizations $\phi, \phi'$ belong to the same chamber if and only if  $\frak n(\phi)=\frak n(\phi')$.
\end{itemize}
In other words, the set of chambers induced by $\cA_{\Gamma}^d$ is the set of classical general V-stability conditions of degree $d$.
\end{example}

However, not every general V-stability condition comes from a general numerical polarization, as the following Example (which is a re-adaptation of \cite[Ex. 6.15]{PT1} to our setting) shows.

\begin{example}\label{Ex:msa-not}
Let $\Gamma$ be the graph with vertices $V(\Gamma)=\{1,\ldots, 6\}$ and edge set $E(\Gamma)=\{\ov{13},\ov{35}, \ov{24}, \ov{45}, \ov{16}, \ov{26}, \ov{14}, \ov{23}\}$, 
where $\ov{ij}$ denotes an edge joining $i$ and $j$. Note that $\Gamma$ is a connected graph of genus $g(\Gamma)=3$. 
Consider the V-stability condition $\frak n$ of degree $4$ on $\Gamma$ given by
$$
\frak n_W-g(\Gamma[W]):=
\begin{cases}
0 & \text{ if } |W|\leq 2 \text{ or }  W=\{1,4,5\}, \{2,3,5\}, \{3,4,5\}, \{2,4,6\}, \{1,3,6\},\\
1 & \text{ if } |W|\geq 4 \text{ or }  W=\{1,3,5\}, \{2,4,5\}, \{2,3,6\}, \{1,4,6\}, \{1,2,6\}.\\contradiction
\end{cases}
$$
We claim that there exists no general numerical polarization $\phi$ of degree $4$ on $\Gamma$ such that $\frak n(\phi)=\frak n$.

Indeed, by contradiction, assume that there exists such a numerical polarization. Then, on the one hand, we have that 
$$
\begin{sis}
0=\frak n_{145}=\lceil \phi_{145}-\frac{\val(145)}{2}\rceil=\lceil \phi_{145}-\frac{4}{2}\rceil\\
1=\frak n_{135}=\lceil\phi_{135}-\frac{\val(135)}{2}\rceil= \lceil\phi_{135}-\frac{4}{2}\rceil
\end{sis}
\Rightarrow \phi_3>\phi_4,
$$
and, on the other hand, we have that 
$$
\begin{sis}
0=\frak n_{235}=\lceil \phi_{235}-\frac{\val(235)}{2}\rceil=\lceil \phi_{235}-\frac{4}{2}\rceil\\
1=\frak n_{245}=\lceil\phi_{245}-\frac{\val(245)}{2}\rceil= \lceil\phi_{245}-\frac{4}{2}\rceil
\end{sis}
\Rightarrow \phi_4>\phi_3,
$$
which is the desired contradiction. 
\end{example}

A general V-stability on a connected graph  induces a general V-stability on each connected spanning subgraph, as we show in the following

\begin{lemdef}\label{LD:V-subgr}
Let $\frak n$ be a general V-stability of degree $d$ on a connected graph $\Gamma$ and let $\Gamma\setminus S$ be a connected spanning subgraph of $\Gamma$. 
The general V-stability induced by $\frak n$ on $\Gamma\setminus S$ is given by 
\begin{equation}\label{E:V-subgr}
 \frak n(\Gamma\setminus S):=\{\frak n(\Gamma\setminus S)_W:=\frak n_W-e_S(W)  \: : W\:  \text{ is biconnected and non-trivial in } \Gamma\setminus S\}.
 \end{equation}
We have that $| \frak n(\Gamma\setminus S)|=|\frak n|-|S|$. 
\end{lemdef}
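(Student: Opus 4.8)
\textbf{Proof plan for Lemma-Definition \ref{LD:V-subgr}.}

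The plan is to verify directly that the collection $\frak n(\Gamma\setminus S)$ defined by $\frak n(\Gamma\setminus S)_W=\frak n_W-e_S(W)$ satisfies the two axioms \eqref{E:Vstab1} and \eqref{E:Vstab2} of a general V-stability condition on the graph $\Gamma\setminus S$, and then to compute its degree. The one genuine subtlety to keep in mind throughout is that a subset $W\subseteq V(\Gamma)=V(\Gamma\setminus S)$ which is biconnected and non-trivial in $\Gamma\setminus S$ is automatically biconnected and non-trivial in $\Gamma$ (adding edges back can only make $\Gamma[W]$ and $\Gamma[W^c]$ "more connected"), so $\frak n_W$ is defined whenever the left-hand side of \eqref{E:V-subgr} is; this is what makes the definition sensible.

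First I would check axiom \eqref{E:Vstab1}. Fix $W$ biconnected and non-trivial in $\Gamma\setminus S$. Using the relations \eqref{E:val-valS} and the last identity in \eqref{add-val} applied to $S$, namely $|S|=e_S(W)+e_S(W^c)+\val_S(W)$, together with $\val_{\Gamma\setminus S}(W)=\val_\Gamma(W)-\val_S(W)$, we compute
\begin{align*}
\frak n(\Gamma\setminus S)_W+\frak n(\Gamma\setminus S)_{W^c}
&=\bigl(\frak n_W-e_S(W)\bigr)+\bigl(\frak n_{W^c}-e_S(W^c)\bigr)\\
&=\bigl(d+1-\val_\Gamma(W)\bigr)-\bigl(|S|-\val_S(W)\bigr)\\
&=(d-|S|)+1-\val_{\Gamma\setminus S}(W),
\end{align*}
which is exactly \eqref{E:Vstab1} for $\Gamma\setminus S$ with degree $d-|S|$, and this same computation already pins down the degree as $|\frak n(\Gamma\setminus S)|=|\frak n|-|S|$.

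Next I would check axiom \eqref{E:Vstab2}. Let $W_1,W_2$ be disjoint with $W_1,W_2,W_1\cup W_2$ all biconnected and non-trivial in $\Gamma\setminus S$; then the same holds in $\Gamma$, so \eqref{E:Vstab2} is available for $\frak n$. The third identity in \eqref{add-val} gives $e_S(W_1\cup W_2)=e_S(W_1)+e_S(W_2)+\val_S(W_1,W_2)$, and \eqref{E:val-valS} gives $\val_{\Gamma\setminus S}(W_1,W_2)=\val_\Gamma(W_1,W_2)-\val_S(W_1,W_2)$. Hence
\begin{align*}
&\frak n(\Gamma\setminus S)_{W_1\cup W_2}-\frak n(\Gamma\setminus S)_{W_1}-\frak n(\Gamma\setminus S)_{W_2}-\val_{\Gamma\setminus S}(W_1,W_2)\\
&=\bigl(\frak n_{W_1\cup W_2}-\frak n_{W_1}-\frak n_{W_2}\bigr)-\bigl(e_S(W_1\cup W_2)-e_S(W_1)-e_S(W_2)\bigr)-\val_{\Gamma\setminus S}(W_1,W_2)\\
&=\bigl(\frak n_{W_1\cup W_2}-\frak n_{W_1}-\frak n_{W_2}\bigr)-\val_S(W_1,W_2)-\bigl(\val_\Gamma(W_1,W_2)-\val_S(W_1,W_2)\bigr)\\
&=\frak n_{W_1\cup W_2}-\frak n_{W_1}-\frak n_{W_2}-\val_\Gamma(W_1,W_2),
\end{align*}
so the quantity being bounded is literally unchanged by passing from $\Gamma$ to $\Gamma\setminus S$, and the inequality $-1\leq\cdot\leq 0$ is inherited verbatim from \eqref{E:Vstab2} for $\frak n$. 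This completes the verification; the main (and only) obstacle is the bookkeeping with the additive formulas in \eqref{add-val} and the splitting \eqref{E:val-valS}, which is routine once one notices that $e_S$ interacts with unions exactly the way $e_\Gamma$ does.
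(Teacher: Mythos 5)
Your proposal is correct and follows essentially the same route as the paper's own proof: observe that biconnectedness in $\Gamma\setminus S$ implies biconnectedness in $\Gamma$ (so the definition makes sense), then verify \eqref{E:Vstab1} and \eqref{E:Vstab2} by the same bookkeeping with \eqref{E:val-valS} and \eqref{add-val}, reducing the second axiom to the identical expression for $\frak n$ on $\Gamma$. Nothing is missing.
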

\begin{proof}
First of all, observe that if $W\subset V(\Gamma\setminus S)=V(\Gamma)$ is biconnected  in $\Gamma\setminus S$ then it is also biconnected in $\Gamma$; hence the definition of  
$ \frak n(\Gamma\setminus S)$ makes sense. We now check the two conditions for $ \frak n(\Gamma\setminus S)$ to be a V-stability. 

First of all, using that $\frak n$ satisfies \eqref{E:Vstab1} with $d=|\frak n|$ and the relations \eqref{E:val-valS} and \eqref{add-val}, 
we have that 
$$\frak n(\Gamma\setminus S)_W+\frak n(\Gamma\setminus S)_{W^c}=\frak n_W-e_S(W)+\frak n_{W^c}-e_S(W^c)=d+1-\val_{\Gamma}(W)-e_S(W)-e_S(W^c)=$$
$$=d+1-\val_{\Gamma}(W)-|S|+\val_S(W)= d+1-|S|-\val_{\Gamma\setminus S}(W).$$ 

Second, using again  the relations \eqref{E:val-valS} and \eqref{add-val}, we have that 
$$
\frak n(\Gamma\setminus S)_{W_1\cup W_2}-\frak n(\Gamma\setminus S)_{W_1}- \frak n(\Gamma\setminus S)_{W_2}-\val_{\Gamma\setminus S}(W_1,W_2)=
$$
$$
=\frak n_{W_1\cup W_2}-e_S(W_1\cup W_2)-\frak n_{W_1}+e_S(W_1)- \frak n_{W_2}+e_S(W_2)-\val_{\Gamma\setminus S}(W_1,W_2)=
$$
$$
=\frak n_{W_1\cup W_2}-\val_S(W_1,W_2)-\frak n_{W_1}- \frak n_{W_2}-\val_{\Gamma\setminus S}(W_1,W_2)=
$$
$$
=\frak n_{W_1\cup W_2}-\frak n_{W_1}- \frak n_{W_2}-\val_{\Gamma}(W_1,W_2)
$$
and we conclude by the fact that $\frak n$ satisfies \eqref{E:Vstab2}. 
\end{proof}

We now define the push-forward of a general V-stability along a morphism of graphs.

\begin{lemdef}\label{LD:V-func}
Let $\frak n$ be a general V-stability of degree $d$ on a connected graph $\Gamma$. Let $f:\Gamma\to \Gamma'$ be a morphism of graphs and 
denote by $f_V:V(\Gamma)\twoheadrightarrow V(\Gamma')$ the induced surjection on vertices. 
The \emph{push-forward} of $\frak n$ along $f$ is the general V-stability on $\Gamma'$ given by 
\begin{equation}\label{E:V-func}
f_*( \frak n):=\{f_*(\frak n)_W:=\frak n_{f_V^{-1}(W) }  \: : W\subset V(\Gamma') \:  \text{  biconnected and non-trivial}\}.
 \end{equation}
We have that $| f_*(\frak n)|=|\frak n|$. 
\end{lemdef}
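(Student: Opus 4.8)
The plan is to verify directly the three defining properties of a general V-stability condition for the collection $f_*(\frak n)$ on $\Gamma'$, namely that the $\frak n_{f_V^{-1}(W)}$ are well-defined integers indexed by biconnected non-trivial subsets of $V(\Gamma')$, that they satisfy the balancing relation \eqref{E:Vstab1}, and that they satisfy the inequality \eqref{E:Vstab2}; then the degree identity $|f_*(\frak n)|=|\frak n|$ will drop out of the balancing relation. The key preliminary observation, which I would establish first, is that $f_V$ transports biconnectedness: if $W\subset V(\Gamma')$ is biconnected and non-trivial, then $f_V^{-1}(W)\subset V(\Gamma)$ is biconnected and non-trivial in $\Gamma$. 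This is because $\Gamma[f_V^{-1}(W)]$ maps onto $\Gamma'[W]$ via the contraction $f$ (with connected fibers, since contracting edges within a vertex-fiber keeps it connected), so connectedness of $\Gamma'[W]$ pulls back to connectedness of $\Gamma[f_V^{-1}(W)]$; the same argument applies to the complement, using that $f_V^{-1}(W)^c=f_V^{-1}(W^c)$; and non-triviality is immediate from surjectivity of $f_V$. Hence every index appearing in \eqref{E:V-func} is a legitimate index for $\frak n$, and $f_*(\frak n)_W$ is a well-defined integer.

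Next I would check \eqref{E:Vstab1}. Write $\wt W:=f_V^{-1}(W)$, so $\wt W^c=f_V^{-1}(W^c)$. Applying \eqref{E:Vstab1} for $\frak n$ on $\Gamma$ to the biconnected non-trivial subset $\wt W$ gives
\begin{equation*}
f_*(\frak n)_W+f_*(\frak n)_{W^c}=\frak n_{\wt W}+\frak n_{\wt W^c}=d+1-\val_{\Gamma}(\wt W).
\end{equation*}
So it remains to identify $\val_{\Gamma}(f_V^{-1}(W))$ with $\val_{\Gamma'}(W)$. An edge of $\Gamma$ joining $f_V^{-1}(W)$ to $f_V^{-1}(W^c)$ is necessarily not contracted by $f$ (its two endpoints lie in different fibers), so it corresponds under $f^E$ to an edge of $\Gamma'$ joining $W$ to $W^c$; conversely every edge of $\Gamma'$ joining $W$ to $W^c$ lifts uniquely via $f^E$ to such an edge of $\Gamma$. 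This bijection gives $\val_{\Gamma}(f_V^{-1}(W))=\val_{\Gamma'}(W)$, and \eqref{E:Vstab1} for $f_*(\frak n)$ follows with the same $d$; taking $W$ to be any biconnected non-trivial subset then yields $|f_*(\frak n)|=d=|\frak n|$.

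For \eqref{E:Vstab2}, suppose $W_1,W_2\subset V(\Gamma')$ are disjoint with $W_1,W_2,W_1\cup W_2$ all biconnected and non-trivial. Then $\wt W_i:=f_V^{-1}(W_i)$ are disjoint, and by the first observation $\wt W_1,\wt W_2$ and $\wt W_1\cup \wt W_2=f_V^{-1}(W_1\cup W_2)$ are all biconnected and non-trivial in $\Gamma$. Applying \eqref{E:Vstab2} for $\frak n$ to $\wt W_1,\wt W_2$:
\begin{equation*}
-1\leq \frak n_{\wt W_1\cup \wt W_2}-\frak n_{\wt W_1}-\frak n_{\wt W_2}-\val_{\Gamma}(\wt W_1,\wt W_2)\leq 0.
\end{equation*}
By definition the first three terms equal $f_*(\frak n)_{W_1\cup W_2}-f_*(\frak n)_{W_1}-f_*(\frak n)_{W_2}$, and the same "uncontracted-edge" bijection as above (an edge between the two fibers is not contracted, and $f^E$ matches edges of $\Gamma$ between $f_V^{-1}(W_1)$ and $f_V^{-1}(W_2)$ bijectively with edges of $\Gamma'$ between $W_1$ and $W_2$) shows $\val_{\Gamma}(\wt W_1,\wt W_2)=\val_{\Gamma'}(W_1,W_2)$. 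Substituting gives exactly \eqref{E:Vstab2} for $f_*(\frak n)$, completing the proof. The only point requiring genuine (though still routine) care is the stability of biconnectedness under $f_V^{-1}$ — in particular that the fibers of the contraction are connected, so that connectedness of an induced subgraph downstairs lifts — and I expect that to be the main, if minor, obstacle; the valence identities are then straightforward bookkeeping with the injectivity of $f^E$ and the fact that cross-edges are never contracted.
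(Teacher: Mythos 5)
Your proposal is correct and follows essentially the same route as the paper: check that $f_V^{-1}(W)$ is biconnected and non-trivial so the definition makes sense, then deduce \eqref{E:Vstab1} and \eqref{E:Vstab2} from $f_V^{-1}(W^c)=f_V^{-1}(W)^c$ and the identity $\val_{\Gamma'}(W_1,W_2)=\val_{\Gamma}(f_V^{-1}(W_1),f_V^{-1}(W_2))$. Your write-up merely spells out the connected-fiber and uncontracted-edge arguments that the paper leaves implicit.
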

\begin{proof}
First of all, if $W\subset V(\Gamma')$ is biconnected and non-trivial, then also $f_V^{-1}(W)\subset V(\Gamma)$ is biconnected and non-trivial; hence the definition of $f_*(\frak n)$ make sense.

The fact that $f_*\frak n$ is a general V-stability on $\Gamma'$ of degree equal to $|\frak n|$ follows from the fact that $f_V^{-1}(W^c)=f_V^{-1}(W)^c$ and $\val_{\Gamma'}(W_1,W_2)=\val_{\Gamma}(f_V^{-1}(W_1), f_V^{-1}(W_2))$, for any $W\subset V(\Gamma')$ and any pairwise disjoint $W_1,W_2\subset V(\Gamma')$. 
\end{proof}

Each general V-stability condition of degree $d$ on $\Gamma$ determines a subset of $\bO^d(\Gamma)$. 


\begin{defi}\label{D:stabV}
Let $\frak n$ be a general V-stability condition of degree $d$ on a graph $\Gamma$. 
The \textbf{V-subset} associated to $\frak n$ is:
$$\cP_{\frak n}:=\{(\Gamma\setminus S,D)\in \bO^d(\Gamma)\: : \: D_W+e_S(W) \geq \frak n_W \: \text{ for any non-trivial biconnected  } W\subset V(\Gamma)\}.$$
 The elements $(G, D)$ are called \emph{stable} with respect to $\frak n$. For any $G\in \SS(\Gamma)$, we set 
 $$
 \cP_{\frak n}(G)=\{D\in \Div^{d-|E(G)^c|}(\Gamma)\: : (G,D)\in \cP_{\frak n}\}. 
 $$
\end{defi}

We make some comments on the above Definition.

\begin{remark}\label{sym-inequ}
Let $D\in  \cP_{\frak n}(G)$ and $W$ a non-trivial biconnected subset of $V(\Gamma)$.
By applying Definition \ref{D:stabV} to $W^c\subset V(\Gamma)$ and using
\eqref{E:Vstab1}, we get that
\begin{equation*}
d-|S|-D_W+e_S(W^c)=D_{W^c}+e_S(W^c) \geq \frak n_{W^c}=d+1-\frak n_W-\val(W),
\end{equation*}
which, using \eqref{add-val}, is equivalent to 
\begin{equation}\label{E:stabV-upp}
D_W+e_S(W)\leq \frak n_W-1+\val_{\Gamma\setminus S}(W).
\end{equation} 
\end{remark}

\begin{remark}\label{R:connW}
Let $W$ a non-trivial connected subset of $V(\Gamma)$ and denote by 
$$\Gamma[W^c]=\Gamma[Z_1]\coprod \ldots \coprod \Gamma[Z_k]$$
 be the decomposition of $\Gamma[W^c]$ into connected components 
(so that $Z_i$ is non-trivial and biconnected for every $1\leq i \leq k$). 

If $D\in \cP_{\frak n}(\Gamma\setminus S)$, then we have that 
\begin{equation}\label{E:connW}
\begin{sis}
& \sum_{i=1}^k \frak n_{Z_i}\leq D_{W^c}+e_S(W^c)\leq \sum_{i=1}^k \frak n_{Z_i} -k +\val_{\Gamma\setminus S}(W^c),\\
& d-  \sum_{i=1}^k \frak n_{Z_i}+k -\val_{\Gamma}(W)\leq D_W+e_S(W)\leq d-\val_S(W)-\sum_{i=1}^k \frak n_{Z_i}.
\end{sis}
\end{equation}

Indeed, the first inequality in \eqref{E:connW} follows by summing the $k$-inequalities
$$
\frak n_{Z_i}\leq D_{Z_i}+e_S(Z_i)\leq \frak n_{Z_i}-1+\val_{\Gamma\setminus S}(Z_i).
$$

The second inequality is deduced from the first one using that (by \eqref{add-val})
$$
D_W+e_S(W)=d-|S|-D_{W^c}+e_S(W)=d-(D_{W^c}+e_S(W^c))-\val_S(W).
$$

\end{remark}

The V-subsets behaves well with respect to the push-forward on the poset of divisors on spanning subgraphs in \eqref{E:f*-O} and the push-forward of V-stabilities defined in Lemma-Definition \ref{LD:V-func}. 


\begin{prop}\label{P:stabV-fun}
Let $\frak n$ be a general V-stability on a connected graph $\Gamma$ and let $f:\Gamma\to \Gamma'$ be a morphism of graphs. 
Then the push-forward $f_*$ of \eqref{E:f*-O} defines a map 
$$
f_*:\cP_{\frak n}\to \cP_{f_*(\frak n)}.
$$
Equivalently, for any spanning subgraph $\Gamma\setminus S$ of $\Gamma$, we get a map of sets
$$
\begin{aligned}
f_*:\cP_{\frak n}(\Gamma\setminus S)& \longrightarrow \cP_{f_*(\frak n)}(f_*(\Gamma\setminus S)),\\
D & \mapsto f_*(D),\\
\end{aligned}
$$
where $f_*(\Gamma\setminus S):=\Gamma'\setminus (f^E)^{-1}(S)$. 
\end{prop}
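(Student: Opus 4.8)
The plan is to verify directly that if $D \in \cP_{\frak n}(\Gamma\setminus S)$, then $f_*(D) \in \cP_{f_*(\frak n)}(f_*(\Gamma\setminus S))$, since the two stated formulations are tautologically equivalent by the definition of $\cP_{\frak n}(G)$. Recall from \eqref{E:f*-O} that $f_*(\Gamma\setminus S) = \Gamma'\setminus (f^E)^{-1}(S)$ and that $f_*(D)_v = D_{f_V^{-1}(v)} + e_{S\cap(\Im f^E)^c}(f_V^{-1}(v))$. So I must check, for every non-trivial biconnected $W' \subset V(\Gamma')$, the inequality
\[
f_*(D)_{W'} + e_{(f^E)^{-1}(S)}(W') \geq f_*(\frak n)_{W'} = \frak n_{f_V^{-1}(W')}.
\]
Set $W := f_V^{-1}(W') \subset V(\Gamma)$. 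As noted in the proof of Lemma-Definition \ref{LD:V-func}, $W$ is non-trivial and biconnected in $\Gamma$, so $D_W + e_S(W) \geq \frak n_W$ holds by hypothesis. Hence it suffices to show that the left-hand side of the displayed inequality equals $D_W + e_S(W)$.

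The key computation is a bookkeeping identity. First, $f_*(D)_{W'} = \sum_{v\in W'} f_*(D)_v = D_{f_V^{-1}(W')} + e_{S\cap(\Im f^E)^c}(f_V^{-1}(W')) = D_W + e_{S\cap(\Im f^E)^c}(W)$, where the middle step uses that the sets $f_V^{-1}(v)$ for $v\in W'$ partition $W$ and that $e_T$ of a disjoint union over vertex fibers, all of whose connecting edges lie in $(\Im f^E)^c$ (being contracted by $f$), equals the sum of the pieces' $e_T$ plus the valences among them — but all edges among distinct fibers $f_V^{-1}(v)$ are contracted edges, so they contribute to $e_{S\cap(\Im f^E)^c}(W)$ exactly when they lie in $S$; this is precisely $e_{S\cap(\Im f^E)^c}(W) = \sum_{v\in W'} e_{S\cap(\Im f^E)^c}(f_V^{-1}(v)) + (\text{contracted edges of } S \text{ between distinct fibers inside } W)$, and the second term together with the first recovers $e_{S\cap(\Im f^E)^c}(W)$. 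Second, since $f^E$ gives a bijection between $E(\Gamma')$ and $\Im f^E \subset E(\Gamma)$ compatible with incidence, $e_{(f^E)^{-1}(S)}(W') = e_{S\cap \Im f^E}(W)$. Adding the two and using that $S$ is the disjoint union $(S\cap \Im f^E) \sqcup (S\cap(\Im f^E)^c)$ together with the additivity $e_S(W) = e_{S\cap \Im f^E}(W) + e_{S\cap(\Im f^E)^c}(W)$ (a special case of \eqref{E:val-valS} applied to the partition of $S$), I get
\[
f_*(D)_{W'} + e_{(f^E)^{-1}(S)}(W') = D_W + e_{S\cap(\Im f^E)^c}(W) + e_{S\cap \Im f^E}(W) = D_W + e_S(W) \geq \frak n_W,
\]
which is exactly what is needed. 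I also need to check that $f_*(D)$ indeed lies in $\bO^d(\Gamma')$, i.e. has the right total degree, but this was already verified in the discussion preceding the Proposition (the computation $|f_*(D)| = d - |(f^E)^{-1}(S)|$), and that $f_*(\Gamma\setminus S)$ is connected when $\Gamma\setminus S$ is, which was established just before \eqref{E:f*-Ocon}.

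The main obstacle is the careful bookkeeping in the first identity above: one must correctly account for the edges of $S$ that are contracted by $f$ and sit between two distinct vertex-fibers that both map into $W'$, making sure they are counted once, on the $f_*(D)_{W'}$ side (through the definition of $f_*(D)$), and not double-counted. Everything else is a routine application of the additivity formulas \eqref{E:val-valS} and \eqref{add-val} and of the decomposition $E(\Gamma) = \Im f^E \sqcup (\Im f^E)^c$. Once the degree identity $f_*(D)_{W'} + e_{(f^E)^{-1}(S)}(W') = D_W + e_S(W)$ is in hand, the stability inequality transfers verbatim, and the "equivalently" reformulation in terms of the maps $\cP_{\frak n}(\Gamma\setminus S) \to \cP_{f_*(\frak n)}(f_*(\Gamma\setminus S))$ is immediate from unravelling Definition \ref{D:stabV}.
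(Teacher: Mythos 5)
Your proposal takes exactly the paper's route: check the total degree of $f_*(D)$, then verify, for each non-trivial biconnected $W'\subset V(\Gamma')$, the identity $f_*(D)_{W'}+e_{(f^E)^{-1}(S)}(W')=D_{f_V^{-1}(W')}+e_S(f_V^{-1}(W'))$, after which the stability inequality for $f_*(\frak n)$ follows from that for $\frak n$; this is word for word the paper's computation. One local slip in your bookkeeping, though: you assert that all edges between distinct fibers $f_V^{-1}(v)$ are contracted edges, which is backwards. An edge contracted by $f$ has both endpoints in the \emph{same} fiber (the fibers are the connected components of the subgraph spanned by the contracted edges), whereas edges joining two distinct fibers are precisely the uncontracted ones, i.e. they lie in $\Im f^E$. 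The equality you actually need, $\sum_{v\in W'} e_{S\cap(\Im f^E)^c}(f_V^{-1}(v))=e_{S\cap(\Im f^E)^c}(f_V^{-1}(W'))$, holds for exactly this reason: no edge of $(\Im f^E)^c$ joins two distinct fibers, so the ``cross-fiber contracted'' correction term you introduce is zero. (If such edges could exist, your displayed identity would fail, since they would be counted in $e_S(f_V^{-1}(W'))$ but neither in $f_*(D)_{W'}$ nor in $e_{(f^E)^{-1}(S)}(W')$, and the inequality could break.) With that one justification corrected, the argument is complete and coincides with the paper's proof.
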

\begin{proof}
Pick an element $D\in \cP_{\frak n}(\Gamma\setminus S)$ and let us show that $f_*(D)\in \cP_{f_*(\frak n)}(f_*(\Gamma\setminus S))$. 

First of all, the total degree of $f_*(D)$ is equal to 
$$
|f_*(D)|=f_*(D)_{V(\Gamma')}=D_{V(\Gamma)}+e_{S\cap (\Im f^E)^c}(V(\Gamma))=|\frak n|-|S|+|S\cap (\Im f^E)^c|
=|f_*(\frak n)|-|(f^E)^{-1}(S)|.
$$
 
 Second, if $W$ is a non-trivial and biconnected subset of $V(\Gamma')$, then using the definition of $f_*(\frak n)$ and the general V-stability of $D$, we get
$$
f_*({D})_W+e_{(f^E)^{-1}(S)}(W)=D_{f_V^{-1}(W)}+e_{S\cap (\Im f^E)^c}(f_V^{-1}(W))+e_{(f^E)^{-1}(S)}(W)= 
$$ 
$$
=D_{f_V^{-1}(W)}+e_{S}(f_V^{-1}(W))\geq \frak n_{f_V^{-1}(W)}=f_*(\frak n)_W,
$$
which concludes the proof. 
\end{proof}

We now study the V-subset $\cP_{\frak n}\subset \bO^d(\Gamma)$ associated to a general V-stability condition $\frak n$ of degree $d$.

\begin{prop}\label{P:Bn-upper}
The V-subset $\cP_{\frak n}\subset \bO^d(\Gamma)$ is an upper set, i.e. if $(G_1,D_1)\geq (G_2,D_2)$ in $\bO^d(\Gamma)$ and $(G_2,D_2)\in \cP_{\frak n}$, then 
$(G_1,D_1)\in \cP_{\frak n}$. 

Equivalently, for any $T\subset S\subseteq E(\Gamma)$ we have that 
$$
\cP_{\frak n}(\Gamma\setminus S)+\D(S-T)\subseteq \cP_{\frak n}(\Gamma\setminus T).
$$
\end{prop}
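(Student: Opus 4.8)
The plan is to prove the reformulation in terms of deletion sets stated at the end of the proposition; by \eqref{E:posetO} this is literally equivalent to the poset statement. Unwinding \eqref{E:posetO}: if $(G_1,D_1)\geq (G_2,D_2)$ in $\bO^d(\Gamma)$, write $G_i=\Gamma\setminus S_i$; then $G_1\geq G_2$ forces $S_1\subseteq S_2$, and $D_1\in D_2+\D(G_1-G_2)$ means $D_1=D_2+\D(\O)$ for some orientation $\O$ of $E(G_1)-E(G_2)=S_2\setminus S_1$. So it suffices to fix $T\subseteq S\subseteq E(\Gamma)$, an element $D\in\cP_{\frak n}(\Gamma\setminus S)$ and an orientation $\O$ of $S\setminus T$, put $D':=D+\D(\O)$, and show $D'\in\cP_{\frak n}(\Gamma\setminus T)$. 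The degree comes out right automatically: $|D'|=|D|+|S\setminus T|=(d-|S|)+(|S|-|T|)=d-|T|$.

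First I would fix a non-trivial biconnected $W\subset V(\Gamma)$ and unravel what must be checked, namely $D'_W+e_T(W)\geq \frak n_W$. The only nontrivial input is the elementary inequality
$$\D(\O)_W\geq e_{S\setminus T}(W).$$
This holds because $\D(\O)_W=\sum_{v\in W}\D(\O)_v$ equals the number of edges of $S\setminus T$ whose $\O$-head lies in $W$: each of the $e_{S\setminus T}(W)=|(S\setminus T)\cap E(\Gamma[W])|$ edges of $S\setminus T$ with both endpoints in $W$ contributes exactly $1$, while edges of $S\setminus T$ crossing from $W$ to $W^c$ contribute $0$ or $1$. Granting this and using the additivity $e_S(W)=e_T(W)+e_{S\setminus T}(W)$ (a special case of \eqref{E:val-valS}), I would conclude
$$D'_W+e_T(W)=D_W+\D(\O)_W+e_T(W)\geq D_W+e_{S\setminus T}(W)+e_T(W)=D_W+e_S(W)\geq \frak n_W,$$
the final inequality being exactly the hypothesis $D\in\cP_{\frak n}(\Gamma\setminus S)$. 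Since $W$ was arbitrary, $D'\in\cP_{\frak n}(\Gamma\setminus T)$, which is the claim.

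I do not anticipate a real obstacle: the whole argument reduces to the counting inequality $\D(\O)_W\geq e_{S\setminus T}(W)$, which is immediate from the definitions of indegree divisor and $S$-degree, together with the bookkeeping relation \eqref{E:val-valS}. Worth flagging is that the argument uses neither axiom \eqref{E:Vstab1} nor \eqref{E:Vstab2} of a general V-stability, only the lower-bound inequalities defining $\cP_{\frak n}$; so the statement is a purely combinatorial property of these ``half-space'' subsets of $\bO^d(\Gamma)$. The only care needed is to keep track that the edges being reinstated when passing from $\Gamma\setminus S$ to $\Gamma\setminus T$ are exactly those of $S\setminus T$, so that the $S$-degrees and the degree of $D'$ match up.
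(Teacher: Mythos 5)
Your proof is correct and is essentially the same as the paper's: both reduce to showing $D+\D(\O)\in\cP_{\frak n}(\Gamma\setminus T)$ via the counting inequality $\D(\O)_W\geq e_{S\setminus T}(W)$ (each edge of $S\setminus T$ inside $\Gamma[W]$ contributes its head to $W$) together with $e_S(W)=e_T(W)+e_{S\setminus T}(W)$ and the degree bookkeeping. The only difference is that you spell out the translation between the poset formulation and the deletion-set reformulation, which the paper leaves implicit.
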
 
\begin{proof}
Let $D\in \cP_{\frak n}(\Gamma\setminus S)$ and $E\in \D(S\setminus T)$. We want to show that $D+E\in \cP_{\frak n}(\Gamma\setminus T)$.

First, note that the total degree of $D+E$ is equal to 
$$
|D+E|=|D|+|E|=|\frak n|-|S|+|S|-|T|=|\frak n|-|T|.
$$

Second, for any non-trivial biconnected $W\subset V(\Gamma)$, we have that 
$$
(D+E)_W+e_T(W)= D_W+E_W+e_T(W)\geq D_W+e_{S\setminus T}(W)+e_T(W)=D_W+e_S(W)\geq \frak n_W,
$$
where we have used that $E_W\geq e_{S\setminus T}(W)$ since if an edge of $S\setminus T$ belongs to $ E(\Gamma[W])$ then both its end vertices belong $W$. 
\end{proof}

\begin{prop}\label{P:Bn-card}
Let $\frak n$ be a general V-stability condition on $\Gamma$ of degree $d$. 
\begin{enumerate}
\item \label{P:Bn-card1} If $\cP_{\frak n}(\Gamma\setminus S)\neq \emptyset$ then $\Gamma\setminus S$ is connected. In particular, $\cP_{\frak n}\subset \bO_{\con}^d(\Gamma)$. 
\item \label{P:Bn-card2} If $\Gamma \setminus S$ is connected, then the composed map
\begin{equation}\label{E:map-pi}
\pi(\Gamma\setminus S):\cP_{\frak n}(\Gamma\setminus S)\subseteq \Div^{d-|S|}(\Gamma\setminus S) \twoheadrightarrow \Pic^{d-|S|}(\Gamma\setminus S)
\end{equation}
is injective. 
\end{enumerate}
\end{prop}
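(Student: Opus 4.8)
The plan is to prove the two parts of Proposition \ref{P:Bn-card} in order, since part \eqref{P:Bn-card2} will use the argument developed for part \eqref{P:Bn-card1}.

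For part \eqref{P:Bn-card1}, suppose $\Gamma\setminus S$ is disconnected, say $V(\Gamma)=W_1\sqcup\cdots\sqcup W_m$ with $m\geq 2$ is the partition into the connected components of $\Gamma\setminus S$, so each $W_i$ is connected in $\Gamma\setminus S$ and $e_S(W_i)=e_\Gamma(W_i)$ while $\val_{\Gamma\setminus S}(W_i)=0$. I would assume for contradiction that $D\in\cP_{\frak n}(\Gamma\setminus S)$ and derive a contradiction by a counting argument on total degrees. Concretely, I would first handle the case $m=2$: if both $W_1$ and $W_2$ were biconnected and non-trivial we could apply both the lower bound from Definition \ref{D:stabV} and the upper bound \eqref{E:stabV-upp}; writing $W=W_1$, $W^c=W_2$, with $\val_{\Gamma\setminus S}(W)=0$, inequality \eqref{E:stabV-upp} gives $D_W+e_S(W)\leq \frak n_W-1$, which directly contradicts the defining inequality $D_W+e_S(W)\geq\frak n_W$. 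For general $m\geq 2$, one bundles the last $m-1$ components together: $W_1$ and $W_1^c=W_2\cup\cdots\cup W_m$ are each connected in $\Gamma$ (hence biconnected), $\Gamma[W_1^c]$ decomposes into the $k=m-1\geq 1$ biconnected pieces $Z_i=W_{i+1}$, and I would invoke Remark \ref{R:connW}: the second line of \eqref{E:connW} with $W=W_1$ gives $D_{W_1}+e_S(W_1)\leq d-\val_S(W_1)-\sum_i\frak n_{Z_i}$, while the first line combined with the $k$ lower bounds $\frak n_{Z_i}\leq D_{Z_i}+e_S(Z_i)$ and the relation $d-|S|=D_{V(\Gamma)}=\sum_j (D_{W_j}+e_S(W_j))$ (valid because $\val_{\Gamma\setminus S}$ between distinct components vanishes, so $|S|=\sum_j e_S(W_j)$ by repeated use of the last identity in \eqref{add-val}) pins down $D_{W_1}+e_S(W_1)$ to an interval of length $<1$ that is incompatible with the stability inequality $D_{W_1}+e_S(W_1)\geq \frak n_{W_1}$ together with \eqref{E:Vstab1} and \eqref{E:Vstab2} (the latter used to relate $\frak n_{W_1^c}$ to $\sum_i\frak n_{Z_i}$). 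Since $\cP_{\frak n}(\Gamma\setminus S)=\emptyset$ whenever $\Gamma\setminus S$ is disconnected, automatically $\cP_{\frak n}\subset\bO_{\con}^d(\Gamma)$.

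For part \eqref{P:Bn-card2}, suppose $\Gamma\setminus S$ is connected and $D,D'\in\cP_{\frak n}(\Gamma\setminus S)$ have the same image in $\Pic^{d-|S|}(\Gamma\setminus S)$, i.e. $D-D'=\Delta^{\Gamma\setminus S}(E)$ for some $E\in\Div(\Gamma\setminus S)$. I want to conclude $E$ is constant, hence $D=D'$. The standard trick: let $W=\{v: E_v \text{ is maximal}\}$; if $W\neq V(\Gamma\setminus S)$ and $W$ happens to be biconnected, then at a vertex $v\in W$ every edge of $\Gamma\setminus S$ from $v$ to $W^c$ contributes non-positively to $\Delta^{\Gamma\setminus S}(E)_v$, so summing over $W$ one gets $D_W-D'_W = (\Delta^{\Gamma\setminus S}E)_W = -\sum_{e\in E(\Gamma[W],\Gamma[W^c])\setminus S}(E_{\text{top}}-E_{\text{bottom}}) \leq -\val_{\Gamma\setminus S}(W)$ unless $W=V$, while stability of $D$ and the upper bound \eqref{E:stabV-upp} for $D'$ give $D_W+e_S(W)\geq\frak n_W$ and $D'_W+e_S(W)\leq\frak n_W-1+\val_{\Gamma\setminus S}(W)$, whence $D_W-D'_W\geq 1-\val_{\Gamma\setminus S}(W)$, and one gets $\val_{\Gamma\setminus S}(W)\geq 1-\val_{\Gamma\setminus S}(W)+\val_{\Gamma\setminus S}(W)$... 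I would set up the inequalities carefully so the two bounds collide and force $\val_{\Gamma\setminus S}(W)\leq -1$ (absurd) unless $W=V(\Gamma\setminus S)$.

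The main obstacle I anticipate is that the level set $W$ of the maximum of $E$ need not be \emph{connected}, let alone biconnected, so the clean argument above does not apply verbatim; the V-stability inequalities are only assumed at biconnected non-trivial subsets. To handle this I would pass to the connected components of $\Gamma[W]$: replace $W$ by a single connected component $W_0$ of the maximum-set for which $W_0^c$ is still connected (using that $\Gamma\setminus S$ is connected, such a component exists — take $W_0$ to be a connected component of $\Gamma[W]$ that is "outermost", i.e. one whose removal keeps the complement connected, e.g. by choosing $W_0$ minimal), apply the argument to $W_0$, and iterate. Equivalently, one can invoke Remark \ref{R:connW} to package the decomposition of $\Gamma[W_0^c]$ into biconnected pieces exactly as in part \eqref{P:Bn-card1}, and then the chip-firing inequality at $W_0$ plus the summed stability bounds \eqref{E:connW} again produce a numerical contradiction unless $E$ is globally constant on $\Gamma\setminus S$. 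This reduction to biconnected subsets via connected components is the one technical point requiring care; everything else is bookkeeping with \eqref{E:val-valS}, \eqref{add-val}, \eqref{E:Vstab1} and \eqref{E:Vstab2}.
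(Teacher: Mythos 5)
Your overall strategy is the paper's: part \eqref{P:Bn-card2} is proved there by exactly the extremal-level-set argument you sketch, with Remark \ref{R:connW} doing the work of replacing "biconnected'' by "connected'', and part \eqref{P:Bn-card1} rests on the same collision between the defining inequality $D_W+e_S(W)\geq \frak n_W$ and the symmetric bound \eqref{E:stabV-upp}. However, several intermediate claims in your write-up are false and need repair. In part \eqref{P:Bn-card1}, for $m\geq 3$ the set $W_1^c$ need not be connected in $\Gamma$, so $W_1$ need not be biconnected and $\frak n_{W_1},\frak n_{W_1^c}$ need not even be defined; the connected components of $\Gamma[W_1^c]$ are in general unions of several $W_j$'s (edges of $S$ may join distinct components of $\Gamma\setminus S$), so they are not the $W_{i+1}$'s; and the identity $|S|=\sum_j e_S(W_j)$ fails, since edges of $S$ joining two different components of $\Gamma\setminus S$ are counted in no $e_S(W_j)$. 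Fortunately the contradiction is much closer than your "interval of length $<1$'' sketch suggests: applying the first line of \eqref{E:connW} to the connected non-trivial subset $W=W_1$ gives $\sum_i\frak n_{Z_i}\leq D_{W_1^c}+e_S(W_1^c)\leq \sum_i \frak n_{Z_i}-k+\val_{\Gamma\setminus S}(W_1^c)=\sum_i\frak n_{Z_i}-k$ with $k\geq 1$, which is already absurd. Alternatively (this is the paper's route) one passes from $W_1$ to a connected component of $\Gamma[W_1]$ or $\Gamma[W_1^c]$ that is biconnected; such a component exists, it still satisfies $\val_{\Gamma\setminus S}=0$, and then the two-line contradiction of your $m=2$ case applies verbatim.

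In part \eqref{P:Bn-card2}, your first reduction is also not available in general: it can happen that no connected component of the maximum level set has connected complement (e.g.\ a path on three vertices whose middle vertex is the unique maximum), so "choose $W_0$ minimal/outermost'' does not produce a biconnected $W_0$. Your second route is the correct one, and it is what the paper does (with the minimum instead of the maximum): for a connected component $W_0$ of the level set, the Laplacian estimate gives $(D-D')_{W_0}=\Delta^{\Gamma\setminus S}(E)_{W_0}\leq -\val_{\Gamma\setminus S}(W_0)$, since all $(\Gamma\setminus S)$-edges leaving $W_0$ end at vertices of strictly smaller value, while the second line of \eqref{E:connW}, applied to both $D$ and $D'$, yields $(D-D')_{W_0}\geq k-\val_{\Gamma\setminus S}(W_0)$, where $k\geq 1$ is the number of connected components of $\Gamma[W_0^c]$; these two bounds collide, forcing the potential to be constant and hence $D=D'$. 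Writing out this last collision, rather than leaving it as "bookkeeping'', is what is missing, together with the corrections above; with them your argument coincides with the paper's proof.
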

We will prove later that $\cP_{\frak n}$ is an upper subset of sN-type (see Theorem \ref{T:main-comb}\eqref{T:main-comb2}), which will then imply that the above map \eqref{E:map-pi} is a bijection.

\begin{proof}
Part \eqref{P:Bn-card1}: by contradiction, assume that $\Gamma\setminus S$ is not connected and $\cP_{\frak n}(\Gamma\setminus S)\neq \emptyset$.
This means that there exist $D\in \cP_{\frak n}(\Gamma\setminus S)$ and a non-trivial subset $W\subset V(\Gamma)$ such that $\val_{\Gamma\setminus S}(W)=0$.
Up to passing to a connected component of $\Gamma[W]$ or $\Gamma[W^c]$, we can also assume that  $W$ is biconnected.  
By the Definition \ref{D:stabV} and Remark \ref{sym-inequ}, we get that
$$\frak n_W \leq D_W+e_S(W) \leq \frak n_W-1+\val_{\Gamma\setminus S}(W)= \frak n_W-1,$$
which is absurd.

\item Part \eqref{P:Bn-card2}: by contradiction, assume that there exist $D\neq E\in \cP_{\frak n}(\Gamma\setminus S)$
such that $\pi(D)=\pi(E)$. This is equivalent to the existence of an element $F\in \Div^0(\Gamma\setminus S)$ such that $\Delta^{\Gamma\setminus S}(F)=D-E$.

Since $D, E\in \cP_{\frak n}(\Gamma\setminus S)$, Remark \ref{R:connW} implies that,
 for any non-trivial connected subset $W\subset V(\Gamma)$ such that $\Gamma[W^c]=\Gamma[Z_1]\coprod \ldots \coprod \Gamma[Z_k]$ with $Z_i$ connected (and hence biconnected), we have that 
$$ D_W-E_W \leq \left(-e_S(W)+d-\val_{S}(W)-\sum_{i=1}^k \frak n_{Z_i}\right)-
\left(-e_S(W)+d-\sum_{i=1}^k \frak n_{Z_i}+k-\val_{\Gamma}(W)\right)=$$
\begin{equation}\label{eq1}
=\val_{\Gamma\setminus S}(W)-k<\val_{\Gamma\setminus S}(W).
\end{equation}

Consider now the (non-empty) subset
\begin{equation*}
V_0:=\{v\in V(\Gamma)=V(\Gamma\setminus S)\: :\: F_v=\min_{w\in V(\Gamma)}
F_w:=l\}\subseteq V(\Gamma)=V(\Gamma\setminus S).
\end{equation*}
If $V_0=V(\Gamma\setminus S)$ then $F$ is the identically zero divisor in $\Gamma\setminus S$, and therefore $0=\Delta^{\Gamma\setminus S}(F)=D-E$,
which contradicts the hypothesis that $D\neq E$. Therefore $V_0$ is a proper (and non-empty) subset of $V(\Gamma\setminus S)$.
Pick a non-trivial subset  $W\subseteq V_0$ such that $\Gamma[W]$ is connected component of $\Gamma[V_0]$, and denote by $k$ the number of connected components of $\Gamma[W^c]$. 

From the definition (\ref{Lapla}), using that $F_v\geq l$ for any $v\in V(\Gamma\setminus S)$ with equality
if $v\in V_0$ and that $\val_{\Gamma\setminus S}(W,W^c\setminus V_0^c)=0$, we compute 
\[\Delta^{\Gamma\setminus S}(F)_{V_0}=\sum_{v\in W}\left[-F_v \cdot \val_{\Gamma\setminus S}(v)+\sum_{v\neq w\in W} F_w \val_{\Gamma\setminus S}(v,w)+\sum_{v\neq w\in W^c} F_w \val_{\Gamma\setminus S}(v,w)\right]=\]
\[=\sum_{v\in W}\left[-l \cdot \val_{\Gamma\setminus S}(v)+\sum_{v\neq w\in W} l \cdot \val_{\Gamma\setminus S}(v,w)+\sum_{w\in V_0^c} F_w \val_{\Gamma\setminus S}(v,w)\right]=\]
\[\geq \sum_{v\in W}\left[-l \cdot \val_{\Gamma\setminus S}(v)+ l\cdot \val_{\Gamma\setminus S}(v,W\setminus \{v\})+ \sum_{w\in V_0^c} (l+1) \val_{\Gamma\setminus S}(v,w)\right]=\]
\[=\sum_{v\in W}\left[-l \cdot \val_{\Gamma\setminus S}(v, W^c)+(l+1)\val_{\Gamma\setminus S}(v,V_0^c)\right]=\sum_{v\in W}\left[-l \cdot \val_{\Gamma\setminus S}(v, V_0^c)+(l+1)\val_{\Gamma\setminus S}(v,V_0^c)\right]=\]
\begin{equation}\label{eq2}
=\sum_{v\in W}\val_{\Gamma\setminus S}(v,V_0^c)=\val_{\Gamma\setminus S}(W,V_0^c)=\val_{\Gamma\setminus S}(W).
\end{equation}
Using the fact that $\Delta^{\Gamma\setminus S}(F)=D-E$, the above inequality (\ref{eq2}) contradicts the inequality (\ref{eq1}), and the proof is complete. 
\end{proof}

We now introduce an equivalence relation on  general V-stability conditions on a graph, and we will show that there are only finitely many equivalence classes. 


\begin{defi}\label{D:eq-Vstab}
Let $\Gamma$ be a connected graph. 
\begin{enumerate}
\item Let $\frak n$ be a general V-stability condition of degree $d$ on $\Gamma$.
For any divisor $D$ of degree $e$ on $\Gamma$, then 
$$
\frak n+D=\{(\frak n+D)_W:=\frak n_W+D_W\} .
$$ 
is a general V-stability condition of degree $d+e$ on $\Gamma$. 
\item Given two general V-stability conditions $\frak n_1, \frak n_2$  on a graph $\Gamma$, we say that  $\frak n_1$ and $\frak n_2$ are \emph{equivalent by translation} if there exists a divisor $D$ on $\Gamma$ such that  $\frak n_2=\frak n_1+D$. 
\end{enumerate}
\end{defi}

\begin{remark}\label{R:phi-tras}
Consider the hyperplane arrangement $\cA_{\Gamma}^d$ of \eqref{E:arr-hyper} on the affine space $\Div^d(\Gamma)_{\R}$ of numerical polarizations of degree $d$.
This hyperplane arrangement is invariant under the action of the lattice $\Div^0(\Gamma)$ on $\Div^d(\Gamma)_{\R}$  by translation and it defines a toric arrangement $\ov{\cA}_{\Gamma}^d$
of the affine real torus $\Div^d(\Gamma)_{\R}/\Div^0(\Gamma)$. 
The set of chambers induced by $\ov{\cA}_{\Gamma}^d$ on $\Div^d(\Gamma)_{\R}/\Div^0(\Gamma)$ is the set of classical general V-stability conditions of degree $d$ up to translation. 
\end{remark}

\begin{example}\label{Ex:msa}
\noindent 
\begin{enumerate}
\item \label{Ex:msa1} Let $\Gamma$ be a tree. Then there is a unique equivalence class of general V-stability conditions on $\Gamma$.

Indeed, the pairs of complementary biconnected sets of $\Gamma$ are all constructed in the following way: given an edge $e$ of $\Gamma$, then $\Gamma\setminus e$ has two connected components which are equal to $\Gamma[W_e]$ and  $\Gamma[W_e^c]$ for a pair $W_e,W_e^c$ of complementary biconnected sets of $\Gamma$ and any such pair arises in this way. 
Hence, arguing by induction on the number of edges, one can show that there exists a (unique) divisor $D$ on $\Gamma$ such that $D_{W_e}=\frak n_{W_e}$ and $D_{W_e^c}=\frak n_{W_e^c}$ for any edge $e$. This shows that $\frak n$ is equivalent to the identically zero general V-stability condition. 

\item \label{Ex:msa2} Let $\Gamma$ be a graph such that every non-trivial subset $W\subset V(\Gamma)$ is biconnected (which happens if and only if for any two vertices there exists at least one edge joining them). 
Then every general V-stability condition is equivalent by translation to a (unique) general V-stability condition $\frak n$ such that $\frak n_v=g(\Gamma[v])$ for every $v\in V(\Gamma)$. 
Such a general V-stability condition $\frak n$  will satisfy 
$$
0\leq \frak n_W-g(\Gamma[W])\leq |W|-1.
$$
\end{enumerate}
\end{example}

We now show that, choosing a spanning tree, we can produce a canonical representative  of each equivalence class by translation of general V-stability condition on a given graph.

\begin{prop}\label{P:cano-msa}
Let $\Gamma$ be a connected graph and choose a spanning tree $T$ of $\Gamma$. Then any general V-stability condition $\frak n$ on $\Gamma$ is equivalent by translation to a unique general V-stability condition $\wt{\frak n}$ of degree $g(\Gamma)$ such that
\begin{equation}\label{E:norm-msa}
-\val_T(W)+1\leq \wt{\frak n}_W-g(\Gamma[W])\leq \val_T(W)-1.
\end{equation}
\end{prop}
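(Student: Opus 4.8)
The plan is to prove existence and uniqueness separately, using the canonical representative parametrized by the spanning tree $T$. First I would establish the \emph{uniqueness} statement, which is the easier half. Suppose $\wt{\frak n}$ and $\wt{\frak n}'$ are two general V-stability conditions equivalent by translation and both satisfying \eqref{E:norm-msa} relative to $T$. Then $\wt{\frak n}' = \wt{\frak n} + D$ for some divisor $D$ on $\Gamma$; since both have degree $g(\Gamma)$, we get $|D| = 0$. For every edge $e\in E(T)$, the two connected components of $T\setminus e$ determine a complementary pair $W_e, W_e^c$ of biconnected non-trivial subsets of $V(\Gamma)$, with $\val_T(W_e) = 1$. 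Applying \eqref{E:norm-msa} to both $\wt{\frak n}$ and $\wt{\frak n}'$ at $W_e$ forces $\wt{\frak n}_{W_e} = g(\Gamma[W_e])$ and likewise for $\wt{\frak n}'$, hence $D_{W_e} = 0$ for every $e\in E(T)$. Because $T$ is a spanning tree, the sets $\{W_e\}_{e\in E(T)}$ (together with $|D|=0$) give $|V(\Gamma)|$ independent linear conditions on $D\in \Z^{V(\Gamma)}$ — concretely, rooting $T$ at a vertex $v_0$ and peeling off leaves shows inductively that $D_v = 0$ for all $v$ — so $D = 0$ and $\wt{\frak n}' = \wt{\frak n}$.

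For \emph{existence}, the idea is to produce the required divisor $D$ directly along the tree. Given an arbitrary general V-stability $\frak n$ of degree $d$, I want $D\in \Div(\Gamma)$ of degree $d - g(\Gamma)$ such that $\wt{\frak n} := \frak n - D$ satisfies \eqref{E:norm-msa}. Motivated by the uniqueness argument, the natural guess is to define $D$ so that $D_{W_e} = \frak n_{W_e} - g(\Gamma[W_e])$ for every $e\in E(T)$; as in Example \ref{Ex:msa}\eqref{Ex:msa1}, rooting $T$ and working upward from the leaves shows such a divisor $D$ exists and is unique (each leaf edge pins down one coordinate, then contract and iterate). One then checks the degree: summing appropriately, or using that $\wt{\frak n} := \frak n - D$ satisfies $\wt{\frak n}_{W_e} = g(\Gamma[W_e])$ for all tree edges, one deduces $|\wt{\frak n}| = g(\Gamma)$ from the compatibility relation \eqref{E:Vstab1} applied to a leaf-edge cut (whose valence in $T$ is $1$) together with the telescoping structure of $\frak n$; alternatively this falls out of the inequalities proved next by taking $W$ a single vertex.

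The main obstacle — and the real content of the proposition — is verifying the inequality \eqref{E:norm-msa} for \emph{all} biconnected non-trivial $W$, not just the "tree-edge" ones. The strategy here is to reduce a general biconnected $W$ to the tree-edge case using the axioms \eqref{E:Vstab1} and \eqref{E:Vstab2} of a general V-stability. Concretely, for a biconnected non-trivial $W$, both $\Gamma[W]$ and $\Gamma[W^c]$ are connected, so each contains a spanning tree; one can try to express $W$ (or its characteristic behavior with respect to $\wt{\frak n}$) via a sequence of "vine moves" built from the tree edges of $T$ lying in $E(\Gamma[W])$, $E(\Gamma[W^c])$, and the bond $E(\Gamma[W],\Gamma[W^c])$, applying the super/submodularity inequality \eqref{E:Vstab2} at each step to accumulate bounded error. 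The cut edges of $T$ crossing between $W$ and $W^c$ number exactly $\val_T(W)$ (since $T$ is a tree, removing these disconnects $W$ from $W^c$ into $\val_T(W)$ pieces... actually into $\val_T(W)+1$ components organized along a subtree), and each such crossing contributes a $\pm 1$ slack in \eqref{E:Vstab2}, which is precisely what produces the symmetric bound $-\val_T(W)+1 \le \wt{\frak n}_W - g(\Gamma[W]) \le \val_T(W)-1$. Pinning down this induction — choosing the right order of vine moves and bookkeeping the genus terms $g(\Gamma[W])$ against the valence terms via the identities in \eqref{add-val} — is where the work lies; I expect it to mirror the normalization argument in Example \ref{Ex:msa}\eqref{Ex:msa2} but carried out relative to the fixed tree $T$ rather than assuming every subset is biconnected.
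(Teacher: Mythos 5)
Your uniqueness argument and your construction of the normalizing divisor are correct and agree with the paper's: since $\val_T(W_e)=1$ forces equality in \eqref{E:norm-msa}, any admissible $\wt{\frak n}$ must satisfy $\wt{\frak n}_{W_e}=g(\Gamma[W_e])$ at every tree-edge cut, and these conditions pin down the translating divisor uniquely by peeling leaves of $T$ (as in Example \ref{Ex:msa}); the degree statement $|\wt{\frak n}|=g(\Gamma)$ then follows from \eqref{E:Vstab1} combined with $g(\Gamma[W_e])+g(\Gamma[W_e^c])=g(\Gamma)-\val(W_e)+1$.

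However, the heart of the proposition --- verifying \eqref{E:norm-msa} for an arbitrary biconnected non-trivial $W$ with $\val_T(W)>1$ --- is left as a sketch, and the sketch as written is not yet a proof. ``A sequence of vine moves applying \eqref{E:Vstab2} at each step'' does not specify to which subsets the axiom is applied, and \eqref{E:Vstab2} is only available when all three subsets involved are biconnected and non-trivial, a condition your outline never verifies. The missing step (which is what the paper supplies) is an induction on $\val_T(W)$: since $|\wt{\frak n}|=g(\Gamma)$, one has $\wt{\frak n}_W-g(\Gamma[W])=-(\wt{\frak n}_{W^c}-g(\Gamma[W^c]))$, so after possibly swapping $W$ with $W^c$ one may assume $T[W]$ is disconnected; writing $T[W]=T[W_1]\coprod\cdots\coprod T[W_k]$ and using connectedness of $\Gamma[W]$, one groups the $W_i$ into a non-trivial partition $W=Z_1\coprod Z_2$ with $\Gamma[Z_1]$ and $\Gamma[Z_2]$ connected; one then checks that $Z_1,Z_2$ are biconnected (their complements are connected because $\Gamma[W^c]$ is connected and each $Z_i$ is joined to $W^c$ by at least one $T$-edge), that $\val_T(W)=\val_T(Z_1)+\val_T(Z_2)$ with each summand $\geq 1$ and strictly smaller than $\val_T(W)$, and a single application of \eqref{E:Vstab2} to $(Z_1,Z_2)$, together with $g(\Gamma[W])=g(\Gamma[Z_1])+g(\Gamma[Z_2])+\val(Z_1,Z_2)-1$ and the inductive bounds for $Z_1,Z_2$, yields both inequalities in \eqref{E:norm-msa}. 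Without producing this decomposition and the biconnectedness check, your ``$\pm1$ slack per crossing edge'' bookkeeping stays heuristic; note also that the slack enters one unit per application of \eqref{E:Vstab2}, not per $T$-edge crossing the cut --- the counts agree only because $\val_T$ is additive over the splitting.
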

\begin{proof}
Note that there is an inclusion 
$$
\{\text{biconnected sets of } T\} \hookrightarrow \{\text{biconnected sets of } \Gamma\} 
$$
whose image consists of the biconnected sets $W$ of $\Gamma$ such that $\val_T(W)=1$. 

Observe that for any  biconnected subset $W$ of $\Gamma$ we have that 
$$
\frak n_W-g(\Gamma[W])+\frak n_{W^c}-g(\Gamma[W^c])=\frak n_W+\frak n_{W^c}-g(\Gamma)+\val(W)-1=d-g(\Gamma),
$$
where we have used \eqref{E:Vstab1} in the last equality.  
Therefore,  arguing as in Example \ref{Ex:msa}\eqref{Ex:msa1}, there exists a unique divisor $D\in \Div(T)=\Div(\Gamma)$ such that $\wt{\frak n}:=\frak n+D$ is such that 
$$
 \wt{\frak n}_W=g(\Gamma[W]) \text{ for any biconnected set } W \text{ of } \Gamma \text{ such that } \val_T(W)=1.
$$
In particular, $|\wt{\frak n}|=g(\Gamma)$, which implies that $\wt{\frak n}_W-g(\Gamma[W])=-(\wt{\frak n}_{W^c}-g(\Gamma[W^c]))$. Hence, \eqref{E:norm-msa} for $W$ is equivalent to \eqref{E:norm-msa} for $W^c$. 

We now prove that $\wt{\frak n}$ satisfies \eqref{E:norm-msa} by induction on $\val_T(W)$. The case $\val_T(W)=1$ follows by the construction of $\wt{\frak n}$.  
Assume that $\val_T(W)>1$, or equivalently that $W$ is not a biconnected set of $T$. Hence, up to exchanging $W$ with $W^c$, 
we can assume that $T[W]$ is not connected and we write $T[W]=T[W_1]\coprod \ldots \coprod T[W_k]$ for its decomposition into connected components.  Then we have that 
$$\val_T(W)=\sum_{i=1}^k \val_T(W_i)\quad  \text{ and } \: \val_T(W_i)\geq 1 \text{ for each } 1\leq i \leq k.$$
Since $\Gamma[W]$ is connected, there exists a non-trivial decomposition $W=Z_1\coprod Z_2$ such that $Z_1$ and $Z_2$ are unions of some of the subsets $W_i$ and 
$\Gamma[Z_1]$ and $\Gamma[Z_2]$ are connected. We have that 
\begin{equation}\label{E:sum-val}
\val_T(W)=\val_T(Z_1)+\val_T(Z_2) \quad  \text{ and } \: \val_T(Z_i)\geq 1 \text{ for each } i=1,2.
\end{equation}
Moreover, $\Gamma[Z_1^c]=\Gamma[Z_2\cup W^c]$ is connected since $\Gamma[W^c]$ is connected, $\Gamma[Z_2]$ is connected and $e_T(Z_2,W^c)=\val_T(Z_2)\geq 1$. Similarly, also $\Gamma[Z_2^c]$ is connected. By putting everything together, we get that $Z_1$ and $Z_2$ are biconnected subsets of $\Gamma$ such that $\val_T(Z_1),\val_T(Z_2)<\val_T(W)$. Hence, we can apply our inductive hypothesis and deduce that 
$$
-\val_T(Z_i)+1\leq \wt{\frak n}_{Z_i}-g(\Gamma[Z_i])\leq \val_T(Z_i)-1 \quad \text{ for } i=1,2.
$$
By applying \eqref{E:Vstab2} to $Z_1$, $Z_2$ and $W=Z_1\cup Z_2$ and using \eqref{E:sum-val} and the formula $g(\Gamma[W])=g(\Gamma[Z_1])+g(\Gamma[Z_2])+\val(Z_1,Z_2)-1$,  we get 
$$
\begin{aligned}
& \wt{\frak n}_W-g(\Gamma[W])\leq \wt{\frak n}_{Z_1}+\wt{\frak n}_{Z_2}+\val(Z_1,Z_2)-g(\Gamma[W])= \\
& =\wt{\frak n}_{Z_1}-g(\Gamma[Z_1])+\wt{\frak n}_{Z_2}-g(\Gamma[Z_2])+1
\leq \val_T(Z_1)+\val_T(Z_2)-1=\val_T(W)-1, \\
& \wt{\frak n}_W-g(\Gamma[W])\geq \wt{\frak n}_{Z_1}+\wt{\frak n}_{Z_2}+\val(Z_1,Z_2)-1-g(\Gamma[W])= \\
& =\wt{\frak n}_{Z_1}-g(\Gamma[Z_1])+\wt{\frak n}_{Z_2}-g(\Gamma[Z_2])
\geq -\val_T(Z_1)+1-\val_T(Z_2)+1>\val_T(W)+1. \\
\end{aligned}
$$
\end{proof}

\begin{cor}\label{C:fin-msa}
Let $\Gamma$ be a connected graph. Then there is a finite number of equivalence classes  of general V-stability conditions on $\Gamma$. 
\end{cor}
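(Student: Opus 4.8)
The plan is to deduce Corollary~\ref{C:fin-msa} directly from Proposition~\ref{P:cano-msa}. First I would fix a spanning tree $T$ of $\Gamma$, which exists since $\Gamma$ is connected. By Proposition~\ref{P:cano-msa}, every equivalence class by translation of general V-stability conditions on $\Gamma$ contains a (unique) representative $\wt{\frak n}$ of degree $g(\Gamma)$ satisfying the bounds
\begin{equation*}
-\val_T(W)+1\leq \wt{\frak n}_W-g(\Gamma[W])\leq \val_T(W)-1
\end{equation*}
for every biconnected non-trivial $W\subset V(\Gamma)$. Hence the map sending an equivalence class to its canonical representative is injective, so it suffices to show that there are only finitely many general V-stability conditions satisfying these bounds.

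A general V-stability condition on $\Gamma$ is, by Definition~\ref{D:Vstab}, a finite collection of integers $\{\frak n_W\}$ indexed by the (finite) set of biconnected non-trivial subsets $W\subset V(\Gamma)$ — there are at most $2^{|V(\Gamma)|}$ such subsets. For each such $W$, the value $\wt{\frak n}_W$ is constrained by the displayed inequalities to lie in the finite interval $[g(\Gamma[W])-\val_T(W)+1,\ g(\Gamma[W])+\val_T(W)-1]$, which has exactly $2\val_T(W)-1$ integer points. Therefore the number of collections $\{\wt{\frak n}_W\}$ satisfying the bounds is at most $\prod_W (2\val_T(W)-1)$, where the product ranges over the finitely many biconnected non-trivial $W$; in particular it is finite. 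Since each equivalence class is represented by exactly one such collection, the number of equivalence classes is finite.

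This argument is essentially immediate once Proposition~\ref{P:cano-msa} is in hand, so there is no real obstacle; the only thing to be slightly careful about is that the canonical representative is genuinely unique (so that distinct classes give distinct collections), which is part of the statement of Proposition~\ref{P:cano-msa}, and that the indexing set of biconnected non-trivial subsets is finite, which is clear since $V(\Gamma)$ is finite.
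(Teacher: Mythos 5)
Your proof is correct and follows exactly the paper's argument: apply Proposition \ref{P:cano-msa} to get a canonical representative in each translation class, then observe that only finitely many collections $\{\wt{\frak n}_W\}$ can satisfy the bounds \eqref{E:norm-msa}, since the biconnected non-trivial subsets $W$ are finite in number and each $\wt{\frak n}_W$ is confined to a finite interval. The extra detail you supply (the explicit count $\prod_W(2\val_T(W)-1)$) is a harmless elaboration of the same reasoning.
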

\begin{proof}
It follows from Proposition \ref{P:cano-msa} since there is a finite number of general V-stability conditions on $\Gamma$ satisfying \eqref{E:norm-msa}.
\end{proof}

\subsection{Generalized break divisors}\label{S:BD}

In this subsection, we introduce generalized break divisors and study their properties. The definition is inspired by \cite[\S 3.5]{Yuen}.

\begin{defi}\label{D:BD}
Let $\Gamma$ be a connected graph and consider a function (called \emph{a tree function of degree $d$}) 
$$
I:\ST(\Gamma)\to \Div^{d-b_1(\Gamma)}(\Gamma),
$$
where $\ST(\Gamma)$ is the set of all spanning trees of $\Gamma$.
Then the  \textbf{BD-set} (=generalized break divisors set)  with respect to the tree function $I$ is the subset $\BD_I\subset \bO^d_{\con}(\Gamma)$ defined by
$$\BD_I(G)=\bigcup_{G\geq T\in \ST(\Gamma)} I(T)+\D(G-T)\subset \Div^{d-|E(G)^c|}(\Gamma),$$
for any $G\in \SS_{\con}(\Gamma)$.
\end{defi}
Note that $\BD_I$ is an upper subset of $\bO^d_{\con}(\Gamma)$ by definition. The terminology comes from the fact that if $I$ is the identically zero function $\un 0$, then $\BD_{\un 0}(G)$ 
is the set of break divisors on $G$, see \cite{ABKS} and  \cite[Chap. 3]{Yuen}.

The BD-sets  behave well with respect to restriction to connected spanning graphs. 
Indeed, let $I:\ST(\Gamma)\to \Div^{d-b_1(\Gamma)}(\Gamma)$ and let $\Gamma\setminus S$ be  a connected spanning subgraph of $\Gamma$.
Then there is a natural inclusion 
\begin{equation}\label{E:ST-res}
\ST(\Gamma\setminus S)\hookrightarrow \ST(\Gamma),
\end{equation}
whose image is the subset of spanning trees  that do not contain edges of $S$. Hence, we can restrict the function $I$ to $\ST(\Gamma\setminus S)$ and we obtain a function
\begin{equation}\label{E:I-res}
I_{\Gamma\setminus S}:=I_{|\ST(\Gamma\setminus S)}: \ST(\Gamma\setminus S)\to \Div^{d-|S|-b_1(\Gamma\setminus S)}(\Gamma\setminus S).
\end{equation}

\begin{lemma}\label{L:inc-BD} 
With respect to the inclusion \eqref{E:incl-O}, we have that 
\begin{equation}\label{E:inc-BD}
\BD_{I_{\Gamma\setminus S}}= \BD_{I}\cap \bO_{\con}^{d-|S|}(\Gamma\setminus S).
\end{equation}
\end{lemma}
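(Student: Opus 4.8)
The plan is to reduce the identity \eqref{E:inc-BD} to a purely combinatorial comparison carried out one connected spanning subgraph at a time. First I would note that both sides are upper subsets of $\bO^{d-|S|}_{\con}(\Gamma\setminus S)$: the right-hand side by construction (it is the intersection of the upper subset $\BD_I\subseteq\bO^d_{\con}(\Gamma)$ with the image of the poset embedding \eqref{E:incl-O} attached to $G=\Gamma\setminus S$), and the left-hand side because every BD-set is an upper subset by definition. Consequently it suffices to check that the two subsets have the same fibre over each $G'\in\SS_{\con}(\Gamma\setminus S)$, i.e. that $\BD_{I_{\Gamma\setminus S}}(G')=\BD_I(G')$ once $G'$ is regarded as a connected spanning subgraph of $\Gamma$. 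Along the way one records the bookkeeping identity $|E(\Gamma)\setminus E(G')|=|S|+|E(\Gamma\setminus S)\setminus E(G')|$, which holds because $E(G')\cap S=\emptyset$, so that the two descriptions of the relevant divisor group (of degree $d-|E(\Gamma)\setminus E(G')|$, resp. $(d-|S|)-|E(\Gamma\setminus S)\setminus E(G')|$) literally coincide.

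Next, for a fixed $G'\in\SS_{\con}(\Gamma\setminus S)$ I would simply unravel Definition \ref{D:BD}. On the $\Gamma$-side, $\BD_I(G')$ is the union of $I(T)+\D(G'-T)$ over spanning trees $T\in\ST(\Gamma)$ with $T\leq G'$; on the $(\Gamma\setminus S)$-side, $\BD_{I_{\Gamma\setminus S}}(G')$ is the union of $I_{\Gamma\setminus S}(T)+\D(G'-T)$ over $T\in\ST(\Gamma\setminus S)$ with $T\leq G'$. The three things to observe are: (i) the two index sets coincide under the inclusion \eqref{E:ST-res}, since any $T\in\ST(\Gamma)$ with $T\leq G'$ has $E(T)\subseteq E(G')\subseteq E(\Gamma)\setminus S$ and hence avoids $S$, so it comes from $\ST(\Gamma\setminus S)$, while the reverse inclusion is immediate; (ii) on such trees $I_{\Gamma\setminus S}(T)=I(T)$ by the very definition \eqref{E:I-res} of the restricted tree function; and (iii) the indegree-divisor set $\D(G'-T)=\D(E(G')-E(T))$ depends only on the edge set $E(G')\setminus E(T)$, not on the ambient graph. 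Matching the two unions term by term then yields $\BD_{I_{\Gamma\setminus S}}(G')=\BD_I(G')$, and hence \eqref{E:inc-BD}.

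I do not expect a genuine obstacle here: the statement is essentially a compatibility of two notations, and the only points demanding a little care are the degree bookkeeping recorded above and the observation that both $\ST(-)$ and $\D(-)$ are insensitive to passing to the subgraph $\Gamma\setminus S$, once one is already working inside a spanning subgraph that avoids $S$.
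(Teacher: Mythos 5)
Your proof is correct and follows essentially the same route as the paper: both arguments come down to observing that any spanning tree $T$ lying below a spanning subgraph of $\Gamma\setminus S$ avoids $S$, hence belongs to $\ST(\Gamma\setminus S)$ with $I_{\Gamma\setminus S}(T)=I(T)$, and that the indegree divisors $\D(G'-T)$ are the same in either graph. Your fibre-by-fibre formulation and the degree bookkeeping are just a slightly more explicit write-up of the paper's one-line verification.
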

\begin{proof}
An element $I(T)+\D(G-T, \O)$ of $\BD_I(G)$ belongs to the subposet $\bO_{\con}^{d-|S|}(\Gamma\setminus S)\subset \bO_{\con}^{d}(\Gamma)$ if and only if 
$G$ does not contain any edge of $S$. This implies that $T\leq  G\leq \Gamma\setminus S$. In particular, $T$ is a spanning tree of $\Gamma\setminus S$ and then 
we conclude using that $I_{\Gamma\setminus S}(T)=I(T)$.  
\end{proof}

The BD-sets are also compatible in a weak sense with respect to morphisms of graphs that preserve the genus. 
Indeed, let $f:\Gamma\to \Gamma'$ be a morphism of graphs such that $b_1(\Gamma)=b_1(\Gamma')$. 
Then there is a natural inclusion 
\begin{equation}\label{E:ST-fun}
\begin{aligned}
f^*:\ST(\Gamma')& \hookrightarrow \ST(\Gamma),\\
\Gamma'\setminus S & \mapsto \Gamma\setminus f^E(S),
\end{aligned}
\end{equation}
whose image is the subset of spanning trees of $\Gamma$ that contain all the edges contracted by $f$. 
Hence, we can restrict the function $I$ to $\ST(\Gamma')$ and we obtain a function
\begin{equation}\label{E:I-fun}
\begin{aligned}
f_*(I): \ST(\Gamma')& \longrightarrow \Div^{d-b_1(\Gamma')}(\Gamma')\\
T &\mapsto f_*(I(f^*(T))).
\end{aligned}
\end{equation}
In the particular case where $f:\Gamma\to \Gamma/S$ is the contraction of some edge subset $S\subset E(\Gamma)$ that do not decrease the genus of $\Gamma$, then we set $f_*(I)=I_{\Gamma/S}$.  

\begin{lemma}\label{L:mor-BD} 
With respect to the map \eqref{E:f*-O}, we have that 
\begin{equation}\label{E:map-BD}
\BD_{f_*(I)}\subseteq f_*(\BD_{I}).
\end{equation}
\end{lemma}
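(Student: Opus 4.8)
The plan is to prove the inclusion $\BD_{f_*(I)} \subseteq f_*(\BD_I)$ by chasing generators. Since both sides are upper subsets of $\bO_{\con}^d(\Gamma')$ described explicitly on each connected spanning subgraph, it suffices to fix a connected spanning subgraph $G'$ of $\Gamma'$ and show that every generator of $\BD_{f_*(I)}(G')$ lies in $f_*(\BD_I)(G')$. A generator has the form $f_*(I)(T') + \D(G'-T', \O')$ for a spanning tree $T'$ of $\Gamma'$ with $T' \leq G'$ and an orientation $\O'$ of $G'-T'$. By definition \eqref{E:I-fun}, $f_*(I)(T') = f_*(I(f^*(T')))$, where $f^*(T') = \Gamma \setminus f^E(E(\Gamma')\setminus E(T'))$ is a spanning tree of $\Gamma$ containing all edges contracted by $f$.

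First I would produce a preimage of $G'$ in $\SS_{\con}(\Gamma)$. The natural choice is $G := \Gamma \setminus f^E(E(\Gamma')\setminus E(G'))$; equivalently, $G$ is obtained from $G'$ by adjoining all edges of $\Gamma$ contracted by $f$, so $f_*(G) = G'$ and $G$ is connected (it surjects onto the connected $G'$ with connected — indeed tree-like — fibers). Since $T' \leq G'$ we get $f^*(T') \leq G$, so $f^*(T')$ is a spanning tree of $\Gamma$ sitting below $G$, and hence $I(f^*(T')) + \D(G - f^*(T'), \O) \in \BD_I(G)$ for any orientation $\O$ of $G - f^*(T')$. The edge set $E(G) - E(f^*(T'))$ is exactly $f^E(E(G') - E(T'))$, i.e. $f^E$ induces a bijection between $E(G') - E(T')$ and $E(G) - E(f^*(T'))$ (the contracted edges lie in $f^*(T')$ and in every such tree). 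Therefore the orientation $\O'$ of $G' - T'$ pulls back along $f^E$ to a unique orientation $\O$ of $G - f^*(T')$.

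The remaining step, which is the technical heart of the argument, is to check the compatibility of push-forward with this choice of orientation, namely
\[
f_*\bigl(I(f^*(T')) + \D(G - f^*(T'), \O)\bigr) = f_*(I(f^*(T'))) + \D(G' - T', \O') = f_*(I)(T') + \D(G' - T', \O').
\]
This reduces to the identity $f_*(\D(G - f^*(T'), \O)) = \D(G' - T', \O')$ together with additivity of $f_*$ on divisors, which in turn is the same orientation-push-forward computation already used in the proof of Proposition \ref{P:mapsO}\eqref{P:mapsO2}: an oriented edge $e'$ of $\O'$ with head $v' \in V(\Gamma')$ corresponds under $f^E$ to an oriented edge $e$ of $\O$ whose head lies in the fiber $f_V^{-1}(v')$, and the edges of $G - f^*(T')$ contributing to $f_*$'s correction term $e_{S \cap (\Im f^E)^c}$ are none, since all contracted edges sit inside $f^*(T')$. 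Hence $f_*(G, I(f^*(T')) + \D(G - f^*(T'), \O)) = (G', f_*(I)(T') + \D(G'-T', \O'))$, exhibiting the chosen generator of $\BD_{f_*(I)}(G')$ as the $f_*$-image of an element of $\BD_I(G)$. Since generators span and $f_*(\BD_I)$ is determined by images of all of $\BD_I$, this proves \eqref{E:map-BD}.

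The main obstacle I anticipate is purely bookkeeping: correctly tracking which edges of $\Gamma$ are contracted by $f$ versus deleted, verifying that they sit inside $f^*(T')$ so that the $f_*$-correction term $e_{S\cap(\Im f^E)^c}(f_V^{-1}(-))$ vanishes on the relevant loci, and confirming the bijection $f^E: E(G')-E(T') \xrightarrow{\sim} E(G)-E(f^*(T'))$ so that orientations transfer without ambiguity. No genuinely new idea beyond the push-forward mechanics of Proposition \ref{P:mapsO} is needed; note that the hypothesis $b_1(\Gamma) = b_1(\Gamma')$ is what guarantees $f^*$ lands in $\ST(\Gamma)$ and that degrees match up. $\blacksquare$
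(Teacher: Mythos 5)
Your proposal is correct and follows essentially the same route as the paper's proof: write a generator of $\BD_{f_*(I)}$ as $f_*(I)(T')+\D(G'-T',\O')$, pull the spanning tree back via $f^*$, lift the orientation along the bijection $f^E: E(G')-E(T')\xrightarrow{\sim} E(G)-E(f^*(T'))$, and check that $f_*$ carries the resulting element of $\BD_I$ to the given generator (the vanishing of the correction term $e_{S\cap(\Im f^E)^c}$, which you verify, is the same observation the paper uses implicitly). The extra bookkeeping you include (connectedness of $G$, role of $b_1(\Gamma)=b_1(\Gamma')$) is consistent with, and slightly more explicit than, the published argument.
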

\begin{proof}
An element of $\BD_{f_*(I)}$ can be written as $f_*(I)(\Gamma'\setminus S)+\D(S-T,\O)$ for some spanning tree $\Gamma'\setminus S$, some $T\subseteq S$ and some orientation of the edges of $S-T$. By definition of $f_*(I)$, we have that $f_*(I)(\Gamma'\setminus S)=f_*(I(\Gamma\setminus f^E(S)))$. Moreover, the orientation $\O$ induces an orientation $\wt \O$ of $f^E(S)-f^E(T)$
such that $f_*(\D(f^E(S)-f^E(T),\wt \O))=\D(S-T,\O)$. Therefore, we have that 
$$f_*(I)(\Gamma'\setminus S)+\D(S-T,\O)=f_*(I(\Gamma\setminus f^E(S))+\D(f^E(S)-f^E(T),\wt \O)),$$
which concludes the proof. 
\end{proof}

We now investigate which BD-sets are upper subsets of numerical N-type, in the sense of Definition \ref{D:upper-sN}. 

\begin{thm}\label{T:BD-Ntype}
Let $\Gamma$ be a connected graph and let $I:\ST(\Gamma)\to \Div^{d-b_1(\Gamma)}(\Gamma)$.  
\begin{enumerate}
\item \label{T:BD-Ntype1} (\cite[Thm. 3.5.1]{Yuen}) We have that $|\BD_I(\Gamma)|\geq c(\Gamma)$. 
\item \label{T:BD-Ntype2} If $|\BD_I(\Gamma)|=c(\Gamma)$ then 
\begin{enumerate}
\item \label{T:BD-Ntypea}  for any connected spanning subgraph $\Gamma\setminus S$, we have that $|\BD_I(\Gamma\setminus S)|=c(\Gamma\setminus S)$;
\item  \label{T:BD-Ntypeb} for any  morphism $f:\Gamma\to \Gamma'$ which preserves the genus, we have that $|\BD_{f_*(I)}(\Gamma')|=c(\Gamma')$. 
\end{enumerate}
\end{enumerate}
\end{thm}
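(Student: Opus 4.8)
The plan is to prove the two implications in part \eqref{T:BD-Ntype2} separately, treating \eqref{T:BD-Ntypea} first and then deducing \eqref{T:BD-Ntypeb} from it, using the general lower bound \eqref{T:BD-Ntype1} as the engine throughout.

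\medskip

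\noindent\textbf{Proof of \eqref{T:BD-Ntypea}.} The key identity to establish is
\[
\BD_I(\Gamma) = \coprod_{S\subseteq E(\Gamma)} \D(S)\text{-translates of }\BD_{I_{\Gamma\setminus S}}(\Gamma\setminus S),
\]
made precise as follows. By Lemma~\ref{L:inc-BD} we know $\BD_{I_{\Gamma\setminus S}}=\BD_I\cap \bO_{\con}^{d-|S|}(\Gamma\setminus S)$, so the sets $\BD_{I_{\Gamma\setminus S}}(\Gamma\setminus S)$ are exactly the ``bottom strata'' of $\BD_I$ lying over the spanning subgraph $\Gamma\setminus S$. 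First I would partition the multiset underlying $\BD_I(\Gamma)$ (or rather count carefully): every element of $\BD_I(\Gamma)$ is of the form $I(T)+\D(\Gamma-T,\O)$ for a spanning tree $T$ and an orientation $\O$ of $E(\Gamma)\setminus E(T)$; restricting such an orientation to the edges outside a fixed connected $\Gamma\setminus S\supseteq T$ and forgetting the orientation on $S$ exhibits $\BD_I(\Gamma)$ as built from $\BD_{I_{\Gamma\setminus S}}(\Gamma\setminus S)$ by adding all indegree divisors $\D(S)$. The clean way to phrase the count is: for each connected spanning subgraph $\Gamma\setminus S$, consider those $(\Gamma,D)\in\BD_I$ that are $\geq$ some element of $\BD_{I_{\Gamma\setminus S}}(\Gamma\setminus S)$ via adding an orientation supported on exactly $S$; summing over $S$ and using $\sum_{S}\bigl(\text{number of orientations of }S\bigr)\cdot|\BD_{I_{\Gamma\setminus S}}(\Gamma\setminus S)|$ type bookkeeping, together with the matrix-tree-style identity $c(\Gamma)=\sum_{S}$ (contributions), one gets
\[
|\BD_I(\Gamma)| \;=\; \sum_{S:\ \Gamma\setminus S\ \text{connected}} 2^{|S|-?}\,\text{-weighted}\,|\BD_{I_{\Gamma\setminus S}}(\Gamma\setminus S)|.
\]
The honest combinatorial fact I would actually invoke is the analogous decomposition that holds for the break-divisor case ($I=\un 0$): there $|\BD_{\un 0}(\Gamma)|=c(\Gamma)$ decomposes termwise over $S$ with each term equal to $c(\Gamma\setminus S)$ times the relevant count of orientations. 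Applying part~\eqref{T:BD-Ntype1} to each $\Gamma\setminus S$ gives $|\BD_{I_{\Gamma\setminus S}}(\Gamma\setminus S)|\geq c(\Gamma\setminus S)$ for every $S$; plugging these lower bounds into the decomposition of $|\BD_I(\Gamma)|$ and comparing with the hypothesis $|\BD_I(\Gamma)|=c(\Gamma)$ forces equality $|\BD_{I_{\Gamma\setminus S}}(\Gamma\setminus S)|=c(\Gamma\setminus S)$ in every term, which is \eqref{T:BD-Ntypea}.

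\medskip

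\noindent\textbf{Proof of \eqref{T:BD-Ntypeb}.} Here I would use the factorization of an arbitrary genus-preserving morphism $f:\Gamma\to\Gamma'$ as a contraction $\Gamma\to\Gamma/S_0$ (where $S_0=(\Im f^E)^c$ is the set of contracted edges, which forms a forest since $f$ preserves $b_1$) followed by an isomorphism. Since isomorphisms obviously preserve all the counts, it suffices to treat $f:\Gamma\to\Gamma/S_0$ with $S_0$ a spanning forest on its vertex support. The pullback \eqref{E:ST-fun} identifies $\ST(\Gamma/S_0)$ with the spanning trees of $\Gamma$ containing $S_0$, and the push-forward $f_*$ of \eqref{E:f*-O} restricts to a bijection between $\BD_I\cap\{\text{strata whose spanning subgraph }\supseteq S_0\}$ and $\BD_{f_*(I)}$ — this is where Lemma~\ref{L:mor-BD} ($\BD_{f_*(I)}\subseteq f_*(\BD_I)$) combined with injectivity of $f_*$ on the relevant sub-poset does the work: $f_*$ is injective on divisors supported away from the contracted forest because contracting a forest does not merge distinct divisor classes. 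Thus $|\BD_{f_*(I)}(\Gamma/S_0)|$ equals the number of elements of $\BD_I(\Gamma)$ supported (as spanning subgraphs) over subgraphs containing $S_0$, which — by the same stratification as in part~(a), now restricted to $S\subseteq E(\Gamma)\setminus S_0$ — is bounded below by $c(\Gamma/S_0)$ via \eqref{T:BD-Ntype1} and bounded above by the corresponding partial sum in the decomposition of $|\BD_I(\Gamma)|=c(\Gamma)$; since part~(a) already forces every term of that decomposition to be tight, the partial sum is exactly $c(\Gamma/S_0)$, giving the claim.

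\medskip

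\noindent\textbf{Main obstacle.} The crux — and the step I would spend the most care on — is the precise combinatorial decomposition of $|\BD_I(\Gamma)|$ as a sum over spanning subgraphs $\Gamma\setminus S$ of (weighted) counts $|\BD_{I_{\Gamma\setminus S}}(\Gamma\setminus S)|$, with the weights matching the matrix-tree decomposition of $c(\Gamma)$ itself. One must check that distinct $(S,\O)$-data give distinct elements of $\BD_I(\Gamma)$ (so the sum is not an overcount) and that every element arises exactly once, which amounts to: each break-divisor-type element $I(T)+\D(\Gamma-T,\O)$ determines canonically the maximal connected spanning subgraph on which it is a ``bottom stratum.'' This is the analogue for arbitrary tree functions $I$ of the classical fact that break divisors on $\Gamma$ are in bijection with $\Pic$, and I expect it to follow from the structure of the poset $\bO^d_{\con}(\Gamma)$ together with part~\eqref{T:BD-Ntype1} applied at every level; once this bookkeeping is in place, both \eqref{T:BD-Ntypea} and \eqref{T:BD-Ntypeb} are immediate consequences of the tightness-propagation argument above.
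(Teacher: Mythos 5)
Your plan for part (2) rests entirely on a decomposition that does not exist. You want to write $|\BD_I(\Gamma)|$ as a weighted sum over \emph{all} connected spanning subgraphs $\Gamma\setminus S$ of terms involving $|\BD_{I_{\Gamma\setminus S}}(\Gamma\setminus S)|$, with each term bounded below by $c(\Gamma\setminus S)$, and then force termwise equality from $|\BD_I(\Gamma)|=c(\Gamma)$. But $\BD_I(\Gamma)$ is a single set of divisors on the top stratum, of cardinality $c(\Gamma)$, and already for the $2$-cycle (two vertices joined by two edges) one has $c(\Gamma)=2$ while $\sum_{S}c(\Gamma\setminus S)=2+1+1=4$: no partition of $\BD_I(\Gamma)$ into nonempty pieces indexed by all connected spanning subgraphs with piece sizes $\geq c(\Gamma\setminus S)$ can exist, and there is no ``matrix-tree'' identity $c(\Gamma)=\sum_S w_S\,c(\Gamma\setminus S)$ of the kind your bookkeeping requires (the subdivision formula \eqref{E:compl-Gm} used later in the paper concerns the subdivided graph $\Gamma^m$, not $\Gamma$ itself). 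The same vagueness infects part (b): the claim that $f_*$ restricts to a bijection onto $\BD_{f_*(I)}$ is unsubstantiated --- on the top stratum $f_*$ just sums a divisor over the fibres of $f_V$, which is far from injective on divisors, and Lemma \ref{L:mor-BD} only gives the inclusion $\BD_{f_*(I)}\subseteq f_*(\BD_I)$.

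The mechanism the paper actually uses (a mild modification of Yuen's argument) is local deletion--contraction at a single edge $e$ that is neither a loop nor a bridge, plus induction. One builds an injection $+u:\BD_{I_{\Gamma\setminus e}}(\Gamma\setminus e)\hookrightarrow \BD_I(\Gamma)$ and an injection $\iota:\BD_{I_{\Gamma/e}}(\Gamma/e)\hookrightarrow\BD_I(\Gamma)$ (a section of $(c_e)_*$, which is how the contraction direction is handled --- note the direction is opposite to your $f_*$), and the key point, completely absent from your proposal, is the correction $\wt\iota(D)=\iota(D)-K(D)u+K(D)v$ that shifts along the endpoints of $e$ until one leaves $\BD_I(\Gamma)$; this is what makes $\Im(+u)$ and $\Im(\wt\iota)$ disjoint. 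This yields $|\BD_I(\Gamma)|\geq |\BD_{I_{\Gamma\setminus e}}(\Gamma\setminus e)|+|\BD_{I_{\Gamma/e}}(\Gamma/e)|\geq c(\Gamma\setminus e)+c(\Gamma/e)=c(\Gamma)$, so equality at the top forces equality in both summands, and induction finishes: any connected spanning subgraph is reached by deleting non-separating edges one at a time, and any genus-preserving morphism is, up to an automorphism, a sequence of contractions of non-loop edges. Your ``tightness propagation'' intuition is the right one, but it must be run through this two-term deletion--contraction inequality, not through a global stratification of $\BD_I(\Gamma)$; as written, the central identity of your argument is false and the proof does not go through.
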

\begin{proof}
Part \eqref{T:BD-Ntype1} is proved by Yuen \cite[Thm. 3.5.1]{Yuen}. We will give a slight modification of the proof of loc. cit., which also proves part \eqref{T:BD-Ntype2}. 

We will do induction on the number of edges that are neither loops nor disconnecting edges. If there are no such edges, then there is a unique spanning subgraph and the Theorem is obvious.

Suppose this is not the case, and take an edge $e$ which is neither a loop nor a separating edge. Consider the tree functions $I_{\Gamma\setminus e}$ and $I_{\Gamma/e}$ induced by $I$ on the connected spanning subgraph $\Gamma\setminus e$ (as in \eqref{E:I-res}) and on the contraction $\Gamma/e$ (as in \eqref{E:I-fun}). Denote by $u$ and $v$  the two distinct vertices that are incident to the edge $e$. There is a injective map
\begin{equation}\label{E:+u}
\begin{aligned}
+u: \BD_{I_{\Gamma\setminus e}}(\Gamma\setminus e)& \hookrightarrow \BD_I(\Gamma)\\
D&\mapsto D+u \\
\end{aligned}
\end{equation}
which is well-defined since if $D=I_{\Gamma\setminus e}(T)+\D(\Gamma\setminus e-T,\O)$ for some spanning tree $T$ of $\Gamma\setminus e$ and some orientation $\O$ of $\Gamma\setminus e-T$, then 
$$
D+u=I_{\Gamma\setminus e}(T)+\D(\Gamma\setminus e-T,\O)+u=I(T)+\D(\Gamma-T,\O\cup \overrightarrow{vu}) 
$$
where $\O\cup \overrightarrow{vu}$ is the orientation of $\Gamma-T$ obtained by adding the orientation $\O$ on $\Gamma\setminus e-T$ with the orientation of $e$ that goes from $v$ to $u$. 
Similarly, there is a well-defined injective map 
\begin{equation}\label{E:+v}
\begin{aligned}
+v: \BD_{I_{\Gamma\setminus e}}(\Gamma\setminus e)& \hookrightarrow \BD_I(\Gamma)\\
D&\mapsto D+v \\
\end{aligned}
\end{equation}

Moreover there is a map 
\begin{equation}\label{E:map-i}
\begin{aligned}
\iota: \BD_{I_{\Gamma/e}}(\Gamma/e)& \hookrightarrow \BD_I(\Gamma)\\
I_{\Gamma/e}(T)+\D(\Gamma/e-T,\O)& \mapsto I(T\cup e)+\D(\Gamma-(T\cup e),\O),
\end{aligned}
\end{equation}
where we used that $T\cup e$ is a spanning subgraph of $\Gamma$ and we lifted the orientation $\O$ from $\Gamma/e-T$ to $\Gamma-(T\cup e)$, since the contraction morphism $c_e:\Gamma\to \Gamma/e$ induces a bijection between $E(\Gamma)-E(T\cup e)$ and  $E(\Gamma/e)-E(T)$.  The map $\iota$ is injective since $(c_e)_*\circ \iota=\id$. 

We now modify slightly the map $\iota$ as follows. For any $D\in \BD_{I_{\Gamma/e}}(\Gamma/e)$, we define by $K(D)$ the biggest natural number such that 
$$\iota(D)-K(D)u+K(D)v\in \BD_I(\Gamma) \quad \text{ and } \:  \iota(D)-(K(D)+1)u+(K(D)+1)v\not \in \BD_I(\Gamma).$$ 
Such a natural number $K(D)$ exists since $\iota(D)\in \BD_I(\Gamma)$ and $\BD_I(\Gamma)$ is a finite set. Then we define the new map 
\begin{equation}\label{E:map-tilde}
\begin{aligned}
\wt{\iota}: \BD_{I_{\Gamma/e}}(\Gamma/e)& \hookrightarrow \BD_I(\Gamma)\\
D& \mapsto \iota(D)-K(D)u+K(D)v.
\end{aligned}
\end{equation}

We now claim that 
\begin{equation}\label{E:+u-tilde}
+u\coprod \wt{\iota}: \BD_{I_{\Gamma\setminus e}}(\Gamma\setminus e)\coprod \BD_{I_{\Gamma/e}}(\Gamma/e) \to \BD_I(\Gamma)\quad  \text{ is injective.}
\end{equation}
Indeed, since $+u$ and $\wt{\iota}$ are injective, then it is enough to prove that $\Im(+u)\cap \Im(\wt{\iota})=\emptyset$. By contradiction, assume there exists $D\in \BD_{I_{\Gamma/e}}(\Gamma/e) $ such that $\wt{\iota}(D)\in \Im(+u)$, or in other words that $\wt{\iota}(D)-u\in  \BD_{I_{\Gamma\setminus e}}(\Gamma\setminus e)$. Then, using the map $+v$ of \eqref{E:+v},  we would have 
that 
$$ \BD_I(\Gamma)\ni \wt{\iota}(D)-u+v=\iota(D)-(K(D)+1)u+(K(D)+1)v, $$
which contradicts the choice of $K(D)$. 

From \eqref{E:+u-tilde} and the induction hypothesis, we get 
\begin{equation}\label{E:ineq-c}
|\BD_I(\Gamma)|\geq |\BD_{I_{\Gamma\setminus e}}(\Gamma\setminus e)|+|\BD_{I_{\Gamma/e}}(\Gamma/e)|\geq c(\Gamma\setminus e)+c(\Gamma/e)=c(\Gamma),
\end{equation}
where the last equality follows from the fact that (see \eqref{E:ST-res} and \eqref{E:ST-fun})
$$
\ST(\Gamma)=\ST(\Gamma\setminus e)\coprod c_e^*(\ST(\Gamma/e)).
$$
The inequality \eqref{E:ineq-c} proves part \eqref{T:BD-Ntype1}. Moreover, it shows that 
\begin{equation}\label{E:eq-ind}
|\BD_I(\Gamma)|=c(\Gamma)\Rightarrow |\BD_{I\setminus e}(\Gamma\setminus e)|=c(\Gamma\setminus e) \text{ and } |\BD_{I/e}(\Gamma/e)|=c(\Gamma/e).
\end{equation}
This proves  part \eqref{T:BD-Ntype2} since any connected spanning subgraphs $\Gamma\setminus S$ can be obtained by iteratively deleting non-separating edges 
and any morphism $f:\Gamma\to \Gamma'$ preserving the genus can be obtained, up to post-componing with an automorphism (which clearly will preserve the equality in question), 
 by iteratively contracting edges that are not loops.
\end{proof}

\begin{cor}\label{C:N=sN}
If $\BD_I$ is of numerically N-type, then:
\begin{enumerate}[(i)]
\item $\BD_{I_{\Gamma\setminus S}}$ is  of numerical SN-type for every connected spanning subgraph $\Gamma\setminus S$ of $\Gamma$.

In particular, $\BD_I$ is of numerical sN-type.
\item $\BD_{f_*(I)}$ if of numerical SN-type for every morphism $f:\Gamma\to \Gamma'$ which preserves the genus. 

In particular, $\BD_{I_{\Gamma/S}}$ is of numerical sN-type for every contraction $\Gamma/S$ such that $b_1(\Gamma/S)=b_1(\Gamma)$. 
\end{enumerate}
\end{cor}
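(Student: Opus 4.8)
The plan is to bootstrap everything from Theorem~\ref{T:BD-Ntype}, whose part~\eqref{T:BD-Ntype2} already does the essential work of propagating the single equality $|\BD_I(\Gamma)|=c(\Gamma)$ both downward along edge-deletions and along genus-preserving contractions. What remains is the bookkeeping needed to recast ``$|\BD(G)|=c(G)$ for every connected spanning subgraph $G$'' as the assertion that the relevant BD-set is of numerical sN-type in the sense of Definition~\ref{D:upper-sN} (recall that BD-sets are upper subsets by construction, so that definition applies).

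For part~(i): the hypothesis that $\BD_I$ is of numerical N-type is, by definition, the equality $|\BD_I(\Gamma)|=c(\Gamma)$, so Theorem~\ref{T:BD-Ntype}\eqref{T:BD-Ntype2}\eqref{T:BD-Ntypea} gives $|\BD_I(\Gamma\setminus S')|=c(\Gamma\setminus S')$ for \emph{every} connected spanning subgraph $\Gamma\setminus S'$ of $\Gamma$; this is precisely the statement that $\BD_I$ is of numerical sN-type, which settles the ``in particular''. Now fix a connected spanning subgraph $\Gamma\setminus S$. Its connected spanning subgraphs are exactly the connected graphs $G=\Gamma\setminus S'$ with $S\subseteq S'\subseteq E(\Gamma)$, and for any such $G$ every spanning tree of $G$ automatically avoids the edges of $S$; hence, directly from Definition~\ref{D:BD} (equivalently, from Lemma~\ref{L:inc-BD}), one has $\BD_{I_{\Gamma\setminus S}}(G)=\BD_I(G)$, so that $|\BD_{I_{\Gamma\setminus S}}(G)|=c(G)$ by the previous step. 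Ranging over all such $G$ shows that $\BD_{I_{\Gamma\setminus S}}$ is of numerical sN-type.

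For part~(ii): Theorem~\ref{T:BD-Ntype}\eqref{T:BD-Ntype2}\eqref{T:BD-Ntypeb} gives $|\BD_{f_*(I)}(\Gamma')|=c(\Gamma')$, i.e.\ $\BD_{f_*(I)}$ is of numerical N-type on $\Gamma'$ for the tree function $f_*(I)\colon\ST(\Gamma')\to\Div^{d-b_1(\Gamma')}(\Gamma')$ of~\eqref{E:I-fun}. Applying part~(i) to the graph $\Gamma'$ and the tree function $f_*(I)$ (with $S=\emptyset$) then shows that $\BD_{f_*(I)}$ is of numerical sN-type. The ``in particular'' is the special case where $f\colon\Gamma\to\Gamma/S$ is an edge-contraction with $b_1(\Gamma/S)=b_1(\Gamma)$: such an $f$ is a genus-preserving morphism of graphs (for connected graphs, genus equals $b_1$), and by the convention fixed just before Lemma~\ref{L:mor-BD} one has $f_*(I)=I_{\Gamma/S}$.

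I expect essentially no obstacle here, since the content is entirely in Theorem~\ref{T:BD-Ntype}; the only point deserving a line of care is the identification $\BD_{I_{\Gamma\setminus S}}(G)=\BD_I(G)$ for connected spanning subgraphs $G\le\Gamma\setminus S$. This rests on the observation that restricting $I$ successively to $\ST(\Gamma\setminus S)$ and then to $\ST(G)$ yields the same function as restricting it in one step to $\ST(G)$ (via the inclusions~\eqref{E:ST-res}), together with the elementary check that the degree $d-|E(G)^c|$ attached to a divisor on $G$ is the same whether $E(G)^c$ is computed inside $\Gamma$ or inside $\Gamma\setminus S$.
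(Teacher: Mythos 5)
Your proof is correct and follows essentially the same route as the paper, which simply cites Theorem \ref{T:BD-Ntype}\eqref{T:BD-Ntype2}; your extra bookkeeping (the identification $\BD_{I_{\Gamma\setminus S}}(G)=\BD_I(G)$ via Lemma \ref{L:inc-BD} and the convention $f_*(I)=I_{\Gamma/S}$ for genus-preserving contractions) is exactly the routine verification the paper leaves implicit.
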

\begin{proof}
This follows from Theorem \ref{T:BD-Ntype}\eqref{T:BD-Ntype2}.
\end{proof}

\begin{cor}\label{C:upper-sN}
If $\cP$ is an upper subset of $\bO_{\con}^d(\Gamma)$ of numerical sN-type, then $\cP=\BD_I$ for a unique $I$.
\end{cor}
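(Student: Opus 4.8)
The plan is to read off the tree function $I$ directly from $\cP$ and then prove $\cP=\BD_I$ by a double inclusion, the nontrivial direction being a cardinality count. First I would note that a spanning tree $T$ of $\Gamma$ satisfies $c(T)=1$, so the numerical sN-type hypothesis $|\cP(T)|=c(T)$ forces $\cP(T)$ to consist of a single divisor, which lies in $\Div^{d-|E(T)^c|}(\Gamma)=\Div^{d-b_1(\Gamma)}(\Gamma)$ since $|E(T)^c|=b_1(\Gamma)$ for connected $\Gamma$. Defining $I(T)$ to be that divisor yields a tree function $I\colon\ST(\Gamma)\to\Div^{d-b_1(\Gamma)}(\Gamma)$ of degree $d$, and hence a BD-set $\BD_I\subseteq\bO_{\con}^d(\Gamma)$.

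For the inclusion $\BD_I\subseteq\cP$ I would use that $\cP$ is an upper subset. By construction $(T,I(T))\in\cP$ for every spanning tree $T$; and for any connected $G\geq T$ and any orientation $\O$ of $G-T$, the element $(G,I(T)+\D(G-T,\O))$ dominates $(T,I(T))$ in the order of $\bO_{\con}^d(\Gamma)$ by Definition \ref{D:posetO}, so it lies in $\cP$. As these elements exhaust $\BD_I(G)$ when $T$ ranges over the spanning trees contained in $G$, this gives $\BD_I(G)\subseteq\cP(G)$ for every $G\in\SSc(\Gamma)$.

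The reverse inclusion is where Theorem \ref{T:BD-Ntype} does the work, and it is the main point. Evaluating at $\Gamma$: from $\BD_I(\Gamma)\subseteq\cP(\Gamma)$, $|\cP(\Gamma)|=c(\Gamma)$, and the lower bound $|\BD_I(\Gamma)|\geq c(\Gamma)$ of Theorem \ref{T:BD-Ntype}\eqref{T:BD-Ntype1}, we get $|\BD_I(\Gamma)|=c(\Gamma)=|\cP(\Gamma)|$, hence $\BD_I(\Gamma)=\cP(\Gamma)$. Theorem \ref{T:BD-Ntype}\eqref{T:BD-Ntypea} then yields $|\BD_I(\Gamma\setminus S)|=c(\Gamma\setminus S)$ for every connected spanning subgraph $\Gamma\setminus S$ (identifying $\BD_I(\Gamma\setminus S)$ with $\BD_{I_{\Gamma\setminus S}}(\Gamma\setminus S)$ via Lemma \ref{L:inc-BD}); together with $\BD_I(\Gamma\setminus S)\subseteq\cP(\Gamma\setminus S)$ and $|\cP(\Gamma\setminus S)|=c(\Gamma\setminus S)$ this forces equality for every $\Gamma\setminus S\in\SSc(\Gamma)$, i.e. $\BD_I=\cP$. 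Uniqueness is immediate: if $\BD_I=\BD_{I'}$, then evaluating at a spanning tree $T$ gives $\{I(T)\}=\BD_I(T)=\BD_{I'}(T)=\{I'(T)\}$, since the only spanning tree below $T$ is $T$ itself and $\D(T-T)=\{0\}$; hence $I=I'$. I expect the only substantive ingredient to be this cardinality propagation supplied by Theorem \ref{T:BD-Ntype}; everything else is bookkeeping with the poset order and the definitions.
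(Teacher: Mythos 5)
Your proof is correct and follows essentially the same route as the paper: define $I(T)$ as the unique element of $\cP(T)$ (using $|\cP(T)|=c(T)=1$), get $\BD_I\subseteq\cP$ from the upper-set property, and force equality by the cardinality bound of Theorem \ref{T:BD-Ntype}. The only cosmetic difference is that you propagate the count through part \eqref{T:BD-Ntype2}\eqref{T:BD-Ntypea} after first matching cardinalities at $\Gamma$, whereas the paper applies the lower bound of part \eqref{T:BD-Ntype1} directly to every connected spanning subgraph; both are valid, and your explicit uniqueness argument via $\BD_I(T)=\{I(T)\}$ is fine.
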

\begin{proof}
Since $\cP$ is of numerical sN-type, we have that $\cP(T)$ has cardinality equal to $c(T)=1$ for every spanning subgraph of $\Gamma$. 
Hence we can define the following tree function of degree $d$
$$
\begin{aligned}
I: \ST(\Gamma) & \longrightarrow \Div^{d-b_1(\Gamma)}(\Gamma) \\
T & \mapsto I(T)\quad \text{ such that } \: \cP(T)=\{I(T)\}.
\end{aligned}
$$
Since $\cP$ is an upper subset, we have that $\BD_I(G)\subseteq \cP(G)$ for every spanning subgraph $G$ of $\Gamma$. 
Since $|\BD_I(G)|\geq c(G)$ by Theorem \ref{T:BD-Ntype}\eqref{T:BD-Ntype1} and $|\cP(G)|=c(G)$ since $\cP$ is of numerical sN-type, then we have that 
 $\BD_I(G)=\cP(G)$ for every $G\in \SS_{\con}(\Gamma)$, which shows that $\cP=\BD_I$.
\end{proof}

\subsection{The Main Theorem}

The aim of this subsection is to prove the following Theorem, which can be seen as the Main Theorem of this section.

\begin{thm}\label{T:main-comb}
Let $\Gamma$ be a connected graph. 
\begin{enumerate}
\item \label{T:main-comb1} There is a bijection 
$$
\begin{aligned}
\left\{\text{V-stabilities of degree $d$}\right\} & \longrightarrow \left\{\begin{aligned}\text{degree-$d$ tree functions $I$ such that}\\ \text{$\BD_I$ is of numerical N-type}\end{aligned}\right\} \\
\frak n &\mapsto I_{\frak n},
\end{aligned}
$$
such that $\cP_{\frak n}=\BD_{I_{\frak n}}$.

The tree function  $I_{\frak n}$ is defined as it follows: for any $T\in \ST(\Gamma)$ and any $e\in E(T)$, write $T\setminus e=T[W_e] \coprod T[W_e^c]$, with $T[W_e]$ and $T[W_e^c]$ connected; then $I_{\frak n}(T)$ is the unique element of $\Div^{d-b_1(\Gamma)}(\Gamma)$ such that 
\begin{equation}\label{E:form-In}
I_{\frak n}(T)_{W_e}=\frak n_{W_e}-b_1(\Gamma[W_e]) \text{ and } I_{\frak n}(T)_{W_e^c}=\frak n_{W_e^c}-b_1(\Gamma[W_e^c]).
\end{equation}
\item \label{T:main-comb2} For a subset $\cP\subset \bO^d(\Gamma)$, the following conditions are equivalent:
\begin{enumerate}
\item  \label{T:main-combA} $\cP$ is a V-subset;
\item  \label{T:main-combB} $\cP$ is an upper subset of sN-type;
\item  \label{T:main-combC}$\cP$ is an upper subset of numerical sN-type;
\item  \label{T:main-combD} $\cP$ is a BD-set of N-type;
\item  \label{T:main-combE} $\cP$ is a BD-set of numerical N-type.
\end{enumerate}
\end{enumerate}
\end{thm}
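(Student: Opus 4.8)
The plan is to establish the two parts together, using the combinatorial machinery built in the preceding subsections as the backbone. The logical skeleton will be a cycle of implications for part \eqref{T:main-comb2}, with the bijection of part \eqref{T:main-comb1} emerging as a byproduct. The implications $\eqref{T:main-combB}\Rightarrow\eqref{T:main-combC}$, $\eqref{T:main-combD}\Rightarrow\eqref{T:main-combE}$, $\eqref{T:main-combB}\Rightarrow\eqref{T:main-combD}$ and $\eqref{T:main-combC}\Rightarrow\eqref{T:main-combE}$ are formal, since the cardinality of $\Pic^e(G)$ equals $c(G)$ and restriction to $G=\Gamma$ loses no information once one knows $\cP$ is an upper subset (the diagram of implications in Definition \ref{D:upper-sN}). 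So the real content is to close the loop among \eqref{T:main-combA}, \eqref{T:main-combC}, \eqref{T:main-combE}, and I would organize it as $\eqref{T:main-combA}\Rightarrow\eqref{T:main-combE}\Rightarrow\eqref{T:main-combC}\Rightarrow\eqref{T:main-combA}$, or more precisely prove $\eqref{T:main-combA}\Rightarrow\eqref{T:main-combE}$ and $\eqref{T:main-combE}\Rightarrow\eqref{T:main-combB}$ (via Corollary \ref{C:upper-sN} once we know a BD-set of numerical N-type is automatically of numerical sN-type by Corollary \ref{C:N=sN}), while the return trip $\eqref{T:main-combB}\text{ or }\eqref{T:main-combC}\Rightarrow\eqref{T:main-combA}$ is handled by explicitly reconstructing $\frak n$ from $\cP$.

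\textbf{The core step: $\cP_{\frak n}$ is a BD-set of numerical N-type.} First I would check that for each spanning tree $T$ there is a \emph{unique} divisor $I_{\frak n}(T)\in\Div^{d-b_1(\Gamma)}(\Gamma)$ satisfying \eqref{E:form-In}: this is the tree computation of Example \ref{Ex:msa}\eqref{Ex:msa1} applied to $T$, using the compatibility identity $\frak n_{W_e}-b_1(\Gamma[W_e])+\frak n_{W_e^c}-b_1(\Gamma[W_e^c])=d-b_1(\Gamma)$, which follows from \eqref{E:Vstab1} and $b_1(\Gamma)=b_1(\Gamma[W_e])+b_1(\Gamma[W_e^c])+\val(W_e)-1$ together with $\val_T(W_e)=1$. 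Then I would show $\BD_{I_{\frak n}}\subseteq\cP_{\frak n}$: since $\cP_{\frak n}$ is an upper set (Proposition \ref{P:Bn-upper}) it suffices to check $I_{\frak n}(T)\in\cP_{\frak n}(T)$ for every spanning tree $T$, i.e. that $I_{\frak n}(T)_W\geq\frak n_W$ for every biconnected non-trivial $W$ — this is done by the induction on $\val_T(W)$ exactly as in the proof of Proposition \ref{P:cano-msa} (the case $\val_T(W)=1$ is the definition; the inductive step splits $W=Z_1\sqcup Z_2$ and uses \eqref{E:Vstab2}). Conversely, $\cP_{\frak n}\subseteq\BD_{I_{\frak n}}$ and the numerical N-type property: by Proposition \ref{P:Bn-card}, $\cP_{\frak n}(\Gamma)$ injects into $\Pic^d(\Gamma)$, so $|\cP_{\frak n}(\Gamma)|\leq c(\Gamma)$; but $|\BD_{I_{\frak n}}(\Gamma)|\geq c(\Gamma)$ by Theorem \ref{T:BD-Ntype}\eqref{T:BD-Ntype1}, and $\BD_{I_{\frak n}}(\Gamma)\subseteq\cP_{\frak n}(\Gamma)$, forcing equality $\BD_{I_{\frak n}}(\Gamma)=\cP_{\frak n}(\Gamma)$ and $|\cP_{\frak n}(\Gamma)|=c(\Gamma)$, so $\BD_{I_{\frak n}}$ is of numerical N-type. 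To get equality of the two upper sets on \emph{every} connected spanning subgraph $G=\Gamma\setminus S$, I would invoke Lemma-Definition \ref{LD:V-subgr}: the restricted V-stability $\frak n(\Gamma\setminus S)$ has $\cP_{\frak n(\Gamma\setminus S)}=\cP_{\frak n}\cap\bO^{d-|S|}_{\con}(\Gamma\setminus S)$ and its associated tree function is $(I_{\frak n})_{\Gamma\setminus S}$ (compatibility of \eqref{E:form-In} with edge deletion, using Lemma \ref{L:inc-BD}), so the argument just given applied to $\Gamma\setminus S$ yields $\BD_{(I_{\frak n})_{\Gamma\setminus S}}(\Gamma\setminus S)=\cP_{\frak n}(\Gamma\setminus S)$. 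Hence $\cP_{\frak n}=\BD_{I_{\frak n}}$.

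\textbf{Surjectivity/return direction and the bijection.} For the converse — that every degree-$d$ tree function $I$ with $\BD_I$ of numerical N-type arises as $I_{\frak n}$ for a unique V-stability $\frak n$, equivalently that every upper subset of numerical sN-type is a V-subset — I would argue as follows. Given such $\cP$ (which by Corollary \ref{C:upper-sN} equals $\BD_I$ for a unique $I$, and by Corollary \ref{C:N=sN} is then of numerical N-type iff of numerical sN-type, so the two formulations match), define, for each biconnected non-trivial $W\subset V(\Gamma)$, the candidate value $\frak n_W:=\min\{D_W+e_S(W):(\Gamma\setminus S,D)\in\cP\}$ — or more tractably, pick a spanning tree $T$ with $\val_T(W)=1$ (possible since $W$ is biconnected, so there is a spanning tree of the biconnected contraction realizing $W$) and set $\frak n_W:=I(T)_W+b_1(\Gamma[W])$; one must check this is independent of the choice of $T$. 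Independence, and the two V-stability axioms \eqref{E:Vstab1}--\eqref{E:Vstab2} for this $\frak n$, I would extract from the numerical N-type property of $\BD_I$ on the relevant connected spanning subgraphs and contractions (Corollary \ref{C:N=sN}), relating $I(T)_W$ for different trees by break-divisor/chip-firing moves on the contracted graph $\Gamma/(E(T)\setminus e)$ — a $2$-valent or biconnected graph on the vertex classes $W,W^c$ — where numerical N-type pins down the possible values tightly. Once $\frak n$ is shown to be a V-stability, the first part of the proof gives $\cP_{\frak n}=\BD_{I_{\frak n}}$, and comparing tree functions on each $T$ via \eqref{E:form-In} gives $I_{\frak n}=I$, hence $\cP=\cP_{\frak n}$; uniqueness of $\frak n$ follows since $\frak n_W$ is recovered from $\cP$ by the displayed minimum. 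I expect \textbf{the main obstacle} to be precisely this independence-of-$T$ verification together with checking axiom \eqref{E:Vstab2}: it is where one genuinely needs the \emph{quantitative} N-type equality $|\BD_I(-)|=c(-)$ rather than just the inequality, and where the inductive edge-deletion/contraction bookkeeping of Theorem \ref{T:BD-Ntype} has to be leveraged carefully — comparing $I(T)$ and $I(T')$ for trees differing by one edge swap, and tracking how the break-divisor representatives move, in order to land exactly in the window $-1\leq\frak n_{W_1\cup W_2}-\frak n_{W_1}-\frak n_{W_2}-\val(W_1,W_2)\leq 0$.
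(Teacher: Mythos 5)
Your overall architecture coincides with the paper's: prove $\cP_{\frak n}=\BD_{I_{\frak n}}$ with the numerical N-type property by pinning down $\cP_{\frak n}(T)$ on spanning trees and then squeezing with Theorem \ref{T:BD-Ntype} and Proposition \ref{P:Bn-card}, and recover the converse by defining $\frak n_W:=I(T)_W+b_1(\Gamma[W])$ on trees adapted to $W$. Two points need attention. First, in your core step the condition you say you will verify is mis-stated: stability of $(T,D)$ with $T=\Gamma\setminus S$ requires $D_W+e_S(W)\geq \frak n_W$, not $D_W\geq\frak n_W$ (the latter is in fact false already for $\val_T(W)=1$ with $b_1(\Gamma[W])>0$). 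With the correct condition, the claim to prove is $\frak n_W-I_{\frak n}(T)_W-g(\Gamma[W])\leq k_T(W)-1$, where $k_T(W)$ is the number of components of $T[W]$; the induction of Proposition \ref{P:cano-msa}, invoked ``exactly as'' stated, only yields the weaker bound with $\val_T(W)-1$ on the right, which is not enough (the critical case is $T[W]$ connected, $\val_T(W)\geq 2$, where you need the bound $0$). The induction can be sharpened — splitting $W$ along $T$-components gives $k_T(Z_1)+k_T(Z_2)=k_T(W)$ and $k_T(Z_1^c)+k_T(Z_2^c)=k_T(W^c)+1$, so the refined two-sided bound does propagate — so your route is salvageable and would replace the paper's Lemma \ref{L:STnonem}, which instead proves non-emptiness of $\cP_{\frak n}(T)$ by induction on the number of vertices (leaf contraction) and then identifies the unique element via \eqref{E:stabV-upp}. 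But as written this step is a gap, not a quotation of Proposition \ref{P:cano-msa}.

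Second, and more seriously, the converse direction is where the bulk of the paper's proof lives (Lemma \ref{L:I-n}), and your proposal only names the difficulty without resolving it. Well-definedness of $\frak n_W=I(T)_W+b_1(\Gamma[W])$ over all adapted trees $T$ is not obtained by a single contraction $\Gamma/(E(T)\setminus e)$: two adapted trees can differ in many edges, and the paper handles this by an induction on the complexity $c(\Gamma)$, using Corollary \ref{C:N=sN} to pass to deletions $\Gamma\setminus e$ and contractions $\Gamma/e$, with a three-case analysis (the delicate case being when one tree contains $e$ and the other does not, resolved by a cycle-counting argument in a genus-one spanning subgraph where numerical sN-type forces $|\BD_I|=r+1$). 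Likewise, axiom \eqref{E:Vstab2} is not a soft consequence of ``numerical N-type pins down the values tightly'': the paper contracts to a three-vertex graph with a triangle $\ov e_{12}+\ov e_{13}+\ov e_{23}$, where the equality $|\BD_{\ov I}|=c=3$ on the $3$-cycle forces exactly two possible shapes of $\ov I$ (Cases A and B), which produce precisely the values $0$ and $-1$ in the window. These constructions are the missing content of your sketch; without them the reconstruction of $\frak n$, its independence of choices, and the verification of \eqref{E:Vstab1}–\eqref{E:Vstab2} remain unproved, so part \eqref{T:main-comb1} (and hence the implication from numerical N-type BD-sets back to V-subsets) is not yet established by your proposal.
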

The fact that, for a general numerical polarization $\phi$, the set $\cP_{\frak n(\phi)}$ is an upper subset of sN-type was proved in \cite[Prop. 3.5]{MV}. 
\begin{proof}
Let us first prove part \eqref{T:main-comb1}.
First of all, note that $I_{\frak n}$ is well-defined, since $I_{\frak n}(T)$ is uniquely determined by the formula \eqref{E:form-In} for all the edges $e$ of $T$ and it has total degree equal to $d-b_1(\Gamma)$ since 
$$
I_{\frak n}(T)_{W_e}+I_{\frak n}(T)_{W_e^c}=\frak n_{W_e}-b_1(\Gamma[W_e])+\frak n_{W_e^c}-b_1(\Gamma[W_e^c])=
$$
$$=d+1-\val(W)-b_1(\Gamma[W_e])-b_1(\Gamma[W_e^c])=d-b_1(\Gamma).
$$

We divide the proof into the following two Claims. 

$\un{\text{Claim 1}}:$ $\BD_{I_{\frak n}}$ is of numerical N-type and $\cP_{\frak n}=\BD_{I_{\frak n}}$.

Indeed, Lemma \ref{L:STnonem} implies that for any $T:=\Gamma\setminus S\in \ST(\Gamma)$, we have that 
$$\cP_{\frak n}(T)=\{D(T)\},$$ 
for some $D(T)\in \Div^{d-b_1(\Gamma)}(T)$. Consider now an edge $e\in T$,  and write $T\setminus e=T[W_e] \coprod T[W_e^c]$, with $T[W_e]$ and $T[W_e^c]$ connected.
Then, we have that $T[W_e]$ is a spanning tree of $\Gamma[W_e]$, $T[W_e^c]$ is a spanning tree of $\Gamma[W_e^c]$ and $\val_T(W)=1$. Therefore, using Definition \ref{D:stabV} and \eqref{E:stabV-upp}, we get that 
\begin{equation}\label{E:forDT}
\frak n_{W_e}\leq D(T)_{W_e}+e_S(W_e)=D(T)_{W_e}+b_1(\Gamma[W_e])\leq \frak n_{W_e}+1-\val_T(W_e)=\frak n_{W_e}.
\end{equation}
Comparing \eqref{E:form-In} and \eqref{E:forDT}, we infer that $D(T)_{W_e}=I_{\frak n}(T)_{W_e}$. Since this is true for any edge $e$ of $T$, we deduce that $D(T)=I_{\frak n}(T)$, or in other words that 
 $$\cP_{\frak n}(T)=\{I_{\frak n}(T)\}.$$ 
This implies, using the Definition \ref{D:BD} of BD-sets and the fact that a V-set is an upper subset by Proposition \ref{P:Bn-upper}, that 
\begin{equation}\label{E:1equ}
\BD_{I_{\frak n}}\subseteq \cP_{\frak n}, \text{ or, equivalently, that } \BD_{I_{\frak n}}(\Gamma\setminus S) \subseteq \cP_{\frak n}(\Gamma\setminus S),
\end{equation}
for any connected spanning subgraph $\Gamma\setminus S$. Now, Theorem \ref{T:BD-Ntype} and Lemma \ref{L:inc-BD} imply that 
\begin{equation}\label{E:2equ}
c(\Gamma\setminus S)\leq |\BD_{I_{\frak n}}(\Gamma\setminus S)|,
\end{equation}
while Proposition \ref{P:Bn-card}\eqref{P:Bn-card2} implies that 
\begin{equation}\label{E:3equ}
|\cP_{\frak n}(\Gamma\setminus S)|\leq c(\Gamma\setminus S). 
\end{equation}
Combining \eqref{E:1equ}, \eqref{E:2equ}, \eqref{E:3equ}, we deduce that $\cP_{\frak n}=\BD_{I_{\frak n}}$ and that $\BD_{I_{\frak n}}$ is of numerical N-type, which proves Claim 1.

$\un{\text{Claim 2}}:$ Any BD-set of numerical N-type is of the form $\BD_{I_{\frak n}}$, for a unique general V-stability $\frak n$. 

Indeed, consider a BD-set $\BD_I$ of numerical N-type. Consider the general V-stability condition $\frak n^I$ defined in Lemma \ref{L:I-n} below. If $T$ is a spanning tree of $\Gamma$, $e$ is an edge of $T$ and we write $T\setminus e=T[W_e]\coprod T[W_e^c]$, with $T[W_e]$ and $T[W_e^c]$ connected, then we have that $T[W_e]$ is a spanning tree of $\Gamma[W_e]$, $T[W_e^c]$ is a spanning tree of $\Gamma[W_e^c]$ and $\val_T(W)=1$. Hence, by applying  \eqref{E:nI} for $\frak n^I$ and \eqref{E:form-In} for $I_{\frak n^I}$, we deduce that $I(T)_{W_e}=I_{\frak n^I}(T)_{W_e}$. Since this holds for any edge $e$  of $T$, we infer that $I(T)=I_{\frak n^I}(T)$, which then implies that 
$$
I=I_{\frak n^I} \quad \text{ or, equivalently, } \BD_I=\BD_{I_{\frak n^I}}.
$$

It remains to show that $\frak n^I$ is the unique general V-stability of degree $d$ such that $I_{\frak n^I}=I$. With this aim, suppose that  $\frak n$ is a general V-stability of degree $d$ such that $I_{\frak n}=I$.  
Let $W\subset V(\Gamma)$ be a biconnected subset of $\Gamma$. Choose a spanning tree $T_1$ of $\Gamma[W]$, a spanning tree $T_2$ of $\Gamma[W^c]$ and an edge $e\in E(W,W^c)$. Then the union $T:=T_1\cup T_2\cup \{e\}$ is a spanning subgraph of $\Gamma$, and we have that $W=W_e$ and $W_e^c=W^c$ (up to switching $W$ and $W^c$).
Then, applying \eqref{E:form-In} to $\frak n$ and $\frak n^I$ and using that $I_{\frak n}=I=I_{\frak n^I}$, we get that 
$$
\frak n_W-b_1(\Gamma[W])=I(T)_W=\frak n^I_W-b_1(\Gamma[W]),
$$
which implies that $\frak n=\frak n^I$.

Let us now prove part \eqref{T:main-comb2} by proving the following cycle of implications.

$\bullet$ $\eqref{T:main-combA} \Rightarrow \eqref{T:main-combB}$:  first of all, if $\cP$ is a V-subset then $\cP$ is an upper subset of $\bO_{\con}^d(\Gamma)$ by Proposition \ref{P:Bn-upper} and \eqref{P:Bn-card}\eqref{P:Bn-card1}.  Consider now a connected spanning subgraph $\Gamma\setminus S$ of $\Gamma$. Proposition \ref{P:Bn-card}\eqref{P:Bn-card2} implies that 
the composition 
\[\pi(\Gamma\setminus S):\cP_{\frak n}(\Gamma\setminus S)\subseteq \Div^{d-|S|}(\Gamma\setminus S) \twoheadrightarrow \Pic^{d-|S|}(\Gamma\setminus S)\]
is injective. Since $\cP_{\frak n}$ is a BD-set of numerical N-type by part \eqref{T:main-comb1}, Theorem \ref{T:BD-Ntype}\eqref{T:BD-Ntypea} implies that the cardinality of $\cP_{\frak n}(\Gamma\setminus S)$ is equal to the complexity of $\Gamma\setminus S$, which is also the cardinality of $\Pic^{d-|S|}(\Gamma\setminus S)$. Hence, the above map $\pi$ is bijective, and we conclude that $\cP$ is of sN-type.

$\bullet$ $\eqref{T:main-combB} \Rightarrow \eqref{T:main-combC}$: obvious. 

$\bullet$ $\eqref{T:main-combC} \Rightarrow \eqref{T:main-combD}$: it follows from Corollary \ref{C:upper-sN}.

$\bullet$ $\eqref{T:main-combD} \Rightarrow \eqref{T:main-combE}$: obvious. 

$\bullet$ $\eqref{T:main-combE} \Rightarrow \eqref{T:main-combA}$: it follows from part \eqref{T:main-comb1}. 
\end{proof}

\begin{lemma}\label{L:STnonem}
Let $\frak n$ be a general V-stability condition on $\Gamma$.  If $T$ is a spanning tree of $\Gamma$, then $\cP_{\frak n}(T)$ consists of a single divisor. 
\end{lemma}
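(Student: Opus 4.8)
The plan is to prove the two halves of the statement separately. That $\cP_{\frak n}(T)$ has \emph{at most} one element is immediate from Proposition~\ref{P:Bn-card}\eqref{P:Bn-card2}: since $T$ is connected, the composition $\cP_{\frak n}(T)\hookrightarrow \Div^{d-b_1(\Gamma)}(T)\twoheadrightarrow \Pic^{d-b_1(\Gamma)}(T)$ is injective, and the target is a singleton because $c(T)=1$. Hence the whole content is to exhibit one element of $\cP_{\frak n}(T)$.

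To this end, set $S:=E(\Gamma)\setminus E(T)$, so $|S|=b_1(\Gamma)$, and for each $e\in E(T)$ write $T\setminus e=T[W_e]\coprod T[W_e^c]$ with both halves connected. Then $\val_T(W_e)=1$, and $T[W_e]$ (resp.\ $T[W_e^c]$) is a spanning tree of the connected graph $\Gamma[W_e]$ (resp.\ $\Gamma[W_e^c]$), so that $e_S(W_e)=b_1(\Gamma[W_e])$ and $e_S(W_e^c)=b_1(\Gamma[W_e^c])$. First I would record the ``degree identity'' $(\frak n_{W_e}-e_S(W_e))+(\frak n_{W_e^c}-e_S(W_e^c))=d-b_1(\Gamma)$ valid for every $e\in E(T)$, which is a formal consequence of \eqref{E:Vstab1}, \eqref{E:val-valS} and \eqref{add-val}. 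Since every non-trivial biconnected $W$ with $\val_T(W)=1$ equals some $W_e$ or $W_e^c$, an easy induction along the tree $T$ (in the spirit of Example~\ref{Ex:msa}\eqref{Ex:msa1}) then produces a \emph{unique} divisor $D\in\Div^{d-b_1(\Gamma)}(\Gamma)$ with $D_{W_e}=\frak n_{W_e}-e_S(W_e)$ for every $e\in E(T)$; by the degree identity we then automatically have $D_{W_e^c}=\frak n_{W_e^c}-e_S(W_e^c)$ as well.

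It remains to check that this $D$ lies in $\cP_{\frak n}(T)$, i.e.\ that $D_W+e_S(W)\ge\frak n_W$ for \emph{every} non-trivial biconnected $W\subseteq V(\Gamma)$, and I would argue by induction on $m:=\val_T(W)$. When $m=1$ the inequality is an equality by the construction of $D$ (for $W=W_e^c$ one uses the degree identity). When $m\ge2$, at least one of $T[W]$, $T[W^c]$ is disconnected; splitting the disconnected one into its connected components and using that $\Gamma[W]$ and $\Gamma[W^c]$ are connected, one writes the corresponding set as a disjoint union $Z_1\coprod Z_2$ with $Z_1,Z_2$ biconnected, $\Gamma[Z_i]$ connected, $\val_T(Z_1,Z_2)=0$, and $\val_T(Z_1)+\val_T(Z_2)=m$ with both summands $\ge1$ (hence $<m$) --- this is exactly the combinatorial move in the proof of Proposition~\ref{P:cano-msa}. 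If $T[W]$ is the disconnected one, I would apply the inductive hypothesis to $Z_1$ and $Z_2$, then \eqref{E:Vstab2} to the triple $(Z_1,Z_2,W)$, together with $\val_S(Z_1,Z_2)=\val(Z_1,Z_2)$ (which holds by \eqref{E:val-valS} since $\val_T(Z_1,Z_2)=0$), to get
\[
D_W+e_S(W)=(D_{Z_1}+e_S(Z_1))+(D_{Z_2}+e_S(Z_2))+\val_S(Z_1,Z_2)\ge\frak n_{Z_1}+\frak n_{Z_2}+\val(Z_1,Z_2)\ge\frak n_W.
\]
If instead $T[W]$ is connected (so $T[W^c]$ is disconnected), I would apply the inductive hypothesis to the complements $Z_1^c$ and $Z_2^c$ (again biconnected and of $T$-valence $<m$), convert the resulting lower bounds into the upper bounds $D_{Z_i}+e_S(Z_i)\le\frak n_{Z_i}-1+\val_T(Z_i)$ via the elementary identity $D_{U^c}+e_S(U^c)=d-\val(U)+\val_T(U)-D_U-e_S(U)$ (valid for any $U$, a consequence of \eqref{E:Vstab1} and \eqref{add-val}; compare the computation in Remark~\ref{sym-inequ}), and then use \eqref{E:Vstab2} for $(Z_1,Z_2,W^c)$ to conclude $D_{W^c}+e_S(W^c)\le\frak n_{W^c}-1+\val_T(W^c)$, which by the same identity is equivalent to $D_W+e_S(W)\ge\frak n_W$.

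I expect the last verification to be the main obstacle. The delicate point is that the defining inequalities of $\cP_{\frak n}(T)$ must be confirmed for \emph{all} biconnected $W$, not just the ``tree-like'' ones with $\val_T(W)=1$ on which $D$ was designed; and since the two sub-cases ($T[W]$ disconnected versus $T[W]$ connected) are genuinely dual, one must be careful to feed the inductive hypothesis through the complementation identity in the second case. By contrast, the degree identity, the construction of $D$, and the splitting $W=Z_1\coprod Z_2$ are routine bookkeeping with \eqref{add-val}, \eqref{E:val-valS}, \eqref{E:Vstab1} and \eqref{E:Vstab2}.
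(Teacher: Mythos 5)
Your proof is correct, but it follows a genuinely different route from the paper's. The uniqueness half is the same in both (Proposition~\ref{P:Bn-card}\eqref{P:Bn-card2} plus $c(T)=1$). For existence, the paper argues by induction on the number of vertices of $T$: it contracts the edge at an end vertex, invokes the push-forward machinery for V-stabilities (Lemma-Definition~\ref{LD:V-func} and Proposition~\ref{P:stabV-fun}) to get the unique stable divisor on the contracted graph, lifts it by hand, and then verifies stability through a three-case analysis. You instead bypass contractions entirely: you write down the candidate divisor explicitly by its values on the tree-cuts, $D_{W_e}=\frak n_{W_e}-e_S(W_e)$ (this is exactly the divisor $I_{\frak n}(T)$ of \eqref{E:form-In}, since $e_S(W_e)=b_1(\Gamma[W_e])$), and check the defining inequalities by induction on $\val_T(W)$, reusing the splitting $W=Z_1\coprod Z_2$ from the proof of Proposition~\ref{P:cano-msa} together with \eqref{E:Vstab2}, with the dual case handled through the complementation identity as in Remark~\ref{sym-inequ}. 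I checked the two cases of your induction and the base case, and they close correctly (note that the complementation identity $D_{U^c}+e_S(U^c)=d-\val(U)+\val_T(U)-D_U-e_S(U)$ uses only $|D|=d-|S|$ together with \eqref{E:val-valS} and \eqref{add-val}, not \eqref{E:Vstab1}; the latter enters only when you translate the upper bound on $D_{W^c}+e_S(W^c)$ back into the lower bound on $D_W+e_S(W)$). What each approach buys: the paper's contraction argument fits into its general functoriality framework and recycles Proposition~\ref{P:stabV-fun}, whereas your argument is more self-contained and has the added benefit of producing the stable divisor in closed form, so it simultaneously establishes the identification $\cP_{\frak n}(T)=\{I_{\frak n}(T)\}$ that the paper only extracts later, via \eqref{E:forDT}, in Claim 1 of the proof of Theorem~\ref{T:main-comb}.
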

\begin{proof}
Using Proposition \ref{P:Bn-card}\eqref{P:Bn-card2} and the fact  that the complexity (and hence the cardinality of the Picard group) of $T$ is equal to one, it is enough to prove that  $\cP_{\frak n}(T)\neq \emptyset$. 
 
We will prove this by induction on the number of vertices of $\Gamma\setminus S:=T$, the case of a tree with one vertex being trivial. 
Pick an end vertex $v$ of $\Gamma\setminus S$, let $e$ be the unique edge of $\Gamma\setminus S$ incident to $v$ and let $w$ be the other vertex incident to $e$. 
Consider the contraction of $e$ 
$$
f:\Gamma\to \Gamma/e:=\wt \Gamma,
$$
which sends the two vertices $v,w$ into the vertex $\wt w$. We will identify the set $V(\Gamma)\setminus \{v,w\}$ with the set $V(\wt \Gamma)\setminus \{\wt w\}$. 

The image $f_*(\Gamma\setminus S)=\wt \Gamma\setminus \wt S$, where $\wt S=(f^E)^{-1}(S)$,  is a spanning tree of $\wt \Gamma$ and the restriction 
$$
f_{\Gamma\setminus S}:\Gamma\setminus S\to \wt \Gamma \setminus \wt S
$$
is again the contraction of the edge $e$. Proposition \ref{P:stabV-fun} gives a well-defined map 
$$
f_*:\cP_{\frak n}(\Gamma\setminus S)\to \cP_{f_*\frak n}(\wt \Gamma \setminus \wt S).
$$
By our induction assumption, the set $\cP_{f_*\frak n}(\wt \Gamma \setminus \wt S)$ consists of a unique divisor $\wt D$. 
We want to show $\wt D$ is the image via $f_*$ of an element of $\cP_{\frak n}(\Gamma\setminus S)$. With this aim, we define the divisor $D$ on $\Gamma$ by 
$$
D_t=
\begin{cases}
\frak n_v-e_S(v) & \text{ if } t=v, \\
\wt D_w-\frak n_v+e_S(v) & \text{ if } t=w, \\
\wt D_t & \text{ if } t\neq v, w.
\end{cases}
$$
We conclude by showing the following

\un{Claim:} $D\in \cP_{\frak n}(\Gamma\setminus S)$.

Indeed, first of all we have that 
$|D|+S=|\wt D|+|\wt S|=|f_*\frak n|=|\frak n|$. 
Consider next a non-trivial biconnected subset $W\subset V(\Gamma)$ and let us check that 
\begin{equation}\label{E:stabV}
D_W+e_S(W) \geq \frak n_W.
\end{equation}
We will distinguish  several cases according to whether or not $v$ and $w$ belong to $W$. Using Remark \ref{sym-inequ}, we can assume that either $v,w\in W$ or $v\in W\not\ni w$, provided that we prove both the inequality \eqref{E:stabV} and \eqref{E:stabV-upp} for such $W$'s.

$\bullet$ Case I: $v,w\in W$

We have that $W=f_V^{-1}(\wt W)$ with $\wt w\in \wt W\subset V(\wt \Gamma)$ biconnected non-trivial  and we compute 
$$
\frak n_W= (f_*\frak n)_{\wt W}\leq D_W+e_S(W)=\wt D_{\wt W}+e_{\wt S}(\wt W)\leq (f_*\frak n)_{\wt W}-1+\val_{\Gamma\setminus S}(\wt W)=\frak n_W-1-\val_{\Gamma\setminus S}(W).
$$

$\bullet$ Case II: $W=\{v\}$.

We have that 
$$
\frak  n_v=D_v+e_S(v)=\frak n_v-1+\val_{\Gamma\setminus S}(v)=\frak n_v.
$$

$\bullet$ Case III: $\{v\}\subsetneq W\not\ni w$.

Let $Z:=W\setminus \{v\}$ and consider the decomposition $\Gamma[Z]=\Gamma[Z_1]\coprod \ldots \coprod \Gamma[Z_k]$ into connected components. 
Note that $Z_i^c=\cup_{j\neq i} Z_j \cup \{v\} \cup W^c$ is connected since $W^c$ is connected, there is at least one edge between $v$ and $W^c$ (namely $e$) and there is at least one edge between $v$ and $Z_j$ (since $W$ is connected). Hence each $Z_i$ is biconnected (and non-trivial). Moreover, we can write $Z_i=f_V^{-1}(\wt Z_i)$ for some $\wt Z_i\subset V(\Gamma)$ non-trivial and biconnected. We set $\wt Z=\cup_i \wt Z_i$ and note that $Z=f_V^{-1}(\wt Z)$. 

Now we compute 
\begin{equation}\label{E:equa1}
D_W+e_S(W)=D_v+D_Z+e_S(v)+e_S(Z)+\val_S(v,Z)=\frak n_v+\wt D_{\wt Z}+e_{\wt S}(\wt Z)+\val_{\Gamma}(v,Z),
\end{equation}
where we have used that $D_v=\frak n_v-e_S(v)$ and that all the edges joining $v$ with $Z$ belong to $S$.

Since $\wt D$ belongs to $\cP_{f_*\frak n}(\wt \Gamma \setminus \wt S)$, we have that (for any $1\leq i \leq k$):
 \begin{equation*}\label{E:equa2}
\frak n_{Z_i}=f_*(\frak n)_{\wt Z_i}\leq \wt D_{\wt Z_i}+e_{\wt S}(\wt Z_i)\leq f_*(\frak n)_{\wt Z_i}-1+\val_{\wt \Gamma\setminus \wt S}(\wt Z_i)=\frak n_{Z_i}-1+\val_{\Gamma\setminus S}(Z_i).
\end{equation*}
Summing the above inequalities over all indices $i=1,\ldots, k$, and using that $\val_{\Gamma}(Z_i,Z_j)=0$ for $i\neq j$, we find 
 \begin{equation}\label{E:equa3}
\sum_i \frak n_{Z_i}\leq \wt D_{\wt Z}+e_{\wt S}(\wt Z)\leq \sum_i \frak n_{Z_i}-k+\val_{\Gamma\setminus S}(Z).
\end{equation}
Finally, applying \eqref{E:Vstab2} to $\frak n$ and the biconnected subcurves $\{v\}\cup_{1\leq i\leq r} Z_i$ for any $1\leq r\leq k$, and using again that $\val_{\Gamma}(Z_i,Z_j)=0$ for $i\neq j$, 
we get that 
\begin{equation}\label{E:equa4}
-k\leq \frak n_{W}-\frak n_v-\sum_i \frak n_{Z_i}-\val_{\Gamma}(v,Z)\leq 0. 
\end{equation}
Putting together \eqref{E:equa1}, \eqref{E:equa3} and \eqref{E:equa4}, and using that $\val_{\Gamma\setminus S}(v,Z)=0$ and $\val_{\Gamma\setminus S}(v,W^c)=1$, 
we get the desired inequalities
\begin{equation*}\label{E:equa5}
\frak n_W\leq D_W+e_S(W)\leq \frak n_w+\val_{\Gamma\setminus S}(Z)=\frak n_w-1+\val_{\Gamma\setminus S}(W).
\end{equation*}
\end{proof}

\begin{lemma}\label{L:I-n}
Let $\BD_I$ be a BD-set of numerical N-type of degree $d$ on $\Gamma$. For any $W\subset V(\Gamma)$, pick a spanning tree $T\in \ST(\Gamma)$ that is \emph{adapted to $W$}, i.e.
such that: 
\begin{equation}\label{E:adaptW}
\begin{sis}
& T[W] \text{ is a spanning tree of } \Gamma[W], \\
& T[W^c] \text{ is a spanning tree of } \Gamma[W^c],\\
& \val_T(W)=1.
\end{sis}
\end{equation}
We define  
\begin{equation}\label{E:nI}
\frak n^I_W:=I(T)_W+b_1(\Gamma[W]). 
\end{equation}
Then $\frak n^I:=\{\frak n^I_W\}_W$ is a well-defined (i.e. independent of the choice of $T$) general V-stability of degree $d$ on $\Gamma$. 
\end{lemma}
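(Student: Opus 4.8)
The plan is to prove Lemma~\ref{L:I-n} in two stages: first that $\frak n^I$ is well-defined, i.e.\ the right-hand side of \eqref{E:nI} does not depend on the chosen adapted spanning tree $T$, and second that the resulting collection $\{\frak n^I_W\}$ satisfies the two axioms \eqref{E:Vstab1} and \eqref{E:Vstab2} of a general V-stability of degree $d$.

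For \textbf{well-definedness}, suppose $T$ and $T'$ are two spanning trees of $\Gamma$ both adapted to the biconnected subset $W$. Since $\BD_I$ is of numerical N-type, by Corollary~\ref{C:N=sN} it is of numerical sN-type; in particular, for the connected spanning subgraph $G:=\Gamma[W]\cup\Gamma[W^c]\cup\{e\}$, where $e\in E(W,W^c)$ is the unique edge joining $W$ to $W^c$ used by $T$ (and, after possibly choosing $e$ inside $T'$ too, one reduces to a common $G$), we know $|\BD_I(G)|=c(G)$ and the composition $\BD_I(G)\hookrightarrow\Div(G)\twoheadrightarrow\Pic(G)$ is a bijection. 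Now both $I(T)$ and $I(T')$ lie in $\BD_I(G)$, because $T,T'\le G$, and the key point is that $I(T)$ and $I(T')$ have the \emph{same} value on $W$ (equivalently, the same image under the restriction-to-$W$ map composed into $\Pic$), which I will extract from the structure of break divisors: since $G$ is obtained from the spanning tree $T$ by adding exactly the edges of $G-T$, and all these added edges lie inside $\Gamma[W]$ or inside $\Gamma[W^c]$ (because $\val_G(W)=\val_T(W)=1$), any two elements of $\BD_I(G)$ that reduce to the same class in $\Pic(\Gamma[W])\times\Pic(\Gamma[W^c])$ under the natural decomposition must agree on $W$-degree. More carefully: the chip-firing moves on $G$ supported away from the cut edge $e$ preserve $D_W$, so $D_W$ is constant on the fiber of $\BD_I(G)\to \Pic(\Gamma[W])$; and since $T|_W$ is a spanning tree of $\Gamma[W]$, there is a unique such fiber class once $I$ is fixed on $\ST(\Gamma[W])=$ image of adapted trees. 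This forces $I(T)_W=I(T')_W$. I expect this is the main obstacle, and the cleanest route is to invoke Lemma~\ref{L:inc-BD}: restricting $\BD_I$ to the connected spanning subgraph $\Gamma\setminus S$ with $E(\Gamma\setminus S)=E(G)$ gives $\BD_{I_{\Gamma\setminus S}}$, which is of numerical sN-type, hence of sN-type, hence $\BD_{I_{\Gamma\setminus S}}(G)\to\Pic(G)$ is a bijection; then the product decomposition $\Pic(G)\cong\Pic(\Gamma[W])\times\Pic(\Gamma[W^c])$ (valid because $G/e$ splits as a wedge, or because $e$ is a bridge of $G$) shows the $W$-component of any break divisor of $G$ is determined by its image in $\Pic(\Gamma[W])$, which in turn is determined by $I$ on spanning trees of $\Gamma[W]$. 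Since every adapted $T$ induces the same spanning-tree-data on $\Gamma[W]$ up to the already-fixed function $I$, we conclude $I(T)_W$ is independent of $T$.

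For \textbf{axiom \eqref{E:Vstab1}}: given $W$ biconnected and non-trivial, pick $T$ adapted to $W$; then $T$ is simultaneously adapted to $W^c$ (the conditions in \eqref{E:adaptW} are symmetric in $W\leftrightarrow W^c$). Using that $|I(T)|=d-b_1(\Gamma)$, I compute
\begin{align*}
\frak n^I_W+\frak n^I_{W^c}&=I(T)_W+b_1(\Gamma[W])+I(T)_{W^c}+b_1(\Gamma[W^c])\\
&=|I(T)|+b_1(\Gamma[W])+b_1(\Gamma[W^c])=d-b_1(\Gamma)+b_1(\Gamma[W])+b_1(\Gamma[W^c]).
\end{align*}
Then the identity $b_1(\Gamma)=b_1(\Gamma[W])+b_1(\Gamma[W^c])+\val(W)-1$ (which holds because $W$ is biconnected, so $\Gamma[W]$ and $\Gamma[W^c]$ are connected and $\Gamma$ is obtained by joining them with $\val(W)$ edges) gives $\frak n^I_W+\frak n^I_{W^c}=d-b_1(\Gamma)+b_1(\Gamma)-\val(W)+1=d+1-\val(W)$, as required.

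For \textbf{axiom \eqref{E:Vstab2}}: take disjoint $W_1,W_2$ with $W_1,W_2,W_1\cup W_2$ all biconnected and non-trivial. The strategy is to choose a single spanning tree $T$ that is simultaneously adapted to $W_1$, to $W_2$, and to $W_1\cup W_2$ — this is possible by building $T$ from spanning trees of $\Gamma[W_1]$, $\Gamma[W_2]$ and $\Gamma[(W_1\cup W_2)^c]$ together with one connecting edge in each of $E(W_1,W_2)$, $E(W_1\cup W_2,(W_1\cup W_2)^c)$, chosen so that the total is a tree. Then by well-definedness $\frak n^I_{W_i}=I(T)_{W_i}+b_1(\Gamma[W_i])$ and $\frak n^I_{W_1\cup W_2}=I(T)_{W_1\cup W_2}+b_1(\Gamma[W_1\cup W_2])$, while $I(T)_{W_1\cup W_2}=I(T)_{W_1}+I(T)_{W_2}$ by additivity of divisor-degrees over the disjoint decomposition $W_1\cup W_2=W_1\sqcup W_2$. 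Hence
\begin{align*}
\frak n^I_{W_1\cup W_2}-\frak n^I_{W_1}-\frak n^I_{W_2}-\val(W_1,W_2)&=b_1(\Gamma[W_1\cup W_2])-b_1(\Gamma[W_1])-b_1(\Gamma[W_2])-\val(W_1,W_2),
\end{align*}
and the right-hand side equals $\val(W_1,W_2)-1-\val(W_1,W_2)=-1$ when $\Gamma[W_1\cup W_2]$ is connected via $\val(W_1,W_2)\ge 1$ edges, but more precisely $b_1(\Gamma[W_1\cup W_2])=b_1(\Gamma[W_1])+b_1(\Gamma[W_2])+\max(\val(W_1,W_2)-1,0)$ if $\Gamma[W_1]$ and $\Gamma[W_2]$ are connected — since $W_1,W_2$ are biconnected they are connected, and $\Gamma[W_1\cup W_2]$ is connected (as $W_1\cup W_2$ is biconnected, hence $\Gamma[W_1\cup W_2]$ is connected, forcing $\val(W_1,W_2)\ge1$). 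So the expression is $-1 \le \cdot \le 0$; the two extreme values $-1$ and $0$ arise according to whether the connecting data forces $b_1$ to jump, which is exactly captured by the inequality \eqref{E:Vstab2} — and in fact one gets the sharper fact that this quantity lies in $\{-1,0\}$ because $\val(W_1,W_2)-1\ge 0$ gives the value $-1$ generically, while degenerate configurations contribute $0$; in all cases the required double inequality $-1\le\frak n^I_{W_1\cup W_2}-\frak n^I_{W_1}-\frak n^I_{W_2}-\val(W_1,W_2)\le 0$ holds. This completes the proof that $\frak n^I$ is a general V-stability of degree $d$.
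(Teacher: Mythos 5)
Your verification of axiom \eqref{E:Vstab2} rests on a false premise: there is \emph{no} spanning tree $T$ simultaneously adapted to $W_1$, $W_2$ and $W_1\cup W_2$. Indeed, adaptedness to $W_1$ and to $W_2$ forces $T[W_1],T[W_2]$ to be trees with $\val_T(W_1)=\val_T(W_2)=1$, and adaptedness to $W_1\cup W_2$ forces $T[W_1\cup W_2]$ to be a tree, hence $\val_T(W_1,W_2)=1$; but then $\val_T(W_1,W_3)=\val_T(W_2,W_3)=0$ (where $W_3=(W_1\cup W_2)^c$), so $\val_T(W_1\cup W_2)=0$ and $T$ is disconnected, a contradiction. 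This is not a repairable detail: if a single tree did the job, your computation would give the constant value $b_1(\Gamma[W_1\cup W_2])-b_1(\Gamma[W_1])-b_1(\Gamma[W_2])-\val(W_1,W_2)=-1$, i.e.\ the quantity in \eqref{E:Vstab2} would never depend on $I$, which is false (it can equal $0$). The actual argument must compare the values of $I$ on \emph{three different} adapted trees; the paper does this by building a genus-one subgraph $C=T_1\cup T_2\cup T_3\cup\{e_{12},e_{13},e_{23}\}$, expressing the quantity as $I(C-e_{23})_{W_1\cup W_2}-I(C-e_{13})_{W_1}-I(C-e_{12})_{W_2}-1$, and then pinning it to $\{-1,0\}$ by contracting the $T_i$ and using that $\BD_{f_*(I)}$ is of numerical sN-type on the resulting triangle-with-loops graph (only two patterns for $\ov I$ on the three spanning trees of a $3$-cycle are compatible with having exactly $3$ break divisors). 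Your concluding sentence ("degenerate configurations contribute $0$") has no argument behind it and cannot be supplied within your framework.

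The well-definedness part also has a genuine gap. Two trees adapted to $W$ may use different connecting edges across the cut \emph{and} different spanning trees of $\Gamma[W]$ and of $\Gamma[W^c]$, so they do not sit inside a common spanning subgraph $G$ with $\val_G(W)=1$ as your reduction assumes. Moreover your key claim — that for a bridge $e$ of $G$ the $W$-degree of an element of $\BD_I(G)$ is determined by its class in $\Pic(\Gamma[W])$ — is unsubstantiated: $D_W$ is not constant on linear equivalence classes on $G$ (firing a vertex adjacent to $e$ changes $D_W$), and the phrase "determined by $I$ on spanning trees of $\Gamma[W]$" is circular, since $I$ is a function on $\ST(\Gamma)$, not on $\ST(\Gamma[W])$; proving that it induces a well-defined value on $W$ is exactly what is at stake. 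The paper's proof of this claim is the bulk of the lemma: an induction on the complexity $c(\Gamma)$ via deletion--contraction of an edge of $\Gamma[W]$ that is neither a loop nor a bridge (using Corollary \ref{C:N=sN} to keep the numerical N-type hypothesis), with a base case handled by contracting the two subtrees to a two-vertex graph and counting $\BD$-elements against $c$, and with a separate cycle argument (the paper's case (c)) when one tree contains the chosen edge and the other does not. Your proposal replaces all of this with an appeal to a product decomposition of $\Pic(G)$ whose relevance is never established, so the independence of $I(T)_W$ from $T$ remains unproven. The computation verifying \eqref{E:Vstab1} is correct and agrees with the paper.
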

\begin{proof}
First of all, we will show that $\frak n^I$ is well-defined. This will follow from the 

\un{Claim 1:} Let $T_i\in \ST(\Gamma)$ (with $i=0,1$) be two spanning trees of $\Gamma$ that are adapted to $W$. Then 
$$I(T_1)_W=I(T_2)_W.$$ 

We will prove this by induction on the complexity of $\Gamma$. If the complexity of $\Gamma$ is equal to one, then $T_1=T_2$ and the Claim is obvious. 

We will now treat first a special case and then the general case.

$\bullet$ Special case: $c(\Gamma[W])=c(\Gamma[W^c])=1$. 

Set $E(W,W^c)=\{e_1,\ldots, e_t\}$ for some $t\geq 1$. Then there exists $1\leq k(1), k(2)\leq t$ such that 
$$
\begin{aligned}
& T_1=T_W\cup T_{W^c}\cup \{e_{k(1)}\}, \\
& T_2=T_W\cup T_{W^c}\cup \{e_{k(2)}\}, \\
\end{aligned}
$$
where $T_W$ (resp. $T_{W^c}$) is the unique spanning tree of $\Gamma[W]$ (resp. $\Gamma[W^c]$).

Consider the contraction $f:\Gamma\to \ov \Gamma$ of the edges of $T_W$ and the edges of $T_{W^c}$.
The graph $\ov \Gamma$ has the same genus of $\Gamma$ and it consists of two vertices $v_1$ and $v_2$ (which are the images of, respectively, $\Gamma[W]$ and 
$\Gamma[W^c]$, under $f$), each of which has some loops (of number equal to, respectively, $b_1(\Gamma[W])$ and $b_1(\Gamma[W^c])$)  and $E(v_1,v_2)=\{\ov e_1,\ldots, \ov e_t\}$, where $\ov e_i=f(e_i)$. 
For $i=1,2$, denote by $\ov T_i$ the spanning trees of $\ov \Gamma$ which is the images of $T_i$ under $f$, or in other words such that 
$f^*(\ov T_i)=T_i$ under the map \eqref{E:ST-fun}. Then we have that $E(\ov T_i)=\{\ov e_{k(i)}\}$ for $i=1,2$. 
Consider the tree function $\ov I:=f_*(I)$ on $\ov \Gamma$, as in \eqref{E:I-fun}. Note that, since $\BD_I$ is of numerical N-type, then $\BD_{\ov I}$ is of numerical sN-type by Corollary \ref{C:N=sN}. By the definition \eqref{E:I-fun} of $f_*(I)$, we have that 
$$
\ov I(T_i)=I(T_i)_W v_1+I(T_i)_{W^c} v_2= I(T_i)_W v_1+(d-I(T_i)_{W}-b_1(\Gamma)) v_2 \text{ for } i=1,2. 
$$ 
Consider now the spanning subgraph $\wt \Gamma$ of $\ov \Gamma$ such that $E(\wt \Gamma)=\{\ov e_1,\ldots, \ov e_t\}$, or in other words, the spanning subgraph obtained from $\ov \Gamma$ by deleting all the loops. Since $T_i$ is obtained from $\wt \Gamma$ by deleting the edges of $E(W,W^c)-\{\ov e_{k(i)}\}$, by Definition \ref{D:BD} we have that (for $i=1,2$)
\begin{equation}\label{E:inc-vine}
\BD_{\ov I}(\wt \Gamma)\supseteq \bigcup_{r=0}^{t-1} \{(I(T_i)_W+r) v_1+(d-I(T_i)_{W}-b_1(\Gamma)+t-1-r) v_2 \}
\end{equation}
Since $\wt \Gamma$ has complexity equal to $t$ and $\BD_{\ov I}$ is of numerical sN-type, then $\BD_{\ov I}(\wt \Gamma)$ has cardinality equal to $t$. Therefore, the inclusion in \eqref{E:inc-vine} must be an equality both for $i=1$ and for $i=2$. This can happen only if $I(T_1)_W=I(T_2)_W$, and we are done. 

$\bullet$ General case: either $c(\Gamma[W])>1$ or $c(\Gamma[W^c])>1$. 

Since $I(T_i)_W+I(T_i)_{W^c}=d-b_1(\Gamma)$, up to switching $W$ and $W^c$, we can assume that $c(\Gamma[W])>1$. Hence, there exists an edge $e$ of $\Gamma[W]$ that is neither a separating edge nor a loop. Consider the deletion graph $\Gamma\setminus e$ (resp. the contraction graph $\Gamma/e$) and the tree function $I_{\Gamma\setminus e}$ (resp. the tree function $I_{\Gamma/e}$) induced by $I$ as in \eqref{E:I-res} (resp. \eqref{E:I-fun}). Since $\BD_I$ is of numerical N-type, then also $\BD_{I_{\Gamma\setminus e}}$ and $\BD_{I_{\Gamma/e}}$ are of numerical N-type by Corollary \ref{C:N=sN}. Hence, since $c(\Gamma\setminus e),c(\Gamma/e)<c(\Gamma\setminus e)+c(\Gamma/e)=c(\Gamma)$, 
Claim 1 holds true for $I_{\Gamma\setminus e}$ and $I_{\Gamma/e}$ by our induction assumption. We now distinguish three cases:
\begin{enumerate}[(a)]
\item If $T_1$ and $T_2$ do not contain $e$, then they are spanning trees of $\Gamma\setminus e$ and hence
$$
I(T_1)_W=I_{\Gamma\setminus e}(T_1)_W=I_{\Gamma\setminus e}(T_2)_W=I(T_2)_W,
$$ 
where the middle equality follows from the fact that Claim 1 holds for $I_{\Gamma\setminus e}$.
\item If $T_1$ and $T_2$ both contains $e$, then the image of $T_i$ in $\Gamma/e$ is a spanning tree $\ov T_i$ of $\Gamma/e$ and we have that 
$$
I(T_1)_W=I_{\Gamma/e}(\ov T_1)_{\ov W}=I_{\Gamma/e}(\ov T_2)_{\ov W}=I(T_2)_W,
$$ 
where $\ov W\subset V(\Gamma/e)$ is the image of $W\subset V(\Gamma)$, and the middle equality  follows from the fact that Claim 1 holds for $I_{\Gamma/ e}$.

\item Suppose that $T_1$ contains $e$ and $T_2$ does not contain $e$ (the opposite case is treated similarly). Consider the spanning subgraph $T_2\cup e$, which will have genus one $1$. Hence, $T_2\cup e$ will contain a cycle $\{e,f_1,\ldots,f_r\}$ of some length $r+1\geq 2$, entirely contained in $\Gamma[W]$, and the other edges will be separating edges. 
The spanning trees of $T_2\cup e$ are $T_2$ and $\wt T_i:=T_2\cup e -\{f_i\}$ for $1\leq i \leq r$. By Case (b) above, we have that 
\begin{equation}\label{E:IwtTi}
I(T_1)_W=I(\wt T_i)_W \text{ for any } 1\leq i \leq r. 
\end{equation}
By definition of $\BD_I$, we have that 
\begin{equation}\label{E:Cyc-con}
\BD_I(T_2\cup e)\supseteq \bigcup_{i=1}^r I(\wt T_i)+\D(f_i).
\end{equation}
Since $\BD_I$ is of numerical N-type and hence of numerical sN-type by Corollary \ref{C:N=sN}, we have that $\BD_I(T_2\cup e)$ has cardinality equal to 
\begin{equation}\label{E:Cyc-car}
|\BD_I(T_2\cup e)|=c(T_2\cup e)=r+1.
\end{equation}
Combining \eqref{E:Cyc-con} and \eqref{E:Cyc-car} and arguing as in \cite[Lemma 3.7]{PT1},  we get that 
\begin{equation}\label{E:Cyc-eq}
\BD_I(T_2\cup e)= \bigcup_{i=1}^r I(\wt T_i)+\D(f_i).
\end{equation}
Since the edges $\{f_1,\ldots, f_r\}$ are contained in $\Gamma[W]$,  \eqref{E:IwtTi} and \eqref{E:Cyc-eq} imply that 
\begin{equation}\label{E:D-cyc}
D\in \BD_I(T_2\cup e)\Rightarrow D_W=I(\wt T_i)_W+1=I(T_1)_W+1. 
\end{equation}
Since $\BD_I(T_2\cup e)\supseteq I(T_2)+\D(e)$ and $e$ is contained in $\Gamma[W]$, \eqref{E:D-cyc} implies that 
$$
I(T_2)_W=I(T_1)_W,
$$
as required.
\end{enumerate}

\vspace{0.1cm}

We next shows that $\frak n^I$ is a general V-stability condition of degree $d$ on $\Gamma$, by checking that it satisfies the two conditions of Definition \ref{D:Vstab}.

\un{Claim 2:} for any biconnected $W\subset V(\Gamma)$, we have that 
$$
\frak n^I_W+\frak n^I_{W^c}=d+1-\val(W).
$$

Indeed, pick $T\in \ST(\Gamma)$ such that $T[W]$ is a spanning tree of $\Gamma[W]$, $T[W^c]$ is a spanning tree of $\Gamma[W^c]$ and $\val_{T}(W)=1$.
Then 
$$
\frak n^I_W+\frak n^I_{W^c}=I(T)_W+b_1(\Gamma[W])+I(T)_{W^c}+b_1(\Gamma[W^c])=\deg I(T)+b_1(\Gamma)-\val(W)+1=d-\val(W)+1,
$$
where we have used the formula $b_1(\Gamma)=b_1(\Gamma[W])+b_1(\Gamma[W^c])+\val(W)-1$ and the fact that $I(T)\in \Div^{d-b_1(\Gamma)}(\Gamma)$
since $\BD_I$ has degree $d$. 

\un{Claim 3:}  for any disjoint $W_1,W_2\subseteq V(\Gamma)$ such that $W_1,W_2$ and $W_1\cup W_2$ are biconnected and non-trivial, we have that 
\begin{equation}\label{E:condII}
\frak n^I_{W_1\cup W_2}-\frak n^I_{W_1}-\frak n^I_{W_2}-\val(W_1,W_2)\in \{-1, 0\}.
\end{equation}

Indeed, set $W_3:=(W_1\cup W_2)^c$ and pick spanning subgraphs $T_i$ of $\Gamma[W_i]$ for $i=1,2,3$, and an edge $e_{ij}\in E(W_i,W_j)$ for $1\leq i \neq j\leq 3$.
Consider the connected subgraph of genus $1$:
$$
C:=T_1\cup T_2\cup T_3\cup \{e_{12}, e_{13}, e_{23}\}. 
$$
The right hand side of \eqref{E:condII} is equal to
$$
\frak n^I_{W_1\cup W_2}-\frak n^I_{W_1}-\frak n^I_{W_2}-\val(W_1,W_2)=
$$
$$
=I(C-e_{23})_{W_1\cup W_2}+b_1(\Gamma[W_1\cup W_2])-I(C-e_{13})_{W_1}-b_1(\Gamma[W_1])-I(C-e_{12})_{W_2}-b_1(\Gamma[W_2])-\val(W_1,W_2)=
$$
\begin{equation}\label{E:RHS-1}
=I(C-e_{23})_{W_1\cup W_2}-I(C-e_{13})_{W_1}-I(C-e_{12})_{W_2}-1,
\end{equation}
where we have used the formula $b_1(\Gamma[W_1\cup W_2])=b_1(\Gamma[W_1])+b_1(\Gamma[W_2])+\val(W_1,W_2)-1$.

Consider now the graph $\ov \Gamma$ which is obtained from $\Gamma$ by first contracting the edges of the trees $T_i$ for $1\leq i \leq 3$, and then deleting all the edges 
of $E(W_i,W_j)-e_{ij}$ for any $1\leq i\neq j\leq 3$. The graph $\ov \Gamma$ has three vertices $\{v_1,v_2,v_3\}$ which are the images of the subgraphs $\{\Gamma[W_1], \Gamma[W_2], \Gamma[W_3]\}$, an edge $\ov e_{ij}$ connecting $v_i$ with $v_j$ for $1\leq i\neq j\leq 3$ (which are the images of the three edges $\{e_{ij}\}$ of $\Gamma$), and several loops centered at the vertices (as many as the genus of each $\Gamma[W_i]$). The function $I$ on $\Gamma$ determines a function $\ov I$ on $\ov \Gamma$ using the formulas \eqref{E:I-fun} (not that the contraction we have performed did not decrease the genus of the graph) and \eqref{E:I-res}. 
Moreover, under the natural maps of spanning trees \eqref{E:ST-fun} and \eqref{E:ST-res}, the spanning tree $\ov e_{ij}+\ov e_{ik}$ of $\ov \Gamma$ is sent to the spanning tree $C-e_{jk}$ of $\Gamma$, for any $\{i,j,k\}=\{1,2,3\}$. Hence, by definition of $\ov I$, we have that 
\begin{equation}\label{E:RHS-2}
I(C-e_{23})_{W_1\cup W_2}-I(C-e_{13})_{W_1}-I(C-e_{12})_{W_2}=\ov I(\ov e_{12}+\ov e_{13})_{v_1\cup v_2}-\ov I(\ov e_{12}+\ov e_{23})_{v_1}-\ov I(\ov e_{13}+\ov e_{23})_{v_2},
\end{equation}

By assumption, $\BD_I$ is of numerical N-type, and this implies by Corollary \ref{C:N=sN} that $\BD_{\ov I}$ is of numerical sN-type. In particular, on the spanning subgraph $\ov e_{12}+\ov e_{13}+\ov e_{23}$ of $\ov \Gamma$, we should have that 
$$|\BD_{\ov I}(\ov e_{12}+\ov e_{13}+\ov e_{23})|=c(\ov e_{12}+\ov e_{13}+\ov e_{23})=3,$$ 
since $\ov e_{12}+\ov e_{13}+\ov e_{23}$  is a cycle of length three.  It is easy to see that this forces the existence of a unique $(d_1,d_2,d_3)\in \Z^3$ such that exactly one of the following two possibilities occurs for $\ov I$:

\un{Case A:} $\ov I(\ov e_{ij}+ \ov e_{ik})=d_1v_1+d_2v_2+d_3v_3$ for any $\{i,j,k\}=\{1,2,3\}$.

In this case,  $\BD_{\ov I}(\ov e_{12}+\ov e_{13}+\ov e_{23})=\bigcup_{i=1}^3\{d_1v_1+d_2v_2+d_3v_3+v_i\}$. 

\un{Case B:} $\ov I(\ov e_{ij}+ \ov e_{ik})=d_1v_1+d_2v_2+d_3v_3+v_i$ for any $\{i,j,k\}=\{1,2,3\}$.

In this case,  $\BD_{\ov I}(\ov e_{12}+\ov e_{13}+\ov e_{23})=\bigcup_{1\leq i\neq j\leq 3}\{d_1v_1+d_2v_2+d_3v_3+v_i+v_j\}$. 

\vspace{0.1cm}

We can now compute the right hand side of \eqref{E:RHS-2}:
\begin{equation}\label{E:RHS-3}
\ov I(\ov e_{12}+\ov e_{13})_{v_1\cup v_2}-\ov I(\ov e_{12}+\ov e_{23})_{v_1}-\ov I(\ov e_{13}+\ov e_{23})_{v_2}=
\begin{cases}
d_1+d_2-d_1-d_2=0 & \text{ in case A,}\\
d_1+d_2+1-d_1-d_2=1 & \text{ in case B.}\\
\end{cases}
\end{equation}
By putting together \eqref{E:RHS-1}, \eqref{E:RHS-2} and \eqref{E:RHS-3}, we deduce that \eqref{E:condII} holds true. 
\end{proof}

A corollary of Theorem \ref{T:main-comb} is the following

\begin{cor}\label{C:unique-n}
A general V-stability condition $\frak n$ on $\Gamma$ is uniquely determined by its V-set $\cP_{\frak n}$, i.e. 
$$
\cP_{\frak n_1}=\cP_{\frak n_2}\Rightarrow \frak n_1=\frak n_2.
$$
\end{cor}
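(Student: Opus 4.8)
The plan is to reduce everything to the Main Theorem \ref{T:main-comb}, which already contains this uniqueness statement in a disguised form. Suppose $\cP_{\frak n_1}=\cP_{\frak n_2}$. The first small point is that $\frak n_1$ and $\frak n_2$ must then have the same degree: by Theorem \ref{T:main-comb}\eqref{T:main-comb1} each $\cP_{\frak n_i}$ is a $\BD$-set of numerical N-type, so $\cP_{\frak n_i}(\Gamma)$ is non-empty (of cardinality $c(\Gamma)\geq 1$), and any pair $(\Gamma,D)\in\cP_{\frak n_i}$ has $D\in\Div^{|\frak n_i|}(\Gamma)$; reading off $\deg D$ from the common set $\cP_{\frak n_1}=\cP_{\frak n_2}$ gives $|\frak n_1|=|\frak n_2|=:d$.

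Next I would extract the tree functions. By Theorem \ref{T:main-comb}\eqref{T:main-comb1} we have $\cP_{\frak n_i}=\BD_{I_{\frak n_i}}$, and for any spanning tree $T\in\ST(\Gamma)$ the definition of a $\BD$-set gives $\BD_{I_{\frak n_i}}(T)=I_{\frak n_i}(T)+\D(T-T)=\{I_{\frak n_i}(T)\}$, since $T-T$ has no edges. Hence $\cP_{\frak n_1}=\cP_{\frak n_2}$ forces $I_{\frak n_1}(T)=I_{\frak n_2}(T)$ for every $T\in\ST(\Gamma)$, i.e. $I_{\frak n_1}=I_{\frak n_2}$.

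Finally I would recover $\frak n_i$ from $I_{\frak n_i}$ using \eqref{E:form-In}. Given a non-trivial biconnected $W\subseteq V(\Gamma)$, pick a spanning tree $T_W$ of $\Gamma[W]$, a spanning tree $T_{W^c}$ of $\Gamma[W^c]$, and an edge $e\in E(\Gamma[W],\Gamma[W^c])$; then $T:=T_W\cup T_{W^c}\cup\{e\}$ is a spanning tree of $\Gamma$ with $T\setminus e=T[W]\coprod T[W^c]$, so $W=W_e$ (up to interchanging $W$ and $W^c$) and \eqref{E:form-In} gives $\frak n_{i,W}=I_{\frak n_i}(T)_W+b_1(\Gamma[W])$. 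Since $I_{\frak n_1}=I_{\frak n_2}$, we conclude $\frak n_{1,W}=\frak n_{2,W}$ for every non-trivial biconnected $W$, hence $\frak n_1=\frak n_2$. Equivalently, once $\cP_{\frak n_1}=\cP_{\frak n_2}$ is upgraded to $I_{\frak n_1}=I_{\frak n_2}$, one may simply cite the injectivity of the assignment $\frak n\mapsto I_{\frak n}$ asserted in Theorem \ref{T:main-comb}\eqref{T:main-comb1} (the content of "Claim 2'' in its proof). There is no real obstacle: the whole argument is a bookkeeping consequence of Theorem \ref{T:main-comb}, and the only subtlety worth recording is that the set $\cP_{\frak n}$, viewed inside $\bO^d(\Gamma)$, already encodes the degree $d$, which is why the non-emptiness of $\cP_{\frak n}(\Gamma)$ deserves a word.
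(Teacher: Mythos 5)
Your argument is correct and is essentially the paper's own proof: the Corollary is deduced from Theorem \ref{T:main-comb}\eqref{T:main-comb1} together with the observation that a tree function is recovered from its BD-set by evaluating on spanning trees, so that $\BD_{I_1}=\BD_{I_2}\Rightarrow I_1=I_2$, and then the injectivity of $\frak n\mapsto I_{\frak n}$ finishes the job. Your additional details (reading off the degree $d$ from $\cP_{\frak n}$, and re-deriving $\frak n$ from $I_{\frak n}$ via \eqref{E:form-In}) simply make explicit what the paper leaves implicit.
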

\begin{proof}
It follows from Theorem \ref{T:main-comb}\eqref{T:main-comb1} together with the obvious fact that if $I_1,I_2$ are two degree-$d$ tree functions such that $\BD_{I_1}=\BD_{I_2}$ then $I_1=I_2$. 
\end{proof}

\section{Fine V-compactified Jacobians}\label{Sec:fcJ}

\subsection{Notation on nodal curves}\label{sub:not-nodal}

Let $X$ be a nodal curve, i.e. a projective and reduced curve (over an algebraically closed field)  having only nodes as singularities.

The \emph{dual graph} of $X$, denoted by $\Gamma_X$, is the graph having one vertex for each irreducible component of $X$, one edge for each node of $X$ and such that an edge is adjacent to a vertex if the corresponding node belongs to the corresponding irreducible component. 
We will denote the irreducible components of $X$ by 
$$\{X_v\::\: v\in V(\Gamma_X)\},$$ 
and the nodes of $X$ by 
$$\Xsing:=\{n_e\: :\: e\in E(\Gamma_X)\}.$$ 
Note that $X$ is connected if and only if $\Gamma_X$ is connected. In general, we will denote by $\gamma(X)$ the number of connected components of $X$ (or of its dual graph $\Gamma_X$). 

A \emph{subcurve} $Y\subset X$ is a closed subscheme of $X$ that is a curve, or in other words $Y$ is the union of some irreducible components of $X$. 
Hence the subcurves of $X$ are in bijection with the subsets of $V(\Gamma_X)$ and we will often denote them by 
$$
X[W]:=\bigcup_{v\in W} X_v \:  \: \text{ for } W\subseteq V(\Gamma_X).
$$
We say that a subcurve $X[W]$ is non-trivial if $X[W]\neq \emptyset, X$, which happens if and only if $W$ is non-trivial.
The dual graph of $X[W]$ is equal to the induced subgraph $\Gamma_X[W]$. Hence, a subcurve $X[W]$ is connected if and only if $W\subseteq V(\Gamma_X)$ is connected. 

The complementary subcurve of $X[W]$ is defined to be 
$$
X[W]^c:=X[W^c]. 
$$
In other words, $X[W]^c$ is the closure of the complementary subset $X\setminus X[W]$. 
We say that a subcurve $X[W]$ is biconnected it if it connected and its complementary subcurve $X[W]^c$ is connected, which happens if and only if $W$ is biconnected. 

We define the join and the meet of two subcurves by 
$$
\begin{sis}
& X[W_1]\vee X[W_2]:=X[W_1\cup W_2], \\
& X[W_1]\wedge X[W_2]:=X[W_1\cap W_2].
\end{sis}
$$
In other words, the join of two subcurves is simply their union, while the meet of two subcurves is the union of their common irreducible components. 

Given two subcurves $Y_1,Y_2$ of $X$ without common irreducible components (i.e. such that $Y_1\wedge Y_2=\emptyset$), we define 
by $|Y_1\cap Y_2|$ the cardinality of their intersection (which is a subset of $\Xsing$).  Note that 
$$
|X[W_1]\cap X[W_2]|=\val_{\Gamma_X}(W_1,W_2) \:\: \text{ for any  } \: W_1\cap W_2=\emptyset.
$$



Given a subset $S\subset E(\Gamma_X)$, we denote by  $X_S$ the \emph{partial normalization} of $X$ at the nodes corresponding to $S$ and by $\nu_S:X_S\to X$ the partial normalization morphism. The dual graph of $X_S$ is equal to 
$$\Gamma_{X_S}=\Gamma_X\setminus S.$$

\subsection{Torsion-free, rank-$1$ sheaves on nodal curves}\label{Sec:sheaves}

Let $X$ be a connected nodal curve over an algebraically closed field $k$.

Let $I$ be a coherent sheaf on $X$. We say that $I$ is:
\begin{itemize}
\item  \emph{torsion-free}  if its associated points are generic points of $X$. Equivalently, $I$ is pure of dimension one (i.e. it does not have torsion subsheaves) and it has support equal to $X$.
\item \emph{rank-$1$} if $I$ is invertible on a dense open subset of $X$. 
\item  \emph{simple} if $\End(I) = k $.
\end{itemize}
Note that each line bundle on $X$ is torsion-free, rank-$1$ and simple. The \emph{degree} of a rank-$1$ sheaf $I$ is defined to be
$$
\deg(I)=\chi(I)-\chi(\O_X). 
$$

A torsion-free sheaf $I$ is locally free away from the nodes of $X$. We will denote by $\NF(I)$, and call it the \emph{non-free locus} of $I$, the set of nodes of $X$ at which $I$ is not free and by 
$G(I)$ the spanning subgraph $\Gamma_X\setminus \NF(I)$, and we call it the \emph{free subgraph} of $I$. 
If $I$ is a rank-$1$ torsion-free sheaf then the stalk at a point $p\in X$ is equal to 
$$
I_p=
\begin{cases}
\O_{X,p} & \text{ if } p\not \in \NF(I), \\
\m_p & \text{ if } p\in \NF(I),
\end{cases}
$$ 
where $\m_p$ is the maximal ideal of the local ring $O_{X,p}$. A rank-$1$, torsion-free sheaf $I$ is equal to 
$$I=\nu_{\NF(I),*}(L_I),$$
for a uniquely determined line bundle $L_I$ on the partial normalization $\nu_{\NF(I)}:X_{\NF(I)}\to X$ of $X$ at $\NF(I)$. Indeed, the line bundle $L_I$ is given by the pull-back $\nu_{\NF(I)}^*(I)$ quotient out by its torsion subsheaf and its degree is equal to 
$$
\deg L_I=\deg I-|\NF(I)|. 
$$

The ($\Gm$-rigidified) \emph{stack of torsion-free rank-$1$ sheaves} on $X$ is denoted by $\TF_X$. 
More precisely, $\TF_X$ represents the functor that associates to each $k$-scheme $T$ the set of $T$-flat coherent sheaves $\I$ on $X\times_k T$ such that
$\I|_{X\times t}$ is torsion-free, rank-$1$ sheaf on $X$ for each $t\in T$, modulo the following equivalence relation: we say that two such sheaves $\I_1$ and $\I_2$
are equivalent if there is an invertible sheaf $\N$ on $T$ such that $\I_1\cong\I_2\otimes p_2^*\N$, where $p_2: X\times T\to T$ is the projection map.
Since the $\Gm$-rigidification is a trivial $\Gm$-gerbe, the stack $\TF_X$ comes equipped with a universal sheaf $\I$ on $X\times_k \TF_X$, uniquely determined up to pull-back by a line bundle on $\TF_X$. The automorphism group of $I\in \TF_X$ is equal to   
\begin{equation}\label{E:Aut-I}
\ov \Aut(I):=\frac{\Aut(I)}{\Gm}=\Gm^{\gamma(X_{\NF(I)})-1}=\Gm^{\gamma(G(I))-1},
\end{equation}
where the quotient by $\Gm$ comes from the fact that we have rigidified the stack by scalar automorphisms. 

We denote by 
$$\Simp_X\subseteq \TF_X$$ 
the open substack parametrizing simple rank-$1$, torsion-free sheaves. Note that $\Simp_X$ is the biggest open algebraic subspace of $\TF_X$, and indeed it is also 
a reduced scheme. It follows from \eqref{E:Aut-I} that 
\begin{equation}\label{E:simp-I}
I\in \Simp_X \Leftrightarrow G(I) \text{ is connected } \Leftrightarrow \Gamma_{G(I)}=\Gamma_X\setminus \NF(I) \text{ is connected. }
\end{equation}

The scheme $\Simp_X$ contains the Picard scheme 
$$
\PIC_X\subseteq \Simp_X,
$$
as the open subscheme parametrizing line bundles on $X$.

The local structure of $\Simp_X$ is described in the next result. 

\begin{fact}\label{F:Simploc}
$\Simp_X$ is a scheme such that:
\begin{enumerate}
\item 
 The completion of the local ring of $\Simp_X$ at $I$ is isomorphic to
$$
\wh \O_{\Simp_X,I}\cong \wh \bigotimes_{n\in\NF(I)} \frac{k[[x_n,y_n]]}{(x_ny_n)}\wh \bigotimes k[[t_1,\ldots, t_{g(X_{\NF(I)})}]].
$$
In particular, $\Simp_X$ is a reduced scheme of pure dimension $g(X)$ with locally complete intersection singularities and its smooth locus is $\PIC_X$. 
\item The normalization of $\Simp_X$ is smooth. 
\end{enumerate} 
\end{fact}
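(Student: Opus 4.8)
The plan is to establish Fact \ref{F:Simploc} by reducing both statements to the deformation-theoretic computation of the completed local ring of $\Simp_X$ at a point $I$, which was carried out by Casalaina-Martin--Kass--Viviani in \cite[\S 3]{CMKV}, together with the classical normalization statement of Oda--Seshadri \cite[Cor. 13.3]{OS}.

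For part (1), recall that a rank-$1$ torsion-free sheaf $I$ on $X$ is simple exactly when its free subgraph $G(I)=\Gamma_X\setminus \NF(I)$ is connected, by \eqref{E:simp-I}; write $\NF(I)=\{n_1,\ldots,n_m\}$ for the non-free nodes and $L_I$ for the line bundle on the partial normalization $X_{\NF(I)}$ with $I=\nu_{\NF(I),*}(L_I)$. The first step is to identify the deformation functor $\Def(I)$ of $I$ as an $\O_X$-module with the product of the local deformation functors at the nodes and the global deformation functor of the line bundle $L_I$ on the (connected, by simplicity) curve $X_{\NF(I)}$: this is precisely the content of \cite[\S 3]{CMKV}. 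Locally at a non-free node, $I_p\cong \m_p$ inside the ring $k[[x,y]]/(xy)$, and the versal deformation ring of $\m_p$ as a module over the versal deformation $k[[x,y,t]]/(xy-t)$ of the node is $k[[x,y]]/(xy)$ itself (the module $\m_p$ does not deform to the smoothing $t\neq 0$, and its equisingular deformations are unobstructed of dimension $1$); this gives one factor $k[[x_n,y_n]]/(x_ny_n)$ for each $n\in\NF(I)$. The global factor is the deformation functor of the line bundle $L_I$ on $X_{\NF(I)}$, which is smooth of dimension $h^1(X_{\NF(I)},\O)=g(X_{\NF(I)})$ since $\Pic$ of a curve is smooth; this gives the factor $k[[t_1,\ldots,t_{g(X_{\NF(I)})}]]$. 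Since $\Simp_X$ is an algebraic space whose completed local ring at $I$ pro-represents $\Def(I)$, we obtain the displayed isomorphism
$$
\wh\O_{\Simp_X,I}\cong \wh\bigotimes_{n\in\NF(I)}\frac{k[[x_n,y_n]]}{(x_ny_n)}\wh\bigotimes k[[t_1,\ldots,t_{g(X_{\NF(I)})}]].
$$
From this presentation the remaining assertions are immediate: each factor $k[[x_n,y_n]]/(x_ny_n)$ is a reduced, one-dimensional complete intersection with an isolated singular point, so the tensor product is a reduced complete intersection of dimension $m+g(X_{\NF(I)})$; and one checks $m+g(X_{\NF(I)})=g(X)$ using $g(X_{\NF(I)})=|E(\Gamma_X)|-m-|V(\Gamma_X)|+1$, i.e.\ $g(X)=g(X_{\NF(I)})+m$ since $X_{\NF(I)}$ is connected. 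The ring is regular precisely when $m=0$, i.e.\ when $I$ is a line bundle, so $\Simp_X^{\rm sm}=\PIC_X$.

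For part (2), the normalization of $\Simp_X$ is smooth: this is \cite[Cor. 13.3]{OS}, and it can also be seen directly from the local presentation above, since normalizing $k[[x_n,y_n]]/(x_ny_n)$ yields $k[[x_n]]\times k[[y_n]]$, so the normalization of $\wh\O_{\Simp_X,I}$ is a finite product of regular complete local rings. I would note that the étale-local structure, being a product of nodes (in the $(x_n,y_n)$ coordinates) with a smooth factor, also gives the "étale-locally a product of nodes" description used elsewhere in the paper.

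The main obstacle is the deformation-theoretic input itself, namely verifying that $\Def(I)$ splits as the advertised product and that the local factor at a non-free node is exactly $k[[x_n,y_n]]/(x_ny_n)$. This is not a routine calculation — it requires understanding how deformations of the sheaf interact with deformations of the curve at a node where the sheaf fails to be locally free, and in particular the fact that over the smoothing parameter the module $\m_p$ has no flat deformation. However, since this computation is precisely what is carried out in \cite[\S 3]{CMKV}, I would simply cite that reference for the key local and global statements and then assemble the consequences as above; no new work beyond bookkeeping with genera is needed.
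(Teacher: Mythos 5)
Your proposal follows essentially the same route as the paper: the completed local ring is identified with the semiuniversal deformation ring of $I$ and its structure is taken from \cite[\S 3]{CMKV}, the numerical consequences (reducedness, lci, pure dimension $g(X)=g(X_{\NF(I)})+|\NF(I)|$, smooth locus $=\PIC_X$) are read off from that presentation, and the smoothness of the normalization is \cite[Cor. 13.3]{OS} (also visible from the local model, as you note). Two small caveats. First, the statement also asserts that $\Simp_X$ is a \emph{scheme}, not merely an algebraic space; you leave this unaddressed, whereas the paper invokes \cite[Thm. B]{est1} for it, so you should add that citation. Second, your parenthetical heuristic that ``the module $\m_p$ does not deform to the smoothing $t\neq 0$'' is not correct: over the semiuniversal deformation of the node the pair (node, $\m_p$) has smooth two-dimensional deformation space mapping to the curve parameter by $t=xy$, so $\m_p$ does deform over the smoothing (becoming locally free there); the factor $k[[x_n,y_n]]/(x_ny_n)$ arises as the fibre over $t=0$, i.e.\ deformations of the sheaf with the curve $X$ held fixed, which is the relevant functor here. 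Since you cite \cite[\S 3]{CMKV} for the actual computation, this slip does not affect the validity of the argument, but the justification as phrased should be corrected or dropped.
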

\begin{proof}
The fact that $\Simp_X$ is a scheme follows from \cite[Thm. B]{est1}. The completed local ring $\wh \O_{\Simp_X,I}$ is isomorphic to the semiuniversal deformation ring of $I$, whose structure is determined in \cite[\S 3]{CMKV}. The fact that the normalization of $\Simp_X$ is smooth follows from the proof of \cite[Cor. 13.3]{OS}.
\end{proof}

We have a decomposition into connected components  
\begin{equation}\label{E:Torsd}
\TF_X=\coprod_{d\in \Z} \TF^d_X
\end{equation}
where $\TF^d_X$ parametrizes sheaves of degree $d$. The decomposition \eqref{E:Torsd} induces the following decompositions 
$$
\Simp_X=\coprod_{d\in \Z} \Simp^d_X \: \text{ and }\:  \PIC_X=\coprod_{d\in \Z} \PIC^d_X,
$$
although $\Simp^d_X$ and $\PIC^d_X$ are not necessarily connected.  

For each subcurve $Y$ of $X$, let $I_Y$ be the restriction $I_{|Y}$ of $I$ to $Y$ modulo torsion.
If $I$ is a torsion-free (resp. rank-$1$) sheaf on $X$, so is $I_Y$ on $Y$.
We let $\deg_Y (I)$ denote the degree of $I_Y$, that is, $\deg_Y(I) := \chi(I_Y )-\chi(\O_Y)$. The \emph{multidegree} of a rank-$1$ torsion-free sheaf $I$ on $X$ is the divisor on $\Gamma_X$ defined as the multidegree of the line bundle $L_I$ on $X_{\NF(I)}$, i.e.
$$D(I):=D(L_I)=\{D(I)_v:=\deg_{|(X_{\NF(I)})_v}(L_I)\: : \: v\in V(\Gamma)\}.$$
where we identify as usual $V(\Gamma_{X_{\NF(I)}})=V(\Gamma_X\setminus \NF(I))=V(\Gamma_X)$.
It turns out that for the subcurve $X[W]$ of $X$ associated to $W\subset V(\Gamma_X)$, we have
\begin{equation}\label{E:deg-sub}
\deg_{X[W]}(I)=D(I)_{W}+e_{\NF(I)}(W). 
\end{equation}
In particular, the degree of $I$ on an irreducible component $X_v$ is given by 
$$
\deg_{X_v}(I)=D(I)_v+e_{\NS(I)}(v),
$$
and the total degree of $I$ is given by
\begin{equation}\label{E:deg-mdeg}
\deg(I)=|D(I)|+|\NF(I)|. 
\end{equation}

The Picard variety $\PIC_X$ of $X$ acts on $\TF_X$ via tensor product. We can restrict this action to the subgroups 
$$
\PIC_X^{\un 0}\subseteq \PIC_X^0\subseteq \PIC_X
$$
where $\PIC^{\un 0}_X$, called the \emph{generalized Jacobian} of $X$, is the semiabelian variety parametrizing  line bundles of multidegree zero.
The orbits of the  action of $\PIC_X^{\un 0}$ on $\TF_X$ are described in the following well-known 

\begin{fact}\label{F:orbits} (see e.g. \cite[Sec. 5]{MV})
Let $X$ be a connected nodal curve. 
\begin{enumerate}
\item \label{F:orbits1}
The orbits of $\PIC_X^{\un 0}$ on $\TF_X$ are given by 
\begin{equation}\label{E:orbit1}
\TF_X(G,D):=\{I\in \TF_X\: :\: G(I)=G \: \text{ and } \: D(I)=D\}.
\end{equation}
In particular, we get a decomposition into disjoint $\PIC_X^{\un 0}$-orbits
$$
\TF_X^d=\coprod_{(G,D)\in \bO^d(\Gamma_X)} \TF_X(G,D) \subset \TF_X=\coprod_{(G,D)\in \bO(\Gamma_X)} \TF_X(G,D) 
$$
\item \label{F:orbits2} We have that 
$$\begin{sis}
& \TF_X(G,D)\subset \Simp_X \Leftrightarrow G \:  \text{ is connected, }\\
& \TF_X(G,D)\subset \PIC_X \Leftrightarrow G=\Gamma_X.
\end{sis}
$$ 
In particular, we get a decomposition into disjoint $\PIC_X^{\un 0}$-orbits
$$
\Simp_X^d=\coprod_{(G,D)\in \bO_{\con}^d(\Gamma_X)} \TF_X(G,D) \subset \Simp_X=\coprod_{(G,D)\in \bO_{\con}(\Gamma_X)} \TF_X(G,D) 
$$
\item \label{F:orbits3}
Each $\TF_X(G,D)$ is a locally closed substack of $\TF_X$ and, if we endow it with the reduced stack structure, there is an isomorphism (if we write $G=\Gamma_X\setminus S$)
$$
(\nu_S)_*: \PIC^{D}_{X_S}:=\{L\in \PIC(X_{S}): \:  D(L)=D\} \xrightarrow{\cong} \TF_X(\Gamma_X\setminus S, D). 
$$
Under this isomorphism, the action of $\PIC_X^{\un 0}$ factors through the quotient $\PIC_X^{\un 0}\twoheadrightarrow \PIC_{X_S}^{\un 0}$ followed by the tensor product action of $\PIC_{X_S}^{\un 0}$ on $ \PIC^{D}_{X_S}$.
\item \label{F:orbits4}
The closure of $\TF_X(G,D)$ is equal to 
$$
\ov{\TF_X(G,D)}=\coprod_{(G,D)\geq (G',D')} \TF_X(G',D'),
$$
where $\geq $ is defined in \eqref{E:posetO}. In particular, the poset of $\PIC_X^{\un 0}$-orbits of $\TF_X$ (resp. $\TF_X^d$) is isomorphic to $\bO(\Gamma_X)$ (resp. $\bO^d(\Gamma_X)$)
and the poset of $\PIC_X^{\un 0}$-orbits of $\Simp_X$ (resp. $\Simp_X^d$) is isomorphic to $\bO_{\con}(\Gamma_X)$ (resp. $\bO_{\con}^d(\Gamma_X)$).
 \end{enumerate}







\end{fact}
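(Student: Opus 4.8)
The plan is to deduce all four assertions from the bijection $I\leftrightarrow(\NF(I),L_I)$ between rank-$1$ torsion-free sheaves and pairs (a subset of the nodes, a line bundle on the corresponding partial normalization) recorded in Section~\ref{Sec:sheaves}, together with elementary properties of partial normalization morphisms and the local structure of $\Simp_X$ from Fact~\ref{F:Simploc}.

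\emph{Parts \eqref{F:orbits1} and \eqref{F:orbits2}.} First one checks that the $\PIC^{\un 0}_X$-action preserves the pair $(G(I),D(I))$: writing $I=\nu_{S,*}(L_I)$ with $S=\NF(I)$, the projection formula gives $I\otimes M=\nu_{S,*}(L_I\otimes\nu_S^*M)$ for every line bundle $M$, which is free at exactly the same nodes (so $G$ is unchanged) and has multidegree $D(L_I)+D(\nu_S^*M)=D(I)$, since each irreducible component of $X_S$ maps isomorphically onto a component of $X$ and hence $\nu_S^*$ sends multidegree-zero line bundles to multidegree-zero line bundles. Thus every $\TF_X(G,D)$ is a union of orbits. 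For transitivity inside $\TF_X(G,D)$ one reduces, via $I=\nu_{S,*}(L)$, $I'=\nu_{S,*}(L')$ with $D(L)=D(L')$, to realizing $L'\otimes L^{-1}\in\PIC^{\un 0}_{X_S}$ as $\nu_S^*M$; that is, to the surjectivity of $\nu_S^*\colon\PIC^{\un 0}_X\to\PIC^{\un 0}_{X_S}$. This holds because both groups are extensions of the \emph{same} abelian variety $\prod_v\PIC^0(X_v^{\nu})$ by tori, $\nu_S^*$ being the identity on the abelian parts and, on the toric parts, the map $\Hom(-,\Gm)$ of the saturated inclusion $H_1(\Gamma_X\setminus S,\Z)\hookrightarrow H_1(\Gamma_X,\Z)$, hence surjective. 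The two displayed decompositions follow. Part \eqref{F:orbits2} is then immediate from \eqref{E:simp-I} ($\TF_X(G,D)\subset\Simp_X$ iff $G$ connected) and the tautology $\TF_X(G,D)\subset\PIC_X\iff\NF(I)=\emptyset\iff G=\Gamma_X$; the induced decompositions of $\Simp^d_X$ and $\PIC_X$ are read off from part \eqref{F:orbits1}.

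\emph{Part \eqref{F:orbits3}.} Since ``$I$ is locally free at a given node'' is an open condition, $\{G(I)\ge G\}$ is open; on it the multidegree is a locally constant invariant, so $\TF_X(G,D)$, being an open-and-closed piece of the locally closed locus $\{G(I)=G\}$, is locally closed. The push-forward $(\nu_S)_*\colon\PIC^D_{X_S}\to\TF_X(\Gamma_X\setminus S,D)$ is a morphism of stacks whose two-sided inverse is $I\mapsto\nu_S^*I/(\text{torsion})$ (both pointwise and in families; this is standard, cf. \cite{est1}), which is also why the reduced structure must be used on the target. Finally the projection-formula identity above gives $I\otimes M=\nu_{S,*}(L_I\otimes\nu_S^*M)$, and $\nu_S^*M$ depends only on the image of $M$ under $\PIC^{\un 0}_X\twoheadrightarrow\PIC^{\un 0}_{X_S}$, which is exactly the asserted factorization of the action.

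\emph{Part \eqref{F:orbits4}, and the main obstacle.} The closure formula and the poset statement both come from Fact~\ref{F:Simploc}: at a sheaf $I\in\TF_X(G',D')$ the completed local ring is $\widehat\bigotimes_{n\in\NF(I)}k[[x_n,y_n]]/(x_ny_n)\,\widehat\otimes\,k[[t_1,\dots,t_{g(X_{\NF(I)})}]]$, so étale-locally $\Simp_X$ near $I$ is a product of coordinate crosses (one per non-free node) times a smooth factor. Each coordinate cross has exactly two branches through the origin, and deforming $I$ along a chosen subset of the non-free nodes and along one of the two branches at each of them yields, by the description of the semiuniversal deformation ring in \cite[\S 3]{CMKV}, precisely the sheaves that become free at exactly those nodes, with multidegree obtained from $D'$ by adding the indegree divisor of the corresponding orientation of $G-G'$. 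Hence the orbits whose closure contains $I$ are exactly the $\TF_X(G,D)$ with $G\ge G'$ and $D\in D'+\D(G-G')$, i.e. with $(G,D)\ge(G',D')$ in the sense of \eqref{E:posetO}; this gives simultaneously the formula for $\ov{\TF_X(G,D)}$ and the identification of the orbit poset of $\TF_X$ (resp. $\TF^d_X$) with $\bO(\Gamma_X)$ (resp. $\bO^d(\Gamma_X)$), and restricting to simple sheaves via part \eqref{F:orbits2} gives $\bO_{\con}(\Gamma_X)$ (resp. $\bO^d_{\con}(\Gamma_X)$). The step I expect to require the most care is exactly this matching: identifying, node by node, the two branches of the coordinate cross in the \cite{CMKV} deformation space with the two orientations of the corresponding edge and with the $\pm1$ shift of the multidegree across that node, so that the deformations realized along all such branches sweep out precisely $D'+\D(G-G')$; the remaining ingredients are either formal or direct citations.
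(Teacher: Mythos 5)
The paper does not actually prove this statement: it is recorded as a Fact with a pointer to \cite[Sec.~5]{MV} (which in turn rests on \cite{OS} and \cite{est1}), so there is no internal argument to measure yours against. Judged on its own, your sketch of parts \eqref{F:orbits1}--\eqref{F:orbits3} is sound and is the standard route: tensoring preserves $(G(I),D(I))$ by the projection formula, transitivity reduces to surjectivity of $\nu_S^*\colon \PIC_X^{\un 0}\to \PIC_{X_S}^{\un 0}$ (your splitting argument works, since $H_1(\Gamma_X,\Z)/H_1(\Gamma_X\setminus S,\Z)$ is torsion-free; alternatively use divisibility of $k^*$), and the inverse $I\mapsto \nu_S^*I/\mathrm{torsion}$ gives part \eqref{F:orbits3}. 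Only the phrase ``maps isomorphically onto a component'' should read ``birationally'' when $S$ contains self-nodes of a component; degree preservation is unaffected.

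Part \eqref{F:orbits4} is where your proposal has a genuine gap, and you half-acknowledge it. First, Fact~\ref{F:Simploc} describes the local structure of $\Simp_X$ only, i.e.\ at simple sheaves, whereas the closure formula and the poset claim range over all of $\TF_X$, including strata with disconnected $G$; to argue as you do at a non-simple $I$ you would have to redo the local analysis on the rigidified stack (the computation of \cite[\S 3]{CMKV} is local on the curve and does not require simplicity, but that is not what the quoted Fact says). Second, the ``matching'' you flag — branches of each coordinate cross $\leftrightarrow$ the two orientations of the corresponding edge together with the $+1$ shift of the multidegree at one endpoint — is the entire content of the closure relation, and it is asserted rather than proved; it is not literally contained in the deformation-ring computation you cite. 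The cleaner route, consistent with the toolkit the paper itself develops, is the specialization analysis by $1$-cochains/orientations of \cite[\S 12]{OS}, reproduced here as Lemma~\ref{L:Io-Ieta}: it gives directly the inclusion $\ov{\TF_X(G,D)}\subseteq \coprod_{(G,D)\geq (G',D')}\TF_X(G',D')$ with $\geq$ as in \eqref{E:posetO}, while the reverse inclusion requires exhibiting, for each orientation of $G-G'$ and each $I_0\in\TF_X(G',D')$, a one-parameter family with the prescribed limit, i.e.\ Proposition~\ref{P:spec-orb}\eqref{P:spec-orb2} (\cite[Prop.~3.42]{CC}) applied to the constant family $X\times\Delta$. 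With those substitutions your outline closes; as written, the crucial step of part \eqref{F:orbits4} is not established.
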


\begin{remark}
The orbits of $\PIC_X^0$ on $\TF_X$ are given by 
\begin{equation}\label{E:orbit2}
\TF_X(G,d):=\{I\in \TF_X\: :\: G(I)=G \: \text{ and } \: \deg(I)=d\}=\coprod_{D: |D|=d} \TF_X(G,D).
\end{equation}

The orbits of $\PIC_X$ on $\TF_X$ are given by 
\begin{equation}\label{E:orbit3}
\TF_X(G):=\{I\in \TF_X\: :\: G(I)=G\}=\coprod_{D} \TF_X(G,D).
\end{equation}
\end{remark}

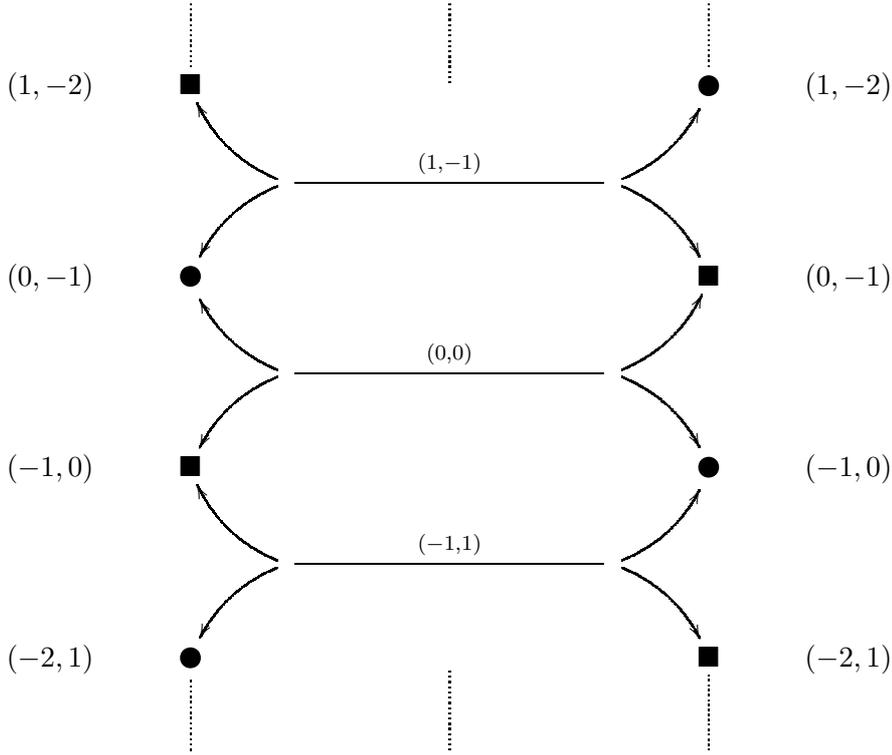
\begin{figure}[hbt!]
\[\xymatrix{
&&&&&&&&\\
(1,-2)&{ \blacksquare}\ar@{.}[u] &&& \ar@{.}[u] &&& { \mathbin{\vcenter{\hbox{\scalebox{2}{$\bullet$}}}}}\ar@{.}[u]& (1,-2)\\
&&\ar@{-}[rrrr]^{(1,-1)} \ar@/^/[lu]  \ar@/_/[ld] && & &  \ar@/_/[ru]   \ar@/^/[rd]  &  &&\\
(0,-1)&{ \mathbin{\vcenter{\hbox{\scalebox{2}{$\bullet$}}}}} &&&&&& { \blacksquare}& (0,-1)\\
&&\ar@{-}[rrrr]^{(0,0)} \ar@/^/[lu]  \ar@/_/[ld] && & &  \ar@/_/[ru]   \ar@/^/[rd]  &  &&\\
(-1,0)&{ \blacksquare} &&&&&& { \mathbin{\vcenter{\hbox{\scalebox{2}{$\bullet$}}}}}& (-1,0)\\
&&\ar@{-}[rrrr]^{(-1,1)} \ar@/^/[lu]  \ar@/_/[ld] && & &  \ar@/_/[ru]   \ar@/^/[rd]  &  && \\
(-2,1)&{ \mathbin{\vcenter{\hbox{\scalebox{2}{$\bullet$}}}}} \ar@{.}[d] &&& \ar@{.}[d]  &&&  { \blacksquare}\ar@{.}[d] &(-2,1)\\
&&&&&&&& \\
}\]

\caption{If $X$ a $2$-cycle curve with $2$ smooth rational components meeting in two nodes $n_1$ and $n_2$, then $\Simp_X^0$ has infinitely many irreducible components
$\{\ov{\TF_X(\Gamma_X,(d,-d))}\}_{d\in \Z}$, each of which is a $\P^1$ with a double origin $\{\TF_X(\Gamma_X\setminus \{n_1\},(d,d-1)),\TF_X(\Gamma_X\setminus \{n_2\},(d-1,d))\}$ and a double infinity $\{\TF_X(\Gamma_X\setminus \{n_2\},(d,d-1)),\TF_X(\Gamma_X\setminus \{n_1\},(d-1,d))\}$ 
}
\end{figure}

The previous Fact implies that any upper subset of $\bO(\Gamma_X)$ (resp. $\bO_{\con}(\Gamma_X)$) determines an open subset of  of $\TF_X$ (resp. $\Simp_X$), as we now formalize in the following

\begin{defi}\label{D:opTF}
\noindent 
\begin{enumerate}
\item \label{D:opTF1} Given an upper subset $\cP$ of $\bO^d(\Gamma_X)$ (resp. $\bO_{\con}^d(\Gamma_X)$), we set 
$$
U(\cP):=\coprod_{(G,D)\in \cP} \TF(G,D) \subset \TF_X \: (\text{resp. } \Simp_X),  
$$
Note that $U(\cP)$ is an open subset by Fact \ref{F:orbits}, and it will be called the open subset associated to the upper subset $\cP$.
\item \label{D:opTF2} We say that an open subset $U\subset  \Simp_X$ is of  \emph{sN-type} (resp. of \emph{numerical sN-type}, resp. of \emph{N-type}, resp. of \emph{numerical N-type})  if $U=U(\cP)$ for an upper subset $\cP$ of  $\bO_{\con}^d(\Gamma_X)$ having the same property (as in Definition \ref{D:upper-sN}).
\end{enumerate}
\end{defi}
Note that, we have the following implications
$$
\xymatrix{
\left\{\text{Open subsets of sN-type}\right\} \ar@{=>}[r] \ar@{=>}[d]  & \left\{\text{Open subsets of N-type}\right\} \ar@{=>}[d] \\
\left\{\text{Open subsets of numerical sN-type}\right\} \ar@{=>}[r] & \left\{\text{Open subsets of numerical N-type}\right\}.
}
$$

\subsubsection{Specializations}

We now examine the specializations of torsion-free rank-$1$ sheaves on nodal curves. 

First of all, we describe the behavior of the decomposition of $\TF_X$ into $\PIC_X^{\un 0}$-orbits under specializations of the nodal curve $X$.

\begin{prop}\label{P:spec-orb}
Let $\pi:\X\to \Delta=\Spec R$ be a flat and proper morphisms whose geometric fibers are connected nodal curves, $R$ is a complete DVR with algebraically closed residue field $k$.
Denote by $\X_0$ the closed fiber of $\pi$ and by $\X_{\ov \eta}$ the geometric generic fiber of $\pi$. Consider the morphism of dual graphs $f:\Gamma_{\X_0}\to \Gamma_{\X_{\ov \eta}}$ 
induced by the \'etale specialization $\ov\eta \rightsquigarrow 0$. Consider the ($\Gm$-rigidified) stack $\TF_{\X/\Delta}$ of relative torsion-free rank-$1$ sheaves on $\X/\Delta$.
\begin{enumerate}
\item \label{P:spec-orb1} If $\sigma:\Delta \to \TF_{\X/\Delta}$ is a section of $\TF_{\X/\Delta}\to \Delta$ with $I_0:=\sigma(0)$ and $I_{\ov \eta}=\sigma(\ov \eta)$, then 
\begin{equation}\label{E:spec-type}
f_*(\NF(I_0), \un \deg(I_0))\leq (\NF(I_{\ov \eta}), \un \deg(I_{\ov \eta})).
\end{equation}
\item \label{P:spec-orb2} Conversely, if $I_0\in \TF_{\X_0}$ and $I_{\ov \eta}\in \TF_{\X_{\ov \eta}}^d$ are such that \eqref{E:spec-type} holds, then there exists a section $\sigma:\Delta \to \TF_{\X/\Delta}$ such that $\sigma(0)=I_0$ and $\sigma(\ov \eta)=I_{\ov \eta}$.
\end{enumerate}
\end{prop}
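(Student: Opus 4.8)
The plan is to reduce everything to the description of torsion-free rank-$1$ sheaves via push-forwards of line bundles from partial normalizations, together with the fact that $\TF_{\X/\Delta}$ is the $\Gm$-rigidified stack of relative torsion-free rank-$1$ sheaves, so that sections over the (strictly henselian, even complete) base $\Delta$ can be analyzed on the formal/étale-local level. I will use the combinatorics of the morphism $f\colon\Gamma_{\X_0}\to\Gamma_{\X_{\ov\eta}}$, which contracts precisely the edges of $\Gamma_{\X_0}$ corresponding to the nodes of $\X_0$ that are \emph{not} nodes on the geometric generic fiber, i.e. the nodes at which the total space $\X$ is regular (when $\X\to\Delta$ is a regular smoothing these are all nodes; in general $(\Im f^E)^c$ records the smoothed nodes). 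The key local model is: near a node $n_e$ of $\X_0$ that gets smoothed, $\X$ has equation $xy=t$ (up to unit), while near a node that persists it has equation $xy=0$.

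For part \eqref{P:spec-orb1}: given a section $\sigma$, write $I_0=\nu_{S_0,*}(L_0)$ with $S_0=\NF(I_0)$ and $I_{\ov\eta}=\nu_{S_{\eta},*}(L_{\eta})$ with $S_\eta=\NF(I_{\ov\eta})$. First I would argue that $S_0\supseteq (\Im f^E)^c\cup f^E(S_\eta)$ in a suitable sense: at a node that persists on the generic fiber but where $I_{\ov\eta}$ is non-free, the sheaf $I$ must be non-free along the whole section, hence $I_0$ is non-free there; at a node that gets smoothed, the local model $k[[x,y,t]]/(xy-t)$ has the property that the only torsion-free rank-$1$ modules of the closed fiber that extend flatly are $\O$ or $\m$, and a flat extension restricting to a line bundle on the generic fiber must restrict to $\m$ on the closed fiber exactly when... — more precisely, the non-free locus can only \emph{grow} under specialization within the fibers, while on smoothed nodes the sheaf $I_0$ may or may not be non-free. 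Organizing this: $G(I_0)\le f^*(G(I_{\ov\eta}))$ as spanning subgraphs of $\Gamma_{\X_0}$ relative to the edges in $\Im f^E$, which says $f_*(G(I_0))\le G(I_{\ov\eta})$ in $\SS(\Gamma_{\X_{\ov\eta}})$. For the multidegree inequality, I would compute $\un\deg(I_{\ov\eta})$ by specializing $L_{\ov\eta}$: the flat limit of a line bundle on the generic fiber of a family of (possibly partially normalized) nodal curves has multidegree obtained from $\un\deg(I_0)$ by moving degree across the contracted/smoothed edges according to an orientation, which is exactly the relation $D(I_{\ov\eta})\in f_*(D(I_0))+\D(f^*G(I_{\ov\eta})-G(I_0))$ encoded in the partial order of $\bO^d$ via \eqref{E:posetO} and the push-forward $f_*$ of \eqref{E:f*-O}; combined with the containment of free subgraphs this is precisely \eqref{E:spec-type}. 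The cleanest way is to first treat the case where $\X\to\Delta$ is a one-parameter smoothing of a single node (so $f$ contracts one edge), prove the inequality there by the explicit $xy=t$ computation (the two orientations of the contracted edge correspond to the two one-parameter families of line bundles degenerating to a given $I_0$), and then compose.

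For part \eqref{P:spec-orb2}, which I expect to be the main obstacle: given $I_0$ and $I_{\ov\eta}$ with \eqref{E:spec-type}, I must \emph{construct} a section. I would proceed by factoring the inequality \eqref{E:spec-type} in $\bO^d$ into a chain of covering relations, and handle each elementary step: either (a) an honest specialization within the closed fiber $\X_0$, i.e. $(G',D')\le(G,D)$ with $G'\le G$ in $\SS(\Gamma_{\X_0})$ — this is realized by a section of $\TF_{\X_0}\to\{0\}$ extended trivially, using that $\TF_{\X_0}(G,D)$ has $\TF_{\X_0}(G',D')$ in its closure by Fact \ref{F:orbits}\eqref{F:orbits4} and that limits in a stack over a DVR lift to sections after a possibly trivial base change since $\TF$ is proper over $\Delta$ along such orbit-closures — or (b) a step that "uses the smoothing", i.e. passes from an orbit on $\X_0$ to the nearby generic fiber across one smoothed node, which I realize by the explicit $xy=t$ model: on $\Spec k[[x,y,t]]/(xy-t)$, the ideal sheaves $(x,t)$ and $(y,t)$ are the two flat families of rank-$1$ torsion-free sheaves restricting to $\m=(x,y)$ on the closed fiber and to $\O$ (a line bundle) on the generic fiber, and by choosing which one at each smoothed node I produce exactly the indegree divisor prescribed by the orientation witnessing \eqref{E:spec-type}. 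Gluing these local constructions with a line bundle over the rest of $\X$ (using that $\Pic$ of the total family surjects onto $\Pic$ of the closed fiber in multidegree $0$, $R$ being henselian) yields the desired section $\sigma$ with $\sigma(0)=I_0$, $\sigma(\ov\eta)=I_{\ov\eta}$. The delicate points will be: bookkeeping the multidegree shift $e_{S\cap(\Im f^E)^c}$ appearing in the definition of $f_*(D)$ so that the constructed $\sigma(\ov\eta)$ has exactly multidegree $D(I_{\ov\eta})$ and not merely the right class; and checking that the elementary moves can indeed be composed, i.e. that an arbitrary relation in $\bO^d$ compatible with $f_*$ factors through moves of type (a) and (b) — here I would invoke the upper lifting property of $f_*$ from Proposition \ref{P:mapsO}\eqref{P:mapsO3} to reduce a general $f_*$-compatible inequality to a specialization purely within $\X_0$ followed by a "generization" step realized by the local models.
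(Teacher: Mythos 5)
Your outline is a from-scratch argument: the paper itself gives no proof of this Proposition, it simply cites \cite[Prop.~3.41, Prop.~3.42]{CC} and \cite[Lemma 5.2]{PT2}. For part \eqref{P:spec-orb1} your plan (semicontinuity of the non-free locus plus the ``one unit of degree moves to one endpoint of each freed edge'' bookkeeping, in the spirit of the paper's Lemma \ref{L:Io-Ieta} for constant families) is reasonable, though the reduction ``to one-node smoothings and compose'' is only heuristic. Part \eqref{P:spec-orb2}, however, breaks down at exactly the step you identify as the engine of the construction.

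The local model at a smoothed node is $xy=t^{m(e)}$ with $m(e)=\val(f_e)\geq 1$ (this is used explicitly in the proof of Theorem \ref{T:PT-thm}), not $xy=t$; and over $xy=t$ your proposed modules are useless: in $k[[x,y,t]]/(xy-t)\cong k[[x,y]]$ one has $(x,t)=(x)$ and $(y,t)=(y)$, which are invertible and restrict to \emph{free} modules on the closed fiber, not to $\m=(x,y)$. Worse, no correct substitute exists when $m(e)=1$: if $\I$ is $R$-flat with torsion-free fibers, then $t$ is a nonzerodivisor on $\I_p$ and $\operatorname{depth}_{\O_{\X,p}}\I_p=1+\operatorname{depth}_{\O_{\X_0,p}}\I_{0,p}\geq 2$, so at a point where $\X$ is regular the stalk $\I_p$ is maximal Cohen--Macaulay over a two-dimensional regular local ring, hence free; therefore $I_0$ can never be non-free at a node where the total space is regular. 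In particular, for a regular one-parameter smoothing there is \emph{no} section of $\TF_{\X/\Delta}\to\Delta$ through a non-free $I_0$, so your construction cannot work as stated; it (and indeed the literal reading of ``section over $\Delta$'' in the statement) is only salvaged by allowing a ramified base change $\Delta'\to\Delta$ — equivalently, by interpreting the conclusion as an \'etale specialization of geometric points of the stack, which is the form proved in \cite[Prop.~3.42]{CC} — after which the local ring becomes $xy=t^m$ with $m\geq 2$ and the correct local models are the ideals $(x,t^j)$, $1\leq j\leq m-1$, whose special fibers are isomorphic to $\m$. A second structural gap: your factorization into elementary moves ``within $\X_0$'' cannot be realized by sections of $\TF_{\X/\Delta}\to\Delta$ (every section has generic fiber on $\X_\eta$), so the elementary steps do not concatenate into a single section hitting the prescribed $I_0$ and $I_{\ov\eta}$; the argument must instead build the relative sheaf in one stroke, by choosing the local model $(x,t^j)$ and a branch at each relevant node and gluing to a line bundle away from those nodes, with the orientation witnessing \eqref{E:spec-type} dictating the choices.
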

\begin{proof}
Part \eqref{P:spec-orb1}: see \cite[Prop. 3.41]{CC} or \cite[Lemma 5.2]{PT2}.
Part \eqref{P:spec-orb2}: see \cite[Prop. 3.42]{CC}.
\end{proof}

We now give a more detailed analysis of the specializations of torsion-free rank-$1$ sheaves on a fixed nodal curve $X$. 

With this aim, we fix a discrete valuation $k$-ring $R$ with residue field $k$ and quotient field $K$, and we denote by $\val:K\to \Z\cup\{\infty\}$ the associated valuation. Set $B:=\Spec R$ with generic point $\eta:=\Spec K$ and special point $o:=\Spec k$.  Consider a relative torsion-free rank-$1$ sheaf $\I\in \TF_X(B)$ on $X_B:=X\times_k B$ and denote by $\I_{\eta}\in \TF_X(\eta)$ its generic fiber and by $\I_o\in \TF_X(k)$ its special fiber.  
We now introduce a $1$-cochain on a spanning subgraph of  $\Gamma_X$ associated to $\I$ that will allow us to describe the combinatorial type $(G(\I_o),D(\I_o))$  of the special fiber $\I_o$ in terms of the combinatorial type $(G(\I_\eta),D(\I_\eta))$ of the generic fiber.  

Recall that the space of $1$-cochains (and the one of $1$-chains) on a graph $G$ can be defined in the following way. Denote by $\bE(G)$ the set of oriented edges of $G$ and denote by $s,t:\bE(G)\to V(G)$ the source and target functions that associate to an oriented edge $\be$ its source and target, respectively.  Given an oriented edge $\be$, we denote by $\ov{\be}$ the opposite oriented edge so that $t(\be)=s(\ov \be)$ and $s(\be)=t(\ov \be)$. 
The space of $1$-chains and $1$-cochains on $G$ with coefficients in an abelian group $A$ (e.g. $A=\Z$)  are defined as 
\begin{equation*}
\bC_1(G,A):=\frac{\bigoplus_{\be \in \bE(G)} A\cdot \be}{(\be=-\ov \be)} \quad \text{ and } \quad \bC^1(G,A):=\{f:\bE(G)\to A\: : f(\be)=-f(\ov \be)\}. 
\end{equation*}


In order to define the $1$-cochain associated to $\I$, consider  the double dual $\L(\I)$ of the pull-back of $\I$ via the normalization map $\nu_B:\wt X_B:=\wt X\times_k B\to X_B$, which is line bundle since it is reflexive sheaf on a regular $2$-dimensional scheme. Similarly, denote by $\L(\I_\eta)$  the torsion-free quotient of the pull-back of $\I_{\eta}$ via the normalization map $\nu_{\eta}:\wt X_{\eta}:=\wt X\times_k k(\eta) \to X_{\eta}:=X\times_k k(\eta)$ and by $\L(\I_o)$  the torsion-free quotient of the pull-back of $\I_{o}$ via the normalization map $\nu:\wt X \to X$.
Note that $\L(\I)$ is the unique line bundle on $\wt X_B$ whose generic fiber is $\L(\I)_{\eta}=\L(\I_{\eta})$. 
As shown in \cite[Prop. 12.7]{OS}, there is a canonical presentation of $\I$ of the form 
\begin{equation}\label{E:pres-I}
0\to \I \to (\nu_B)_*(\L(\I))\xrightarrow{a} \bigoplus_{n\in \NF(\I_{\eta})^c}\O_{\{n\}\times B}\to 0.
\end{equation}
For any $n\in \NF(\I_{\eta})^c=E(G(\I_\eta))$, choose an oriented edge $\be$ whose underlying edge $e$ corresponds to the node $n$. 
Denote by $n_\be^s$ and $n_\be^t$ the two inverse images of $n$ under the normalization map $\nu:\wt X\to X$ in such a way that $n_\be^s\in \wt X_{s(\be)}$ and $n_\be^t\in \wt X_{t(\be)}$.
The restriction of $a$ to  $\{n\}\times B$ induces a surjection
$$
a_{|\{n\}\times B}: (\nu_B)_*(\L(\I))_{|\{n\}\times B}=\L(\I)_{|\{n_\be^s\}\times B}\oplus \L(\I)_{|\{n_{\be}^t\}\times B}\twoheadrightarrow \O_{\{n\}\times B}
$$
which defines an element $[x_{\be}^s,x_{\be}^t]\in \P^1(R)$. Since the restriction $a_{|\{n\}\times \eta}$ is surjective on each of the two factors (see \cite[Prop. 12.7]{OS}), the element 
$[x_\be^s,x_{\be}^t] \in \P^1(R)$ is different from the $0$ and $\infty$. We set 
\begin{equation}\label{E:gammaI}
\gamma(\I)(\be):=\val\left(\frac{x_{\be}^t}{x_{\be}^s}\right)\in \Z. 
\end{equation}
Note that $\gamma(\I)(\be)=-\gamma(\I)(\ov \be)$ by construction, and hence we get a well-defined element $\gamma(\I)\in \bC^1(G(\I_{\eta}),\Z)$, called the \emph{$1$-cochain associated to $\I$.}  Intuitively, the integer $\gamma(\I)(\be)$ measures the ''speed'' at which $\I_o$ is smoothened out in the direction of $\be$.

We define the support of $\gamma(\I)$ as 
\begin{equation}\label{E:supp-gamma}
\supp \gamma(\I):=\{e\in E(G(\I_\eta))\: : \gamma(\I)(\be)\neq 0 \text{ for some orientation } \be  \text{ of } e\},
\end{equation}
and the \emph{orientation} associated to $\I$ (or to $\gamma(\I)$) as 
\begin{equation}\label{E:OI}
\O(\I):=\bigcup\{\be: \gamma(\I)(\be)>0\}.
\end{equation}
Note that $\supp \O(\I)=\supp \gamma(\I)$.


The orientation $\O(\I)$ associated to $\I$ allows us to 
describe the combinatorial type $(G(\I_o),D(\I_o))$ of the special fiber $\I_o$ in terms of the combinatorial type $(G(\I_\eta),D(\I_\eta))$.

\begin{lemma}\label{L:Io-Ieta}
With the above notation, we have that 
$$\begin{aligned}
&G(\I_o)=G(\I_{\eta})\setminus \supp \O(\I), \\
&D(\I_o)=D(\I_{\eta})-\D(\O(\I)).
\end{aligned}$$
\end{lemma}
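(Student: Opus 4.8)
The plan is to analyze the presentation \eqref{E:pres-I} node-by-node and determine, for each $n\in \NF(\I_\eta)^c$, whether $\I_o$ is free or not free at $n$ and, when free, how the specialization affects the multidegree at the two branches. Since $\I$ is free at every node in $\NF(\I_\eta)$ already on the generic fiber (and hence on all of $B$), the only nodes where the combinatorial type can jump are those in $\supp\gamma(\I)$; away from $\supp\gamma(\I)$ nothing happens, so it suffices to work at one node $n\in\supp\gamma(\I)$ at a time and restrict attention to the local picture. Concretely, I would localize \eqref{E:pres-I} at $n$: writing the stalk of $(\nu_B)_*\L(\I)$ at $n$ as $\L(\I)_{n_\be^s}\oplus\L(\I)_{n_\be^t}\cong R\oplus R$ (after trivializing), the map $a$ restricted to $\{n\}\times B$ is $(u,v)\mapsto x_\be^s u + x_\be^t v$ for the element $[x_\be^s:x_\be^t]\in\P^1(R)$ produced above, so $\I_n$ is the kernel of the composite $R\oplus R\to R$ given by that formula on the closed point together with the gluing datum on the open part.

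The key computation is then the following: by definition $\gamma(\I)(\be)=\val(x_\be^t/x_\be^s)$, and after multiplying the row by a unit we may assume $\min\{\val(x_\be^s),\val(x_\be^t)\}=0$. If $\gamma(\I)(\be)=0$, i.e. both $x_\be^s$ and $x_\be^t$ are units, then $a_{|\{n\}\times\{o\}}$ is still surjective on each factor, so $\I_o$ is still free at $n$ with unchanged branch degrees; this recovers the fact that nodes outside $\supp\O(\I)$ contribute nothing. If $\gamma(\I)(\be)>0$ (the orientation $\be$ is the one in $\O(\I)$), then $x_\be^s$ is a unit and $x_\be^t\in\m_R$, so on the closed fiber the map $a_{|\{n\}\times\{o\}}$ factors as projection to the $s$-branch; a direct local computation of the kernel shows that $(\I_o)_n\cong\m_n$ is not free at $n$ — so $n\in\NF(\I_o)$ — and moreover that the line bundle $L_{\I_o}$ on the partial normalization at $\NF(\I_\eta)^c\cup\NF(\I_\eta)$ has, at the two branches over $n$, degrees shifted by $\D(\O(\I))$ exactly as in the statement: the target vertex $t(\be)$ loses one from its degree (equivalently, the indegree divisor $\D(\O(\I))$ is subtracted). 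Summing over all $n\in\supp\O(\I)$ gives $\NF(\I_o)=\NF(\I_\eta)\sqcup\supp\O(\I)$, i.e. $G(\I_o)=G(\I_\eta)\setminus\supp\O(\I)$, and $D(\I_o)=D(\I_\eta)-\D(\O(\I))$.

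The main obstacle I anticipate is bookkeeping the normalization conventions carefully: the multidegree $D(\I)$ is by definition the multidegree of $L_\I$ on the partial normalization at $\NF(\I)$ (not the full normalization), so when a node $n$ passes from the free to the non-free locus one must track how the line bundle on $\wt X_B$ descends to a line bundle on the relevant partial normalization of the special fiber and check that the degree drop lands on $t(\be)$ rather than $s(\be)$ — this is precisely the sign/orientation convention fixed in \eqref{E:gammaI} and \eqref{E:OI}, and getting it consistent with the definition of $\D(\O)$ as an indegree divisor is the delicate point. Once the single-node local model is pinned down, globalizing is immediate since the nodes are handled independently in \eqref{E:pres-I}, and the fact that $\L(\I)$ is the unique line bundle on $\wt X_B$ with generic fiber $\L(\I_\eta)$ guarantees that the branch degrees of $L_{\I_o}$ away from $\supp\O(\I)$ agree with those of $L_{\I_\eta}$.
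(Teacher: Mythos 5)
Your strategy is essentially the paper's: restrict the canonical presentation \eqref{E:pres-I} to the special fibre, argue node by node, and read off from the reduction of $[x_\be^s,x_\be^t]\in\P^1(R)$ whether $\I_o$ stays free at $n$ and on which component the multidegree drops. The only difference is that where you propose to redo the local kernel computation by hand, the paper quotes the formula $\L(\I)_o=\L(\I_o)\bigl(\sum_{x_\be^s=0}n_\be^s+\sum_{x_\be^t=0}n_\be^t\bigr)$ from the proof of \cite[Lemma 12.6]{OS} and then only has to translate the vanishing of the reduced coordinates into the sign of $\gamma(\I)(\be)$.

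The gap is exactly at the step you claim to do directly, which is also the crux of the lemma: deciding which branch loses a degree. As written, your local computation is internally inconsistent. Suppose $\gamma(\I)(\be)>0$, so in your normalization $x_\be^s$ is a unit and $x_\be^t\in\m_R$. If one reads $x_\be^s,x_\be^t$ as the coefficients of the surjection $(u,v)\mapsto x_\be^s u(n_\be^s)+x_\be^t v(n_\be^t)$ --- which is the reading forced by the paper's own computation in the proof of Proposition \ref{P:non-sepa}\eqref{P:non-sepa2}, where composing with $\wt g_*$ multiplies the two coordinates by $\wt g(s(\be))$ and $\wt g(t(\be))$ respectively --- then on the closed fibre the map factors through evaluation on the $s$-branch, as you say; but then its kernel is the subsheaf of sections vanishing at $n_\be^s$, so it is the \emph{source} vertex $s(\be)$ that loses a degree, not $t(\be)$ as you assert, and you would end up subtracting the outdegree rather than the indegree divisor $\D(\O(\I))$. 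To arrive at the statement one needs the opposite dictionary between the point of $\P^1(R)$ and the surjection (the kernel-direction convention of Oda--Seshadri), which is precisely what the paper imports by the citation; note also that this sign issue must be kept consistent with the definition \eqref{E:gammaI} and with Proposition \ref{P:non-sepa}\eqref{P:non-sepa2}, which pull in opposite directions under the two conventions. So you must either carry out the kernel computation with the conventions actually in force and verify which vanishing corresponds to which branch, or cite \cite[Lemma 12.6]{OS} as the paper does; at present the decisive sign is asserted, and your own intermediate step points the other way. Two smaller slips: $\I_\eta$ is \emph{non}-free at the nodes of $\NF(\I_\eta)$ (there $\I$ coincides locally with $(\nu_B)_*(\L(\I))$, so those nodes stay in $\NF(\I_o)$ with branch degrees governed by $\L(\I)$, which is the easy part of the claim $G(\I_o)\leq G(\I_\eta)$), and $L_{\I_o}$ lives on the partial normalization at $\NF(\I_o)=\NF(\I_\eta)\cup\supp\O(\I)$, not at all the nodes.
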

\begin{proof}
Since we have that $\L(\I)_{\eta}=\L(\I_{\eta})$, then $\L(\I)$ is a line bundle on $\wt X\times_k B$ of relative multidegree equal to $D(\I_\eta)$. 

The presentation \eqref{E:pres-I} induces a presentation  of the special fiber $\I_{o}$ 
\begin{equation*}\label{E:pres-Io}
0\to \I_{o} \to (\nu_\eta)_*(\L(\I)_{o})\xrightarrow{a_o} \bigoplus_{n\in \NF(\I_\eta)^c}\O_{\{n\}}\to 0.
\end{equation*}
The restriction of $a_o$ to  $\{n\}$ induces a surjection
$$
(a_{o})_{|\{n\}}: (\nu)_*(\L(\I)_o)_{|\{o\}}=\L(\I_o)_{|\{n_\be^s\}}\oplus \L(\I_o)_{|\{n_{\be}^t\}}\twoheadrightarrow \O_{\{n\}},
$$
which defines an element $[x_{\be}^s(o),x_{\be}^t(o)]\in \P^1(k)$. 
As shown in the proof of \cite[Lemma 12.6]{OS}, we have that 
\begin{equation*}\label{E:for-Io}
\L(\I)_o=\L(\I_o)\left(\sum_{x_{\be}^s=0} n_{\be}^s+\sum_{x_{\be}^t=0} n_{\be}^t\right).
\end{equation*}
By passing to the multidegrees, we get 
\begin{equation}\label{E:deg-Io}
D(\I_\eta)=D(\I_o)+\sum_{x_{\be}^s=0} s({\be})+\sum_{x_{\be}^t=0} t(\be).
\end{equation}
Since the element $[x_{\be}^s(o),x_{\be}^t(o)]\in \P^1(k)$ is the reduction of the element $[x_{\be}^s,x_{\be}^t]\in \P^1(R)$ defined above, we have that 
\begin{equation}\label{E:null-cord}
\begin{sis}
x_{\be}^t=0 \Leftrightarrow \gamma(\I)(\be)>0, \\
x_{\be}^s=0 \Leftrightarrow \gamma(\I)(\be)<0.
\end{sis}
\end{equation}
We conclude by combining \eqref{E:deg-Io} and \eqref{E:null-cord}.
\end{proof}

Finally, we want to describe all the relative torsion-free rank-$1$ sheaves  $X_B$ that have the same generic fiber. 

Recall that the space of $0$-cochains on a  graph $G$ with coefficients in an abelian group $A$ (e.g. $A=\Z$, $(R^*,\cdot)$, or $(K^*,\cdot)$) is given by 
\begin{equation*}\label{E:0-cochains}
C^0(G,A):=\left\{g:V(G)\to A\right\}.
\end{equation*}
and it is endowed with the coboundary map 
\begin{equation*}\label{E:cobound}
\begin{aligned}
\delta=\delta_G: C^0(G,A) & \longrightarrow \bC^1(G,A),\\
g &\mapsto \delta(g)(\be):=g(t(\be))-g(s(\be)).
\end{aligned}
\end{equation*}

We now want to define an action of $C^0(\Gamma_X,\Z)$ on the set $|\TF_X(B)|$ of isomorphism classes of the groupoid $\TF_X(B)$. Take $\I\in \TF_X(B)$ and $g\in \C^0(\Gamma_X,\Z)$. The exact sequence of abelian groups
$$
0\to (R^*,\cdot) \to (K^*,\cdot) \xrightarrow{\val} \Z\to 0
$$
induces an exact sequence of $0$-cochains
$$
0\to C^0(\Gamma_X, R^*) \to C^0(\Gamma_X,K^*) \xrightarrow{C^0(\val)} C^0(\Gamma_X,\Z)\to 0.
$$
Pick a lift $\wt g\in C^0(\Gamma_X,K^*)$ of $g$. The element $\wt g$ induces an automorphism $\wt g_*$ of the line bundle $\L(\I_{\eta})$ which is the scalar multiplication by $\wt g(v)\in K^*$ on the irreducible component $(\wt X_v)_{\eta}:=\wt X_v\times_k K$ of $\wt X_{\eta}$ corresponding to the vertex $v$ of $\Gamma_X$. The automorphism $\wt g_*$ extends uniquely to an automorphism of the line bundle $\L(\I)$ on $\wt X_B$ and 
hence to an automorphism of the sheaf $(\nu_B)_*(\L(\I))$ on $X_B$, that we will also denote by $\wt g_*$. Consider now the presentation \eqref{E:pres-I} of $\I$ and define a new element $\wt g(\I)$ of $\TF_X(B)$ as it follows
\begin{equation}\label{E:pres-gI}
0\to \wt g(\I):=\ker(a\circ \wt g_*) \to (\nu_B)_*(\L(\I))\xrightarrow{a\circ \wt g_*} \bigoplus_{n\in \NF(\I_{\eta})^c}\O_{\{n\}\times B}\to 0.
\end{equation}
Arguing as in the proof of \cite[Prop. 12.3]{OS}, it follows that if $\wt g'$ is another lift of $g$ (so that $\wt g-\wt g'\in C^0(\Gamma_X,R^*)$), then $\wt g(\I)$ is isomorphic to $\wt g'(\I)$ in $X_B$. 
Hence, we get a well-defined action 
\begin{equation}\label{E:action}
\begin{aligned}
C^0(\Gamma_X,\Z)\times |\TF_X(B)|& \longrightarrow |\TF_X(B)|\\
(g,\I) & \mapsto g(\I):=\wt g(\I).
\end{aligned}
\end{equation}

\begin{prop}\label{P:non-sepa}
\noindent 
\begin{enumerate}
\item \label{P:non-sepa1} 
The action \eqref{E:action} induces a bijection
$$
\begin{aligned}
|\TF_X(B)|/C^0(\Gamma_X,\Z) & \xrightarrow{\cong} |\TF_X(\eta)|\\
[\I] &\mapsto \I_{\eta}.
\end{aligned}
$$
\item  \label{P:non-sepa2}
The $1$-cochains associated to conjugate elements under the action \eqref{E:action}  satisfy the relation
$$
\gamma(g(\I))=\gamma(\I)+\delta_{G(\I_{\eta})}(g)\in \bC^1(G(\eta),\Z),
$$
where $g$ is interpreted as an element of $C^0(G(\I_{\eta}),\Z)$ using that $V(G(\I_{\eta}))=V(\Gamma_X)$. 
 \end{enumerate}
\end{prop}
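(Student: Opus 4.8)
The plan is to prove Proposition \ref{P:non-sepa} by reducing everything to the presentation \eqref{E:pres-I} and tracking how the building data $(\L(\I),a)$ transform under the $C^0(\Gamma_X,\Z)$-action and what they determine on the generic fibre. The key structural input is that a relative torsion-free rank-$1$ sheaf $\I$ on $X_B$ is equivalent to the following data up to isomorphism: the free subgraph $G=G(\I_{\eta})$ (equivalently, the non-free locus $\NF(\I_{\eta})^c$, which is locally constant by properness of $B$ so equals $\NF(\I_o)^c$ \textit{set-theoretically}, with nodes in $G(\I_{\eta})\setminus G(\I_o)$ being smoothened out); the line bundle $\L(\I)$ on $\wt X_B$ (determined by its generic fibre $\L(\I_{\eta})$ since $\wt X_B$ is regular); and for each node $n$ in $E(G)$ the gluing point $[x_{\be}^s:x_{\be}^t]\in \P^1(R)\setminus\{0,\infty\}$, well-defined up to the $R^*$-rescaling coming from the choice of trivializations of $\L(\I)$ near $n_{\be}^s$ and $n_{\be}^t$, i.e.\ up to $C^0(G,R^*)$. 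This is exactly the content of \cite[Prop. 12.3, Prop. 12.7]{OS}, applied over the base $B$.

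For part \eqref{P:non-sepa1}, I would first check surjectivity: given $J\in \TF_X(\eta)$, extend its line bundle $\L(J)$ on $\wt X_{\eta}$ to the (unique) line bundle $\L$ on $\wt X_B$, choose arbitrary lifts to $R$ of the finitely many gluing ratios (after scaling each to lie in $R^*$, which is possible since $K^*=R^*\cdot \varpi^{\Z}$ for a uniformizer $\varpi$ — multiply $[x_{\be}^s:x_{\be}^t]$ by a power of $\varpi$ to land in $\P^1(R)\setminus\{0,\infty\}$), and assemble these into a surjection $a$ as in \eqref{E:pres-I}; the resulting $\I:=\ker a$ lies in $\TF_X(B)$ and restricts to $J$. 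For injectivity modulo $C^0(\Gamma_X,\Z)$: if $\I,\I'\in \TF_X(B)$ have isomorphic generic fibres, then $\L(\I)_{\eta}\cong \L(\I')_{\eta}$ forces $\L(\I)\cong \L(\I')$ on $\wt X_B$, and the two families of gluing points in $\P^1(R)\setminus\{0,\infty\}$ differ, node by node, by an element of $K^*$ (namely the ratio of the gluing ratios) that becomes an isomorphism only over $\eta$. Encoding those scalars as a $0$-cochain $\wt g\in C^0(\Gamma_X,K^*)$, its valuation $g:=C^0(\val)(\wt g)\in C^0(\Gamma_X,\Z)$ is precisely the element with $g(\I)\cong \I'$: this is immediate from the definition \eqref{E:pres-gI}, since multiplying $a$ by $\wt g_*$ rescales each gluing point by $\wt g(t(\be))/\wt g(s(\be))$, and the $R^*$-ambiguity in the lift $\wt g$ of $g$ changes $\I$ only up to isomorphism by the argument of \cite[Prop. 12.3]{OS}. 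Conversely the action is clearly well-defined on generic fibres (it does nothing to $\I_{\eta}$ up to isomorphism, since $\wt g_*$ is an automorphism of $\L(\I_{\eta})$), so the map is well-defined on the quotient and we get the claimed bijection.

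For part \eqref{P:non-sepa2}, I would compute directly: by definition \eqref{E:gammaI}, $\gamma(\I)(\be)=\val(x_{\be}^t/x_{\be}^s)$ where $[x_{\be}^s:x_{\be}^t]$ is the gluing point of $a$ at $n$. In the presentation \eqref{E:pres-gI} of $g(\I)$ the map $a$ is replaced by $a\circ \wt g_*$, whose gluing point at $n$ is $[\wt g(s(\be))\,x_{\be}^s : \wt g(t(\be))\,x_{\be}^t]$ (the automorphism $\wt g_*$ acts by the scalar $\wt g(v)$ on the component over $v$, so on the two branch points $n_{\be}^s\in \wt X_{s(\be)}$ and $n_{\be}^t\in \wt X_{t(\be)}$ it multiplies the respective coordinates). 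Hence
\[
\gamma(g(\I))(\be)=\val\!\left(\frac{\wt g(t(\be))\,x_{\be}^t}{\wt g(s(\be))\,x_{\be}^s}\right)=\val\!\left(\frac{x_{\be}^t}{x_{\be}^s}\right)+\val(\wt g(t(\be)))-\val(\wt g(s(\be)))=\gamma(\I)(\be)+\delta_{G(\I_{\eta})}(g)(\be),
\]
using $g(v)=\val(\wt g(v))$ and the definition of the coboundary $\delta$. Since this holds for every oriented edge $\be$ of $G(\I_{\eta})$, we get $\gamma(g(\I))=\gamma(\I)+\delta_{G(\I_{\eta})}(g)$, as asserted.

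The main obstacle is the bookkeeping in part \eqref{P:non-sepa1}, specifically making precise that $\TF_X(B)/\!\cong$ is faithfully captured by the triple (free subgraph, line bundle on $\wt X_B$, gluing data in $\prod_n (\P^1(R)\setminus\{0,\infty\})/C^0(R^*)$) — this is where one leans hardest on \cite[\S 12]{OS}, and one must be a little careful that the normalization of $\wt X_B = \wt X\times_k B$ is regular so that $\L(\I)$ is automatically a line bundle and is determined by its generic fibre, and that the non-free locus is constant over $B$. Once that dictionary is set up, both parts are short formal consequences; part \eqref{P:non-sepa2} in particular is a one-line valuation computation.
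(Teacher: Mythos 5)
Your part (2) is exactly the paper's proof: the same identification of the gluing point of $a\circ \wt g_*$ at $n$ as $[\wt g(s(\be))x_{\be}^s:\wt g(t(\be))x_{\be}^t]$, followed by the one-line valuation computation. For part (1) the paper simply cites \cite[Cor. 12.9]{OS}, and your surjectivity/injectivity sketch via the canonical presentation \eqref{E:pres-I} and the gluing-data dictionary of \cite[\S 12]{OS} is precisely the argument underlying that citation, so the route is the same, just spelled out. Two small slips worth correcting: the non-free locus is \emph{not} constant over $B$ --- one only has $\NF(\I_\eta)\subseteq \NF(\I_o)$, possibly strictly (this is the whole point of Lemma \ref{L:Io-Ieta}), and what is fixed is the indexing set $\NF(\I_\eta)^c$ of the presentation, which moreover is the set of \emph{free} edges, not the non-free locus; and in the injectivity step the $0$-cochain $\wt g$ cannot be ``encoded'' from the node-by-node ratios (an arbitrary collection of edge scalars need not be a coboundary) but must be read off as the componentwise scalars by which the isomorphism of generic fibres acts on $\L(\I_\eta)$ --- since your scalars do arise from that isomorphism, the argument goes through once phrased this way.
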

In particular, part \eqref{P:non-sepa1} says that $\TF_X$ satisfies the existence part of the valuative criterion for properness and that the action of $C^0(\Gamma_X,\Z)$ is responsible for the non-separatedness of $\TF_X$. 
\begin{proof}
Part \eqref{P:non-sepa1} is a reformulation of \cite[Cor. 12.9]{OS}.

Let us prove part \eqref{P:non-sepa2}. Consider the presentation of $g(\I)=\wt g(\I)$ in \eqref{E:pres-gI}. As above,  for any $n\in \NF(\I_{\eta})^c=E(G(\I_\eta))$, choose an oriented edge $\be$ whose underlying edge $e$ corresponds to the node $n$ and denote by $n_\be^s$ and $n_\be^t$ the two inverse images of $n$ under the normalization map $\nu:\wt X\to X$ in such a way that $n_\be^s\in X_{s(\be)}$ and $n_\be^t\in X_{t(\be)}$. Using that $ (\nu_B)_*(\L(\I))_{|\{n\}\times B}=\L(\I)_{|\{n_\be^s\}\times B}\oplus \L(\I)_{|\{n_{\be}^t\}\times B}$, 
the restriction of $a\circ \wt g_*$ to  $\{n\}\times B$ is equal to 
$$
(a\circ \wt g_*)_{|\{n\}\times B}: \L(\I)_{|\{n_\be^s\}\times B}\oplus \L(\I)_{|\{n_{\be}^t\}\times B}\xrightarrow[\cong]{\wt g_*} \L(\I)_{|\{n_\be^s\}\times B}\oplus \L(\I)_{|\{n_{\be}^t\}\times B} \stackrel{a}{\twoheadrightarrow} \O_{\{n\}\times B}.
$$
Since the automorphism $\wt g_*$  is the scalar multiplication by $\wt g(v)\in K^*$ on the irreducible component  $\wt X_v$, the element of $\P^1(R)$ corresponding to the surjection  
$(a\circ \wt g_*)_{|\{n\}\times B}$ is equal to $[\wt g(s(\be))x_{\be}^s,\wt g(t(\be))x_{\be}^t]$, where $[x_{\be}^s,x_{\be}^t]$ is the element of $\P^1(R)$ corresponding to the surjection 
$a_{|\{n\}\times B}$. Hence, by the definition \eqref{E:gammaI}, we have that 
$$
\gamma(g(\I))(\be)=\val\left(\frac{\wt g(t(\be))x_{\be}^t}{\wt g(s(\be))x_{\be}^s}\right)=\val\left(\frac{x_{\be}^t}{x_{\be}^s}\right)+g(t(\be))-g(s(\be))=\gamma(\I)(\be)+\delta_{G(\I_{\eta})}(g)(\be),
$$
which concludes our proof. 
\end{proof}

\subsection{Fine compactified Jacobians}

In this subsection, we introduce fine compactified Jacobians of a connected nodal curve $X$ over $k=\ov k$.

\begin{defi}\label{D:compJac}(\cite[Def. 3.1]{PT2})
A \textbf{fine compactified Jacobian}  of a connected nodal curve $X$ is a connected proper open substack $\ov J_X$ of $\TF_X$.
\end{defi}

\begin{remark}\label{R:irr}
If $X$ is an irreducible nodal curve, then $\TF_X^d=\Simp_X^d$ is proper and connected, and hence it is the unique fine compactified Jacobian of $X$ of degree $d$. 
 
On the other hand, if $X$ is reducible then $\Simp_X^d$ is not of finite type nor separated, and there are infinitely many fine compactified Jacobians of $X$ of any given degree $d$ (see e.g. Example \ref{Ex:clas-fcJ}).
\end{remark}

Two immediate consequences of the definition are given in the following 

\begin{lemma}\label{L:prop-fcJ}
Let $\ov J_X$ be a fine compactified Jacobian of $X$.
\begin{enumerate}[(i)]
\item \label{L:prop-fcJ1} $\ov J_X$ is contained in $\Simp_X$. Hence, $\ov J_X$ is a variety (i.e. a reduced scheme of finite type over the base field $k$).
\item  \label{L:prop-fcJ2} There exists a unique integer $d$ such that $\ov J_X\subseteq \TF_X^d$.
\end{enumerate}
\end{lemma}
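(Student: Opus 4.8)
The plan is to deduce both assertions purely from the defining properties of $\ov J_X$ — connected, proper, open in $\TF_X$ — together with what is already established about $\TF_X$ and $\Simp_X$. For part \eqref{L:prop-fcJ1} the crucial point is that $\ov J_X$ is in particular a \emph{separated} algebraic stack, so its diagonal $\ov J_X\to \ov J_X\times_k \ov J_X$ is proper; hence the inertia stack $I_{\ov J_X}\to \ov J_X$, being a base change of the diagonal, is proper as well. Evaluating at a geometric point $I\in\ov J_X$ shows that its automorphism group $\ov\Aut(I)$, computed in the rigidified stack $\TF_X$, is a proper group scheme over $k$. By \eqref{E:Aut-I} one has $\ov\Aut(I)\cong \Gm^{\gamma(G(I))-1}$; since $\Gm$ is affine of positive dimension it is not proper over a field, which forces $\gamma(G(I))-1=0$, i.e. the free subgraph $G(I)$ is connected. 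By \eqref{E:simp-I} this says precisely that $I\in\Simp_X$, and as $I$ was arbitrary, $\ov J_X\subseteq\Simp_X$.

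Once this inclusion is in hand, the rest of \eqref{L:prop-fcJ1} is immediate: by Fact \ref{F:Simploc}, $\Simp_X$ is a reduced scheme, so the open substack $\ov J_X$ is a reduced scheme, and being proper over $k$ it is of finite type over $k$ — hence a $k$-variety. For part \eqref{L:prop-fcJ2} I would observe that the degree $\deg(I)=\chi(I)-\chi(\O_X)$ is locally constant on $\TF_X$, because Euler characteristic is locally constant in flat families; thus the decomposition $\TF_X=\coprod_{d\in\Z}\TF_X^d$ of \eqref{E:Torsd} is a decomposition into open and closed substacks. A connected non-empty substack is therefore contained in exactly one $\TF_X^d$, and uniqueness of $d$ follows since the $\TF_X^d$ are pairwise disjoint.

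The only genuinely delicate point is the first step above. Because $\TF_X$ is badly non-separated and not of finite type, openness of $\ov J_X$ by itself yields no control; the argument really uses that a proper — hence separated — algebraic stack has proper inertia, which in view of \eqref{E:Aut-I} means \emph{trivial} inertia, and this is exactly what drops $\ov J_X$ into the open locus $\Simp_X$ on which $\TF_X$ is an honest reduced scheme. Everything after that — reducedness, being a scheme, finite type over $k$ — is inherited from $\Simp_X$ and is formal.
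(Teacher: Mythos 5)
Your proof is correct and follows essentially the same route as the paper: separatedness (from properness) forces the automorphism groups $\ov\Aut(I)\cong\Gm^{\gamma(G(I))-1}$ of \eqref{E:Aut-I} to be proper, hence trivial, so $\ov J_X\subseteq\Simp_X$, with reducedness and finite type inherited via Fact \ref{F:Simploc}; and connectedness of $\ov J_X$ together with the open-and-closed decomposition \eqref{E:Torsd} gives the unique degree $d$. Your extra justifications (inertia stack, local constancy of the Euler characteristic) only make explicit what the paper leaves implicit.
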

The integer $d$ of \eqref{L:prop-fcJ2} is called the \emph{degree} of $\ov J_X$, and we sometimes write $\ov J_X^d$ to denote a fine compactified Jacobian of degree $d$. 
\begin{proof}
Part \eqref{L:prop-fcJ1}: let $I\in \ov J_X$. Since $\ov J_X$ is separated, the automorphism group $\ov{\Aut}(I)$ of $I$ is proper. Hence, by the description of $\Aut(I)$ of \eqref{E:Aut-I}, 
it follows that $\ov{\Aut}(I)$ is trivial, or equivalently that $I$ is simple. The fact that $\ov J_X$ is a variety follows from Fact \ref{F:Simploc}.

Part \eqref{L:prop-fcJ2}: since $\ov J_X$ is connected by definition, we have that $\ov J_X$ is contained in a unique connected component of $\TF_X$. We conclude that $\ov J_X$ is contained in a unique $\TF_X^d$ since  \eqref{E:Torsd} is the decomposition of $\TF_X$ into connected components.
\end{proof}

\begin{cor}\label{C:fcJ-orb1}
Let $\ov J_X$ be a fine compactified Jacobian of $X$ of degree $d$.
If $\TF(G,D)\subset \ov J_X$ then $\deg(D)+|E(G)^c|=d$.
\end{cor}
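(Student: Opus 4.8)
The plan is to reduce the statement to the degree formula \eqref{E:deg-mdeg} for a single sheaf in the orbit $\TF(G,D)$. First I would note that $\TF(G,D)$ is nonempty: by Fact \ref{F:orbits}\eqref{F:orbits3}, if we write $G=\Gamma_X\setminus S$, then $\TF(G,D)\cong \PIC^D_{X_S}$, which contains the push-forward of any line bundle of multidegree $D$ on the partial normalization $X_S$. Pick such an $I\in \TF(G,D)\subseteq \ov J_X$.

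Next I would invoke Lemma \ref{L:prop-fcJ}\eqref{L:prop-fcJ2}: since $\ov J_X$ is a fine compactified Jacobian of degree $d$, we have $\ov J_X\subseteq \TF_X^d$, so $\deg(I)=d$. Finally, by definition of the free subgraph we have $G(I)=G=\Gamma_X\setminus \NF(I)$, hence $\NF(I)=E(\Gamma_X)\setminus E(G)=E(G)^c$ and $|\NF(I)|=|E(G)^c|$; and $D(I)=D$. Plugging into the degree formula \eqref{E:deg-mdeg},
\[
d=\deg(I)=|D(I)|+|\NF(I)|=\deg(D)+|E(G)^c|,
\]
which is exactly the claimed identity.

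There is no real obstacle here; the only point requiring a word of care is the nonemptiness of $\TF(G,D)$, so that the chosen $I$ actually exists and lies in $\ov J_X$ — but this is immediate from Fact \ref{F:orbits}\eqref{F:orbits3} as noted above. The rest is a direct substitution into \eqref{E:deg-mdeg}.
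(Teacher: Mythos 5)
Your proof is correct and follows essentially the same route as the paper: pick a sheaf $I$ in the orbit, use that $\ov J_X\subseteq \TF_X^d$ (Lemma \ref{L:prop-fcJ}) and the degree formula \eqref{E:deg-mdeg} to get $d=\deg(I)=|D|+|\NF(I)|=\deg(D)+|E(G)^c|$. The extra remark on nonemptiness of $\TF(G,D)$ via Fact \ref{F:orbits} is a harmless added detail that the paper leaves implicit.
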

\begin{proof}
This follows from the fact that if $I\in \TF(G,D)$ then $\deg I=|\un d|+|S|$ by \eqref{E:deg-mdeg}.
\end{proof}

An important property of fine compactified Jacobians is given by the following 

\begin{prop}\label{P:fcJ-orb}(\cite[Lemma 7.2]{PT2})
Any fine compactified Jacobian $\ov J_X$ of $X$ is a union of finitely many $\PIC_X^{\un 0}$-orbits of $\TF_X$. In particular, the generalized Jacobian $\PIC_X^{\un 0}$ acts on $\ov J_X$ via tensor product.  
\end{prop}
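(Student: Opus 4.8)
\textbf{Proof plan for Proposition \ref{P:fcJ-orb}.}

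The plan is to show that a fine compactified Jacobian $\ov J_X$, being an open substack of $\TF_X$ that is proper over $k$, can contain only finitely many of the orbits $\TF_X(G,D)$, and that it is a union of such orbits. The second point is almost immediate: by Lemma \ref{L:prop-fcJ}, $\ov J_X$ is contained in $\Simp_X$ and has a fixed degree $d$, and by Fact \ref{F:orbits}\eqref{F:orbits4} the $\PIC_X^{\un 0}$-orbits in $\Simp_X^d$ are exactly the locally closed substacks $\TF_X(G,D)$ with $(G,D)\in \bO_{\con}^d(\Gamma_X)$. Since $\ov J_X$ is open and $\PIC_X^{\un 0}$ is connected, the action of $\PIC_X^{\un 0}$ on $\Simp_X$ restricts to an action on $\ov J_X$ (a connected group preserves each connected component of the locus of points whose orbit meets $\ov J_X$, but more simply: if $I \in \ov J_X$ then its orbit is irreducible, meets the open set $\ov J_X$, and one checks the orbit map $\PIC_X^{\un 0}\to \Simp_X$ has image meeting $\ov J_X$ in a dense open of the orbit closure — since $\ov J_X$ is also closed in $\Simp_X$... actually $\ov J_X$ is only proper, not closed in $\Simp_X$; instead use that $\ov J_X$ is stable under specialization within $\Simp_X$ because it is proper, hence closed in any open containing it, and combine with openness). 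The cleanest route: $\ov J_X$ open $\Rightarrow$ it is a union of orbits intersected with $\ov J_X$; $\ov J_X$ proper (hence universally closed) $\Rightarrow$ for $I \in \ov J_X$ the whole orbit $\PIC_X^{\un 0}\cdot I$ lies in $\ov J_X$, because the orbit is the image of the connected group, its intersection with the open $\ov J_X$ is open and nonempty in the orbit, and its complement in the orbit would map to a curve specializing into $\ov J_X$, forcing the limit (which lies in the same orbit) into $\ov J_X$ as well — iterating/using irreducibility of the orbit gives the whole orbit. So $\ov J_X$ is a union of full orbits $\TF_X(G,D)$.

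Next, finiteness. The set of orbits of $\TF_X^d$ is indexed by $\bO_{\con}^d(\Gamma_X)$ by Fact \ref{F:orbits}, and this index set is \emph{infinite} in general (as the $2$-cycle example in the excerpt shows, where $d$ ranges over all of $\Z$ along the multidegrees $(d,-d)$). The finiteness of the orbits in $\ov J_X$ must therefore come from properness. I would argue as follows: suppose $\ov J_X$ contained infinitely many orbits $\TF_X(G_i, D_i)$. Each orbit $\TF_X(G_i,D_i)\cong \PIC_{X_{S_i}}^{D_i}$ is nonempty and locally closed in $\ov J_X$; since $\ov J_X$ is a Noetherian scheme (finite type over $k$ by Lemma \ref{L:prop-fcJ}\eqref{L:prop-fcJ1}), it has finitely many irreducible components and each component meets only finitely many of a locally closed stratification's strata — wait, that is false in general, but here each $\TF_X(G_i,D_i)$ has dimension equal to $\dim \PIC^{\un 0}_{X_{S_i}} = g(X_{S_i}) = g(X) - |S_i| + (\text{something})$; combined with Fact \ref{F:Simploc}, $\ov J_X$ has pure dimension $g(X)$. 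A locally closed subscheme of dimension $< g(X)$ can still occur infinitely often in a variety of dimension $g(X)$ only if it accumulates, contradicting Noetherianity of the constructible topology. More directly: a scheme of finite type over $k$ is Noetherian, hence quasi-compact, and a quasi-compact scheme cannot be an infinite disjoint union (as a topological space, via the locally closed strata forming a stratification) of nonempty locally closed pieces unless finitely many — because one can extract an infinite descending chain of closed subsets, contradicting the ascending chain condition on closed subsets (Noetherian space).

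\textbf{Main obstacle.} The genuinely delicate step is the topological finiteness argument: $\ov J_X$ is a union of infinitely many \emph{a priori} locally closed strata $\TF_X(G,D)$, and I must rule this out using only that $\ov J_X$ is of finite type (Noetherian, quasi-compact) over $k$. The standard fact is that a Noetherian topological space has only finitely many irreducible components, and any constructible stratification into locally closed pieces is finite; but one must make sure the collection $\{\TF_X(G,D)\subset \ov J_X\}$ really is such a stratification (it is, being the restriction to $\ov J_X$ of the orbit decomposition, whose closures satisfy the chain condition by Fact \ref{F:orbits}\eqref{F:orbits4}). The cited reference \cite[Lemma 7.2]{PT2} presumably handles exactly this; I would reproduce that argument, emphasizing that the closure relations in $\bO^d_{\con}(\Gamma_X)$ are governed by a Noetherian poset (finite rank $b_1(\Gamma_X)$ on the subgraph coordinate, but infinite in the divisor coordinate — so the key point is that only finitely many multidegrees $D$ can occur for each fixed $G$, which follows because $\ov J_X$ is bounded, i.e. of finite type). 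Once finiteness of orbits is established, the statement that $\PIC_X^{\un 0}$ acts on $\ov J_X$ is then recorded as in the first paragraph.
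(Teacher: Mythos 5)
The paper itself does not prove this Proposition: it is quoted from \cite[Lemma 7.2]{PT2}, so there is no in-paper argument to compare yours with. Judged on its own terms, your proposal has a genuine gap, and it is located exactly where you believe the argument is easy.

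\textbf{The gap: containment of whole orbits.} Your ``cleanest route'' asserts that properness makes $\ov J_X$ closed (or stable under specialization) inside $\Simp_X$, and, in the valuative argument, that the limit supplied by properness ``lies in the same orbit''. Both assertions fail because $\Simp_X$ is not separated. Properness of $\ov J_X$ over $k$ does not make the open immersion $\ov J_X\hookrightarrow\Simp_X$ closed: in the $2$-cycle example depicted in Section \ref{Sec:sheaves}, a fine compactified Jacobian contains a stratum $\TF_X(\Gamma_X,D)$ but only two of the four point-strata lying in its closure, so it is neither closed nor specialization-stable in $\Simp_X^0$. Correspondingly, take $I\in\ov J_X\cap\TF_X(G,D)$, a point $J\in\TF_X(G,D)\setminus\ov J_X$, and a DVR path inside the orbit with generic point in $\ov J_X$ and special point $J$. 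The valuative criterion of universal closedness for $\ov J_X$ only produces \emph{some} limit of the generic sheaf lying in $\ov J_X$; by Proposition \ref{P:non-sepa} this limit is a twist $g(\I)$ for some $g\in C^0(\Gamma_X,\Z)$, and by Lemma \ref{L:Io-Ieta} a non-constant $g$ places it in a \emph{strictly smaller} stratum. If instead it lay in $\TF_X(G,D)$, then (the orbit being open in its closure and a separated scheme $\cong\PIC^{D}_{X_S}$) the two extensions would coincide and the limit would be $J$ itself, which you have excluded. So a single application of properness yields no contradiction: it is perfectly consistent, at this level of argument, that $\ov J_X$ meets an orbit without containing it, merely acquiring points in deeper strata. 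Your parenthetical ``(which lies in the same orbit)'' is therefore precisely the assertion to be proved; it is the substantive content of \cite[Lemma 7.2]{PT2} and needs a further idea that your proposal does not supply. This also means your ``Main obstacle'' paragraph has the emphasis reversed.

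\textbf{On finiteness.} This part is the soft one and your plan is salvageable, though not as written. Since $\ov J_X$ is a scheme of finite type over $k$ (Lemma \ref{L:prop-fcJ}), its underlying space is Noetherian, and it is partitioned into the locally closed pieces $\ov J_X\cap\TF_X(G,D)$; a Noetherian sober space admits no partition into infinitely many nonempty locally closed subsets (short Noetherian induction: in a minimal closed counterexample pass to an irreducible component, the piece containing the generic point is open and dense, and the remaining pieces partition a proper closed subset). Alternatively, Fact \ref{F:Simploc} and Fact \ref{F:orbits}\eqref{F:orbits4} show the orbit stratification is locally finite, since a point of type $(G,D)$ has a neighborhood meeting only the finitely many strata $\geq (G,D)$, and quasi-compactness of $\ov J_X$ concludes. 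Your text first asserts a general principle you yourself flag as false, then replaces it with an unproved ``descending chain'' extraction and an irrelevant dimension count; either of the two arguments above should be substituted. Note finally that finiteness obtained this way is independent of the orbit-containment statement, so it cannot be used to sidestep the gap described above.
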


\begin{cor}\label{C:fcJ-orb2}
Let $\ov J_X$ be a fine compactified Jacobian of $X$.
If $\TF(G,D)\subset \ov J_X$ then
$$\TF(G', D')\subset \ov J_X \text{ for any } (G',D')\geq (G,D).$$ 
\end{cor}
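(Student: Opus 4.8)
The statement follows by combining two facts already at hand: the closure relations among the $\PIC_X^{\un 0}$-orbits of $\TF_X$ (Fact \ref{F:orbits}\eqref{F:orbits4}) and the fact that $\ov J_X$ is simultaneously open in $\TF_X$ and a union of such orbits (Proposition \ref{P:fcJ-orb}). The key observation is that the inequality $(G',D')\geq (G,D)$ in the poset $\bO(\Gamma_X)$ translates, by Fact \ref{F:orbits}\eqref{F:orbits4}, into the containment $\TF(G,D)\subseteq \ov{\TF(G',D')}$, i.e. the smaller orbit lies in the closure of the bigger one.

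First I would fix a point $I\in \TF(G,D)$; by hypothesis $I\in \ov J_X$, and by the closure description in Fact \ref{F:orbits}\eqref{F:orbits4} applied to $(G',D')\geq (G,D)$ we have $I\in \ov{\TF(G',D')}$. Since $\ov J_X$ is an open neighbourhood of $I$ in $\TF_X$ and $I$ lies in the closure of $\TF(G',D')$, the open set $\ov J_X$ must meet the locally closed substack $\TF(G',D')$, so $\TF(G',D')\cap \ov J_X\neq \emptyset$.

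Finally I would invoke Proposition \ref{P:fcJ-orb}: $\ov J_X$ is a union of $\PIC_X^{\un 0}$-orbits of $\TF_X$, and $\TF(G',D')$ is a single such orbit (Fact \ref{F:orbits}\eqref{F:orbits1}); hence $\TF(G',D')\cap \ov J_X\neq \emptyset$ forces $\TF(G',D')\subseteq \ov J_X$, which is the desired conclusion. There is essentially no obstacle here — the only point requiring a moment of care is matching the direction of the order: it is the \emph{larger} pair $(G',D')$ whose orbit has the \emph{smaller} orbit $\TF(G,D)$ in its closure, so that openness of $\ov J_X$ propagates containment upward in the poset.
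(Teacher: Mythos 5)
Your argument is correct and is essentially the paper's own proof: both rest on the combination of Proposition \ref{P:fcJ-orb} (that $\ov J_X$ is a union of $\PIC_X^{\un 0}$-orbits), the openness of $\ov J_X$ in $\TF_X$, and the closure relation of Fact \ref{F:orbits}\eqref{F:orbits4}, with the order direction handled correctly. Your version merely spells out the point-wise topological step (an open set containing a point of the closure of an orbit must meet that orbit), which the paper leaves implicit.
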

\begin{proof}
Proposition \ref{P:fcJ-orb} implies that $\ov J_X$ is a union of $\PIC_X^{\un 0}$-orbits of $\TF_X$. Moreover, since $\ov J_X$ is open in $\TF_X$, if $\ov J_X$ contains an orbit  $\TF(S, \un{d})$, then it must contains all the other orbits $\TF(S', \un{d'})$ whose closure contains $\TF(G,D)$. We then conclude by Fact \ref{F:orbits}\eqref{F:orbits4}. 
\end{proof}

The above Proposition \ref{P:fcJ-orb} allows to define the poset of orbits of a fine compactified Jacobian. 

\begin{defi}\label{D:orb-fcJ}
Let $\ov J_X^d$ be a fine compactified Jacobian of $X$ of degree $d$. The \emph{poset of orbits} of $\ov J_X^d$ is the following upper subset 
$$\cP(\ov J_X^d):=\{(G,D)\in \bO_{\con}^d(\Gamma_X)\: : \TF(G,D)\subset \ov J_X\}\subset \bO_{\con}^d(\Gamma_X).$$ 
\end{defi}
The poset of orbits for fine classical compactified Jacobians was studied in \cite[\S 3.1]{MRV}.

Note that Proposition \ref{P:fcJ-orb} implies that 
$$\ov J_X^d=U(\cP(\ov J_X^d)).$$

\begin{remark}\label{R:pos-orb}
It can be shown, arguing as in \cite[Prop. 3.4]{MRV} and using the local structure of $\TF_X$ described in Fact \ref{F:Simploc}, that the poset of orbits $\cP(\ov J_X^d)$ is isomorphic to the singular poset of $\ov J_X^d$ (see \cite[p. 5361]{MRV} for the precise definition), and hence that it depends only on the isomorphism class of $\ov J_X^d$. 

Furthermore, arguing as in \cite[Prop. 3.5]{MRV}, it can be shown that if $k=\C$ then the poset of orbits $\cP(\ov J_X^d)$ depends only on the homeomorphism class of $\ov J_X^d$. 
\end{remark}

We now prove a lower bound on the number of orbits of a fine compactified Jacobian. 

\begin{thm}\label{T:bound-orb}
Let $\ov J_X$ be a fine compactified Jacobian. For every connected spanning subgraph $G$ of $\Gamma_X$, we have that 
$$|\{D\in \Div(\Gamma_X)\: : \TF(G,D)\subset \ov J_X\}|\geq c(G).$$
\end{thm}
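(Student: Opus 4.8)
The plan is to reduce the statement to the analogous combinatorial bound for break divisors, namely Theorem \ref{T:BD-Ntype}\eqref{T:BD-Ntype1}, which asserts $|\BD_I(G)| \geq c(G)$ for any tree function $I$. The key observation is that the poset of orbits $\cP(\ov J_X^d)$ of a fine compactified Jacobian is an \emph{upper subset} of $\bO_{\con}^d(\Gamma_X)$: this is exactly the content of Corollary \ref{C:fcJ-orb2} together with Proposition \ref{P:fcJ-orb} and Lemma \ref{L:prop-fcJ}\eqref{L:prop-fcJ1} (which guarantees $\cP(\ov J_X^d) \subseteq \bO_{\con}^d(\Gamma_X)$ since every sheaf in $\ov J_X$ is simple, hence has connected free subgraph by \eqref{E:simp-I}). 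So the real task is to show: any nonempty upper subset $\cP$ of $\bO_{\con}^d(\Gamma_X)$ that is the poset of orbits of a \emph{proper} open substack satisfies $|\cP(G)| \geq c(G)$ for every connected spanning subgraph $G$.

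The main step is to produce, from properness, a tree function $I$ with $\BD_I \subseteq \cP$. First, for each spanning tree $T$ of $\Gamma_X$ I claim $\cP(T)$ is nonempty: since $\ov J_X$ is proper and $\PIC_X^{\un 0}$ acts on it with finitely many orbits, and $T$ is a \emph{minimal} element of $\SSc(\Gamma_X)$, one uses the valuative criterion of properness together with the specialization analysis of Section \ref{Sec:sheaves} — concretely, take any $I_0 \in \ov J_X$ lying in some orbit $\TF(G,D)$ with $G \geq T$ (such exists since $\ov J_X$ is nonempty and, if $G$ is not already $\leq$-above some tree contained in it, one iteratively deletes non-separating edges via Corollary \ref{C:fcJ-orb2} applied in reverse — more carefully, since $\ov J_X$ is closed under specialization by Corollary \ref{C:fcJ-orb2}, and every orbit $\TF(G,D)$ has in its closure orbits $\TF(T,D')$ for spanning trees $T \leq G$ by Fact \ref{F:orbits}\eqref{F:orbits4}, properness forces these limit orbits to lie in $\ov J_X$ as well). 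Hence $\cP(T) \neq \emptyset$ for every $T \in \ST(\Gamma_X)$, and since $c(T) = 1$, Proposition \ref{P:Bn-card}-type reasoning is not directly available, but in any case we may simply \emph{choose} $I(T) \in \cP(T)$ for each $T$, obtaining a tree function $I:\ST(\Gamma_X) \to \Div^{d-b_1(\Gamma_X)}(\Gamma_X)$.

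Now because $\cP$ is an upper subset and contains each $I(T)$, Definition \ref{D:BD} gives $\BD_I(G) = \bigcup_{G \geq T \in \ST(\Gamma_X)} I(T) + \D(G-T) \subseteq \cP(G)$ for every $G \in \SSc(\Gamma_X)$: indeed each element $I(T) + \D(G-T,\O)$ satisfies $(G, I(T)+\D(G-T,\O)) \geq (T, I(T))$ in $\bO^d_{\con}(\Gamma_X)$, and $\cP$ is upper. Applying Theorem \ref{T:BD-Ntype}\eqref{T:BD-Ntype1} to this $I$ yields
$$
|\{D \in \Div(\Gamma_X) : \TF(G,D) \subset \ov J_X\}| = |\cP(G)| \geq |\BD_I(G)| \geq c(G),
$$
which is the desired bound.

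The step I expect to be the main obstacle is showing $\cP(T) \neq \emptyset$ for every spanning tree $T$ — that is, that a fine compactified Jacobian, being proper, must contain an orbit over \emph{every} spanning tree, not just over some of them. The clean way is: pick any orbit $\TF(G,D) \subseteq \ov J_X$, pick a spanning tree $T$ of $G$; then $(T, D') \leq (G,D)$ for a suitable $D' \in \Div(T)$, so $\TF(T,D')$ lies in the closure of $\TF(G,D)$ (Fact \ref{F:orbits}\eqref{F:orbits4}), and since $\ov J_X$ is proper hence closed in $\TF_X$, we get $\TF(T,D') \subseteq \ov J_X$, i.e. $\cP(T) \neq \emptyset$, but only for trees \emph{contained in $G$}. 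To reach an arbitrary spanning tree $T_0$ of $\Gamma_X$ one must then move between trees; this is where I would invoke that $\ov J_X$ is proper over $\Spec k$ together with the existence part of the valuative criterion as packaged in Proposition \ref{P:non-sepa}\eqref{P:non-sepa1} and the orbit-closure description, connecting any two spanning trees through a chain of deletions and contractions as in the inductive scheme of Theorem \ref{T:BD-Ntype}. Alternatively — and this is likely the intended route — one argues directly that the map from $\ov J_X$ to a fixed "combinatorial" target forces surjectivity onto all minimal orbit-types by properness, exactly mirroring how Theorem \ref{T:BD-Ntype} handles the base case where $\Gamma$ has only loops and bridges.
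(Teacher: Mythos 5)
The reduction you set up at the start is sound and is exactly the paper's first step: once one knows $\cP(T)\neq\emptyset$ for every spanning tree $T$, choosing $I(T)\in\cP(T)$ gives a tree function with $\BD_I\subseteq\cP(\ov J_X^d)$ (because the poset of orbits is an upper subset of $\bO^d_{\con}(\Gamma_X)$), and Theorem \ref{T:BD-Ntype}\eqref{T:BD-Ntype1} then yields $|\cP(G)|\geq|\BD_I(G)|\geq c(G)$. But the step you yourself flag as the main obstacle --- nonemptiness of $\cP(T)$ (equivalently of $\cP(G)$ for every connected spanning subgraph $G$) --- is where your argument breaks. Your ``clean way'' rests on the claim that $\ov J_X$, being proper, is closed in $\TF_X$, so that orbit closures of orbits in $\ov J_X$ stay in $\ov J_X$. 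This is false: $\TF_X$ and $\Simp_X$ are not separated, so properness of $\ov J_X$ over $k$ does not make the open immersion $\ov J_X\hookrightarrow\TF_X^d$ closed (if it did, connectedness of $\TF_X^d$ would force $\ov J_X=\TF_X^d$, which is not even of finite type for reducible $X$). Concretely, for the $2$-cycle curve the closure in $\Simp_X$ of a maximal orbit $\TF(\Gamma_X,D)\subset\ov J_X(\frak n)$ contains orbits $\TF(\Gamma_X\setminus\{e\},D')$ whose multidegree violates the stability inequalities, hence lying outside $\ov J_X(\frak n)$. Note also that Corollary \ref{C:fcJ-orb2} gives closure under generization (upper-set property), not ``closed under specialization'' as you wrote. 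The valuative criterion only guarantees that a family with generic fiber in $\ov J_X$ admits \emph{some} limit in $\ov J_X$, but because of non-separatedness that limit may sit in a different orbit than the naive limit in $\TF_X$; controlling where it can land is the whole difficulty, and your fallback sentences (``connecting any two spanning trees through a chain of deletions and contractions'', ``surjectivity onto all minimal orbit-types by properness'') do not supply that control.

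For comparison, the paper closes this gap by a second reduction and a genuinely quantitative argument: it suffices to show $\cP(\Gamma_X\setminus\{e\})\neq\emptyset$ for each non-separating edge $e$ (one then inducts on $g(\Gamma_X)$ by passing to the partial normalization $X_e$, whose preimage of $\ov J_X$ contains a fine compactified Jacobian of $X_e$). For a fixed $e$ one takes a family $\I$ over a DVR degenerating a line bundle in $\ov J_X$ to a sheaf non-free exactly at $n_e$, and compares it with the limit $\I'$ produced by properness using Proposition \ref{P:non-sepa}: the two differ by the action of some $g\in C^0(\Gamma_X,\Z)$, and $\gamma(\I')=\gamma(\I)+\delta(g)$. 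If $e$ is a loop, $\delta(g)$ cannot kill the component at $e$, so $\ov J_X$ is forced to contain an orbit over $\Gamma_X\setminus\{e\}$. If $e$ is not a loop, assuming no such orbit exists forces $\delta(g)(\be)=-1$, and reorienting one edge along a path from $v$ to $u$ produces a new multidegree $D^2$ with $\TF(\Gamma_X,D^2)\subset\ov J_X$ and $D^1<_c D^2$ in a lexicographic order; iterating yields an infinite strictly increasing chain of line-bundle orbits inside $\ov J_X$, contradicting finite type. None of this machinery (Lemma \ref{L:Io-Ieta}, Proposition \ref{P:non-sepa}, the ordering argument) is replaced by anything in your proposal, so as it stands the proof has a genuine gap precisely at its crucial step.
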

\begin{proof}
Call $d$ the degree of $\ov J_X$ and set $\ov J_X^d:=\ov J_X$. We first perform two reductions. 

\un{Reduction 1:} it is enough to show that 
\begin{equation}\label{E:red1}
\{D\in \Div(\Gamma_X)\: : \TF(G,D)\subset \ov J_X\}\neq \emptyset \: \text{ for any connected spanning subgraph } G \text{  of } \Gamma_X.  
\end{equation}

Indeed, if \eqref{E:red1} holds, then for any spanning tree $T$ of $\Gamma_X$, there exists $D_T\in \Div(\Gamma_X)$ such that 
$$
\TF(T,D_T)\subset \ov J_X^d.
$$
Since $D$ must have degree $d-|E(T)^c|$ by Corollary \ref{C:fcJ-orb1} and $|E(T)^c|=g(\Gamma_X)$ for any spanning tree $T$, the function 
$$
\begin{aligned}
I:\ST(\Gamma_X) & \longrightarrow \Div^{d-g(\Gamma_X)}(\Gamma_X)\\
T & \mapsto I(T):=D_T
\end{aligned}
$$
is a tree function of degree $d$ on $\Gamma_X$. 
By Definition  \ref{D:BD} and Corollary \ref{C:fcJ-orb2}, the BD-set $\BD_I$ associated to $I$ satisfies 
\begin{equation}\label{E:incl-pos}
\BD_I\subseteq \cP(\ov J_X^d)\subset \bO_{\con}^d(\Gamma_X).
\end{equation}
Then, by applying Lemma \ref{L:inc-BD} and Theorem  \ref{T:BD-Ntype}\eqref{T:BD-Ntype1}, we get that 
$$
|\{D\in \Div(\Gamma_X)\: : \TF(G,D)\subset \ov J_X\}|\geq |\BD_I(G)|\geq c(G)
$$
for any connected spanning subgraph $G$ of $\Gamma_X$, which proves the conclusion of the Theorem.

\un{Reduction 2:} it is enough to show that 
\begin{equation}\label{E:red2}
\{D\in \Div(\Gamma_X)\: : \TF(\Gamma_X\setminus \{e\},D)\subset \ov J_X\}\neq \emptyset \: \text{ for any non-separating edge $e$ of } \Gamma_X. 
\end{equation}

Indeed, we assume that \eqref{E:red2} holds and we will prove \eqref{E:red1} by induction on $g(\Gamma_X)$.

First of all, if $G=\Gamma_X$, then \eqref{E:red1} follows since $\ov J_X^d$ is open in $\TF_X^d$ and $\PIC_X^d$ is open and dense in $\TF_X^d$. This settles the base case of the induction, since if $g(\Gamma_X)=0$ then the only connected spanning subgraph of $\Gamma_X$ is itself. 

On the other hand, if $G\neq \Gamma_X$, then $G$ is a spanning subgraph of a certain $\Gamma_X\setminus \{e\}$, for some non-separating edge $e$ of $\Gamma_X$.
Consider the partial normalization $\nu_e:X_e\to X$ of $X$ at the node $e$. By Fact \eqref{F:orbits}, the push-forward along $\nu_e$ induces a closed embedding 
$$
(\nu_e)_*:\TF_{X_e}^d\hookrightarrow \TF_X^d,
$$
whose image is identified with the union of all the strata $\TF(H,D)$ of $\TF_X^d$ such that $H\leq \Gamma_X\setminus \{e\}=\Gamma_{X_e}$. 
The inverse image $(\nu_e)_*^{-1}(\ov J_X^d)$ is therefore an open and proper subset of $\TF_{X_e}^d$, and it is non-empty by the assumption \eqref{E:red2}.
Hence, a connected component of $(\nu_e)_*^{-1}(\ov J_X^d)$ is a fine compactified Jacobian of $X_e$, call it $\ov J_{X_e}^d$. 
Since $g(\Gamma_X\setminus \{e\})<g(\Gamma_X)$, we can apply our induction hypothesis in order to conclude that there exists an orbit of the form $\TF(G,D)$ contained in $\ov J_{X_e}^d$, and hence also 
in $\ov J_X^d$, and we are done. 

\vspace{0.1cm}

We now conclude showing that \eqref{E:red2} holds true by distinguishing two cases:

\vspace{0.1cm}

\un{Final step A:} \eqref{E:red2} holds true if $e$ is a loop of $\Gamma_X$.

Indeed, since $\ov J_X$ is open in $\TF_X^d$ by definition and it is a union of $\Pic^{\un 0}_X$-orbits by Proposition \ref{P:fcJ-orb}, we can find $B=\Spec R$, with $R$ a discrete valuation $k$-ring, with special point $o=\Spec k$ and generic point $\eta=\Spec K$, and an element $\I\in \TF_X(B)$ such that $\I_{\eta}$ is a line bundle belonging to $\ov J_X(K)$ and $\I_o$ is a torsion-free sheaf on $X$ that is not locally free exactly at the node $n_e$ corresponding to $e$.  Lemma \ref{L:Io-Ieta} implies that the $1$-cochain $\gamma(\I)\in \bC^1(\Gamma_X,\Z)$ associated to $\I$ is such that $\supp \gamma(\I)=\{e\}$.

Since $\ov J_X$ is proper, by the valuative criterion of properness there exists  $\I'\in \TF_X(B)$ such that $\I'_\eta\cong \I_\eta$ and $\I'_o\in \ov J_X$. Proposition \ref{P:non-sepa}\eqref{P:non-sepa1} implies that there exists $g\in C^0(\Gamma_X,\Z)$ such that $\I'\cong g(\I)$ while Proposition \ref{P:non-sepa}\eqref{P:non-sepa2} implies that 
$$\gamma(\I')=\gamma(g(\I))=\gamma(\I)+\delta(g)\in \bC^1(\Gamma_X,\Z),$$ 
where $\delta=\delta_{\Gamma_X}$. Since $\supp \gamma(\I)=\{e\}$ and, for any orientation $\be$ of $e$, we have that $\delta(g)(\be)=g(t(\be))-g(s(\be))=0$ because $e$ is a loop, we have that 
\begin{equation}\label{E:eq-supp}
e\in \supp \gamma(\I').
\end{equation}
Using that $G(\I_o')=\Gamma_X\setminus \supp \gamma(\I')$ by Lemma \ref{L:Io-Ieta}, we deduce that $\I_o'$ is not locally free at $n_e$.
Since $\ov J_X^d$ is open, we can deform $\I'_o$ to a sheaf  $J\in \ov J_X^d$ that is not locally free only at $n_e$. Then we must have that 
$$
\TF_X(\Gamma\setminus \{e\},D(J))\subset \ov J_X^d.
$$

\vspace{0.1cm}

\un{Final step B:} \eqref{E:red2} holds true if $e$ is not a loop of $\Gamma_X$.

Denote by $u$ and $v$ the two distinct vertices to which $e$ is adjacent and choose an orientation $\be$ of $e$ that goes from $u$ to $v$. Since $e$ is not a separating edge, there exists a path $c$ from $v$ to $u$ disjoint from $e$, i.e. $c=\be_1+\ldots+\be_n\in \bC_1(\Gamma_X\setminus \{e\},\Z)$ with $s(\be_1)=v=:v_0$, $t(\be_i)=s(\be_{i+1})=:v_i$ for any $1\leq i \leq n-1$, $t(\be_n)=u=:v_{n}$ such that the vertices $\{v_i\}_{0\leq i \leq n}$  are pairwise disjoint. We now introduce the following partial order relation on the divisors of $\Div^d(\Gamma_X)$: 
$$
D\leq_c D' \Leftrightarrow (D_{v_0}, \ldots, D_{v_n}) \leq_{\rm lex}  (D'_{v_0}, \ldots, D'_{v_n}),
$$
where $\leq_{\rm lex}$ is the lexicographic order, and we say that 
$$
D<_c D' \Leftrightarrow D\leq_c D' \text{ and } D\neq D'.
$$

Pick now a divisor $D^1\in \Div^d(\Gamma_X)$ such that $\TF(\Gamma_X,D^1)\subset \ov J_X^d$, which exists since $\ov J_X^d$ is open in $\Simp_X^d$ by definition and it is a union of $\Pic^{\un 0}_X$-orbits by Proposition \ref{P:fcJ-orb}. Assume by contradiction that there exists no $E\in \Div^{d-1}(\Gamma_X)$ such that $\TF(\Gamma_X\setminus \{e\},E)\subset \ov J^d_X$. We can find a discrete valuation $k$-ring $R$ with special point $o=\Spec k$ and generic point $\eta=\Spec K$ and an element $\I\in \TF_X(B:=\Spec R)$ such that $\I_{\eta}$ is a line bundle on $X_K$ of multidegree $D(\I_\eta)=D^1$ (and hence such that $\I_\eta\in \ov J^d_X(K)$) 
and such that $\gamma(\I)$ is equal to the characteristic function $\chi_\be$ of $\be$, so that $\I_o$  is a torsion-free sheaf on $X$ that is not locally free exactly at the node $n_e$ corresponding to $e$ (and hence such that $\I_o\not \in \ov J^d_X$).
Since $\ov J_X$ is  proper, by the valuative criterion of properness there exists  $\I'\in \TF_X(B)$ such that $\I'_\eta\cong \I_\eta$ and $\I'_o\in \ov J^d_X$. Proposition \ref{P:non-sepa}\eqref{P:non-sepa1} implies that there exists $g\in C^0(\Gamma_X,\Z)$ such that $\I'\cong g(\I)$ while Proposition \ref{P:non-sepa}\eqref{P:non-sepa2} implies that 
$$\gamma(\I')=\gamma(g(\I))=\gamma(\I)+\delta(g)=\chi_{\be}+\delta(g)\in \bC^1(\Gamma_X,\Z)$$ 
where $\delta=\delta_{\Gamma_X}$. 
Since $\I_o'\in \ov J_X^d$, $G(\I_o')=G(\I_\eta)\setminus \supp \gamma(\I')=\Gamma_X\setminus \supp(\chi_{\be}+\delta(g))$ by Lemma \ref{L:Io-Ieta} and  $e\not\in \supp \gamma(\I')$ by our assumption, we must have that 
\begin{equation}\label{E:guv}
\delta(g)(\be)=g(t(\be))-g(s(\be))=g(v)-g(u)=-1.
\end{equation}
Note that $D(\I'_o)=D(\I_{\eta})-\D(\O(\I))=D^1-\D(\O(\I))$.

Now, \eqref{E:guv} implies that the sequence of integers 
$$\{g(v_0=v),g(v_1),\ldots, g(v_{n-1}),g(v_n=u)\}$$
cannot be non-decreasing. Then we pick the smallest index $i\in \{1,\ldots, n\}$ such that 
\begin{equation}\label{E:gvi}
g(v_{i-1})<g(v_i).
\end{equation}
Consider the following partial orientation 
\begin{equation}\label{E:Ohat}
  \wh \O:=\O(\I)\setminus \{\be_i\} \cup \{\ov \be_i\},
\end{equation}
which clearly satisfies $\supp \wh \O=\supp \O(\I)$. The degree-$d$ divisor
$$D^2:=D^1-\D(\O(\I))+\D(\wh \O),$$ 
 satisfies, by construction, the properties
$$
\begin{sis}
& D^1<_c D^2, \\
& (\Gamma_X,D^2)\geq (\Gamma_X\setminus \supp(\wh \O),D^1-\D(\O(\I))=(\Gamma_X\setminus \supp \O(\I),D^1-\D(\O(\I)).
\end{sis}
$$
Since $\I'_o\in \ov J_X^d\cap \TF_X(\Gamma_X\setminus \supp \O(\I),D^1-\D(\O(\I)))$ and $\ov J_X^d$ is open and union of $\Pic^{\un 0}_X$-orbits, we deduce that 
$$
\TF_X(\Gamma,D^2)\subseteq \ov J_X^d.
$$
Repeating the above construction starting with $D^2$ and keep iterating, we find an infinite chain of divisors of degree $d$ on $\Gamma$ such that 
$$
\begin{sis}
& D^1<_c D^2<_c D^3<_c\ldots <_c D^r<_c\ldots \\
& \TF_X(\Gamma_X,D^r)\subset \ov J_X^d \text{ for any } r\geq 1.
\end{sis}
$$
This contradicts the fact that $\ov J_X^d$ is of finite type, and hence we are done.  
\end{proof}

The smooth locus and the irreducible components of a fine compactified Jacobian are described in the following

\begin{prop}\label{P:sm-irr}
Let $\ov J_X$ be a fine compactified Jacobian of $X$.
\begin{enumerate}
\item \label{P:sm-irr1} The smooth locus of $\ov J_X$ is equal to 
$$
(\ov J_X)_{\sm}=\coprod_{(\Gamma_X,D)\in \cP(\ov J_X)} \TF(\Gamma_X,D).
$$
\item \label{P:sm-irr2} The irreducible components of $\ov J_X$ are given by 
$$
\{\ov{\TF(\Gamma_X,D)}\: : (\Gamma_X,D)\in \cP(\ov J_X)\}.
$$
\end{enumerate}
\end{prop}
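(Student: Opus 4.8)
The plan is to deduce both parts from the local structure of $\Simp_X$ in Fact \ref{F:Simploc}, combined with the orbit decomposition in Fact \ref{F:orbits} and the fact (Proposition \ref{P:fcJ-orb}) that $\ov J_X$ is a union of finitely many $\PIC^{\un 0}_X$-orbits. The key observation is that, by Fact \ref{F:Simploc}(1), the completed local ring of $\Simp_X$ at a sheaf $I$ is $\widehat\bigotimes_{n\in \NF(I)} k[[x_n,y_n]]/(x_ny_n) \,\widehat\otimes\, k[[t_1,\dots,t_{g(X_{\NF(I)})}]]$, which is regular precisely when $\NF(I)=\emptyset$, i.e. when $I$ is a line bundle. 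Since $\ov J_X$ is open in $\Simp_X$ (hence in $\TF_X$), this says that the smooth locus of $\ov J_X$ consists exactly of the line bundles it contains.

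For part \eqref{P:sm-irr1}, I would argue as follows. By Fact \ref{F:orbits}\eqref{F:orbits2}, $I\in\PIC_X$ if and only if the orbit $\TF(G(I),D(I))$ of $I$ has $G(I)=\Gamma_X$. So $(\ov J_X)_{\sm}$, being the locus of line bundles in $\ov J_X$, is the union of those orbits $\TF(\Gamma_X,D)$ that are contained in $\ov J_X$, i.e. $\coprod_{(\Gamma_X,D)\in\cP(\ov J_X)}\TF(\Gamma_X,D)$; here $\cP(\ov J_X)$ is the poset of orbits of Definition \ref{D:orb-fcJ}, and $\ov J_X = U(\cP(\ov J_X))$ by Proposition \ref{P:fcJ-orb}. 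One still needs that every such orbit is genuinely contained in the smooth locus: this is immediate since $\PIC_X = \Simp_{X,\sm}$ by Fact \ref{F:Simploc}(1), and $\ov J_X$ is open in $\Simp_X$, so $\ov J_X\cap\PIC_X$ is open in the smooth scheme $\PIC_X$, hence smooth; conversely a point of $\ov J_X$ not in $\PIC_X$ has a non-regular completed local ring by the explicit formula, hence is a singular point of $\ov J_X$.

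For part \eqref{P:sm-irr2}, I would combine part \eqref{P:sm-irr1} with the fact that $\ov J_X$ has pure dimension $g(X)$ (from Fact \ref{F:Simploc}(1), every local ring has dimension $g(X)$) and that its singular locus has codimension at least one. Since $(\ov J_X)_{\sm}$ is dense in $\ov J_X$, the irreducible components of $\ov J_X$ are the closures of the connected components of its smooth locus. By part \eqref{P:sm-irr1}, the connected components of $(\ov J_X)_{\sm}$ are among the orbits $\TF(\Gamma_X,D)$ with $(\Gamma_X,D)\in\cP(\ov J_X)$: indeed each such orbit is isomorphic (via Fact \ref{F:orbits}\eqref{F:orbits3} with $S=\emptyset$) to $\PIC^D_X$, a torsor under the connected group $\PIC^{\un 0}_X$, hence irreducible and in particular connected; moreover distinct orbits are disjoint and each is both open and closed in $(\ov J_X)_{\sm}$ (open because orbits are locally closed and their closures only meet smaller-dimensional orbits by Fact \ref{F:orbits}\eqref{F:orbits4}, closed because the only orbit in the closure of $\TF(\Gamma_X,D)$ lying in the top stratum is itself). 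Therefore the connected components of $(\ov J_X)_{\sm}$ are exactly the orbits $\TF(\Gamma_X,D)$ for $(\Gamma_X,D)\in\cP(\ov J_X)$, and the irreducible components of $\ov J_X$ are their closures $\ov{\TF(\Gamma_X,D)}$.

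The main obstacle, though a mild one, is keeping the bookkeeping of orbit closures straight: one must make sure that no orbit $\TF(\Gamma_X,D)$ lies in the closure of another orbit $\TF(\Gamma_X,D')$ with $D'\ne D$, which follows because by Fact \ref{F:orbits}\eqref{F:orbits4} the closure relation $(\Gamma_X,D')\ge(G,D)$ forces $G\le\Gamma_X$ and the rank function $\rho$ (the number of edges of $G$) is then strictly smaller unless $G=\Gamma_X$ and $D=D'$. Given this, the orbits $\TF(\Gamma_X,D)$ are precisely the open strata of $\ov J_X$, hence each is open in $(\ov J_X)_{\sm}$, and the argument closes. All other steps are routine invocations of the already-established local structure and orbit stratification.
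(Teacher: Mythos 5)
Your proof is correct and follows essentially the same route as the paper, which simply cites Proposition \ref{P:fcJ-orb} together with Fact \ref{F:Simploc} for the smooth locus and Fact \ref{F:orbits} for the irreducible components; you have just spelled out the routine details (regularity of the completed local rings exactly on the line bundle locus, connectedness and non-nesting of the top orbits) that the paper leaves implicit.
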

\begin{proof}
Part \eqref{P:sm-irr1} follows from Proposition \ref{P:fcJ-orb} and Fact \ref{F:Simploc}. 
Part \eqref{P:sm-irr2} follows from Proposition \ref{P:fcJ-orb} and Fact \ref{F:orbits}.
\end{proof}

\begin{cor}\label{C:irr-fcJ}
Let $\ov J_X$ be a fine compactified Jacobian of a connected nodal curve $X$. 
\begin{enumerate}[(i)]
\item  The smooth locus of $\ov J_X$ is isomorphic to a disjoint union of at least $c(\Gamma_X)$ copies of the generalized Jacobian $\PIC^{\un 0}(X)$. 
\item The number of irreducible components of  $\ov J_X$ of $X$ is greater than or equal to $c(\Gamma_X)$. 
\end{enumerate}
\end{cor}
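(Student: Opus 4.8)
The plan is to deduce both statements essentially formally from Proposition \ref{P:sm-irr} (which already pins down the smooth locus and the irreducible components of any fine compactified Jacobian) together with the lower bound proved in Theorem \ref{T:bound-orb}. The only preliminary identification needed is of the strata $\TF(\Gamma_X,D)$ that show up along the smooth locus.

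First I would recall, applying Fact \ref{F:orbits}\eqref{F:orbits3} with $S=\emptyset$, that for each $D$ the locally closed substack $\TF(\Gamma_X,D)$ is isomorphic (via push-forward along the identity $\nu_\emptyset$) to $\PIC^D_X$, the locus of line bundles on $X$ of multidegree $D$. Since $X$ is connected, $\PIC^D_X$ is a torsor under the generalized Jacobian $\PIC^{\un 0}(X)$, hence (non-canonically) isomorphic to $\PIC^{\un 0}(X)$.

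For part (i): by Proposition \ref{P:sm-irr}\eqref{P:sm-irr1}, the smooth locus of $\ov J_X$ is the disjoint union of the strata $\TF(\Gamma_X,D)$ with $(\Gamma_X,D)\in\cP(\ov J_X)$, and by the previous paragraph each such stratum is isomorphic to $\PIC^{\un 0}(X)$. It then remains only to count them: Theorem \ref{T:bound-orb} applied to the connected spanning subgraph $G=\Gamma_X$ gives
$$|\{D\in\Div(\Gamma_X)\: : \TF(\Gamma_X,D)\subset\ov J_X\}|\geq c(\Gamma_X),$$
which is precisely the number of connected components of $(\ov J_X)_{\sm}$. For part (ii): by Proposition \ref{P:sm-irr}\eqref{P:sm-irr2} the irreducible components of $\ov J_X$ are exactly the closures $\ov{\TF(\Gamma_X,D)}$ with $(\Gamma_X,D)\in\cP(\ov J_X)$; these are pairwise distinct because the strata $\TF(\Gamma_X,D)$ are pairwise disjoint and each one is dense in the corresponding closure, so their number equals $|\{D\: : \TF(\Gamma_X,D)\subset\ov J_X\}|$, which is $\geq c(\Gamma_X)$ by the same instance of Theorem \ref{T:bound-orb}.

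Since all the substantive work has already been carried out (in particular the combinatorial input from Theorem \ref{T:BD-Ntype} feeding into Theorem \ref{T:bound-orb}), I do not expect a genuine obstacle here; the only point requiring a line of care is the observation that the closures of distinct top-dimensional strata are genuinely distinct irreducible components, which is immediate from Proposition \ref{P:sm-irr}\eqref{P:sm-irr2} and the fact that $\dim\ov J_X=g(X)=\dim\TF(\Gamma_X,D)$.
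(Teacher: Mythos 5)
Your proof is correct and follows exactly the paper's own argument: Proposition \ref{P:sm-irr} identifies the smooth locus and the irreducible components, Theorem \ref{T:bound-orb} applied to $G=\Gamma_X$ gives the lower bound $c(\Gamma_X)$ on the number of strata $\TF(\Gamma_X,D)$, and the identification of each such stratum with $\PIC^{\un 0}(X)$ via the torsor structure completes both parts. No discrepancies to report.
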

\begin{proof}
This follows from Proposition \ref{P:sm-irr} and Theorem \ref{T:bound-orb} for $G=\Gamma_X$, and noticing that  $\TF(\Gamma_X,D)$ is a torsor for $\PIC^{\un 0}(X)$. 
\end{proof}

We now introduce some special classes of fine compactified Jacobians. The first types of fine compactified Jacobians come from Definition \ref{D:opTF}.

\begin{defi}\label{D:spec-fcJ}
Let $\ov J_X^d$ be a fine compactified Jacobian of $X$ of degree $d$. Then we say that $\ov J_X^d$ is of  \emph{sN-type} (resp. of \emph{numerical sN-type}, resp. of \emph{N-type}, resp. of \emph{numerical N-type})  if $\cP(\ov J_X^d)$ has the same property (as in Definition \ref{D:upper-sN}).
\end{defi}

Note that, we have the following implications
$$
\xymatrix{
\left\{\text{Fine compactified Jacobians of sN-type}\right\} \ar@{=>}[r] \ar@{=>}[d]  & \left\{\text{Fine compactified Jacobians of N-type}\right\} \ar@{=>}[d] \\
\left\{\text{Fine comp. Jac. of numerical sN-type}\right\} \ar@{=>}[r] & \left\{\text{Fine comp. Jac. of numerical N-type}\right\}.
}
$$
Moreover, it follows from Proposition \ref{P:sm-irr} that $\ov J_X^d$ if of numerically N-type if and only if it has $c(\Gamma_X)$ irreducible components. 

A few words about the terminology: the term N-type comes from its relation to the property of being of N\'eron-type (see Definition \ref{D:Neron} and Proposition \ref{P:Neron-N}), the term sN-type means \emph{strongly} N-type, the term \emph{numerical} N-type (resp. sN-type) refers to the fact that it is a numerical version of the property of being  N-type (resp. sN-type). Note also that a fine compactified Jacobian 
is of numerical sN-type (resp. numerical N-type) iff equality holds in Theorem \ref{T:bound-orb} (resp. if equality holds in Theorem \ref{T:bound-orb} for $G=\Gamma_X$).



\vspace{0.1cm}

The other special classes of fine compactified Jacobians of $X$ have to do with extension properties to smoothing of $X$, either in a one-parameter or in a semi-universal family, as we recall in the following

\begin{defi}\label{D:smoothX}
Let $X$ be a nodal curve. 
\begin{enumerate}
\item \label{D:smoothX1} A \emph{$1$-parameter smoothing}  of $X$ is a flat and proper morphism $\pi:\X\to \Delta=\Spec R$, where $R$ is a discrete valuation ring (=DVR), together with a fixed isomorphism $\X_0\cong X$, where $\X_0$ is the central fiber of $\pi$, and whose general fiber $\X_{\eta}$ is smooth.

Such a $1$-parameter family is \emph{regular} if the total space $\X$ is a regular. 
 \item \label{D:smoothX2} A \emph{semiuniversal effective deformation} of $X$ is a morphism $\Pi: \bX\to \Spec R_X$, where $R_X$ is Noetherian complete local $k$-algebra, together with a fixed isomorphism $\bX_0\cong X$, where $\bX_0$ is the central fiber of $\Pi$, and such that the induced morphism 
 $$
 h_{R_X}: \Hom(R_X,-)\to \Def_X
 $$ 
 of functors of Artin $k$-algebras is formally smooth and an isomorphism on tangent spaces.

\end{enumerate}
\end{defi}
Note that a semiuniversal effective deformation of $X$ exists and is unique (for any projective reduced curve $X$), and also that $R_X\cong k[[T_1,\ldots, T_N]]$ (for any reduced, projective curve with locally complete singularities $X$); see e.g. \cite[\S 4.3]{MRV} and the references therein.

\begin{defi}\label{D:type-fcJ}
Let $\ov J_X^d$ be a fine compactified Jacobian of $X$ of degree $d$. We say that $\ov J_X^d$ is:
\begin{enumerate}
\item \label{D:type-fcJ1} \emph{weakly smoothable} if there exists a one-parameter  smoothing $\pi:\X\to \Delta$ of $X$ and an open substack 
 $\ov J_{\X/\Delta}^d\subseteq \TF^d_{\X/\Delta}$ which is proper over $\Delta$ and such that $(\ov J_{\X/\Delta}^d)_0=\ov J_X^d$. 
\item \label{D:type-fcJ2} \emph{smoothable} if for any  one-parameter smoothing $\pi:\X\to \Delta$ of $X$ there exists an  open substack 
 $\ov J_{\X/\Delta}^d\subseteq \TF^d_{\X/\Delta}$ which is proper over $\Delta$ and such that $(\ov J_{\X/\Delta}^d)_0=\ov J_X^d$.
\item \label{D:type-fcJ3} \emph{universally smoothable} if there exists an open substack $\ov J_{\bX/\Spec(R_X)}^d$ of $\TF^d_{\bX/\Spec(R_X)}$, which is proper and flat over $\Spec(R_X)$, and such that  $(\ov J_{\bX}^d)_0\cong \ov J_X^d$. 
\end{enumerate}
\end{defi}

The above definition of  smoothable fine compactified Jacobians differs from the definition of smoothable fine compactified Jacobians in  \cite[Def. 2.4]{PT1} and \cite[Def. 3.1]{PT2}, where it is required that there exists a \emph{regular} one-parameter smoothing satisfying the conclusions of \eqref{D:type-fcJ1}. Hence, the property of being a smoothable fine compactified Jacobians in the sense of loc. cit. sits naturally in between 
our properties of being smoothable and weakly smoothable.
Anyhow, Pagani-Tommasi have shown that the conditions of being smoothable and weakly smoothable coincide (see Theorem \ref{T:PT-thm} below), so that the above change of terminology will not lead to any potential confusion.

\begin{remark}\label{R:smooth}
The stacks $\ov J_{\X/\Delta}^d$ appearing in \eqref{D:type-fcJ1} or \eqref{D:type-fcJ2} (resp. the stack $\ov J_{\bX/\Spec(R_X)}^d$ appearing in \eqref{D:type-fcJ3}) of the above Definition \ref{D:type-fcJ} satisfy the following properties:
\begin{enumerate}[(i)]
\item  $\ov J_{\X/\Delta}^d$  (resp. $\ov J_{\bX/\Spec(R_X)}^d$) is open in $\Simp_{\X/\Delta}^d$ (resp. $\Simp^d_{\bX/\Delta}$), because its central fiber $\ov J_X^d$ is open in $\Simp_X^d$ and the property of being simple is open. In particular, $\ov J_{\X/\Delta}^d$  (resp. $\ov J_{\bX/\Spec(R_X)}^d$) is a scheme (using \cite[Thm. B]{est1}).
\item $\ov J_{\X/\Delta}^d\to \Delta$ (resp. $\ov J_{\bX/\Spec(R_X)}^d\to \Spec(R_X)$) is flat because it is open in $\Simp_{\X/\Delta}^d$ (resp. $\Simp^d_{\bX/\Delta}$), which is flat over $\Delta$ (resp. $\Spec R_X$) since it is has equidimensional fibers over a regular base.
\item The fibers of $\ov J_{\X/\Delta}^d\to \Delta$ (resp. $\ov J_{\bX/\Spec(R_X)}^d\to \Spec(R_X)$) are  geometrically connected and reduced, 
since  the central fiber $\ov J_X^d$ is connected and reduced and the morphism is flat and proper. 
\item  Since the generic fiber of $\X \to \Delta$ (resp. of $\bX\to \Spec(R_X)$) is smooth, the generic fiber of $\ov J_{\X/\Delta}^d\to \Delta$ (resp. $\ov J_{\bX/\Spec(R_X)}^d\to \Spec(R_X)$)  is 
equal to, respectively:
$$
(\ov J_{\X/\Delta}^d)_{\eta}=\PIC^d_{\X_\eta} \text{ and } (\ov J_{\bX/\Spec(R_X)}^d)_{\eta}=\PIC^d_{\bX_\eta}.
$$
In particular, we have that 
$$
\ov J_{\X/\Delta}^d=\ov J_X^d\coprod \PIC_{\X_{\eta}}^d.
$$
\item The scheme $\ov J_{\bX/\Spec(R_X)}^d$ is regular (see \cite[Thm. 4.5]{MRV}).
\end{enumerate}
\end{remark}

Finally, following Caporaso \cite{capNtype} (see also \cite{capneron} and  \cite{capsurvey}), we introduce a class of fine compactified Jacobians that interacts well with the N\'eron model of the Jacobian of a smoothing.

\begin{defi}(Caporaso)\label{D:Neron}
A degree-$d$ fine compactified Jacobian $\ov J_X^d$ of $X$ is said to be of \emph{N\'eron-type} if for every one-parameter regular smoothing $\pi:\X\to \Delta$ of $X$
\begin{equation}\label{E:Neron1}
\ov J_{\X/\Delta}^d:=\ov J_X^d\coprod \PIC_{\X_{\eta}}^d\to \Delta \quad  \text{ is proper } 
\end{equation}
and the canonical $\Delta$-map (induced by the identity on the generic fiber)
\begin{equation}\label{E:Neron2}
\Phi:(\ov J_{X/\Delta}^d)_{\sm} \to N(\PIC_{\X_{\eta}}^d) \quad \text{ is an isomorphism,}
\end{equation}
where $ (\ov J_{X/\Delta}^d)_{\sm}$ is the relative smooth locus of $\ov J_{\X/\Delta}^d$ and $N(\PIC_{\X_{\eta}}^d)$ is the  N\'eron model of of $\PIC_{\X_{\eta}}^d$.   
\end{defi}

In other words, $\ov J_X^d$ is of N\'eron-type if $\ov J_{\X/\Delta}^d$ provides a compactfication of the N\'eron model of  $\PIC_{\X_{\eta}}^d$ for every one-parameter regular smoothing $\pi:\X\to \Delta$ of $X$. 

We now show that the property of being of N\'eron-type is strongly related to the property of being of N-type (which also explain why we adopted the terminology N-type).  

\begin{prop}\label{P:Neron-N}
Let $\ov J_X^d$ be a degree-$d$ fine compactified Jacobian of $X$. Then $\ov J_X^d$ is of N\'eron type if and only if \eqref{E:Neron1} holds and $\ov J_X^d$ if of N-type.
\end{prop}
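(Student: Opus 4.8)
The plan is to prove the two implications separately, using the explicit description of the relative smooth locus of $\ov J^d_{\X/\Delta}$ in terms of the poset of orbits of the special fiber, together with the classical description of the component group of the N\'eron model.

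\medskip

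\textbf{Setup and the key identification.} Fix a one-parameter regular smoothing $\pi:\X\to \Delta$ of $X$, and assume \eqref{E:Neron1} holds, so that $\ov J^d_{\X/\Delta}$ is proper over $\Delta$. By Remark \ref{R:smooth}(ii)--(iv) (applied in the one-parameter case), $\ov J^d_{\X/\Delta}$ is open in $\Simp^d_{\X/\Delta}$, flat over $\Delta$, with generic fiber $\PIC^d_{\X_\eta}$ and special fiber $\ov J^d_X$. The relative smooth locus $(\ov J^d_{\X/\Delta})_{\sm}$ is then an open subscheme, smooth and separated over $\Delta$, whose generic fiber is $\PIC^d_{\X_\eta}$ and whose special fiber is, by Proposition \ref{P:sm-irr}\eqref{P:sm-irr1}, the disjoint union $\coprod_{(\Gamma_X,D)\in\cP(\ov J_X^d)} \TF(\Gamma_X,D)$, each piece being a torsor under $\PIC^{\un 0}_X$. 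The first step is to recall (this is the classical analysis of Raynaud, see also Caporaso \cite{capneron} and Melo-Viviani \cite{MV}) that the N\'eron model $N(\PIC^d_{\X_\eta})$ of $\PIC^d_{\X_\eta}$ has special fiber whose connected components are torsors under $\PIC^{\un 0}_X$ and whose component group is canonically $\Pic^d(\Gamma_X)=\Div^d(\Gamma_X)/\Im(\Delta^{\Gamma_X})$; in particular $N(\PIC^d_{\X_\eta})$ has exactly $c(\Gamma_X)$ components in its special fiber. Since $N(\PIC^d_{\X_\eta})$ is the maximal separated smooth model, the identity on $\PIC^d_{\X_\eta}$ extends to the canonical $\Delta$-map $\Phi: (\ov J^d_{X/\Delta})_{\sm}\to N(\PIC^d_{\X_\eta})$ of \eqref{E:Neron2}; this $\Phi$ is automatically an open immersion on the generic fiber (it is the identity) and a morphism of smooth separated group-algebraic-spaces over $\Delta$ on the level of the identity components.

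\medskip

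\textbf{($\Rightarrow$)} Suppose $\ov J^d_X$ is of N\'eron type. Then by definition \eqref{E:Neron1} holds, and $\Phi$ is an isomorphism. On special fibers $\Phi$ restricts to an isomorphism between $\coprod_{(\Gamma_X,D)\in\cP(\ov J^d_X)}\TF(\Gamma_X,D)$ and the special fiber of $N(\PIC^d_{\X_\eta})$; since the latter has $c(\Gamma_X)$ components, we get $|\{D: \TF(\Gamma_X,D)\subset \ov J^d_X\}| = c(\Gamma_X)$. Moreover, the bijection of component groups induced by $\Phi$ identifies $\{D:(\Gamma_X,D)\in\cP(\ov J^d_X)\}$ with the component group of $N(\PIC^d_{\X_\eta})$, which is $\Pic^d(\Gamma_X)$, compatibly with the surjection $\Div^d(\Gamma_X)\twoheadrightarrow \Pic^d(\Gamma_X)$ (because $\Phi$ is compatible with the torsor structures under $\PIC^{\un 0}_X$ and with translation by line bundles coming from $\PIC^d_{\X_\eta}$, which on the combinatorial side is exactly chip-firing). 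Hence the composition $\{D:(\Gamma_X,D)\in\cP(\ov J^d_X)\}\hookrightarrow \Div^d(\Gamma_X)\twoheadrightarrow \Pic^d(\Gamma_X)$ is a bijection, i.e. $\ov J^d_X$ is of N-type.

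\medskip

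\textbf{($\Leftarrow$)} Suppose \eqref{E:Neron1} holds and $\ov J^d_X$ is of N-type. We must show $\Phi$ is an isomorphism. Since $(\ov J^d_{X/\Delta})_{\sm}$ is smooth and separated over $\Delta$ with generic fiber $\PIC^d_{\X_\eta}$, by the N\'eron mapping property the identity on $\PIC^d_{\X_\eta}$ extends to a $\Delta$-morphism $\Phi$; this is the map in \eqref{E:Neron2}. To show it is an isomorphism it suffices (both sides being smooth and flat of finite type over $\Delta$ with identical generic fibers) to check it is an isomorphism on special fibers. On identity components both special fibers are $\PIC^{\un 0}_X$-torsors and $\Phi$ is $\PIC^{\un 0}_X$-equivariant, so the restriction of $\Phi$ to the identity component is an isomorphism onto the identity component of $N(\PIC^d_{\X_\eta})_0$ — indeed one checks $\Phi$ is an open immersion on identity components (an equivariant map of torsors under a smooth connected group) and then that the image is closed, using properness of $\ov J^d_{\X/\Delta}$ which forces the relative smooth locus to surject onto the component containing it; alternatively, both identity components represent the same functor, $\PIC^{\un 0}_{\X/\Delta}$. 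For the component groups: the N-type hypothesis gives that $\{D:(\Gamma_X,D)\in\cP(\ov J^d_X)\}\to\Pic^d(\Gamma_X)$ is a bijection, so the special fiber of $(\ov J^d_{X/\Delta})_{\sm}$ has exactly $c(\Gamma_X)$ components, matching $N(\PIC^d_{\X_\eta})$, and the induced map of component groups is a bijection of $\Pic^d(\Gamma_X)$-torsors (again compatibility with translation), hence an isomorphism. Combining the identity-component statement with the component-group statement yields that $\Phi$ is an isomorphism on special fibers, hence everywhere. Therefore \eqref{E:Neron2} holds and $\ov J^d_X$ is of N\'eron type.

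\medskip

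\textbf{Main obstacle.} The delicate point is the ($\Leftarrow$) direction: knowing \eqref{E:Neron1} and the purely combinatorial N-type condition, one must produce the \emph{isomorphism} $\Phi$ in \eqref{E:Neron2}, not merely a bijection on orbit sets. The heart of the matter is to check that the canonical map $\Phi$ — which exists by the N\'eron mapping property on the relative smooth locus — is an isomorphism on the identity component (equivalently, that $(\ov J^d_{X/\Delta})_{\sm}$ restricted to the identity component is already the N\'eron model of $\PIC^0_{\X_\eta}$, i.e. that it represents $\PIC^{\un 0}_{\X/\Delta}$ and this is the relevant Raynaud model), and then to verify the compatibility of the $\Pic^d(\Gamma_X)$-torsor structures on the component groups so that counting ($c(\Gamma_X)$ on both sides, by N-type) upgrades to a genuine isomorphism of component groups. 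This is essentially the content of \cite[Thm. 4.1]{MV} in the classical case, and the proof strategy here is to carry the same argument through using only that $\cP(\ov J^d_X)$ is of N-type rather than that it comes from a numerical polarization.
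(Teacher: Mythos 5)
Your proof follows the same route as the paper: fix a regular one-parameter smoothing, identify the special fiber of the relative smooth locus with the orbits $\TF(\Gamma_X,D)$ for $(\Gamma_X,D)\in\cP(\ov J_X^d)$, and invoke Raynaud's description of $N(\PIC^d_{\X_\eta})$ (components that are $\PIC^{\un 0}_X$-torsors, component group $\Pic^d(\Gamma_X)$) to conclude that $\Phi$ is an isomorphism exactly when the set of multidegrees maps bijectively onto $\Pic^d(\Gamma_X)$, i.e. when $\ov J_X^d$ is of N-type. You split this into two implications and spell out the fiberwise reduction and torsor compatibilities, but the substance — including the key compatibility of $\Phi$ with the combinatorial identification, which both you and the paper delegate to Raynaud — is the same.
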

\begin{proof}
Take a one-parameter regular smoothing $\pi:\X\to \Delta$ of $X$ and consider the scheme $\ov J_{\X/\Delta}^d$ which is proper over $\Delta$. 
Fact \ref{F:Simploc} implies that the relative smooth locus of $\ov J_{\X/\Delta}^d$ is equal to 
$$
(\ov J_{\X/\Delta}^d)_{\sm}=\coprod_{(\Gamma_X,D)\in \cP(\ov J_X^d)}\Pic^D(X) \coprod \PIC_{\X_{\eta}}^d.
$$
Now, the description of $N(\PIC_{\X_{\eta}}^d)$ provided by Raynaud \cite{Ray} shows that the canonical map 
$$
\Phi: (\ov J_{\X/\Delta}^d)_{\sm}=\coprod_{(\Gamma_X,D)\in \cP(\ov J_X^d)}\Pic^D(X) \coprod \PIC_{\X_{\eta}}^d\to N(\PIC_{\X_{\eta}}^d),
$$
is an isomorphism if and only if the composition 
$$
\cP(\ov J_X^d)(\Gamma_X)=\{D\in \Div^d(\Gamma_X)\: : (\Gamma_X,D)\in \cP(\ov J_X^d)\}\subset \Div^d(\Gamma_X)\twoheadrightarrow \Pic^d(\Gamma_X) \text{ is a bijection},
$$
or, equivalently, if and only if $\ov J_X^d$ is of N-type.  
\end{proof}


The relation between some of the above notions was clarified in \cite{PT2}.

\begin{thm}\label{T:PT-thm}(Pagani-Tommasi \cite{PT2})
Let $U$ be an open subset of $\Simp_X^d$. Then the following conditions for $U$ are equivalent:
\begin{enumerate}
\item \label{T:PT-thm1} $U$ is of sN-type;
\item \label{T:PT-thm2} $U$ is a smoothable fine compactified Jacobian of $X$;
\item \label{T:PT-thm3} $U$ is a weakly smoothable fine compactified Jacobian of $X$.
\end{enumerate} 
\end{thm}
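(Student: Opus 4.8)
I will prove the cycle $(1)\Rightarrow(2)\Rightarrow(3)\Rightarrow(1)$. The implication $(2)\Rightarrow(3)$ is immediate from Definition \ref{D:type-fcJ}, so the content is in $(1)\Rightarrow(2)$ and $(3)\Rightarrow(1)$. Throughout I use that for $\cP$ of sN-type the set $\cP$ is finite (as $\SSc(\Gamma_X)$ is finite and $|\cP(G)|=c(G)$), so $U=U(\cP)$ is a finite union of $\PIC_X^{\un 0}$-orbits.

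\textbf{$(1)\Rightarrow(2)$.} Let $U=U(\cP)$ with $\cP\subset \bO_{\con}^d(\Gamma_X)$ of sN-type. By Definition \ref{D:opTF} $U$ is open in $\Simp_X^d$, and by the N-type property $\cP(\Gamma_X)\neq\emptyset$, so $U$ is nonempty and contains line bundles. Fix a one-parameter smoothing $\pi\colon\X\to\Delta$ and set $\ov J_{\X/\Delta}^d:=U\amalg \PIC_{\X_\eta}^d\subset \TF_{\X/\Delta}^d$; it is open and, being open in $\Simp_{\X/\Delta}^d$, flat of finite type over $\Delta$ (Remark \ref{R:smooth}(ii)). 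I check properness over $\Delta$ by the valuative criterion, reducing to a trait $\Spec R'$ dominating $\O_\Delta$ and a line bundle $L$ on $\X_{K'}$. Existence: choosing any $I_0\in U$, Proposition \ref{P:spec-orb}(\ref{P:spec-orb2}) (whose hypothesis is vacuous for a smoothing, since $f\colon\Gamma_X\to\Gamma_{\X_\eta}$ contracts a spanning tree) produces a section with special fibre $I_0\in U$ and generic fibre $L$. Uniqueness: two such extensions with isomorphic generic fibre differ, after base change, by the $C^0(\Gamma_X,\Z)$-action of Proposition \ref{P:non-sepa}, so by Lemma \ref{L:Io-Ieta} the combinatorial types $(\Gamma_X\setminus S_i,D_i)$ of their special fibres have the same image in $\Pic^{d-|S_i|}$ of the relevant subgraph; since $\cP$ is of sN-type, the map $\cP(G)\to\Pic^{d-|E(G)^c|}(G)$ is injective, which forces the types — and hence the sheaves — to coincide. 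Thus $\ov J_{\X/\Delta}^d\to\Delta$ is proper, so its special fibre $U$ is proper over $k$. Finally, $\ov J_{\X/\Delta}^d$ is reduced (open in the reduced $\Simp_{\X/\Delta}^d$) and irreducible (flat over the trait $\Delta$, with irreducible generic fibre $\PIC_{\X_\eta}^d$), hence integral; by Zariski's connectedness theorem applied to the proper morphism $\ov J_{\X/\Delta}^d\to\Delta$ every fibre is connected, in particular $U$ is connected. So $U$ is a fine compactified Jacobian, and the above shows it is smoothable.

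\textbf{$(3)\Rightarrow(1)$.} Let $\ov J_X^d$ be witnessed by $\pi\colon\X\to\Delta$ with $\ov J_{\X/\Delta}^d:=\ov J_X^d\amalg\PIC_{\X_\eta}^d$ proper over $\Delta$; as above it is integral and equals the closure of $\PIC_{\X_\eta}^d$, so every orbit $\TF(G,D)\subset\ov J_X^d$ is reached by a trait $\Spec R'\to\ov J_{\X/\Delta}^d$ through the generic point, i.e. there is $\I\in\TF_{\X/\Delta}^d(\Spec R')$ with $\I_{o'}\in\TF(G,D)$ and $\I_{\eta'}$ a line bundle. By Theorem \ref{T:bound-orb}, $|\cP(\ov J_X^d)(G)|\geq c(G)=|\Pic^{d-|E(G)^c|}(G)|$ for every connected spanning subgraph $G=\Gamma_X\setminus S$, so sN-type follows once $\cP(\ov J_X^d)(G)\to\Pic^{d-|E(G)^c|}(G)$ is shown injective. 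Suppose $(G,D),(G,D')\in\cP(\ov J_X^d)$ with $D-D'=\Delta^G(F)$, $F\in\Div^0(\Gamma_X\setminus S)=\Div^0(\Gamma_X)$; write $\nu_S\colon X_S\to X$ for the partial normalisation, so $\TF(G,D')=\nu_{S,*}\PIC^{D'}(X_S)$. Assuming the total space $\X$ is regular, the closures $\X_v$ of the components $X_v$ are Cartier divisors; the line bundle $\mathcal M:=\O_{\X}\big(\sum_v F_v\,\X_v\big)$ is trivial on $\X_{\eta'}$ (its support lies in the special fibre) and, using $\sum_w\X_w\cdot\X_v=0$, restricts to a line bundle on $X$ of multidegree $\Delta^{\Gamma_X}(F)$, so that $\nu_S^*(\mathcal M|_X)=\O_{X_S}\big(\sum_v F_v X_{S,v}\big)$ has multidegree $\Delta^G(F)$. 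Picking $I'\in\TF(G,D')\subset\ov J_X^d$ with a smoothing family $\I'$ as above, the family $\I'\otimes\mathcal M$ has the same generic fibre $\I'_{\eta'}$ and, by the projection formula, special fibre $\nu_{S,*}\big(L'\otimes\nu_S^*(\mathcal M|_X)\big)\in\TF(G,D'+\Delta^G(F))=\TF(G,D)\subset\ov J_X^d$. Separatedness of $\ov J_{\X/\Delta}^d$ forces $\I'\cong\I'\otimes\mathcal M$, hence $\O_{X_S}\big(\sum_v F_v X_{S,v}\big)\cong\O_{X_S}$, hence $\Delta^G(F)=0$ and $D=D'$. This establishes that $\cP(\ov J_X^d)$ is of sN-type, i.e. $U$ is of sN-type.

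\textbf{The main obstacle.} The argument for $(3)\Rightarrow(1)$ used that the witnessing smoothing has regular total space, but Definition \ref{D:type-fcJ}(\ref{D:type-fcJ1}) only provides \emph{some} one-parameter smoothing; at a node over which $\X$ acquires an $A_{n-1}$-singularity with $n\geq 2$ the divisor $\X_v$ is no longer Cartier and the twisting construction breaks down. Passing to a regular semistable model replaces $X$ by a curve obtained by inserting chains of $\P^1$'s at some nodes, and one must transport the properness of $\ov J_{\X/\Delta}^d$ back to $\ov J_X^d$ — equivalently, one must know that weak smoothability already implies smoothability in the sense of Definition \ref{D:type-fcJ}(\ref{D:type-fcJ2}). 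This reduction is the genuinely delicate point and is exactly the content of Pagani--Tommasi's theorem, which I would invoke from \cite{PT2} (the same subtlety is what makes the equivalence $(2)\Leftrightarrow(3)$ nontrivial rather than tautological).
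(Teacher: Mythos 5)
There is a genuine gap, and it sits exactly where you flag it — but your proposed repair (citing \cite{PT2}) does not close it, and there is an additional computational error earlier. For $\eqref{T:PT-thm3}\Rightarrow\eqref{T:PT-thm1}$ your twisting argument assumes the witnessing smoothing is regular, while Definition \ref{D:type-fcJ}\eqref{D:type-fcJ1} supplies an arbitrary one; and \cite[Prop.~7.12]{PT2} itself is proved only under the regularity assumption, so invoking it is not available (indeed the whole point of the proof recorded in this paper is the non-regular case). The actual argument proceeds differently: one blows up each node $n_e$ with local equation $xy=\pi^{m(e)+1}$ to get a regular semistable model $\X^m$, lifts each $D\in\cP(G)$ to divisors on the subdivided graph $\Gamma_X^m$ by adding one exceptional vertex per edge of $E(\Gamma_X)\setminus E(G)$, compares $|\cP^m|=\sum_G\bigl(\prod_{e\notin E(G)}m(e)\bigr)|\cP(G)|$ with $c(\Gamma_X^m)=\sum_G\bigl(\prod_{e\notin E(G)}m(e)\bigr)c(G)$, uses Raynaud's result on $\X^m$ to turn a relation in $\Pic^d(\Gamma_X^m)$ into two relative sheaves with the same generic fibre, and deduces from separatedness only the numerical N-type equality $|\cP(\Gamma_X)|=c(\Gamma_X)$. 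The upgrade from this single equality to sN-type is then purely combinatorial, via BD-sets (Theorems \ref{T:BD-Ntype} and \ref{T:main-comb}) together with Theorem \ref{T:bound-orb} and the already-established implication $\eqref{T:PT-thm1}\Rightarrow\eqref{T:PT-thm2}$. Your proposal never uses this combinatorial machinery, and it cannot avoid it: even granting regularity, your twist $\mathcal M=\O_{\X}\bigl(\sum_v F_v\X_v\bigr)$ restricts to the special fibre with multidegree $\Delta^{\Gamma_X}(F)$, and pulling back along $\nu_S$ does not change this; so the new special fibre lies in $\TF(G,D'+\Delta^{\Gamma_X}(F))$, not $\TF(G,D'+\Delta^{G}(F))$. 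Hence your argument only yields injectivity of $\cP(\Gamma_X)\to\Pic^d(\Gamma_X)$ (N-type), not injectivity of $\cP(G)\to\Pic^{d-|E(G)^c|}(G)$ for proper spanning subgraphs $G$, which is what sN-type requires.

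The implication $\eqref{T:PT-thm1}\Rightarrow\eqref{T:PT-thm2}$ is also not established as written. Your separatedness step invokes Proposition \ref{P:non-sepa}, but that proposition describes the non-separatedness of $\TF_X$ for the \emph{constant} family $X\times_k B$; for a one-parameter smoothing $\X\to\Delta$ (a fortiori a non-regular one, which smoothability must cover) two torsion-free extensions of the same generic line bundle are not related by the $C^0(\Gamma_X,\Z)$-action of \eqref{E:action}, and describing how their special fibres are related (via line bundles on a semistable model and the subdivided graph $\Gamma_X^m$) is precisely the content of \cite[Lemma~6.5]{PT2}, which the paper cites for this implication rather than reproves. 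So both nontrivial implications of your cycle rest on inputs that are either misapplied (Proposition \ref{P:non-sepa}) or unavailable in the stated generality ($\cite{PT2}$ for non-regular smoothings), and the Laplacian mismatch $\Delta^{\Gamma_X}(F)$ versus $\Delta^{G}(F)$ is an outright error that blocks the sN-type conclusion even in the regular case.
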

\begin{proof}
The implication $\eqref{T:PT-thm1} \Rightarrow \eqref{T:PT-thm2}$ is proved in \cite[Lemma 6.5]{PT2}.

The implication $\eqref{T:PT-thm2} \Rightarrow \eqref{T:PT-thm3}$ is obvious.

The implication $\eqref{T:PT-thm3} \Rightarrow \eqref{T:PT-thm1}$ is proved in \cite[Prop. 7.12]{PT2} under the stronger assumption that the one-parameter smoothing appearing in Definition \ref{D:type-fcJ}\eqref{D:type-fcJ1} is regular. A proof in the general case was communicated to us by the authors of loc. cit. and it goes as follows\footnote{We thank the authors of \cite{PT2} for communicating us the following proof and giving us the permission to include it in this paper.}. 

Suppose that $U=\ov J_X^d$ is a weakly smoothable fine compactified Jacobian of $X$. We have to show that the upper subset $\cP:=\cP(\ov J_X^d)$ is of sN-type as in Definition \ref{D:upper-sN}. 

First of all, Theorem \ref{T:bound-orb} implies that 
\begin{equation}\label{E:card-P>}
|\cP(G)|\geq c(G) \text{ for any connected spanning subgraph } G \text{ of } \Gamma_X.
\end{equation}

\un{Claim}: We have that  $|\cP(\Gamma_X)|=c(\Gamma_X)$.

Indeed, by assumption (and Remark \ref{R:smooth}), there exists a one-parameter  smoothing $\pi:\X\to \Delta=\Spec R$ of $X$ such that  $\ov J_{\X/\Delta}^d:=\ov J_X^d\coprod \Pic_{\X_\eta}^d \subseteq \TF^d_{\X/\Delta}$ is proper over $\Delta$. For any edge $e$ of $\Gamma_X$, a local equation for the surface $\X$ at the node $n_e$ of $X$ corresponding to the edge $e$ is equal to $xy=\pi^{m(e)+1}$ for some unique $m(e)\in \bN$, where $\pi$ is a fixed uniformizer of $R$. This defines a function $m:E(\Gamma_X)\to \bN$. After blowing up $m(e)$ times the surface $\X$ at the point $n_e$ for each $e\in E(\Gamma_X)$, we get a regular surface $\pi^m:\X^m\to \Delta$ endowed with a $\Delta$-morphism $f:\X^m\to \X$ which is an isomorphism on the generic fiber and it is the contraction of a chain of $m(e)$ rational curves over each point $n_e$. In particular, the dual graph $\Gamma_X^m:=\Gamma_{\X^m_o}$ is obtained from  $\Gamma_X$ by inserting $m(e)$ new vertices (called exceptional vertices) inside each edge $e\in E(\Gamma_X)$.

Now, any divisor $D\in \cP(G)$ for some connected spanning subgraph $G$ of $\Gamma_X$ can be lifted to a collection of degree-$d$ divisor on $\Gamma_X^m$ by summing to it one (and only one)  exceptional vertex for each edge $e\in E(\Gamma_X)\setminus E(G)$, as in \cite[Eq. (5.5)]{PT2}. The collection of all such divisors on $\Gamma_X^m$ is denoted by $\cP^m$ and it come with a map $\phi:\cP^m\to \cP$ that sends a divisor in $\cP^m$ to the unique element of $\cP$ of which it is a lift. By construction, the set $\cP^m$ has cardinality 
\begin{equation}\label{E:card-Pm}
|\cP^m|=\sum_{G\in \SS(\Gamma_X)} \left(\prod_{e\in E(\Gamma_X)\setminus E(G)} m(e)\right)|\cP(G)|.
\end{equation}
 On the other hand, the complexity of $\Gamma_X^m$ is given by (see \cite[Lemma 5.3]{PT2})
\begin{equation}\label{E:compl-Gm}
c(\Gamma_X^m)=\sum_{G\in \SS(\Gamma_X)} \left(\prod_{e\in E(\Gamma_X)\setminus E(G)} m(e)\right)c(G).
\end{equation}
By comparing \eqref{E:card-Pm} and \eqref{E:compl-Gm}, and using \eqref{E:card-P>}, we get that $ |\cP^m|\geq c(\Gamma_X^m)$ with strict inequality  if $|\cP(\Gamma_X)|>c(\Gamma_X)$.

Hence, in order to conclude the proof of the Claim, we assume that $ |\cP^m|> c(\Gamma_X^m)$ and we show that it leads to a contradiction. Because of the assumption $ |\cP^m|> c(\Gamma_X^m)$, there exist two distinct divisors $D_1,D_2\in \cP^m\subseteq \Div^d(\Gamma_X^m)$ mapping to  the same element of $\Pic^d(\Gamma_X^m)$. This is equivalent to say that $D_2=D_1+\Delta(E)$ for some divisor $E\in \Div^0(\Gamma_X^m)$, where $\Delta:=\Delta_{\Gamma_X^m}$ is the Laplacian of $\Gamma_X^m$ as in \eqref{Lapla}. Pick now a line bundle $\L^1$ on the family $\X^m/\Delta$ such that  $\un\deg(\L^1_o)=D_1$ and another line bundle $\E$ on $\X^m/\Delta$ such that $\E$ is trivial on the generic fiber and $\un\deg(\E_o)=\Delta(E)$ (this latter line bundle exists by \cite[Prop. (8.1.2)]{Ray}). If we set $\L^2=\L^1\otimes \E$,  then $\L^1$ and $\L^2$ are two line bundles on the family $\X^m/\Delta$ that are isomorphic on the generic fiber and such that $\un \deg(\L^i)=D_i$ for $i=1,2$. By construction, the pushforwards $\I^1:=f_*(\L_1)$ and $\I^2:=f_*(\L_2)$ are two relative rank-$1$ torsion-free sheaves on $\X/\Delta$ that are isomorphic on the generic fiber and such that their restriction to the special fiber have combinatorial type equal to, respectively,  $\phi(D_1)$ and $\phi(D_2)$. 
In particular, $\I^1,\I^2\in \ov J_{\X/\Delta}^d(\Delta)$ since $\phi(D_1),\phi(D_2)\in \cP=\cP(\ov J_X^d)$. The two elements $\phi(D_1)$ and $\phi(D_2)$ of $\cP$ cannot be equal by \cite[Lemma 3.10(5)]{AAPT}, and hence also the sheaves $\I^1$ and $\I^2$ cannot be equal. This contradicts the separatedness of $\ov J_{\X/\Delta}^d\to \Delta$, and the Claim is proved.

\un{End of the proof}: by \eqref{E:card-P>}, the upper subset $\cP$ contains a BD-set, call it $\BD_I$. By the Claim and Theorem \ref{T:BD-Ntype}\eqref{T:BD-Ntype1}, we have that $\BD_I$ is of numerical N-type, i.e. 
$$|\BD_I(\Gamma_X)|=c(\Gamma_X).$$  
Then Theorem \ref{T:main-comb}\eqref{T:main-comb2} implies that $\BD_I$ is an upper subset of sN-type. By applying the implication $\eqref{T:PT-thm1} \Rightarrow \eqref{T:PT-thm2}$ (which has been already proved), we deduce that $U(\BD_I)$ is a (smoothable) compactified Jacobian of $X$ which is clearly an open substack of $U(\cP)=\ov J_X^d$. Since $U(\BD_I)$ and $U(\cP)=\ov J_X^d$ are proper, we deduce that the inclusion $U(\BD_I)\hookrightarrow \ov J_X^d$ is also closed and hence  that $U(\BD_I)= \ov J_X^d$ by connectedness of $\ov J_X^d$. Therefore, $\cP=\BD_I$ is of sN-type, and we are done. 
\end{proof}

\begin{cor}\label{C:PT-cor}
An open subset of sN-type is a fine compactified Jacobian of sN-type. 
\end{cor}

There is a useful equivalence relation on fine compactified Jacobians, given by translations by line bundles. 

\begin{defi}\label{D:eq-fcJ}(\cite[Def. 3.1]{MRV})
Let $X$ a connected nodal curve. 
\begin{enumerate}
\item If $\ov J_X$ is a fine compactified Jacobian of $X$ and $L$ is any line bundle on $X$, then the \emph{translation} of $\ov J_X$ by $L$ is the following fine compactified Jacobian of $X$
$$
\ov J_X\otimes L:=\{I\otimes L\: :   I\in \ov J_X\}.
$$
\item Two fine compactified Jacobians $\ov J_X$ and $\ov J'_X$ are said to be \emph{equivalent by translation} if there exists a line bundle $L$ on $X$ such that 
$\ov J_X'=\ov J_X\otimes L$. 
\end{enumerate}
\end{defi}
Note that the equivalence by translation is an equivalence relation among the fine compactified Jacobians of a given connected nodal curve, and that this equivalence relation preserves the property of being of (resp. numerical) sN-type or of (resp. numerical) N-type, as well as the property of being (resp. weakly or universally) smoothable.

\subsection{Fine V-compactified Jacobians}\label{sec:VfcJ}

In this subsection we introduce a new class of fine compactified Jacobians, namely the fine V-compactified Jacobians and we study their properties.

\begin{remark}\label{R:Vstab-cur}
Let $X$ be a nodal curve with dual graph $\Gamma_X$. Given a general V-stability condition $\frak n$ on $\Gamma_X$ of degree $d=|\frak n|\in \Z$ as in Definition \ref{D:Vstab}, we get a collection of integers 
$$
\frak n=\{\frak n_Y:=\frak n_{V(Y)} \: : \: Y\subset X\:  \text{ is a biconnected and non-trivial subcurve}\}
$$
satisfying the following properties:
\begin{enumerate}
\item for any $Y\subset X$ biconnected and non-trivial, we have 
\begin{equation}\label{E:Vstab-cur1}
\frak n_Y+\frak n_{Y^c}=d+1-|Y\cap Y^c|;
\end{equation}
\item  for any disjoint $Y_1,Y_2\subset X$ without common irreducible components such that $Y_1,Y_2$ and $Y_1\cup Y_2$ are biconnected and non-trivial, we have that 
\begin{equation}\label{E:Vstab-cur2}
-1\leq \frak n_{Y_1\cup Y_2}-\frak n_{Y_1}-\frak n_{Y_2}-|Y_1\cap Y_2|\leq 0.
\end{equation}
\end{enumerate}
\end{remark}

A general V-stability condition on $\Gamma_X$ of degree $d$ determines an open subset of $\Simp_X^d$, as in the following

\begin{lemdef}\label{D:V-open}
Let $\frak n$ be a degree-$d$ general V-stability condition on $\Gamma_X$. Then  \emph{V-open subset} associated to $\frak n$ is the open subset of $\Simp_X^d$ given by
\begin{equation}\label{E:Vopen2}
\ov J_X(\frak n):=U(\cP_{\frak n})=\coprod_{(G,D)\in \cP_{\frak n}} \TF_X(G,D).
\end{equation}
Explicitly, we have that  
\begin{equation}\label{E:Vopen1}
\ov J_X(\frak n):=\{I \in \TF_X^d\: : \:  \deg_Y(I)\geq \frak n_Y\}.
\end{equation}
\end{lemdef}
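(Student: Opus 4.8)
The plan is to deduce both assertions directly from the combinatorial results of Section~\ref{S:poset-C0} together with the orbit description of $\TF_X$ recorded in Fact~\ref{F:orbits}; no genuinely new argument is needed.

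First, for the openness. By Proposition~\ref{P:Bn-upper} the V-subset $\cP_{\frak n}$ is an upper subset of $\bO^d(\Gamma_X)$, and by Proposition~\ref{P:Bn-card}\eqref{P:Bn-card1} it is moreover contained in $\bO^d_{\con}(\Gamma_X)$. Hence Definition~\ref{D:opTF}\eqref{D:opTF1} applies, and $U(\cP_{\frak n})=\coprod_{(G,D)\in\cP_{\frak n}}\TF_X(G,D)$ is an open subset of $\Simp_X$. It lands in $\Simp_X^d$ because any $(G,D)\in\bO^d(\Gamma_X)$ satisfies $|D|+|E(G)^c|=d$, so by \eqref{E:deg-mdeg} every sheaf in $\TF_X(G,D)$ has degree $d$. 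Note that only openness in $\Simp_X^d$ is asserted here (connectedness and properness, needed to call $\ov J_X(\frak n)$ a fine compactified Jacobian, are established later in Theorem~\ref{T:fcJ-vine}), so nothing further is required for this part.

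Second, for the explicit description \eqref{E:Vopen1}. By Fact~\ref{F:orbits}\eqref{F:orbits1}, each $I\in\TF_X^d$ lies in the unique orbit $\TF_X(G(I),D(I))$ with $(G(I),D(I))\in\bO^d(\Gamma_X)$; therefore $I\in U(\cP_{\frak n})$ if and only if $(G(I),D(I))\in\cP_{\frak n}$. Now I would unwind Definition~\ref{D:stabV}: writing $G(I)=\Gamma_X\setminus\NF(I)$, the membership $(G(I),D(I))\in\cP_{\frak n}$ reads $D(I)_W+e_{\NF(I)}(W)\geq\frak n_W$ for every non-trivial biconnected $W\subseteq V(\Gamma_X)$. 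Formula \eqref{E:deg-sub} identifies the left-hand side with $\deg_{X[W]}(I)$, and since non-trivial biconnected subcurves $Y\subseteq X$ correspond bijectively to non-trivial biconnected subsets $W=V(Y)\subseteq V(\Gamma_X)$ with $\frak n_Y=\frak n_{V(Y)}$ (Remark~\ref{R:Vstab-cur}), this is precisely the condition $\deg_Y(I)\geq\frak n_Y$ for all such $Y$. This yields \eqref{E:Vopen1}.

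The statement is thus essentially a repackaging of Propositions~\ref{P:Bn-upper} and~\ref{P:Bn-card} via the dictionary between the poset $\bO^d(\Gamma_X)$ and the $\PIC^{\un 0}_X$-orbits of $\TF_X$, so I do not expect any real obstacle. The one point worth a remark is that the condition in \eqref{E:Vopen1}, a priori imposed on all of $\TF_X^d$, automatically forces $I$ to be simple: it forces $(G(I),D(I))\in\cP_{\frak n}\subset\bO^d_{\con}(\Gamma_X)$, hence $G(I)$ is connected and $I\in\Simp_X$ by \eqref{E:simp-I}, in agreement with $\ov J_X(\frak n)\subseteq\Simp_X^d$.
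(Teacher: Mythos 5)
Your proof is correct and follows essentially the same route as the paper's: openness and containment in $\Simp_X^d$ via Propositions \ref{P:Bn-upper} and \ref{P:Bn-card}\eqref{P:Bn-card1} together with the orbit description, and the equivalence of \eqref{E:Vopen1} and \eqref{E:Vopen2} by unwinding Definition \ref{D:stabV} through \eqref{E:deg-sub} and \eqref{E:orbit1}. Your closing remark that the inequalities automatically force $G(I)$ to be connected, hence $I$ simple, is a correct and welcome clarification of the paper's terse "the last assertion is obvious."
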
 
\begin{proof}
The equivalence between \eqref{E:Vopen1} and \eqref{E:Vopen2} follows from Definition \ref{D:stabV} and equations \eqref{E:deg-sub} and \eqref{E:orbit1}.
The fact that $\ov J_X(\frak n)$ is open and contained in $\Simp_X^d$ follows from the fact that $\cP_{\frak n}$ is an upper subset contained in $\bO_{\con}^d(\Gamma_X)$. 
The last assertion is obvious. 
\end{proof}

\begin{remark}\label{R:upp-ineq}
Using Remark \ref{sym-inequ}, it follows that if $I\in \ov J_X(\frak n)$ then 
\begin{equation}\label{E:upp-ineq}
\deg_Y(I)\leq \frak n_Y-1-|Y\cap Y^c\cap \NF(I)^c|,
\end{equation}
for any non-trivial and biconnected subcurve $Y\subset X$. 
\end{remark}

\begin{example}\label{Ex:clas-fcJ}
Let $\phi$ be a  \emph{general numerical polarization} on $\Gamma_X$ of degree $d\in \Z$, as in Example \ref{Ex:phi}. 
Then the  V-open subset associated to the general V-stability condition $\frak n(\phi)$ determined by $\phi$ (as in \eqref{E:V-q}) is the 
\textbf{fine classical compactified Jacobian} associated to $\phi$:  
\begin{equation}\label{E:fcJphi}
\ov J_X(\frak n(\phi)):=\ov J_X(\phi)=\left\{I \in \TF_X^d\: : \:  \deg_Y(I)\geq \phi_Y-\frac{|Y\cap Y^c|}{2}:= \phi_{V(Y)}-\frac{|Y\cap Y^c|}{2}\right\}.
\end{equation}
Note that if $\phi$ and $\phi'$ are two general numerical polarizations such that $\frak n(\phi)=\frak n(\phi')$, or equivalently if $\phi$ and $\phi'$ belong to the same chamber 
of the hyperplane arrangement \eqref{E:arr-hyper}, then 
 $$\ov J_X(\phi)=\ov J_X(\phi').$$
 The converse of this statement will follow from Corollary \ref{C:unique-n}.
\end{example}

We now characterize the V-open subsets as those open subsets that are of (numerical) sN-type as in Definition \ref{D:opTF}, or as those fine compactified Jacobians 
that are of (numerical) sN-type or (numerical) N-type as in Definition \ref{D:spec-fcJ}.

\begin{thm}\label{T:fcJ-vine}
Let $U$ be an open subset of $\Simp_X^d$. Then the following conditions for $U$ are equivalent:
\begin{enumerate}
\item \label{T:fcJ-vine1} $U$ is a V-open subset;
\item \label{T:fcJ-vine2} $U$ is an open subset of sN-type;
\item \label{T:fcJ-vine3} $U$ is an open subset of numerical sN-type;
\item \label{T:fcJ-vine4} $U$ is a fine compactified Jacobian of sN-type;
\item \label{T:fcJ-vine5} $U$ is a fine compactified Jacobian of numerical sN-type;
\item \label{T:fcJ-vine6} $U$ is a fine compactified Jacobian of N-type;
\item \label{T:fcJ-vine7} $U$ is a fine compactified Jacobian of numerical N-type.
\end{enumerate} 
\end{thm}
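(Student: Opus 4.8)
The plan is to prove Theorem~\ref{T:fcJ-vine} by assembling the combinatorial machinery of Section~1 (principally Theorem~\ref{T:main-comb} and Theorem~\ref{T:BD-Ntype}) with the geometric dictionary of Section~\ref{Sec:sheaves} (Fact~\ref{F:orbits}, Definition~\ref{D:opTF}) and the lower bound of Theorem~\ref{T:bound-orb}. The key observation is that all seven conditions can be translated, via the correspondence $U=U(\cP)$ between open subsets of $\Simp_X^d$ and upper subsets $\cP\subset \bO_{\con}^d(\Gamma_X)$, into statements about $\cP$: condition \eqref{T:fcJ-vine1} says $\cP=\cP_{\frak n}$ is a V-subset; conditions \eqref{T:fcJ-vine2}--\eqref{T:fcJ-vine3} say $\cP$ is an upper subset of (numerical) sN-type; and once we know $U$ is a fine compactified Jacobian, conditions \eqref{T:fcJ-vine4}--\eqref{T:fcJ-vine7} say $\cP=\cP(\ov J_X^d)$ has the corresponding property in the sense of Definition~\ref{D:spec-fcJ}, which is literally the same as Definition~\ref{D:upper-sN} for the associated upper subset. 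Thus the backbone of the argument is Theorem~\ref{T:main-comb}\eqref{T:main-comb2}, which already proves the equivalence of \textbf{(A)} V-subset, \textbf{(B)} upper subset of sN-type, \textbf{(C)} upper subset of numerical sN-type, \textbf{(D)} BD-set of N-type, \textbf{(E)} BD-set of numerical N-type, all as conditions on $\cP$. The only genuinely new content is bridging the gap between ``$U$ is an open subset'' and ``$U$ is a fine compactified Jacobian'', i.e. showing that these $U$'s are automatically connected and proper.

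Concretely, I would proceed as follows. First, $\eqref{T:fcJ-vine1}\Leftrightarrow\eqref{T:fcJ-vine2}\Leftrightarrow\eqref{T:fcJ-vine3}$: by Definition~\ref{D:opTF} and Lemma-Definition~\ref{D:V-open}, $U$ is a V-open subset iff $\cP(U)=\cP_{\frak n}$ for some $\frak n$, and $U$ is of (numerical) sN-type iff $\cP(U)$ is an upper subset of (numerical) sN-type; these three statements about $\cP(U)$ are equivalent by Theorem~\ref{T:main-comb}\eqref{T:main-comb2}. Next, the crucial step: \emph{any V-open subset is a fine compactified Jacobian}. Openness and containment in $\Simp_X^d$ are Lemma-Definition~\ref{D:V-open}. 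For connectedness, I would argue using that $\cP_{\frak n}$ contains the unique stable divisor on every spanning tree (Lemma~\ref{L:STnonem}) together with the fact that $\cP_{\frak n}=\BD_{I_{\frak n}}$ is a BD-set (Theorem~\ref{T:main-comb}\eqref{T:main-comb1}): the corresponding orbit closures $\ov{\TF(\Gamma_X,D)}$ all meet along lower-dimensional strata, and the BD-structure forces the union to be connected (alternatively, since $\PIC_X^d\subset U$ is dense and $U$ is a union of $\PIC_X^{\un 0}$-orbits each in the closure of some top orbit, connectedness of $U$ reduces to connectedness of the top stratum $\coprod_{(\Gamma_X,D)\in\cP_{\frak n}}\TF(\Gamma_X,D)$ modulo the gluing along codimension-one strata, and the BD-picture of break divisors supplies the needed paths). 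For properness I would use the valuative criterion: given $B=\Spec R$ a DVR and $\I_\eta\in U(K)=\PIC_{X_K}^d$ or more generally $\I_\eta$ with $(G(\I_\eta),D(\I_\eta))\in\cP_{\frak n}$, Proposition~\ref{P:non-sepa}\eqref{P:non-sepa1} produces \emph{some} $\I\in\TF_X(B)$ with that generic fiber, and the action of $C^0(\Gamma_X,\Z)$ via Proposition~\ref{P:non-sepa}\eqref{P:non-sepa2} lets me adjust the orientation $\O(\I)$; the V-stability inequalities (Definition~\ref{D:stabV}, Remark~\ref{sym-inequ}) then single out a unique orientation making $(G(\I_o),D(\I_o))=(G(\I_\eta)\setminus\supp\O(\I), D(\I_\eta)-\D(\O(\I)))$ land in $\cP_{\frak n}$, giving existence; uniqueness (separatedness) follows because Proposition~\ref{P:Bn-card}\eqref{P:Bn-card2} shows the map $\cP_{\frak n}(G)\to\Pic(G)$ is injective, so two such limits with the same generic fiber would have conjugate combinatorial types under $C^0$, hence equal, and then equal as sheaves. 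Finiteness of type is Proposition~\ref{P:fcJ-orb} applied once we know $U$ is separated and the orbit set $\cP_{\frak n}$ is finite. This establishes $\eqref{T:fcJ-vine1}\Rightarrow U$ is a fine compactified Jacobian.

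Once $U$ is known to be a fine compactified Jacobian, the remaining equivalences are formal. By Definition~\ref{D:spec-fcJ}, $U$ is of (numerical) sN-type or (numerical) N-type iff $\cP(U)=\cP(\ov J_X^d)$ has the corresponding property as an upper subset (Definition~\ref{D:upper-sN}); combined with the displayed implications right after Definition~\ref{D:spec-fcJ} one gets $\eqref{T:fcJ-vine4}\Rightarrow\eqref{T:fcJ-vine5}\Rightarrow\eqref{T:fcJ-vine7}$ and $\eqref{T:fcJ-vine4}\Rightarrow\eqref{T:fcJ-vine6}\Rightarrow\eqref{T:fcJ-vine7}$, while $\eqref{T:fcJ-vine4}\Leftrightarrow\eqref{T:fcJ-vine2}$ is immediate from the definitions. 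So I would close the cycle by proving $\eqref{T:fcJ-vine7}\Rightarrow\eqref{T:fcJ-vine1}$: assume $U$ is a fine compactified Jacobian of numerical N-type, i.e. $|\cP(U)(\Gamma_X)|=c(\Gamma_X)$. By Theorem~\ref{T:bound-orb}, $\cP(U)$ contains a BD-set $\BD_I$ (choose, for each spanning tree $T$, an orbit $\TF(T,I(T))\subset U$; such exists by the Reduction-1 argument inside the proof of Theorem~\ref{T:bound-orb}, and $\BD_I\subseteq\cP(U)$ by Corollary~\ref{C:fcJ-orb2}). Then $c(\Gamma_X)=|\cP(U)(\Gamma_X)|\geq|\BD_I(\Gamma_X)|\geq c(\Gamma_X)$ by Theorem~\ref{T:BD-Ntype}\eqref{T:BD-Ntype1}, forcing equality, so $\BD_I$ is of numerical N-type; by Theorem~\ref{T:main-comb}\eqref{T:main-comb2} it is an upper subset of sN-type, hence $U(\BD_I)$ is a V-open subset and (by the already-proved implication) a fine compactified Jacobian, so $U(\BD_I)\hookrightarrow U$ is open, closed and between proper schemes, hence an equality by connectedness of $U$; therefore $\cP(U)=\BD_I$ is a V-subset and $U$ is a V-open subset. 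The main obstacle I anticipate is the self-contained verification that a V-open subset is proper and connected (the valuative-criterion argument and the connectedness of the union of top strata), since this is the one place where the combinatorial input does not immediately hand over the geometric conclusion; everything else is bookkeeping through the established correspondences.
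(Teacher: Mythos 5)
Your overall architecture matches the paper in two of its three pieces: the equivalences $\eqref{T:fcJ-vine1}\Leftrightarrow\eqref{T:fcJ-vine2}\Leftrightarrow\eqref{T:fcJ-vine3}$ via Theorem \ref{T:main-comb}, and the closing implication $\eqref{T:fcJ-vine7}\Rightarrow\eqref{T:fcJ-vine1}$ via Theorem \ref{T:bound-orb}, Corollary \ref{C:fcJ-orb2}, Theorem \ref{T:BD-Ntype} and Theorem \ref{T:main-comb}, are exactly the paper's arguments. The genuine gap is in the bridge you single out yourself as the ``only genuinely new content'': that a V-open (equivalently sN-type) subset is proper, connected, hence a fine compactified Jacobian. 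The paper does not prove this by a direct valuative-criterion argument; it invokes Corollary \ref{C:PT-cor}, i.e.\ Theorem \ref{T:PT-thm} (Pagani--Tommasi, \cite[Lemma 6.5]{PT2}), which is an external input. Your substitute -- ``the V-stability inequalities single out a unique orientation making $(G(\I_o),D(\I_o))$ land in $\cP_{\frak n}$'' -- is precisely the nontrivial semistable-reduction statement: given an arbitrary $1$-cochain $\gamma(\I)$ on $G(\I_\eta)$, one must exhibit $g\in C^0(\Gamma_X,\Z)$ for which the degeneration type of $g(\I)$ (computed by Lemma \ref{L:Io-Ieta} from $\gamma(\I)+\delta(g)$) is $\frak n$-stable. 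No argument is given, and none of the results in the paper (Propositions \ref{P:non-sepa}, \ref{P:Bn-card}, \ref{P:Bn-upper}, Lemma \ref{L:STnonem}) yields it formally; supplying it amounts to reproving \cite[Lemma 6.5]{PT2}.

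The separatedness half of your argument is also flawed as stated. Two extensions $\I$ and $\I'=g(\I)$ of the same generic fiber can degenerate to strata on \emph{different} spanning subgraphs $G(\I_o)\neq G(\I'_o)$, and their multidegrees differ by $\D(\O(\gamma+\delta g))-\D(\O(\gamma))$, which is \emph{not} in general an element of $\Im(\Delta)$ (it is not $\Delta(g)$: indegree divisors of orientations are not Laplacians of $0$-cochains). So the injectivity of $\cP_{\frak n}(G)\to\Pic^{d-|S|}(G)$ for a fixed $G$ (Proposition \ref{P:Bn-card}\eqref{P:Bn-card2}) does not directly force the two limits, or the two families, to coincide; one needs a genuine combinatorial uniqueness argument across strata. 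Likewise your connectedness claim (``the BD-picture supplies the needed paths'') is only a sketch. Unless you carry out these three verifications in full -- existence of a stable limit in the $C^0(\Gamma_X,\Z)$-orbit, uniqueness of that limit, and connectedness of $U(\cP_{\frak n})$ -- the step $\eqref{T:fcJ-vine2}\Rightarrow\eqref{T:fcJ-vine4}$ must be routed through Theorem \ref{T:PT-thm} and Corollary \ref{C:PT-cor}, as the paper does.
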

It follows from the above Theorem that the V-open subset $\ov J_X(\frak n)$ is a fine compactified Jacobian of sN-type, that we will call the \textbf{fine V-compactified Jacobian} associated to $\frak n$.  By definition, the poset of orbits of $\ov J_X(\frak n)$ is 
$$
\cP(\ov J_X(\frak n))=\cP_{\frak n}. 
$$
The fact that fine classical compactified Jacobian are of sN-type (and hence also of numerical sN-type, of N-type and of numerical N-type) was proved  in  \cite[Thm. 5.1]{MV}. 

\begin{proof}
The equivalences $\eqref{T:fcJ-vine1}\Leftrightarrow \eqref{T:fcJ-vine2} \Leftrightarrow \eqref{T:fcJ-vine3}$ follows from Theorem \ref{T:main-comb}. 

The implication  $\eqref{T:fcJ-vine4}\Rightarrow \eqref{T:fcJ-vine2}$ is obvious and the converse $\eqref{T:fcJ-vine2}\Rightarrow \eqref{T:fcJ-vine4}$ follows from Corollary \ref{C:PT-cor}.

The implications $\eqref{T:fcJ-vine4}\Rightarrow \eqref{T:fcJ-vine5} \Rightarrow \eqref{T:fcJ-vine7}$ and $\eqref{T:fcJ-vine4}\Rightarrow \eqref{T:fcJ-vine6} \Rightarrow \eqref{T:fcJ-vine7}$
are obvious from the Definition \ref{D:spec-fcJ}.

It remains to prove the implication $\eqref{T:fcJ-vine7}\Rightarrow \eqref{T:fcJ-vine1}$. Let $\ov J_X^d$ be a degree-$d$ fine compactified Jacobian of numerical N-type. Theorem \ref{T:bound-orb}
(together with Corollary \ref{C:fcJ-orb1}) implies  that for any spanning tree $T$ of $\Gamma_X$, there exists $D_T\in \Div^{d-|E(T)|^c}(\Gamma_X)$ such that 
$$
\TF(T,D_T)\subset \ov J_X^d.
$$
Since $|E(T)^c|=g(\Gamma_X)$ for any spanning tree $T$, the function 
$$
\begin{aligned}
I:\ST(\Gamma_X) & \longrightarrow \Div^{d-g(\Gamma_X)}(\Gamma_X)\\
T & \mapsto I(T):=D_T
\end{aligned}
$$
is a tree function of degree $d$ on $\Gamma_X$. By Definition  \ref{D:BD} and Corollary \ref{C:fcJ-orb2}, the BD-set $\BD_I$ associated to $I$ satisfies 
\begin{equation}\label{E:incl-pos}
\BD_I\subseteq \cP(\ov J_X^d)\subset \bO_{\con}^d(\Gamma_X).
\end{equation}
This inclusion, together with the hypothesis that $\cP(\ov J_X^d)$ is of numerical N-type and Theorem \ref{T:BD-Ntype}\eqref{T:BD-Ntype1}, implies that $\BD_I$ is of numerical N-type. 
Therefore we can apply Theorem \ref{T:main-comb} in order to get a general V-stability condition $\frak n$ of degree $d$ on $\Gamma_X$   such that $\cP_{\frak n}=\BD_I$. 
The inclusion \eqref{E:incl-pos} of posets implies that we have an inclusion of open subsets of $\TF_X^d$
\begin{equation*}\label{E:inc-Jac}
\ov J_X(\frak n)\subseteq \ov J_X^d.
\end{equation*}
However, since $\ov J_X^d$ is proper and connected by assumption, and $\ov J_X(\frak n)$ is proper by the implication $\eqref{T:fcJ-vine1}\Rightarrow \eqref{T:fcJ-vine4}$ which was shown before, we infer that  
$$\ov J_X(\frak n)=\ov J_X^d,$$
which concludes the proof. 
\end{proof}

We now deduce some consequences from the above Theorem. 

First of all, fine V-compactified Jacobians of $X$ can be characterized as the ones having the smallest number of irreducible components, namely the complexity $c(\Gamma_X)$ of the dual graph $\Gamma_X$. 

\begin{cor}\label{C:sm-VfcJ}
Let $\ov J_X$ be a fine compactified Jacobian. The following conditions are equivalent:
 \begin{enumerate}[(i)]
\item $\ov J_X$ is a fine V-compactified Jacobian;
\item $\ov J_X$ has $c(\Gamma_X)$ irreducible components;
\item the smooth locus of  $\ov J_X$ of $X$ is  isomorphic to $c(\Gamma_X)$ copies of the generalized Jacobian $\PIC^{\un 0}_X$ of $X$.
\end{enumerate}
In particular, any two fine V-compactified Jacobians of $X$ are birational. 
\end{cor}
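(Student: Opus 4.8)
The plan is to deduce Corollary \ref{C:sm-VfcJ} from the structural results already established, principally Theorem \ref{T:fcJ-vine}, Proposition \ref{P:sm-irr}, and Theorem \ref{T:bound-orb}. The three conditions concern, respectively, being a fine V-compactified Jacobian, having exactly $c(\Gamma_X)$ irreducible components, and having smooth locus a disjoint union of exactly $c(\Gamma_X)$ copies of $\PIC^{\un 0}_X$. I would organize the proof as a cycle of implications, or better, show that conditions (ii) and (iii) are each literally a reformulation of "$\ov J_X$ is of numerical N-type" and then invoke Theorem \ref{T:fcJ-vine}.

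First I would recall, via Proposition \ref{P:sm-irr}, that the irreducible components of $\ov J_X$ are exactly the closures $\ov{\TF(\Gamma_X,D)}$ for $(\Gamma_X,D)\in \cP(\ov J_X)$, and that the smooth locus is the disjoint union of the strata $\TF(\Gamma_X,D)$ over the same index set. Since each $\TF(\Gamma_X,D)$ is a torsor under $\PIC^{\un 0}_X$ (as used in the proof of Corollary \ref{C:irr-fcJ}), both the number of irreducible components and the number of connected components of $(\ov J_X)_{\sm}$ equal the cardinality of
$$\cP(\ov J_X)(\Gamma_X)=\{D\in \Div^d(\Gamma_X)\: :\: \TF(\Gamma_X,D)\subset \ov J_X\}.$$
Therefore (ii) $\Leftrightarrow$ (iii) $\Leftrightarrow$ $|\cP(\ov J_X)(\Gamma_X)|=c(\Gamma_X)$, which by Definition \ref{D:spec-fcJ} and Definition \ref{D:upper-sN}(4) is precisely the statement that $\ov J_X$ is of numerical N-type. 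Then the equivalence with (i) is exactly the equivalence $\eqref{T:fcJ-vine7}\Leftrightarrow \eqref{T:fcJ-vine1}$ of Theorem \ref{T:fcJ-vine}. For the "in particular" clause: by (iii) every fine V-compactified Jacobian contains a dense open subscheme isomorphic to $c(\Gamma_X)$ copies of $\PIC^{\un 0}_X$; since $\ov J_X$ is reduced (Lemma \ref{L:prop-fcJ}) and irreducible on each component, any two fine V-compactified Jacobians of $X$ share isomorphic dense open subsets, hence are birational.

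There is essentially no obstacle here: the corollary is a bookkeeping consequence of the already-proven Theorem \ref{T:fcJ-vine} together with the explicit description of the smooth locus and irreducible components in Proposition \ref{P:sm-irr}. The only mild subtlety is making sure that the bound in Theorem \ref{T:bound-orb} (which gives $|\cP(\ov J_X)(\Gamma_X)|\geq c(\Gamma_X)$ unconditionally) is used only to frame "numerical N-type" as an equality case, and that the torsor structure of the top-dimensional strata is invoked to convert "cardinality of $\cP(\ov J_X)(\Gamma_X)$" into "number of components"; both are already in hand. So the proof will be short.

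\begin{proof}
By Proposition \ref{P:sm-irr}, the irreducible components of $\ov J_X$ are the closures $\ov{\TF(\Gamma_X,D)}$ with $(\Gamma_X,D)\in \cP(\ov J_X)$, and
$$(\ov J_X)_{\sm}=\coprod_{(\Gamma_X,D)\in \cP(\ov J_X)}\TF(\Gamma_X,D).$$
Since each $\TF(\Gamma_X,D)$ is a torsor under $\PIC^{\un 0}_X$ (see the proof of Corollary \ref{C:irr-fcJ}), both the number of irreducible components of $\ov J_X$ and the number of copies of $\PIC^{\un 0}_X$ appearing in $(\ov J_X)_{\sm}$ are equal to $|\cP(\ov J_X)(\Gamma_X)|$, where $\cP(\ov J_X)(\Gamma_X)=\{D\in \Div^d(\Gamma_X)\: :\: \TF(\Gamma_X,D)\subset \ov J_X\}$. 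Hence conditions (ii) and (iii) are each equivalent to the equality $|\cP(\ov J_X)(\Gamma_X)|=c(\Gamma_X)$, which by Definition \ref{D:spec-fcJ} and Definition \ref{D:upper-sN}(4) means exactly that $\ov J_X$ is of numerical N-type. By the equivalence $\eqref{T:fcJ-vine7}\Leftrightarrow \eqref{T:fcJ-vine1}$ of Theorem \ref{T:fcJ-vine}, this holds if and only if $\ov J_X$ is a fine V-compactified Jacobian, which is condition (i).

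For the last assertion, let $\ov J_X$ and $\ov J'_X$ be two fine V-compactified Jacobians of $X$. By condition (iii), each of them contains a dense open subscheme isomorphic to $c(\Gamma_X)$ copies of $\PIC^{\un 0}_X$; since $\ov J_X$ and $\ov J'_X$ are reduced by Lemma \ref{L:prop-fcJ}, they have isomorphic dense open subschemes, hence they are birational.
\end{proof}
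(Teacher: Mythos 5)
Your proof is correct and follows essentially the same route as the paper, which deduces the corollary from Proposition \ref{P:sm-irr} together with the characterization of fine V-compactified Jacobians as the fine compactified Jacobians of numerical N-type in Theorem \ref{T:fcJ-vine}; you have simply spelled out the bookkeeping (the torsor structure of the strata $\TF(\Gamma_X,D)$ and the density of the smooth locus for the birationality claim) that the paper leaves implicit.
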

The fact that  fine classical compactified Jacobians have $c(\Gamma_X)$ irreducible components was shown in \cite[Cor. 14.4]{OS}.
\begin{proof}
This follows  from Proposition \ref{P:sm-irr} using that fine V-compactified Jacobians are exactly the fine compactified Jacobians  of numerical N-type by Theorem \ref{T:fcJ-vine}. 
\end{proof}

Another corollary of the above Theorem is that a fine V-compactified Jacobian comes from a unique general V-stability condition and that the equivalence by translation on general V-stability conditions (as in Definition \ref{D:eq-Vstab}) correspond exactly to the equivalence by translation on their associated fine V-compactified Jacobians (as in Definition \ref{D:eq-fcJ}).

\begin{cor}\label{C:unique-n}
Suppose that $\frak n, \frak n'$ are two general V-stability conditions on $\Gamma_X$  and let $L$ be a line bundle on $X$ of multidegree $D(L)=D$.
Then we have that 
$$
\ov J_X(\frak n)\otimes L=\ov J_X(\frak n') \Leftrightarrow \frak n+D=\frak n'.
$$
In particular, $\ov J_X(\frak n)=\ov J_X(\frak n')$ if and only if $\frak n=\frak n'$. 
\end{cor}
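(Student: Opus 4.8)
The plan is to reduce the statement for nodal curves to the purely combinatorial statement about general V-stability conditions on the dual graph $\Gamma_X$ that was already established. Specifically, I would first translate the equality $\ov J_X(\frak n)\otimes L=\ov J_X(\frak n')$ into an equality of posets of orbits, using that a fine V-compactified Jacobian is determined by its poset of orbits (Proposition \ref{P:fcJ-orb}, together with the fact that $\ov J_X(\frak n)=U(\cP_{\frak n})$ from Lemma-Definition \ref{D:V-open}). So the key first step is to compute the effect of translation by $L$ on the stratification into $\PIC_X^{\un 0}$-orbits.

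The main step is the identity $\cP(\ov J_X(\frak n)\otimes L)=\cP_{\frak n}+D$, where $\frak n+D$ is the general V-stability condition of Definition \ref{D:eq-Vstab}. For this I would use the explicit description $\ov J_X(\frak n)=\{I\in \TF_X^d : \deg_Y(I)\geq \frak n_Y\}$ from \eqref{E:Vopen1}: since $\deg_Y(I\otimes L)=\deg_Y(I)+\deg_Y(L)=\deg_Y(I)+D_{V(Y)}+e_{\NF(I)}(V(Y))$ by \eqref{E:deg-sub} (and $\deg_Y(L)=D_{V(Y)}+\val(V(Y),V(Y)^c)$ since $L$ is locally free), the condition $\deg_Y(I\otimes L)\geq \frak n'_Y$ for all biconnected non-trivial $Y$ is equivalent to $\deg_Y(I)\geq \frak n'_Y-D_{V(Y)}$. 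Hence $\ov J_X(\frak n)\otimes L=\ov J_X(\frak n'-D)$, and the left-hand side equals $\ov J_X(\frak n')$ if and only if $\ov J_X(\frak n'-D)=\ov J_X(\frak n')$. Wait — more directly: $\ov J_X(\frak n)\otimes L=\ov J_X(\frak n+D)$ by this computation, so the asserted equivalence becomes $\ov J_X(\frak n+D)=\ov J_X(\frak n')\Leftrightarrow \frak n+D=\frak n'$.

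This last equivalence is then exactly the statement that a general V-stability condition is uniquely determined by its fine V-compactified Jacobian, which follows from Corollary \ref{C:unique-n} of the first section: $\cP_{\frak n_1}=\cP_{\frak n_2}\Rightarrow \frak n_1=\frak n_2$, combined with the fact (Proposition \ref{P:fcJ-orb} / Definition \ref{D:orb-fcJ}) that $\ov J_X(\frak n_1)=\ov J_X(\frak n_2)$ implies $\cP(\ov J_X(\frak n_1))=\cP(\ov J_X(\frak n_2))$, i.e. $\cP_{\frak n_1}=\cP_{\frak n_2}$. Conversely $\frak n+D=\frak n'$ trivially gives equality of the associated V-subsets and hence of the Jacobians. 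The final sentence of the corollary is the special case $D=0$ (equivalently $L=\O_X$, or more precisely $L$ of multidegree zero, for which the argument shows $\ov J_X(\frak n)=\ov J_X(\frak n')\Leftrightarrow \frak n=\frak n'$).

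The only genuinely delicate point I anticipate is bookkeeping with the degree of $L$ on a subcurve: one must be careful that $L$ is an honest line bundle so that $\deg_Y(L)=D_{V(Y)}+\val(V(Y),V(Y)^c)$ (no correction term from a non-free locus), and that this is consistent with the normalization $|\frak n+D|=|\frak n|+|D|=d+\deg L$ so that both sides of the claimed equality live in $\TF_X^{d+\deg L}$. Once this is set up correctly the rest is formal, so I would keep that computation explicit and relegate everything else to citations of the results proved above.

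\begin{proof}
Let $d=|\frak n|$ and $e=\deg L=|D|$, so that $\ov J_X(\frak n)\subseteq \TF_X^d$ while $\ov J_X(\frak n)\otimes L\subseteq \TF_X^{d+e}$, and $|\frak n+D|=d+e$.

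We first check that $\ov J_X(\frak n)\otimes L=\ov J_X(\frak n+D)$. Indeed, for any biconnected and non-trivial subcurve $Y\subset X$ and any $I\in \TF_X^d$, using \eqref{E:deg-sub} and the fact that $L$ is a line bundle we have
$$
\deg_Y(I\otimes L)=\deg_Y(I)+\deg_Y(L)=\deg_Y(I)+D_{V(Y)}.
$$
Hence, by the description \eqref{E:Vopen1}, we get
$$
\ov J_X(\frak n)\otimes L=\{I\otimes L\::\: \deg_Y(I)\geq \frak n_Y \text{ for all biconnected non-trivial } Y\}=\{J\in \TF_X^{d+e}\::\: \deg_Y(J)\geq \frak n_Y+D_{V(Y)}\}=\ov J_X(\frak n+D),
$$
where in the last equality we used Remark \ref{R:Vstab-cur} and Definition \ref{D:eq-Vstab} to identify $(\frak n+D)_Y=\frak n_Y+D_{V(Y)}$.

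It therefore suffices to show that, for two general V-stability conditions $\frak m,\frak n'$ on $\Gamma_X$, we have $\ov J_X(\frak m)=\ov J_X(\frak n')$ if and only if $\frak m=\frak n'$. The implication $\frak m=\frak n'\Rightarrow \ov J_X(\frak m)=\ov J_X(\frak n')$ is trivial. Conversely, assume $\ov J_X(\frak m)=\ov J_X(\frak n')$. By Lemma-Definition \ref{D:V-open} and Proposition \ref{P:fcJ-orb}, a fine V-compactified Jacobian is the union of its $\PIC_X^{\un 0}$-orbits and $\ov J_X(\frak m)=U(\cP_{\frak m})$, $\ov J_X(\frak n')=U(\cP_{\frak n'})$; since the $\PIC_X^{\un 0}$-orbits $\TF_X(G,D)$ are pairwise disjoint (Fact \ref{F:orbits}), the equality $U(\cP_{\frak m})=U(\cP_{\frak n'})$ forces $\cP_{\frak m}=\cP_{\frak n'}$. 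By Corollary \ref{C:unique-n} of Section \ref{Sec:not-gr} (uniqueness of a general V-stability condition from its V-subset) we conclude $\frak m=\frak n'$.

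Combining the two steps, $\ov J_X(\frak n)\otimes L=\ov J_X(\frak n')$ is equivalent to $\ov J_X(\frak n+D)=\ov J_X(\frak n')$, which is equivalent to $\frak n+D=\frak n'$. The last assertion is the special case in which $L$ has multidegree $D=0$ (for instance $L=\O_X$), for which the statement reads $\ov J_X(\frak n)=\ov J_X(\frak n')\Leftrightarrow \frak n=\frak n'$.
\end{proof}
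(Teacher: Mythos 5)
Your proof is correct and follows essentially the same route as the paper's: the translation identity $\ov J_X(\frak n)\otimes L=\ov J_X(\frak n+D)$ (which the paper simply calls an obvious fact) combined with the reduction, via equality of the posets of $\PIC_X^{\un 0}$-orbits, to the combinatorial uniqueness statement $\cP_{\frak n}=\cP_{\frak n'}\Rightarrow \frak n=\frak n'$ proved in Section 1. The only slip is in your informal plan, where the formula $\deg_Y(L)=D_{V(Y)}+\val(V(Y),V(Y)^c)$ is wrong (a line bundle has no valence correction term), but the proof itself uses the correct identity $\deg_Y(L)=D_{V(Y)}$, so nothing is affected.
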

\begin{proof}
Let us first prove the last statement. If $\ov J_X(\frak n)=\ov J_X(\frak n')$ then we have that 
$$\cP_{\frak n}=\cP(\ov J_X(\frak n))=\cP(\ov J_X(\frak n'))=\cP_{\frak n'}.$$
We now conclude that $\frak n=\frak n'$ by Corollary \ref{C:unique-n}.

The general case follows from this special case together with the obvious fact that $\ov J_X(\frak n)\otimes L=\ov J_X(\frak n+D)$. 
\end{proof}
Note that the corresponding fact for general numerical polarizations is false: if $\phi$ and $\phi'$ are general polarizations on $\Gamma_X$, then $\ov J_X(\phi)\otimes L=\ov J_X(\phi')$ if and only if 
$\phi+D$ and $\phi'$ belong to the same chamber of the hyperplane arrangement \eqref{E:arr-hyper}   (which is equivalent to requiring that $\frak n(\phi+D)=\frak n(\phi')$). 

\begin{cor}\label{C:finite-eq}
Let $X$ be a connected nodal curve. Then there are a finite number of equivalence classes  of fine V-compactified Jacobians up to translation. 
\end{cor}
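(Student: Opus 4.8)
The plan is to reduce this statement directly to its combinatorial counterpart, which has already been established. Recall that Corollary~\ref{C:finite-eq} asserts that a connected nodal curve $X$ has only finitely many equivalence classes of fine V-compactified Jacobians up to translation.

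First I would recall, from Corollary~\ref{C:unique-n} (the second one, in Section~\ref{sec:VfcJ}), that for general V-stability conditions $\frak n, \frak n'$ on $\Gamma_X$ and a line bundle $L$ on $X$ of multidegree $D(L)=D$, one has $\ov J_X(\frak n)\otimes L=\ov J_X(\frak n')$ if and only if $\frak n+D=\frak n'$. In particular, two general V-stability conditions give rise to fine V-compactified Jacobians that are equivalent by translation precisely when the V-stability conditions themselves are equivalent by translation in the sense of Definition~\ref{D:eq-Vstab}: indeed, any line bundle $L$ on $X$ realizes an arbitrary divisor $D=D(L)\in \Div(\Gamma_X)$ as its multidegree, so the translation action of $\Pic(X)$ on fine V-compactified Jacobians matches the translation action of $\Div(\Gamma_X)$ on V-stability conditions.

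Next I would invoke Theorem~\ref{T:fcJ-vine}, which tells us that every fine V-compactified Jacobian of $X$ is of the form $\ov J_X(\frak n)$ for some general V-stability condition $\frak n$ on $\Gamma_X$; combined with the first Corollary~\ref{C:unique-n} of Section~1, the assignment $\frak n\mapsto \ov J_X(\frak n)$ is a bijection between general V-stability conditions on $\Gamma_X$ of degree $d$ and fine V-compactified Jacobians of $X$ of degree $d$. By the previous paragraph this bijection descends to a bijection between equivalence classes by translation on each side. Therefore the number of equivalence classes of fine V-compactified Jacobians of $X$ up to translation equals the number of equivalence classes of general V-stability conditions on $\Gamma_X$ up to translation, which is finite by Corollary~\ref{C:fin-msa}.

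This argument is essentially a transport of structure, so there is no serious obstacle: the only point requiring a word of care is to note that ``up to translation'' on the geometric side is defined via tensoring by \emph{line bundles} on $X$ (Definition~\ref{D:eq-fcJ}), and one must observe that the multidegree map $\Pic(X)\to \Div(\Gamma_X)$ is surjective, so that the two notions of translation equivalence correspond exactly under the bijection $\frak n\mapsto \ov J_X(\frak n)$. Granting that, the finiteness is immediate from Corollary~\ref{C:fin-msa}.

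\begin{proof}
By Theorem~\ref{T:fcJ-vine} and Corollary~\ref{C:unique-n} (the uniqueness statement in Section~1, see also the first part of Section~2's Corollary~\ref{C:unique-n}), the map $\frak n\mapsto \ov J_X(\frak n)$ is a bijection between the set of general V-stability conditions on $\Gamma_X$ of degree $d$ and the set of fine V-compactified Jacobians of $X$ of degree $d$. Moreover, by Corollary~\ref{C:unique-n} (Section~2), for any line bundle $L$ on $X$ with $D(L)=D$ we have $\ov J_X(\frak n)\otimes L=\ov J_X(\frak n+D)$; since the multidegree map $\Pic(X)\to \Div(\Gamma_X)$ is surjective, two fine V-compactified Jacobians $\ov J_X(\frak n)$ and $\ov J_X(\frak n')$ are equivalent by translation (in the sense of Definition~\ref{D:eq-fcJ}) if and only if $\frak n$ and $\frak n'$ are equivalent by translation (in the sense of Definition~\ref{D:eq-Vstab}). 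Hence the bijection $\frak n\mapsto \ov J_X(\frak n)$ induces a bijection between equivalence classes of general V-stability conditions on $\Gamma_X$ up to translation and equivalence classes of fine V-compactified Jacobians of $X$ up to translation. The former set is finite by Corollary~\ref{C:fin-msa}, so the latter is finite as well.
\end{proof}
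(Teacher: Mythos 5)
Your proof is correct and follows essentially the same route as the paper, which deduces the finiteness directly from Corollary \ref{C:unique-n} (translation equivalence of fine V-compactified Jacobians corresponds exactly to translation equivalence of V-stability conditions) together with Corollary \ref{C:fin-msa}. The extra remark about surjectivity of the multidegree map $\Pic(X)\to \Div(\Gamma_X)$ is a harmless and correct elaboration of the same argument.
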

This result was shown for fine classical compactified Jacobian in \cite[Prop. 3.2]{MRV}.
\begin{proof}
This follows from Corollary \ref{C:unique-n} and Corollary \ref{C:fin-msa}. 
\end{proof}

Another consequence of Theorem \ref{T:fcJ-vine} is a formula for the class of any fine V-compactified Jacobian in the Grothendieck ring $K_0(\Var_k)$ of varieties over $k$.   

\begin{cor}\label{C:K-var}
For any general V-stability condition on the dual graph $\Gamma_X$ of a connected nodal curve $X$ over $k$, the class of $\ov J_X(\frak n)$ in $K_0(\Var_k)$ is equal to
$$
[\ov J_X(\frak n)]=c(\Gamma_X)[J^0_{X^{\nu}}]\cdot \mathbb{L}^{g(\Gamma_X)},
$$
where $J^0_{X^{\nu}}$ is the Jacobian of the normalization $X^{\nu}$ of $X$ and $\mathbb{L}=[\mathbb{A}^1]$. 
\end{cor}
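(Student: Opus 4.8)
The plan is to use the stratification of $\ov J_X(\frak n)$ into $\PIC_X^{\un 0}$-orbits, which by Theorem \ref{T:fcJ-vine} has poset $\cP_{\frak n} = \BD_{I_{\frak n}}$, together with the additivity of the class in $K_0(\Var_k)$ over locally closed stratifications. First I would recall from Fact \ref{F:orbits}\eqref{F:orbits3} that each stratum $\TF_X(\Gamma_X\setminus S, D) \cong \PIC^D_{X_S}$, where $X_S$ is the partial normalization of $X$ at the nodes in $S$; since $X_S$ is connected (as $\Gamma_X\setminus S$ is connected for every $(G,D)\in \cP_{\frak n}$ by Proposition \ref{P:Bn-card}\eqref{P:Bn-card1}) and has the same normalization $X^{\nu}$ as $X$, the group $\PIC^D_{X_S}$ is a torsor under the semiabelian variety $\PIC^{\un 0}_{X_S}$, which is an extension of $J^0_{X^{\nu}}$ by a torus $\Gm^{b_1(\Gamma_X\setminus S)}$. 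Hence in $K_0(\Var_k)$ we have $[\TF_X(\Gamma_X\setminus S,D)] = [J^0_{X^{\nu}}]\cdot (\mathbb{L}-1)^{b_1(\Gamma_X\setminus S)}$, using that the class of a torsor under such a group equals the class of the group (torsors under special groups, e.g. extensions of abelian varieties by tori, are Zariski-locally trivial).

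Next I would assemble the total class as a sum over the poset. Writing the stratification
$$
[\ov J_X(\frak n)] = \sum_{(G,D)\in \cP_{\frak n}} [\TF_X(G,D)] = [J^0_{X^{\nu}}] \cdot \sum_{(G,D)\in \cP_{\frak n}} (\mathbb{L}-1)^{b_1(G)},
$$
it remains to evaluate the combinatorial sum. Grouping by the underlying spanning subgraph $G = \Gamma_X\setminus S$, this is $\sum_{G\in \SSc(\Gamma_X)} |\cP_{\frak n}(G)| \cdot (\mathbb{L}-1)^{b_1(G)}$, and since $\cP_{\frak n}=\BD_{I_{\frak n}}$ is of sN-type by Theorem \ref{T:main-comb}, we have $|\cP_{\frak n}(G)| = c(G)$ for every connected spanning subgraph $G$ of $\Gamma_X$. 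Thus the sum becomes $\sum_{G\in \SSc(\Gamma_X)} c(G)\cdot (\mathbb{L}-1)^{b_1(G)}$, which is independent of $\frak n$. The final step is the purely combinatorial identity
$$
\sum_{G\in \SSc(\Gamma_X)} c(G)\cdot (t-1)^{b_1(G)} = c(\Gamma_X)\cdot t^{g(\Gamma_X)},
$$
which I would prove by induction on the number of edges of $\Gamma_X$ that are neither loops nor bridges, using deletion–contraction: if $e$ is such an edge, then $\SSc(\Gamma_X)$ splits according to whether $e$ is present or absent into a copy of $\SSc(\Gamma_X\setminus e)$ (absent) and, via $G\mapsto G/e$, a copy of $\SSc(\Gamma_X/e)$ (present, with $b_1$ dropping by one and $c(G) = c(G/e)$); combined with $c(\Gamma_X) = c(\Gamma_X\setminus e) + c(\Gamma_X/e)$ and $(t-1)+1 = t$ this yields the claim, the base case (only loops and bridges) being immediate since then the unique connected spanning subgraph contributing is obtained by deleting all loops and $c(\Gamma_X)=1$.

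The main obstacle is the bookkeeping in the combinatorial identity — specifically making sure the deletion–contraction recursion correctly matches up the two pieces of $\SSc(\Gamma_X)$ with $\SSc(\Gamma_X\setminus e)$ and $\SSc(\Gamma_X/e)$ while tracking $b_1$ and $c$ simultaneously; a cleaner alternative is to recognize $\sum_{G} c(G) t^{b_1(G)}$ (up to the substitution $t = u-1$ and normalization) as a specialization of the Tutte polynomial of $\Gamma_X$, since $c(G) = T_G(1,1)$ counts spanning trees, and the sum over spanning subgraphs with weights of this form is governed by a known Tutte-polynomial contraction-deletion formula — but since the rest of the paper works elementarily with $c(\cdot)$ and deletion–contraction (cf. the proof of Theorem \ref{T:BD-Ntype}), I would present the direct inductive argument. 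Everything else — the stratification, the structure of each stratum, and additivity in $K_0(\Var_k)$ — is standard and already assembled in the results cited above.
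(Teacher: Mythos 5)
Your strategy — stratify $\ov J_X(\frak n)$ by $\PIC_X^{\un 0}$-orbits, compute the class of each orbit as $[J^0_{X^{\nu}}]\cdot(\mathbb{L}-1)^{b_1(G)}$, use numerical sN-type to get $|\cP_{\frak n}(G)|=c(G)$, and finish with a combinatorial identity — is exactly the argument behind the paper's proof, which simply invokes \cite[Prop. B.2]{MSV} together with the sN-type property. Two caveats on the geometric half: semiabelian varieties are \emph{not} special groups (abelian varieties are not special), so your justification for replacing the torsor $\PIC^D_{X_S}$ by the group is off; but this is harmless, since over $k=\ov k$ the torsor has a $k$-point and is therefore trivial, and the step $[\PIC^{\un 0}_{X_S}]=[J^0_{X^{\nu}}](\mathbb{L}-1)^{b_1(G)}$ does correctly use that the torus $\Gm^{b_1(G)}$ is special, so the extension is Zariski-locally trivial over the abelian part.

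The genuine problem is in your deletion--contraction bookkeeping, which is wrong as stated. For a non-loop edge $e$ and $G\ni e$, contraction does \emph{not} drop $b_1$: both $|E|$ and $|V|$ drop by one, so $b_1(G/e)=b_1(G)$; and $c(G)\neq c(G/e)$ in general (only when $e$ is a bridge of $G$; otherwise $c(G)=c(G/e)+c(G\setminus e)$). Your base case is also off: a graph with only loops and bridges has $2^{\#\mathrm{loops}}$ connected spanning subgraphs (all bridges plus any subset of loops), not one, though the sum still evaluates to $t^{g(\Gamma)}$. The recursion that actually works is obtained by splitting $c(G)$ for $G\ni e$ as above, giving $F(\Gamma,t)=t\,F(\Gamma\setminus e,t)+F(\Gamma/e,t)$ for $F(\Gamma,t):=\sum_{G\in\SSc(\Gamma)}c(G)(t-1)^{b_1(G)}$, which with $c(\Gamma)=c(\Gamma\setminus e)+c(\Gamma/e)$, $b_1(\Gamma\setminus e)=b_1(\Gamma)-1$ and $b_1(\Gamma/e)=b_1(\Gamma)$ yields the claim by induction. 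Simpler still, avoid induction altogether by double counting pairs $(T,G)$ with $T\in\ST(\Gamma)$ and $T\subseteq G\in\SSc(\Gamma)$: for fixed $T$ the subgraphs containing it are $T\cup A$ with $A\subseteq E(\Gamma)\setminus E(T)$ and $b_1(T\cup A)=|A|$, so
\begin{equation*}
\sum_{G\in\SSc(\Gamma_X)}c(G)(\mathbb{L}-1)^{b_1(G)}=\sum_{T\in\ST(\Gamma_X)}\ \sum_{A\subseteq E(\Gamma_X)\setminus E(T)}(\mathbb{L}-1)^{|A|}=c(\Gamma_X)\,\mathbb{L}^{g(\Gamma_X)}.
\end{equation*}
With that replacement your proof is complete and coincides in substance with the cited argument of \cite{MSV}.
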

This is proved for fine classical compactified Jacobian in \cite[Prop. B.2]{MSV}.
\begin{proof}
This is proved as in \cite[Appendix B]{MSV}, using that $\ov J_X(\frak n)$ is of sN-type.
\end{proof}




Let us now give the example of two classes of nodal curves for which we know a description of all fine compactified Jacobians.

\begin{example}\label{Ex:vine}
Let $X$ be a \emph{vine curve of type $t$}, i.e. $X$ consists of two smooth curves $C_1$ and $C_2$ meeting in $t$ nodes. A general V-stability on $\Gamma_X$ of degree $d$ consists of two integer numbers 
$\frak n=(\frak n_1:=\frak n_{V(C_1)},\frak n_2:=\frak n_{V(C_2)})$ such that $\frak n_1+\frak n_2=d+1-t$. The fine V-compactified Jacobian $\ov J_X(\frak n)$ associated to $\frak n$ is equal to 
\begin{equation}\label{E:fcJ-vine}
\ov J_X(\frak n):=\{I\in \TF_X^d\: : \: \frak n_1\leq \deg_{C_1}(I) \text{ and } \frak n_2\leq \deg_{C_2}(I)\}.
\end{equation}
It turns out that $\ov J_X(\frak n)$ is a fine classical compactified Jacobian equal to 
$$\ov J_X(\frak n)=\ov J_X\left(\frak n_1-\frac{t}{2}-\epsilon,\frak n_2+\frac{t}{2}-1+\epsilon\right) \: \text{ for any } 0<\epsilon <1. $$ 
Moreover, it turns out that any fine compactified Jacobian of $X$ is equal to $\ov J_X(\frak n)$, for a (unique) general V-stability condition $\frak n$, see \cite[Ex. 7.4]{PT2}.
\end{example}

\begin{remark}\label{R:Vtype}
We can now explain why we have baptized our stability conditions (and hence also the new fine compactified Jacobian) of \emph{vine type}, or for short of \emph{V-type}.

Observe that   given a non-trivial biconnected subcurve $Y\subset X$, we can smooth out all the nodes of $X$ except those that belong to the intersection $Y\cap Y^c$ and we get a vine curve of type $t:=|Y\cap Y^c|$.  Conversely, given a partial smoothing of $X$ to a vine curve of type $t$, by taking the closure of the two irreducible components of the vine curve and intersecting with $X$ we get a pair of non-trivial biconnected complementary subcurves $Y$ and $Y^c$ such that $t=|Y\cap Y^c|$. Since all the fine compactified Jacobians of degree $d$ of a vine curve $C_1\cup C_2$ of type $t$ are classified by a pair of natural numbers $(\frak n_{C_1},\frak n_{C_2})$ such that $\frak n_{C_1}+\frak n_{C_2}=d+1-t$ (see Example \ref{Ex:vine}), a general V-stability condition $\frak n$ of degree $d$ on $\Gamma_X$ is the same as giving a degree $d$ fine compactified Jacobian for any partial smoothing of $X$ to a vine curve, subject to a natural compatibility condition (without which $\ov J_X(\frak n)$ would not be a fine compactified Jacobian). 
\end{remark}

\begin{example}\label{Ex:cycle}
Let $X_n$ be a \emph{$n$-cycle curve} of genus $1$, i.e. a curve formed by $n$ smooth rational irreducible components $\{C_1,\ldots,C_n\}$ with $n$ nodes $\{P_1,\ldots, P_n\}$ such 
$P_i=C_i\cap C_{i+1}$ for each $i=1,\ldots, n$ (with the cyclic convention $C_{n+1}=C_1$). 

Pagani-Tommasi have classified in \cite[Sec. 3]{PT2} all the fine compactified Jacobians of $X_n$. In particular, they have showed that any fine compactified Jacobian of $X_n$ is a $n\rho$-cycle curve of genus $0$ and all the $\rho\geq 1$ can occur if $n\geq 3$ (while $\rho$ must be equal to $1$ for $n=2$, in which case $X_2$ is a vine curve of type $2$).

In particular, for the $n$-cycle curve $X_n$ with $n\geq 3$, we deduce that:
\begin{enumerate}[(i)]
\item there are fine compactified Jacobians that are not of numerical N-type (since $c(X_n)=n$), and hence they are not fine V-compactified Jacobians (by Theorem \ref{T:fcJ-vine});
\item there are infinitely many equivalence classes of fine compactified Jacobians up to translation. 
\end{enumerate}
\end{example}

We can now characterize the fine V-compactified Jacobian as those that are (universally or weakly) smoothable as in Definition \ref{D:smoothX}.

\begin{thm}\label{T:V-fcJ}
Let $\ov J_X$ be a fine compactified Jacobian of $X$. Then the following conditions are equivalent:
\begin{enumerate}
\item \label{T:V-fcJ1} $\ov J_X$ is fine V-compactified Jacobian, i.e. $\ov J_X=\ov J_X(\frak n)$ for some general V-stability condition $\frak n$ on $\Gamma_X$;
\item \label{T:V-fcJ2} $\ov J_X$ is universally smoothable.
\item \label{T:V-fcJ3} $\ov J_X$ is  smoothable.
\item \label{T:V-fcJ3bis}  $\ov J_X$ is of N\'eron-type.
\item \label{T:V-fcJ4} $\ov J_X$ is weakly smoothable.
\end{enumerate}
\end{thm}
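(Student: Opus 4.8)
\textbf{Proof strategy for Theorem \ref{T:V-fcJ}.}
The plan is to prove the cycle of implications
$$\eqref{T:V-fcJ1}\Rightarrow \eqref{T:V-fcJ2}\Rightarrow \eqref{T:V-fcJ3}\Rightarrow \eqref{T:V-fcJ4}\quad\text{and}\quad \eqref{T:V-fcJ1}\Rightarrow \eqref{T:V-fcJ3bis}\Rightarrow \eqref{T:V-fcJ4}\Rightarrow \eqref{T:V-fcJ1},$$
using Theorem \ref{T:fcJ-vine} to translate back and forth between the V-condition and the (numerical) sN/N conditions on the poset of orbits, and Theorem \ref{T:PT-thm} (Pagani--Tommasi) which already identifies weakly smoothable with sN-type. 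The two genuinely new pieces are the implication $\eqref{T:V-fcJ1}\Rightarrow\eqref{T:V-fcJ2}$ (universal smoothability) and the implication $\eqref{T:V-fcJ1}\Rightarrow\eqref{T:V-fcJ3bis}$ (N\'eron-type); everything else is formal.

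First I would dispatch the easy links. The implication $\eqref{T:V-fcJ4}\Rightarrow\eqref{T:V-fcJ1}$ is exactly the content of Theorem \ref{T:fcJ-vine}: a weakly smoothable fine compactified Jacobian is of sN-type by Theorem \ref{T:PT-thm}, hence a V-open subset, hence a fine V-compactified Jacobian. The implications $\eqref{T:V-fcJ2}\Rightarrow\eqref{T:V-fcJ3}$ and $\eqref{T:V-fcJ3}\Rightarrow\eqref{T:V-fcJ4}$ are formal: restricting the universal family $\bX\to\Spec R_X$ along a map $\Delta\to\Spec R_X$ inducing a given one-parameter smoothing produces a proper $\Delta$-scheme with the right central fibre (here one uses that $R_X\cong k[[T_1,\dots,T_N]]$ so that every one-parameter smoothing is induced by a map from $\Delta$, and that properness is stable under base change), and ``for all smoothings'' trivially implies ``for some smoothing''. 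Similarly $\eqref{T:V-fcJ3bis}\Rightarrow\eqref{T:V-fcJ4}$ is immediate since a N\'eron-type fine compactified Jacobian is in particular proper after adjoining $\PIC^d_{\X_\eta}$ over a regular smoothing. So the work concentrates on producing $\ov J_{\bX/\Spec R_X}^d$ in \eqref{T:V-fcJ2} and checking the N\'eron property in \eqref{T:V-fcJ3bis}.

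For $\eqref{T:V-fcJ1}\Rightarrow\eqref{T:V-fcJ2}$, I would construct the candidate $\ov J_{\bX/\Spec R_X}^d$ directly as the open substack of $\TF^d_{\bX/\Spec R_X}$ cut out by the stability inequalities $\deg_{\mathcal Y}(\I)\geq \frak n_{\mathcal Y}$ over all relative biconnected subcurves; this is the relative analogue of \eqref{E:Vopen1}, and relative biconnected subcurves of $\bX/\Spec R_X$ correspond to biconnected subcurves of $X$ since $\bX\to\Spec R_X$ has connected fibres degenerating to $X$ (its special fibre dual graph surjects onto each generic-fibre dual graph in the sense of Proposition \ref{P:spec-orb}). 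Openness is clear since the $\deg_{\mathcal Y}$ are locally constant in families. Flatness and properness over $\Spec R_X$ need to be established: flatness follows because the relative simple locus $\Simp^d_{\bX/\Spec R_X}$ is flat over the regular base $\Spec R_X$ (equidimensional fibres), exactly as in Remark \ref{R:smooth}; for properness I would use the valuative criterion together with Proposition \ref{P:non-sepa} (existence from $\TF_X$ being ``valuative-complete'') and the separatedness argument — the combinatorial translation being that $\cP_{\frak n}$ is an upper subset of sN-type, so that on the special fibre there is at most one orbit of each combinatorial type, and this rigidity, combined with the chip-firing description of specializations in Lemma \ref{L:Io-Ieta} and Proposition \ref{P:non-sepa}\eqref{P:non-sepa2}, forces uniqueness of limits. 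I would also note the central fibre is $\ov J_X(\frak n)$ by construction. The regularity of $\ov J^d_{\bX}$ (needed for \cite[Thm. 4.5]{MRV}-type statements) can be cited but is not needed for \eqref{T:V-fcJ2} as stated.

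Finally, for $\eqref{T:V-fcJ1}\Rightarrow\eqref{T:V-fcJ3bis}$, I would invoke Proposition \ref{P:Neron-N}: being of N\'eron-type is equivalent to properness of $\ov J^d_{\X/\Delta}=\ov J^d_X\coprod\PIC^d_{\X_\eta}$ together with being of N-type. Properness over a regular one-parameter smoothing follows from \eqref{T:V-fcJ2}$\Rightarrow$\eqref{T:V-fcJ3} already proved (every regular one-parameter smoothing is a smoothing), and being of N-type is immediate from Theorem \ref{T:fcJ-vine} since a fine V-compactified Jacobian is of sN-type, hence of N-type. Thus $\eqref{T:V-fcJ1}\Rightarrow\eqref{T:V-fcJ3bis}$. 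The main obstacle, as anticipated, is the properness (separatedness) of the relative V-compactified Jacobian over $\Spec R_X$ in step \eqref{T:V-fcJ2}: the existence half of the valuative criterion is handed to us by Proposition \ref{P:non-sepa}\eqref{P:non-sepa1}, but ruling out two distinct limits requires showing that the stability inequalities together with the $C^0(\Gamma_X,\Z)$-action on $1$-cochains (Proposition \ref{P:non-sepa}\eqref{P:non-sepa2}) pin down a unique orbit — and this is precisely where the sN-type property (each combinatorial type realised by exactly $c(G)$ orbits, mapping bijectively onto $\Pic(G)$, via Theorem \ref{T:fcJ-vine} and Theorem \ref{T:main-comb}) does the heavy lifting, by the same mechanism used in the proof of Theorem \ref{T:PT-thm}.
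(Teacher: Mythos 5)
Your overall architecture coincides with the paper's: the easy links are handled identically, the weight of the proof is placed on constructing a proper universal family over $\Spec R_X$ for \eqref{T:V-fcJ2}, and \eqref{T:V-fcJ3bis} is obtained from Proposition \ref{P:Neron-N} plus the N-type property coming from Theorem \ref{T:fcJ-vine} (the paper proves \eqref{T:V-fcJ3}$\Rightarrow$\eqref{T:V-fcJ3bis} rather than \eqref{T:V-fcJ1}$\Rightarrow$\eqref{T:V-fcJ3bis}, but this is immaterial). The problem is that your key step, properness (and even the precise definition and openness) of $\ov J^d_{\bX/\Spec R_X}$, is under-argued and partly misdirected. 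First, the family cannot be cut out by inequalities indexed by ``relative biconnected subcurves'': subcurves of $X$ do not extend to relative subcurves over $\Spec R_X$, and the fibers $\bX_{\ov s}$ have their own (coarser) dual graphs. The paper's construction defines the fiber over each point $s$ as the fine V-compactified Jacobian of $\bX_{\ov s}$ for the \emph{pushed-forward} stability $\frak n_s=(f_s)_*(\frak n)$ of Lemma-Definition \ref{LD:V-func}; this ingredient is absent from your sketch, and without it you have no control over the fibers away from the closed point. Relatedly, openness of the resulting locus is not ``clear by local constancy of $\deg_Y$'' — the dual graph jumps along specializations — but requires Proposition \ref{P:spec-orb} together with the compatibility of V-subsets with push-forward (Proposition \ref{P:stabV-fun}) and Fact \ref{F:orbits}\eqref{F:orbits4}.

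Second, your uniqueness-of-limits argument for properness leans on Proposition \ref{P:non-sepa}, which is a statement about the \emph{constant} family $X\times_k B$; it does not apply to the test families relevant for properness over $\Spec R_X$, whose generic fiber is typically a smooth (or less degenerate) curve, and whose closed point may map to a non-closed point of $\Spec R_X$. The paper's route is to reduce, via the valuative criterion with a dense open target (\cite[Tag 0CMF]{ST}), to DVRs whose generic point lies over the smooth locus; over such a DVR the family is a one-parameter smoothing of $\bX_{\ov h(0)}$ and the restricted family is $\ov J_{\bX_{\ov h(0)}}(\frak n_{\ov h(0)})\coprod \PIC^d$, which is proper because $\ov J_{\bX_{\ov h(0)}}(\frak n_{\ov h(0)})$ is of sN-type (Theorem \ref{T:fcJ-vine}) and sN-type implies smoothable (Theorem \ref{T:PT-thm}, i.e.\ \cite[Lemma 6.5]{PT2}). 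Your appeal to ``the same mechanism as in the proof of Theorem \ref{T:PT-thm}'' points at the wrong direction of that theorem (the paper only reproves weakly smoothable $\Rightarrow$ sN; the implication you need is cited from Pagani--Tommasi), so as written the properness of the universal family — the one genuinely new assertion in \eqref{T:V-fcJ1}$\Rightarrow$\eqref{T:V-fcJ2} — is not established.
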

This result was known for  fine classical compactified Jacobians: the fact that they are  universally smoothable (which then implies that they are smoothable and weakly smoothable, recovering \cite[\S 5]{Ish}) was proved in \cite[Thm. 5.4]{MRV} and the fact that they are of N\'eron-type was proved  in \cite[Thm. 4.1]{MV}.



\begin{proof}
We will prove a chain of implications.  Call $d$ the degree of $\ov J_X$.

$\bullet$ $\eqref{T:V-fcJ1}\Rightarrow \eqref{T:V-fcJ2}$: by assumption we have that $\ov J_X=\ov J_X(\frak n)$ for some general V-stability condition $\frak n$ on $\Gamma_X$ of degree $d$. 

Consider  the semiuniversal effective deformation $\Pi:\bX\to \Spec R_X$ of $X$ as in Definition \ref{D:smoothX}\eqref{D:smoothX2}. Since $\Spec R_X$ is a local scheme with unique closed point $0\in \Spec R_X$, for every point $s\in \Spec R_X$ there exists a unique \'etale specialization $\ov s\rightsquigarrow 0$, where $\ov s$ is a geometric point over $s$. 
This \'etale specialization gives rise to a morphism of dual graphs 
$$f_s:\Gamma_X=\Gamma_{\bX_0}\to \Gamma_{\bX_{\ov s}}:=\Gamma_{\ov s}$$ 
Consider the push-forward $\frak n_s:=(f_s)_*(\frak n)$ of $\frak n$ along $f_s$ (see Lemma-Definition \ref{LD:V-func}), which is a general V-stability condition of degree $d$ on $\Gamma_{\bX_{\ov s}}$. 
We denote by $\ov J_{\bX_s}(\frak n_s)$ the image of $\ov J_{\bX_{\ov s}}(\frak n_s)$ via the map $\Simp^d_{\bX_{\ov s}}\to \Simp^d_{\bX_s}$ and we set
$$
\ov J_{\bX/\Spec R_X}(\frak n):=\bigcup_{s\in \Spec R_X} \ov J_{\bX_s}(\frak n_s).
$$
Fact \ref{F:orbits}\eqref{F:orbits4}  and Propositions \ref{P:stabV-fun} and \ref{P:spec-orb} imply that $\ov J_{\bX/\Spec R_X}(\frak n)$ is an open subset of $\Simp^d_{\bX/\Spec R_X}$  
whose central fiber is clearly $\ov J_X(\frak n)$. We conclude by proving the following

\un{Claim:} $\Psi: \ov J_{\bX/\Spec R_X}(\frak n)\to \Spec R_X$ is proper. 

Indeed, consider the open dense subset $(\Spec R_X)^{\sm}$ parametrizing points $s\in \Spec R_X$ such that $\bX_s$ is smooth and let $\ov J_{\bX/\Spec R_X}(\frak n)^{\sm}:=\Psi^{-1}((\Spec R_X)^{\sm})\subset \ov J_{\bX/\Spec R_X}(\frak n)$, which is also open and dense. In order to show that $\Psi$ is proper, we will use \cite[\href{https://stacks.math.columbia.edu/tag/0CMF}{Tag0CMF}]{ST} (which we can use $\Spec R_X$ is Noetherian and the morphism $\Psi$ is of finite type, and hence also quasi-separated): given a commutative diagram of solid arrows
\begin{equation*}
\xymatrix{
\Spec K \ar@{^{(}->}[d]_{\eta}\ar[r] & \ov J_{\bX/\Spec R_X}(\frak n)^{\sm} \ar@{^{(}->}[r] & \ov J_{\bX/\Spec R_X}(\frak n) \ar[d]^\Psi \\
\Spec A \ar[rr]^h \ar@{-->}[rru]& & \Spec R_X
}
\end{equation*}
where $A$ is a DVR with field of fractions $K$, we need to show that there exists a unique dotted arrow making the diagram commutative. This amount to prove that the base change 
$$\wt \Psi:\ov J_{\bX/\Spec R_X}(\frak n) \times_{\Spec R_X} \Spec A\to \Spec A$$
 is proper. Since $h(\eta)\in (\Spec R_X)^{\sm}$ by assumption, the base change family $\bX\times_{\Spec R_X} \Spec A \to \Spec A$ is a $1$-parameter smoothing of $\bX_{\ov h(0)}$. Hence, we have that 
$$
 \ov J_{\bX/\Spec R_X}(\frak n) \times_{\Spec R_X} \Spec A=\ov J_{\bX_{\ov h(0)}}(\frak n_{\ov h(0)})\coprod \PIC^d_{\bX_{h(\eta)}}.
 $$
Since $\ov J_{\bX_{\ov h(0)}}(\frak n_{\ov h(0)})$ is a fine compactified Jacobian of sN-type by Theorem \ref{T:fcJ-vine}, we deduce that $\wt \Psi$ is proper by Theorem \ref{T:PT-thm}.

$\bullet$ $\eqref{T:V-fcJ2}\Rightarrow \eqref{T:V-fcJ3}$: by assumption,   there exists an open subscheme
$$
\ov J_{\bX/\Spec R_X}\subset \Simp_{\bX/\Spec R_X}^d
$$
which is proper over $\Spec R_X$ and such that $(\ov J_{\bX/\Spec R_X})_0=\ov J_X$. 

Let $\pi:\X\to \Delta$ be a one-parameter smoothing of $X$. Then, by the definition of semiuniversal effective deformation of $X$, there exists a map $h:\Delta \to \Spec R_X$ with respect to which $\X\cong \bX\times_{\Spec R_X} \Delta$. Then, the open subset  
$$\ov J_{\X/\Delta}:=\ov J_{\bX/\Spec R_X}\times_{\Spec R_X} \Delta\subset \TF^d_{\X/\Delta}$$
is proper over $\Delta$ and it satisfies $(\ov J_{\X/\Delta})_0=\ov J_X$. Hence, $\ov J_X$ is a smoothable fine compactified Jacobian.

$\bullet$ $\eqref{T:V-fcJ3}\Rightarrow \eqref{T:V-fcJ3bis}$: if $\ov J_X$ is smoothable, then $\ov J_X$ is of sN-type (and hence of N-type) by Theorem \ref{T:PT-thm}. Now we conclude by Proposition \ref{P:Neron-N}.

$\bullet$ $\eqref{T:V-fcJ3bis}\Rightarrow \eqref{T:V-fcJ4}$: obvious.

$\bullet$ $\eqref{T:V-fcJ4}\Rightarrow \eqref{T:V-fcJ1}$: if $\ov J_X$ is weakly smoothable, then $\ov J_X$ is of sN-type by Theorem \ref{T:PT-thm}. 
Theorem \ref{T:fcJ-vine} then implies that $\ov J_X$ is a fine V-compactified Jacobian. 
\end{proof}

We now derive some consequences from the above Theorem. The first consequence is the fact that fine V-compactified Jacobians are (singular) CY varieties.

\begin{prop}\label{P:trivdual}
Any fine V-compactified Jacobian $\ov J_X^d(\frak n)$ has trivial dualizing sheaf. 
\end{prop}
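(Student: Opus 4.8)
The plan is to deduce the triviality of the dualizing sheaf of $\ov J_X^d(\frak n)$ from the fact that it is universally smoothable (Theorem \ref{T:V-fcJ}), using the regularity of the total space of the family over the semiuniversal deformation together with the adjunction formula. Concretely, by Theorem \ref{T:V-fcJ} there exists an open subscheme $\ov J_{\bX/\Spec R_X}^d \subset \Simp_{\bX/\Spec R_X}^d$ which is proper and flat over $\Spec R_X$, with central fiber $\ov J_X^d(\frak n)$, and which is regular by Remark \ref{R:smooth}(v) (citing \cite[Thm. 4.5]{MRV}). Since $R_X \cong k[[T_1,\ldots,T_N]]$ is regular, the special fiber $\ov J_X^d(\frak n)$ is a Cartier divisor in the regular scheme $\ov J_{\bX/\Spec R_X}^d$, cut out by the single equation $T_1 = 0$ (after a suitable choice of coordinates, or more invariantly by the image of a uniformizing parameter along the locus over $0$).

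First I would record that $\ov J_{\bX/\Spec R_X}^d$ is Gorenstein: it is regular, hence in particular Cohen-Macaulay and Gorenstein, so it has an invertible dualizing sheaf $\omega_{\ov J_{\bX/\Spec R_X}^d}$. The morphism $\Psi:\ov J_{\bX/\Spec R_X}^d \to \Spec R_X$ is flat and proper with local complete intersection (indeed regular) total space over a regular base, so it is a Gorenstein morphism and the relative dualizing sheaf $\omega_\Psi$ is invertible. Next I would identify $\omega_\Psi$ with $\O_{\ov J_{\bX/\Spec R_X}^d}$: its restriction to the generic fiber is $\omega_{\PIC^d_{\bX_\eta}}$, which is trivial because an abelian variety has trivial canonical bundle; hence $\omega_\Psi$ is an invertible sheaf on $\ov J_{\bX/\Spec R_X}^d$ that is trivial on a dense open subset. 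Since $\ov J_{\bX/\Spec R_X}^d$ is regular and $\Spec R_X$ is local, any invertible sheaf which is trivial on the complement of a closed subset of codimension $\geq 1$ (here the non-smooth locus, whose image in $\Spec R_X$ is a proper closed subscheme) need not be globally trivial on the nose — so instead I would argue directly: $\omega_\Psi \cong \Psi^* \mathcal{M}$ for a line bundle $\mathcal M$ on $\Spec R_X$, which is trivial since $R_X$ is local; the isomorphism $\omega_\Psi|_{\PIC^d_{\bX_\eta}} \cong \O$ comes from the canonical trivialization of the dualizing sheaf of the (connected) algebraic group $\PIC^d_{\bX_\eta}$, and this determines the trivialization of $\Psi^*\mathcal M$ up to a unit, hence globally. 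Finally, adjunction for the Cartier divisor $\ov J_X^d(\frak n) = \Psi^{-1}(0)$ gives
\begin{equation*}
\omega_{\ov J_X^d(\frak n)} \cong \left(\omega_\Psi \otimes \Psi^*\omega_{\Spec R_X/k}\right)\big|_{\ov J_X^d(\frak n)},
\end{equation*}
and since both $\omega_\Psi$ and $\omega_{\Spec R_X}$ (the base being $\Spec$ of a power series ring, or at least smooth in the relevant sense) pull back to trivial sheaves, we conclude $\omega_{\ov J_X^d(\frak n)} \cong \O_{\ov J_X^d(\frak n)}$.

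The main obstacle I anticipate is making the identification of the relative dualizing sheaf $\omega_\Psi$ with the structure sheaf genuinely canonical rather than merely ``trivial after restriction to the generic fiber'': one needs that the trivialization of $\omega$ on the abelian-variety fibers (via the Lie algebra / invariant top form) extends across the bad fibers, which is where flatness of $\Psi$, properness, and the normality (indeed regularity) of the total space enter — pushing forward $\omega_\Psi$ and using $\Psi_* \O = \O_{\Spec R_X}$ (cohomology and base change, fibers being geometrically connected and reduced by Remark \ref{R:smooth}(iii)) to see $\Psi_*\omega_\Psi$ is a line bundle on $\Spec R_X$ whose formation commutes with base change, forcing $\omega_\Psi = \Psi^*\Psi_*\omega_\Psi$. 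Alternatively, and perhaps more cleanly, one can invoke the description of the smooth locus: by Corollary \ref{C:sm-VfcJ} the smooth locus of $\ov J_X^d(\frak n)$ is a disjoint union of $c(\Gamma_X)$ copies of the semiabelian variety $\PIC^{\un 0}_X$, which has trivial canonical bundle, so $\omega_{\ov J_X^d(\frak n)}$ is trivial on the smooth locus; then since $\ov J_X^d(\frak n)$ has locally complete intersection (hence Gorenstein) singularities by Theorem A(1), $\omega_{\ov J_X^d(\frak n)}$ is a line bundle, and it is trivial on the complement of the singular locus which has codimension $\geq 1$ — but $\ov J_X^d(\frak n)$ need not be normal, so this last step requires care and is exactly why the smoothing argument via the regular total space is the safer route.
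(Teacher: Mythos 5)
Your overall strategy is the same as the paper's, which follows \cite[Cor.~5.7]{MRV}: reduce everything to the triviality of the relative dualizing sheaf $\omega_\Psi$ of the universal family $\Psi\colon \ov J_{\bX/\Spec R_X}(\frak n)\to \Spec R_X$ and then restrict to the closed fiber. However, your execution of the crucial step, the identification $\omega_\Psi\cong\O$, has a genuine gap. You assert $\omega_\Psi\cong\Psi^*\mathcal M$ for a line bundle $\mathcal M$ on the base, justified by saying that $\Psi_*\omega_\Psi$ is a line bundle whose formation commutes with base change, ``forcing'' $\omega_\Psi\cong\Psi^*\Psi_*\omega_\Psi$. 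This is circular: base change together with $h^0(F,\omega_F)=1$ on every fiber (which does hold, by Serre duality and $h^{g}(F,\O_F)=1$ as in \eqref{E:coom-O}) only says that each fiber's dualizing sheaf has a one-dimensional space of sections; to conclude that the evaluation map $\Psi^*\Psi_*\omega_\Psi\to\omega_\Psi$ is an isomorphism you would need that section to be nowhere vanishing on every fiber, which is exactly the fiberwise triviality you are trying to prove. Triviality on the generic fiber is not enough either: the locus where $\omega_\Psi$ fails to be pulled back could map onto a divisor in $\Spec R_X$, so no contradiction with the base being local arises. (Your fallback argument on the fiber itself is, as you yourself observe, blocked by non-normality and by the singular locus having codimension one.) There is also a slip in the reduction: the closed fiber is not a Cartier divisor cut out by $T_1=0$; it is the fiber over the closed point of the $N$-dimensional base, cut out by the regular sequence $T_1,\dots,T_N$. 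This is harmless provided you replace divisor adjunction by the standard fact that the relative dualizing sheaf of a flat, proper, Gorenstein morphism restricts on each fiber to the dualizing sheaf of that fiber (or by adjunction for a complete intersection with free conormal bundle).

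The missing idea, and the one used in \cite[Cor.~5.7]{MRV}, is a codimension computation on the regular total space. By the local structure of $\Simp$ over the semiuniversal deformation (\'etale-locally $x_ny_n=t_{e_n}$ at a node where the sheaf is not free), the locus where the universal sheaf is not invertible, i.e.\ the non-smooth locus of $\Psi$, has codimension at least $2$ in $\ov J_{\bX/\Spec R_X}(\frak n)$. On its complement $U$, which is open in the relative Picard space of $\bX/\Spec R_X$, one has $\omega_\Psi|_U\cong\det\Omega^1_\Psi|_U$, and this is the pullback of the line bundle $\det\bigl(R^1\Pi_*\O_{\bX}\bigr)^{\vee}$ from the local base, hence trivial. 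Since the total space is regular, restriction gives an injection $\Pic\bigl(\ov J_{\bX/\Spec R_X}(\frak n)\bigr)\hookrightarrow\Pic(U)$, so $\omega_\Psi\cong\O$, and restricting to the closed fiber yields $\omega_{\ov J_X^d(\frak n)}\cong\O_{\ov J_X^d(\frak n)}$. Without this codimension-two input (or some substitute for it), your argument does not close.
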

This was shown for fine classical compactified Jacobians in \cite[Cor. 5.7]{MRV}.
\begin{proof}
This is proved as in \cite[Cor. 5.7]{MRV}, by showing that the relative dualizing sheaf of the universal fine V-compactified Jacobian $\Psi: \ov J_{\bX/\Spec R_X}(\frak n)\to \Spec R_X$ is trivial. 
\end{proof}

Another consequence is the computation of the Leray perverse filtration on the cohomology of a fine V-compactified Jacobian $\ov J_X(\frak n)$ for a nodal curve $X$ over $\C$. This is defined as follows:
consider the universal fine V-compactified Jacobian $\Psi: \ov J_{\bX/\Spec R_X}(\frak n)\to \Spec R_X$ (which exists by Theorem \ref{T:V-fcJ} and it is regular by Remark \ref{R:smooth}) and define the $n$-graded piece of the Leray perverse filtration by 
\begin{equation}\label{E:per-fil}
\Gr_P^n H^*(\ov J_X(\frak n),\Q):=(^{p}R^n\Psi_*\Q)_0,
\end{equation}
where $^{p}R^n\Psi_*\Q$ denotes the n-th component in the perverse decomposition for $R\Psi_*\Q$. 

\begin{prop}\label{P:Lerper}
Let $X$ be a connected nodal curve over $\C$ and let $\frak n$ be a general V-stability condition  on  $\Gamma_X$. Then we have the following equality in the Grothendieck group of Hodge structures:
\begin{equation}\label{E:Lerper}
\sum_n q^n \Gr^n_P H^*(\ov J_X(\frak n), \Q)=\sum_m q^m H^m(J^0_{X^{\nu}},\Q) \cdot \sum_h n_h(\Gamma_X) (q\mathbb{L})^{g(\Gamma_X)-h} ((1-q)(1-q\mathbb{L}))^{h},
\end{equation}
where $J^0_{X^{\nu}}$ is the Jacobian of the normalization $X^{\nu}$ of $X$ and $n_h(\Gamma_X)$ is the number of genus $h$ connected spanning subgraphs of the dual graph $\Gamma_X$ of $X$.  

In particular, $\Gr^n_P H^*(\ov J_X(\frak n), \Q)$ is independent of the chosen general V-stability condition $\frak n$.
\end{prop}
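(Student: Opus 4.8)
The plan is to follow the strategy of \cite[\S 5]{MSV}, exploiting the universal fine V-compactified Jacobian $\Psi: \ov J_{\bX/\Spec R_X}(\frak n)\to \Spec R_X$ constructed in the proof of Theorem \ref{T:V-fcJ}, which exists precisely because $\ov J_X(\frak n)$ is universally smoothable, and which is a regular scheme by Remark \ref{R:smooth}. The key point is that the right-hand side of \eqref{E:Lerper} is purely combinatorial plus the contribution of $H^*(J^0_{X^{\nu}},\Q)$, and hence independent of $\frak n$; so once the formula is established for one $\frak n$, the last assertion is automatic, and in fact the formula will be deduced uniformly from the stratification of $\ov J_X(\frak n)$ by the $\PIC_X^{\un 0}$-orbits $\TF_X(G,D)$ with $(G,D)\in \cP_{\frak n}$.

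First I would recall, following \cite[Appendix B]{MSV} and the proof of Corollary \ref{C:K-var}, that each orbit $\TF_X(G,D)$ is a torsor under $\PIC^{\un 0}_{X_S}$ (writing $G=\Gamma_X\setminus S$), hence isomorphic to $J^0_{X^{\nu}}\times \Gm^{g(G)}$ where $g(G)=b_1(G)$; here $X^{\nu}$ is the normalization of $X$, which is also the normalization of any partial normalization $X_S$. The number of orbits $\TF_X(G,D)\subset \ov J_X(\frak n)$ with $G$ of genus $h$ is, by Theorem \ref{T:fcJ-vine} (since $\ov J_X(\frak n)$ is of sN-type, so $\cP_{\frak n}(G)$ is in bijection with $\Pic^{\bullet}(G)$), equal to $\sum_{G} c(G)$ where $G$ ranges over the connected spanning subgraphs of genus $h$. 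Here is where one must be slightly careful: $n_h(\Gamma_X)$ is defined as the number of genus $h$ connected spanning subgraphs, not weighted by complexity; so the combinatorial identity to invoke is that $\sum_{g(G)=h} c(G)$, summed over connected spanning subgraphs, rearranges — after the generating-function manipulation accounting for the $(\Gm)^{g(G)}$ factors, whose Leray perverse contribution is $((1-q)(1-q\mathbb{L}))$ per $\Gm$-factor up to the shift $(q\mathbb{L})^{g(\Gamma_X)-h}$ — into $\sum_h n_h(\Gamma_X)(q\mathbb{L})^{g(\Gamma_X)-h}((1-q)(1-q\mathbb{L}))^h$. This is exactly the reorganization carried out in \cite[Prop. 4.1 and Appendix B]{MSV} for classical polarizations, and since the only input used there is that the compactified Jacobian is of sN-type with the stated orbit structure, the argument transfers verbatim.

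Concretely the steps are: (1) identify $R\Psi_*\Q$ near the central point using the fact that $\Psi$ is a flat proper morphism with regular total space whose generic fiber is an abelian variety $\PIC^d_{\X_{\eta}}$ of dimension $g(X)$, so that the decomposition theorem applies; (2) compute the stalk at $0$ of each perverse cohomology sheaf ${}^pR^n\Psi_*\Q$ via the support decomposition, which reduces — as in \cite[\S 5]{MSV} — to the support theorem for compactified Jacobians (Ngô-type), giving that the only summand supported at the origin contributes through the combinatorics of the dual graph $\Gamma_X$ while the rest contributes $H^*(J^0_{X^{\nu}},\Q)$ tensored with the cohomology of the "graph side"; (3) assemble the generating function using the orbit stratification of $\ov J_X(\frak n)$ described above and the $\Gm$-factor bookkeeping. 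The main obstacle is step (2): one needs the full strength of the support/decomposition theorem for the family $\Psi$, i.e. that there are no unexpected supports, which for classical fine compactified Jacobians was established in \cite{MSV} (building on \cite{MRV2}, \cite{MRV3}); the point to verify is that nothing in that argument used the classical nature of the polarization beyond the sN-type orbit structure and the regularity of the universal model, both of which we now have for V-compactified Jacobians by Theorem \ref{T:V-fcJ}, Theorem \ref{T:fcJ-vine} and Remark \ref{R:smooth}. Once this is checked, the final independence statement follows since the right-hand side of \eqref{E:Lerper} does not mention $\frak n$.
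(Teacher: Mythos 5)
Your proposal is correct and follows essentially the same route as the paper: both reduce to the universal family $\Psi:\ov J_{\bX/\Spec R_X}(\frak n)\to \Spec R_X$ and the support-theorem identity ${}^{p}R^n\Psi_*\Q=IC(\bigwedge^n R^1\Psi_{\sm,*}\Q)$, transferred from \cite[Thm. 5.12]{MSV} using only the sN-type orbit structure and the regularity of the universal model, which are exactly the inputs you identify. The only cosmetic difference is that you phrase the final combinatorial assembly via the orbit stratification and $\Gm$-factor bookkeeping, whereas the paper simply quotes the weight-polynomial computation of the IC sheaves in \cite[Cor. 3.10]{MSV}; the content is the same.
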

This is shown for fine classical compactified Jacobians in \cite[Thm. 1.1]{MSV} (although in the statement of loc. cit. it is assumed for simplicity that all the irreducible components of $X$ are rational, or equivalently that $J^0_{X^{\nu}}$ is a point). 

\begin{proof}
This follows, as in the proof of \cite[Thm. 1.1]{MSV}, from the equality 
$$
^{p}R^n\Psi_*\Q=IC(\bigwedge^n R^1\Psi_{\sm,*}\Q),
$$
which is proved as in \cite[Thm. 5.12]{MSV}, together with the computation of the weight polynomial of $IC(\bigwedge^n R^1\Psi_{\sm,*}\Q)$ in \cite[Cor. 3.10]{MSV}.
\end{proof}

We now show that that fine V-compactified Jacobians of $X$ arise as special fibers of suitable Mumford models of the Jacobian of a smoothing of $X$, a fact that was shown for fine classical compactified Jacobians by Christ-Payne-Shen \cite{CPS}.

Let us fix the setting, following \cite{CPS} to which we refer for further details. 
Let $K=(K,v)$ be a complete and algebraically closed rank-$1$ valued field, with valuation ring $R$ and residue field $k=\ov k$. Let $\pi:\X\to \Spec R$ be a flat and proper morphism of relative dimension $1$
such that the generic fiber $\X_K$ is smooth and connected and the special fiber $\X_o=X$ is nodal and connected.
For a fixed general V-stability condition $\frak n$ on $X$ of degree $d$, consider the relative V-compactified Jacobian
$$
\ov J_{\X}(\frak n):=\ov J_X(\frak n)\coprod \Pic^d(\X_K)\to \Spec R,
$$
which is proper (and flat) by Theorem \ref{T:V-fcJ}.

Let $\Gamma$ be the metric graph associated to $\pi:\X\to \Spec R$, i.e. $\Gamma=(\Gamma_X,l)$ where $l:E(\Gamma_X)\to \R_{>0}$ is the length function defined as follows: for any $e\in E(\Gamma_X)$, the local equation of $\X$ at the node $n_e$ of $X$ corresponding to $e$ is given by $xy=f_e$ for some $f_e$  in the maximal ideal $\frak m_R$ of $R$, and then we set $l(e):=v(f_e)\in  v(\frak m_R)\subseteq \R_{>0}$. Denote by $J(\Gamma)$ the Jacobian of $\Gamma$, which is the real torus $\frac{H_1(\Gamma_X,\R)}{H_1(\Gamma_X,\Z)}$ of dimension $g(\Gamma_X)$ endowed with the integral affine structure induced by the lattice  $H^1(\Gamma_X,\Z)\subset H^1(\Gamma_X,\R)\cong H_1(\Gamma_X,\R)$, where the last isomorphism is induced by the scalar product $\langle , \rangle_l$  on $C_1(\Gamma_X,\R)$ defined by 
$$
\langle e,e'\rangle_l=
\begin{cases}
l(e) & \text{ if } e=e',\\
0 & \text{ otherwise.} 
\end{cases}
$$
We denote by $\Pic^d(\Gamma)$  the degree-$d$ Picard group of $\Gamma$, which is a torsor for the Jacobian $J(\Gamma)$.

\begin{prop}\label{P:MumV}
Setting as above. There exists an admissible polytopal decomposition $\Delta_{\frak n}$ of $\Pic^d(\Gamma)$ such that:
\begin{enumerate}
\item \label{P:MumV1} the fine compactified Jacobian  $\ov J_X(\frak n)$ is naturally isomorphic to 
$$
\frac{Y_{\Delta_{\frak n}}\times_{H^1(\Gamma_X,\Z)\otimes \Gm} \PIC_X^{\un 0}}{H_1(\Gamma_X,\Z)},
$$
where $Y_{\Delta_{\frak n}}$ is the stable toric variety associated to the decomposition $\Delta_{\frak n}$, and endowed with the commuting actions of the lattice $H_1(\Gamma_X,\Z)$
and of the torus  $H^1(\Gamma_X,\Z)\otimes \Gm$ which is the affine subgroup scheme  of the semiabelian variety $\PIC_X^{\un 0}$. 
\item \label{P:MumV2} the formal completion of the relative fine compactified Jacobian $\ov J_{\X}(\frak n)$ along the special fiber is naturally isomorphic to the Mumford model associated to $\Delta_{\frak n}$. 
\end{enumerate}
\end{prop}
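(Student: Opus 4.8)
The plan is to reduce to the techniques of Christ--Payne--Shen \cite{CPS}, whose analysis of Mumford models of classical compactified Jacobians goes through verbatim once one feeds in the correct polytopal decomposition, the existence of which is supplied by the combinatorics of Section 1 and by universal smoothability. First I would recall the non-archimedean uniformization of $\Pic^d(\X_K)$: by Raynaud's theory of degenerating abelian varieties attached to $\pi:\X\to\Spec R$, the analytification $\Pic^d(\X_K)^{\mathrm{an}}$ is the quotient of a Raynaud extension $\mathcal E$ --- an extension of the analytification of the Jacobian of $X^\nu$ by the analytic torus with cocharacter lattice $H^1(\Gamma_X,\Z)$ --- by the lattice $H_1(\Gamma_X,\Z)$, embedded via the polarization pairing $\langle\,,\,\rangle_l$; the associated skeleton is $\Pic^d(\Gamma)$, a torsor under $J(\Gamma)$. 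In this picture, exactly as in \cite{CPS}, admissible (i.e.\ $H_1(\Gamma_X,\Z)$-periodic, integral-polytopal) decompositions $\Delta$ of $\Pic^d(\Gamma)$ correspond to formal $R$-models of $\Pic^d(\X_K)$ of Mumford type, the special fibre of the model being the quotient by $H_1(\Gamma_X,\Z)$ of the stable toric variety $Y_\Delta$ twisted by the semiabelian variety $\PIC_X^{\un 0}$; none of this uses that $\Delta$ comes from a classical polarization.

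Next I would construct the decomposition $\Delta_{\frak n}$. By Theorem \ref{T:main-comb} the orbit poset $\cP(\ov J_X(\frak n))=\cP_{\frak n}$ equals the generalized break-divisor set $\BD_{I_{\frak n}}$ of the tree function $I_{\frak n}$ of \eqref{E:form-In}; concretely, $\Delta_{\frak n}$ is to be the preimage, under the tropical Abel--Jacobi (chip-firing) map $C_1(\Gamma_X,\R)\to\Pic^d(\Gamma)$, of the partition of $\Pic^d(\Gamma)$ into the generalized break-divisor regions attached to the divisors $I_{\frak n}(T)$, so that its cells are indexed by $\cP_{\frak n}$, the top-dimensional ones by the $c(\Gamma_X)$ elements of $\cP_{\frak n}(\Gamma_X)$ (Theorem \ref{T:fcJ-vine}), and the cell indexed by $(G,D)$ having torus part $H^1(G,\Z)\otimes\Gm$. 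That these cells are honest polytopes, are $H_1(\Gamma_X,\Z)$-periodic, tile $\Pic^d(\Gamma)$ and carry the correct face incidences is where the inequalities \eqref{E:Vstab1}--\eqref{E:Vstab2} of a general V-stability enter: \eqref{E:Vstab1} forces the complementary-face matching, and \eqref{E:Vstab2} forces convexity along each triple of biconnected subcurves; Theorem \ref{T:BD-Ntype} together with Proposition \ref{P:Bn-card} then supplies the counts $|\BD_{I_{\frak n}}(G)|=c(G)$ that pin down the combinatorial type of each cell. This amounts to re-running the analysis of \cite{CPS} with the hyperplane arrangement \eqref{E:arr-hyper} replaced by the V-stability combinatorics, and it is the main obstacle of the proof.

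With $\Delta_{\frak n}$ in hand, part \eqref{P:MumV1} follows by matching stratifications: Fact \ref{F:orbits} writes $\ov J_X(\frak n)=\coprod_{(G,D)\in\cP_{\frak n}}\TF_X(G,D)$ with $\TF_X(\Gamma_X\setminus S,D)\cong\PIC^D_{X_S}$ a $\PIC_X^{\un 0}$-orbit whose affine part is $H^1(\Gamma_X\setminus S,\Z)\otimes\Gm$, and this matches the torus-orbit stratification of $Y_{\Delta_{\frak n}}\times_{H^1(\Gamma_X,\Z)\otimes\Gm}\PIC_X^{\un 0}$ cell-by-cell, the closure relations on both sides being governed by the poset order \eqref{E:posetO}; taking the $H_1(\Gamma_X,\Z)$-quotient and tracking the two commuting group actions yields the asserted isomorphism. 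For part \eqref{P:MumV2}, by Theorem \ref{T:V-fcJ} the relative fine V-compactified Jacobian $\ov J_{\X}(\frak n)\to\Spec R$ is proper and flat with generic fibre $\Pic^d(\X_K)$, so its formal completion along the special fibre is an admissible formal $R$-model of $\Pic^d(\X_K)$; by part \eqref{P:MumV1} its special fibre coincides with that of the Mumford model attached to $\Delta_{\frak n}$, and since such a model is determined by its special fibre (rigidity of admissible formal models, \cite{CPS}) the two are canonically isomorphic. I expect the only substantial work to be the polytopality and admissibility of $\Delta_{\frak n}$ in the middle step; the uniformization input and the stratification matching are then formal.
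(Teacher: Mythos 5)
Your overall shell is the paper's: the decomposition you have in mind (cells obtained from $(G,D)\in\cP_{\frak n}$ by adding one interior point of each edge of $E(G)^c$ to $D$, i.e. a generalized break-divisor tiling) is essentially the $\Delta_{\frak n}$ of the paper, and both arguments are reductions to \cite{CPS}. But the step you yourself call ``the main obstacle'' --- that the open cells indexed by $\cP_{\frak n}$ are pairwise disjoint and cover $\Pic^d(\Gamma)$, i.e. the decomposition \eqref{E:dec-Pic} --- is precisely where your proposed route fails. You plan to prove admissibility combinatorially, by ``re-running the analysis of \cite{CPS} with the hyperplane arrangement \eqref{E:arr-hyper} replaced by the V-stability combinatorics''; but for a non-classical V-stability there is no hyperplane arrangement whose chambers cut out $\cP_{\frak n}$ (cf. Example \ref{Ex:msa-not}), and the paper explicitly records, in a footnote to its proof, that it does not know how to adapt the purely combinatorial tiling argument of \cite[Prop. 5.7]{CPS} beyond the classical case. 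The paper instead obtains \eqref{E:dec-Pic} geometrically, from the properness of $\ov J_{\X}(\frak n)\to \Spec R$ (available by Theorem \ref{T:V-fcJ}) combined with \cite[Prop. 4.2, 4.4]{CPS}. Your appeal to \eqref{E:Vstab1}, \eqref{E:Vstab2} and to the counts $|\BD_{I_{\frak n}}(G)|=c(G)$ yields neither the disjointness of the open cells (Proposition \ref{P:Bn-card}\eqref{P:Bn-card2} only compares two divisors on the \emph{same} spanning subgraph, whereas the tiling requires comparing cells attached to different pairs $(G,D)$, $(G',D')$) nor the covering; as written this is a genuine gap, not a routine verification.

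Two further points. Your indexing is inverted: in $\Delta_{\frak n}$ the cell of $(G,D)$ has dimension $|E(G)^c|$, so the top-dimensional cells are indexed by the pairs $(T,I_{\frak n}(T))$ with $T$ a spanning tree, while the $c(\Gamma_X)$ elements of $\cP_{\frak n}(\Gamma_X)$ index the vertices; and the cells themselves have no ``torus part'' --- the corresponding strata of the special fibre do, with dimensions complementary to those of the cells. More substantively, for part (1) a cell-by-cell matching of strata and closure relations does not by itself produce an isomorphism of varieties: the paper deduces the isomorphism, as in \cite[Prop. 5.10]{CPS}, from the toric-geometric description of $\Simp_X$ in \cite[Thm. 13.2]{OS}. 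Likewise for part (2), a formal $R$-model is not determined by its special fibre alone; the paper instead checks that the proof of \cite[Thm. 6.2]{CPS} carries over, using that $\ov J_X(\frak n)$ is reduced.
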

Note that the decomposition $\Delta_{\frak n}$ depends on the length function $l$ on $\Gamma_X$, but the stable toric variety $Y_{\Delta_{\frak n}}$ is independent of $l$ (and so it depends only on the graph $\Gamma_X$), as it will be clear from the proof of the Theorem. 

The above Theorem is shown for fine classical compactified Jacobians in \cite[Prop. 5.10, Thm. 6.2]{CPS}, where the result is also proved for non-fine classical compactified Jacobians.  Moreover, part \eqref{P:MumV2} can be strengthened for classical compactified Jacobians by showing  that any relative classical compactified Jacobian is isomorphic to the corresponding algebraic Mumford model (see \cite[Thm. 6.2]{CPS}). This stronger version would be true for fine V-compactified Jacobians if one could prove the projectivity of $\ov J_{\X}(\frak n)\to \Delta$ (see Open Questions).

\begin{proof}
We will indicate how to extend the proofs of \cite[Prop. 5.10, Thm. 6.2]{CPS} to our setting. 

First of all, let us define the admissible polytopal decomposition $\Delta_{\frak n}$ following \cite[\S 5.2]{CPS}.  For any $(G,D)\in \bO^d(\Gamma)$ (see Definition \ref{D:posetO}), we denote by $\theta^o_{G,D}$ the subset of $\Pic^d(\Gamma)$ formed by linear equivalence  classes of divisors of the form $D+E$, where $E$ is a divisor of degree $|E(G)^c|$ consisting of the sum of one point in the interior of each edge of $E(G)^c$. 
We then set 
$$
\Delta_{\frak n}:=\{\theta_{G,D}=\ov{\theta_{G,D}^o}\: : \: (G,D)\in \cP_{\frak n}\},
$$
where $\cP_{\frak n}$ is the V-subset associated to $\frak n$ as in  Definition \ref{D:stabV}.

\un{Claim}: $\Delta_{\frak n}$ is an admissible polytopal decomposition of $\Pic^d(\Gamma)$. 

Indeed, the fact that each $\theta_{G,D}$ is an admissible polytope (as in \cite[Def. 5.5]{CPS}) is proved as in \cite[Prop. 5.7]{CPS}. The fact that each face of $\theta_{G,D}$ belongs to $\Delta_{\frak n}$ follows from the fact that the faces of $\theta_{G,D}$ are the polytopes $\theta_{(G',D')}$ where $(G',D')\geq (G,D)$, which belong to $\Delta_{\frak n}$ since $\cP_{\frak n}$ is an upper  set (see Proposition \ref{P:Bn-upper}). The fact that the intersection of two polytopes in $\Delta_{\frak n}$ is either empty or a common face of both, together with the fact that the polytopes in $\Delta_{\frak n}$ cover $\Pic^d(\Gamma)$, follow 
from the disjoint decomposition 
\begin{equation}\label{E:dec-Pic}
\coprod_{(G,D)\in \cP_{\frak n}}  \theta^o_{(G,D)}=\Pic^d(\Gamma),
\end{equation}
 which is a consequence, similarly to \cite[Prop. 4.4]{CPS}, of the properness of $\ov J_{\X}(\frak n)\to \Spec R$ and \cite[Prop. 4.2]{CPS}\footnote{Note that for classical compactified Jacobians, there is an alternative purely combinatorial proof of the decomposition \eqref{E:dec-Pic} in \cite[Prop. 5.7]{CPS}, which however we do not know how to adapt to fine V-compactified Jacobians.}. 
 
Part \eqref{P:MumV1} is deduced as in \cite[Prop. 5.10]{CPS} from the toric geometric description of $\Simp_X$ (and hence of the open subscheme $\ov J_X(\frak n)$), as described in \cite[Thm. 13.2]{OS}. 


Part \eqref{P:MumV2}: the proof of \cite[Thm. 6.2]{CPS} carries over, using that $\ov J_X(\frak n)$ is reduced (by Lemma \ref{L:prop-fcJ}\eqref{L:prop-fcJ1}).
\end{proof}

We next describe, following \cite{MRV2}, the two subgroup schemes
$$
\PIC^o(\ov J_X(\frak n))\subset \PIC^{\tau}(\ov J_X(\frak n))\subset \PIC(\ov J_X(\frak n))
$$
of the Picard scheme $\PIC(\ov J_X(\frak n))$ of $\ov J_X(\frak n)$, parametrizing line bundles on $\ov J_X(\frak n)$ that are, respectively, algebraic or numerically equivalent to the trivial line bundle.



\begin{prop}\label{P:autod}
Let $X$ be a connected nodal curve and let $\frak n$ be a general V-stability condition on  $\Gamma_X$. 
\begin{enumerate}
\item \label{P:autod1}
There exists a Poincar\'e line bundle $\cP$ on $\PIC^{\un 0}(X) \times \ov J_X(\frak n)$ inducing a group scheme isomorphism 
$$
\begin{aligned}
\beta_{\frak n}: \PIC^{\un 0}(X) & \xrightarrow{\cong} \PIC^o(\ov J_X(\frak n))\\
M & \mapsto \cP_M:=\cP_{\ov J_X(\frak n)\times \{M\}}.
\end{aligned}
$$
\item \label{P:autod2}
We have an equality of group schemes
$$\PIC^o(\ov J_X(\frak n))=\PIC^{\tau}(\ov J_X(\frak n)).$$
\end{enumerate} 
\end{prop}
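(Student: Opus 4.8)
The statement to prove is Proposition~\ref{P:autod}, which asserts the existence of a Poincaré line bundle inducing a group scheme isomorphism $\beta_{\frak n}\colon \PIC^{\un 0}(X)\xrightarrow{\cong}\PIC^o(\ov J_X(\frak n))$, together with the equality $\PIC^o(\ov J_X(\frak n))=\PIC^{\tau}(\ov J_X(\frak n))$. My plan is to reduce both assertions to the corresponding statements for fine classical compactified Jacobians, which were established by Melo--Rapagnetta--Viviani in \cite{MRV2}, via the results already proved in this paper, rather than redoing the full argument.

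\textbf{Setup and reduction to the classical case.} First I would recall that, by Theorem~\ref{T:fcJ-vine} and Theorem~\ref{T:V-fcJ}, the fine V-compactified Jacobian $\ov J_X(\frak n)$ is a fine compactified Jacobian of sN-type that is universally smoothable; in particular it is reduced with locally complete intersection singularities (Fact~\ref{F:Simploc}) and its smooth locus is a disjoint union of $c(\Gamma_X)$ torsors under $\PIC^{\un 0}(X)$ (Corollary~\ref{C:sm-VfcJ}). The key structural input is the toric/Mumford description from Proposition~\ref{P:MumV}, which realizes $\ov J_X(\frak n)$ as the quotient
$$
\frac{Y_{\Delta_{\frak n}}\times_{H^1(\Gamma_X,\Z)\otimes \Gm}\PIC_X^{\un 0}}{H_1(\Gamma_X,\Z)}.
$$
This presentation is literally of the same shape as the one used for classical compactified Jacobians in \cite{CPS} and \cite{MRV2}; indeed, the entire analysis of $\PIC^o$ and $\PIC^{\tau}$ in \cite{MRV2} depends only on: (a) the fact that $\ov J_X(\frak n)$ is a reduced, connected, proper scheme with the singularities of Fact~\ref{F:Simploc}, (b) the combinatorial/toric structure of its stratification into $\PIC_X^{\un 0}$-orbits indexed by the upper subset $\cP_{\frak n}\subset \bO^d_{\con}(\Gamma_X)$, and (c) the property that $\cP_{\frak n}$ is of numerical N-type, i.e. that $\ov J_X(\frak n)$ has exactly $c(\Gamma_X)$ irreducible components. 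All three hold here by Theorem~\ref{T:fcJ-vine}. So the plan is: point out that the construction of the Poincaré bundle $\cP$ in \cite[\S 4]{MRV2} (resp.\ \cite[Thm.\ A]{MRV2}) and the proof that $\beta_{\frak n}$ is an isomorphism of group schemes and that $\PIC^o=\PIC^{\tau}$ go through verbatim once these inputs are available.

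\textbf{Carrying out the argument.} Concretely I would proceed in the following steps. (1) Construct $\cP$ on $\PIC^{\un 0}(X)\times \ov J_X(\frak n)$ as the descent of the tautological bundle: starting from the universal sheaf $\I$ on $X\times \TF_X$ restricted to $X\times \ov J_X(\frak n)$, form the appropriate determinant-of-cohomology line bundle and rigidify it; this is exactly the recipe of \cite[\S 4]{MRV2}, and it makes sense because $\ov J_X(\frak n)\subset \Simp_X$ carries a universal sheaf. (2) Show $\cP_M$ is algebraically trivial for $M\in\PIC^{\un 0}(X)$ and that $M\mapsto \cP_M$ is a group homomorphism: this is a computation on the smooth locus $(\ov J_X(\frak n))_{\sm}=\coprod \TF(\Gamma_X,D)$ (Proposition~\ref{P:sm-irr}), each component a $\PIC^{\un 0}(X)$-torsor, so that $\cP_M$ restricts to the standard Poincaré bundle there; since the complement of the smooth locus has codimension $\ge 1$ (pure dimension $g(X)$, lci) restriction of line bundles from $\ov J_X(\frak n)$ to $(\ov J_X(\frak n))_{\sm}$ is injective, giving injectivity of $\beta_{\frak n}$. (3) Surjectivity of $\beta_{\frak n}$ onto $\PIC^o$ and the equality $\PIC^o=\PIC^{\tau}$: this is where the toric Mumford model of Proposition~\ref{P:MumV} enters; one computes $\PIC^{\tau}(\ov J_X(\frak n))$ via the presentation as a quotient of $Y_{\Delta_{\frak n}}\times_{(\ldots)}\PIC^{\un 0}_X$ exactly as in \cite[\S 5]{MRV2}, using that a line bundle numerically trivial on $\ov J_X(\frak n)$ restricts to a numerically trivial bundle on each stable toric stratum $Y_{\Delta_{\frak n}}$, hence (toric varieties being their own Picard-trivial part numerically, once one accounts for the stable toric combinatorics) must come from the abelian/semiabelian part $\PIC^{\un 0}_X$, and that the numerical N-type condition guarantees no extra classes appear. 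I would then conclude $\dim\PIC^o=\dim\PIC^{\tau}=g(X)-g(\Gamma_X)=\dim\PIC^{\un 0}(X)$ and, combined with the injectivity and the identification of connected components, that $\beta_{\frak n}$ is an isomorphism onto both.

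\textbf{Main obstacle.} The genuinely substantive point, and the one I expect to require the most care, is step (3): verifying that the argument of \cite[\S 5]{MRV2} computing $\PIC^{\tau}$ really only uses the toric-geometric input of Proposition~\ref{P:MumV} and the numerical N-type property, and not some feature special to polarizations $\phi$ (for instance, projectivity, which is not available here — see the Open Questions). The safest route is to phrase the whole proof intrinsically in terms of the data $(\cP_{\frak n}, Y_{\Delta_{\frak n}})$ supplied by Propositions~\ref{P:sm-irr}, \ref{P:MumV} and Theorem~\ref{T:fcJ-vine}, observing that \cite{MRV2} never uses projectivity in the computation of $\PIC^o$ and $\PIC^\tau$ (it uses properness, reducedness, the lci/normalization-smooth structure, and the stratification), all of which we have. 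Thus the proof reduces to the sentence: \emph{the arguments of \cite[\S 4, \S 5]{MRV2} apply verbatim, replacing the classical stability polarization $\phi$ by the general V-stability condition $\frak n$ and using Propositions~\ref{P:sm-irr}, \ref{P:MumV}, Corollary~\ref{C:sm-VfcJ} and Theorem~\ref{T:fcJ-vine} in place of the corresponding statements for classical compactified Jacobians.}
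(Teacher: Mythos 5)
Your overall slogan --- that the proof of \cite[Thm. C, D]{MRV2} carries over --- is the same as the paper's, but you misidentify which ingredients make it carry over, and the steps you substitute do not work as stated. The argument of \cite{MRV2} is not a fiberwise computation on a single compactified Jacobian: it is a deformation-theoretic argument, and what the paper verifies is precisely (a) that the universal fine V-compactified Jacobian $\ov J^d_{\bX/\Spec (R_X)}(\frak n)$ constructed in the proof of Theorem \ref{T:V-fcJ} is flat with geometrically connected and reduced fibers over $\Spec R_X$ and has regular total space (Remark \ref{R:smooth}); (b) that the geometric fibers over points of $\Spec R_X$ of codimension at most one are nodal curves with at most one node, so their fine compactified Jacobians are \emph{classical} ones (Remark \ref{R:irr}, Example \ref{Ex:vine}), for which autoduality is already known --- this codimension-one reduction is the engine of the \cite{MRV2} argument and has no counterpart in your proposal; and (c) the computation $h^i(\ov J_X(\frak n),\O_{\ov J_X(\frak n)})=\binom{g(X)}{i}$, which is the only place where the toric description of Proposition \ref{P:MumV} actually enters (as in \cite[Thm. 4.3]{AN}). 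Your step (3), a direct determination of $\PIC^{\tau}(\ov J_X(\frak n))$ from the stable toric quotient presentation ``exactly as in \cite[\S 5]{MRV2}'', does not correspond to an argument in loc.\ cit.\ and is not carried out; the assertion that only the stratification, properness and the numerical N-type condition are needed is exactly the point that would have to be proved.

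There is also a concrete error in step (2): the complement of the smooth locus of $\ov J_X(\frak n)$ has codimension one (the strata $\TF(G,D)$ with $|E(G)^c|=1$), and restriction of line bundles to an open subset whose complement has codimension one is \emph{not} injective in general --- one can twist by divisors supported on the boundary; an argument of this shape would require codimension at least two together with a Serre-type condition. Injectivity of $\beta_{\frak n}$ in \cite{MRV2} is obtained by different means. So while your instinct to reduce to \cite{MRV2} is the right one, the proposal as written has genuine gaps: the missing codimension-one/deformation ingredient (b), the missing cohomological input (c), and an incorrect injectivity step.
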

This is shown for fine classical compactified Jacobians in \cite[Thm. C, D]{MRV2}, where the result is also proved for reduced curves with locally planar singularities. 

\begin{proof}
The proof of \cite[Thm. C, D]{MRV2} carries over since it is using only the following (key) ingredients:

\begin{enumerate}[(a)]
\item The universal fine compactified Jacobian  
$$\ov J_{\bX/\Spec(R_X)}^d(\frak n)\to \Spec(R_X)$$
constructed in the proof of Theorem \ref{T:V-fcJ}  is projective and flat with geometrically connected and reduced fibers over $\Spec(R_X)\cong \Spec k[[T_1,\ldots, T_N]]$ (by Theorem \ref{T:V-fcJ} and Remark \ref{R:smooth}), and the total space is regular (by Remark \ref{R:smooth}). 

\item 
Every geometric fiber over points of $\Spec R_X$ of codimension at most one is a classical fine compactified Jacobian (by Remark \ref{R:irr} and Example \ref{Ex:vine}, using that 
the geometric fibers over codimension at most one points are nodal curves with at most one node), for which the autoduality result is already known.

\item We have that
\begin{equation}\label{E:coom-O}
h^i(\ov J_X(\frak n), \O_{\ov J_X(\frak n)})=\binom{g(X)}{i} \text{ for any } 0\leq i \leq g(X). 
\end{equation}
 This is proved as in \cite[Thm. 4.3]{AN} using the toric geometric description of $\ov J_X(\frak n)$ in Proposition \ref{P:MumV}\eqref{P:MumV1} (see also the proof of \cite[Prop. 7.1]{MRV2}).
\end{enumerate}
\end{proof}

Finally, following \cite{MRV3}, we show  that every two fine V-compactified Jacobians are derived equivalent with respect to an integral transform induced by a natural Poincar\'e sheaf.

\begin{prop}\label{P:der}
Let $X$ be a connected nodal curve $X$ over an algebraically closed field of characteristic either zero or greater than $g(X)$ and let $\frak n$ and $\frak n'$ be two general V-stability conditions on $\Gamma_X$. 
Then there exists a maximal Cohen-Macauly sheaf (called the Poincar\'e sheaf) $\ov \cP$ on $\ov J_X(\frak n)\times \ov J_X(\frak n')$, flat over the two factors, such that the induced integral transform 
$$
\begin{aligned}
\Phi^{\ov \cP}: D^b_{\rm{(q)coh}}(\ov J_X(\frak n))& \longrightarrow D^b_{\rm{(q)coh}}(\ov J_X(\frak n'))\\
\cE^\bullet &\mapsto Rp_{2*}(p_1^*(\cE^{\bullet})\otimes^L \ov \cP)
\end{aligned}
$$ 
is an equivalence of triangulated categories, where $D^b_{\rm{(q)coh}}(-)$ denotes the derived category of quasi-coherent (resp. coherent) sheaves. 
\end{prop}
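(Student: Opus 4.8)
The plan is to adapt the strategy of \cite{MRV3}, where the analogous statement is established for fine classical compactified Jacobians, checking that all of its inputs are now available in the V-setting. Concretely, we will use: (i) the universal fine V-compactified Jacobians $\Psi:\ov J_{\bX/\Spec R_X}(\frak n)\to \Spec R_X$ and $\Psi':\ov J_{\bX/\Spec R_X}(\frak n')\to \Spec R_X$ constructed in the proof of Theorem \ref{T:V-fcJ}, which are flat and proper with geometrically connected and reduced fibers and with regular total space (Remark \ref{R:smooth}); (ii) the fact, already used in the proof of Proposition \ref{P:autod}, that the geometric fibers of $\Psi$ and $\Psi'$ over the points of $\Spec R_X$ of codimension at most one are fine classical compactified Jacobians of nodal curves with at most one node, for which the Proposition is known by \cite{MRV3}; and (iii) the local structure of $\Simp_X$ from Fact \ref{F:Simploc} together with the cohomology computation \eqref{E:coom-O}.

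First I would construct the Poincar\'e sheaf. Working over $S:=\Spec R_X$, consider the fiber product $\ov J_{\bX/S}(\frak n)\times_S \ov J_{\bX/S}(\frak n')$, which is a flat and proper $S$-scheme with regular total space. On the open subset where at least one of the two factors parametrizes line bundles there is a canonical Poincar\'e line bundle, and its complement has codimension at least two; following the argument of \cite{MRV3} (this is where the hypothesis on the characteristic of $k$ is used, via the structure of the Abel map) one extends it to a sheaf $\ov\cP_S$ on $\ov J_{\bX/S}(\frak n)\times_S \ov J_{\bX/S}(\frak n')$ which is maximal Cohen--Macaulay and flat over each of the two factors. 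Restricting $\ov\cP_S$ to the special fiber produces the sheaf $\ov\cP$ on $\ov J_X(\frak n)\times \ov J_X(\frak n')$ with the asserted properties, flatness over the two factors being preserved under base change.

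It then remains to prove that $\Phi^{\ov\cP}$ is an equivalence. I would consider the relative integral functor over $S$ with kernel $\ov\cP_S$. By construction it restricts, over the dense open locus $S^{\sm}\subset S$ where the fiber of $\bX\to S$ is smooth, to the classical Fourier--Mukai transform of the Jacobian of a smooth projective curve, which is an equivalence by Mukai; and it restricts, over the points of $S$ of codimension one, to the integral transform attached to the Poincar\'e sheaf of a fine classical compactified Jacobian of a nodal curve with a single node, which is an equivalence by \cite{MRV3}. One then invokes the base-change criterion of \cite{MRV3} for relative integral functors with flat kernel over a regular base: the locus of $s\in S$ over which the fiberwise transform is an equivalence is stable under generization, so the equivalence over all points of codimension $\le 1$ forces it over the closed point. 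Hence $\Phi^{\ov\cP}$ is an equivalence; taking $\frak n,\frak n'$ arbitrary yields that any two fine V-compactified Jacobians of $X$ are derived equivalent.

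The main obstacle is twofold, and is exactly where \cite{MRV3} enters essentially. First, one must check that the construction of $\ov\cP_S$ and the proof that it is maximal Cohen--Macaulay and bi-flat --- which in \cite{MRV3} rest on delicate local computations around the non-free locus and on the characteristic hypothesis --- carry over verbatim; this is the case because the local structure of $\Simp_X$ (Fact \ref{F:Simploc}) and the deformation-theoretic picture of Theorem \ref{T:V-fcJ} and Remark \ref{R:smooth} coincide with the classical ones. Second, one must verify that the propagation from the codimension $\le 1$ fibers to the special fiber is legitimate, i.e. that the relative integral functor commutes with the relevant base changes; here the flatness, properness and regularity recorded in Remark \ref{R:smooth} are precisely what is needed.
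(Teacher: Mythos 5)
Your construction of the Poincar\'e sheaf (extension, following \cite{MRV3}, from the open locus where at least one of the two sheaves is a line bundle, whose complement has codimension $\geq 2$) is consistent with what the paper relies on. The genuine gap is in your argument that $\Phi^{\ov\cP}$ is an equivalence. The step ``the locus of $s\in \Spec R_X$ over which the fiberwise transform is an equivalence is stable under generization, so the equivalence over all points of codimension $\le 1$ forces it over the closed point'' is logically backwards: the closed point of $\Spec R_X$ is a \emph{specialization} of the codimension $\leq 1$ points, not a generization of them, so generization-stability (an openness-type property) of that locus says nothing about the closed fiber --- which is precisely the fiber the Proposition concerns. No such base-change criterion is available in \cite{MRV3} or elsewhere: there is no general semicontinuity of ``the fiberwise integral functor is an equivalence'' in the specialization direction, and if there were, the whole theorem would already follow from Mukai's result on the smooth fibers, with no need for the delicate work on singular fibers.

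What the paper (following \cite{MRV3}) actually does is prove a sheaf-level identity over the universal family and then restrict it: one shows, as in \cite[Thm. 6.2]{MRV3}, that
$$
Rp_{13*}\left(p_{12}^*(\ov \cP^{\rm un})^{\vee}\otimes p_{23}^*(\ov \cP^{\rm un})\right)\cong \O_{\Delta^{\rm un}}
$$
in $D^b_{\rm coh}$ of the triple fibered product of the universal fine V-compactified Jacobians over $\Spec R_X$, using the regularity of the total space, the Cohen--Macaulayness and bi-flatness of the universal kernel, and the fact that the identity is known over the dense open locus of smooth curves (and over the codimension-one strata, where the fibers are classical fine compactified Jacobians). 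A coherent identity of this kind \emph{can} be propagated across loci of high codimension by depth/CM arguments, and once established universally it restricts to the closed fiber; the equivalence of $\Phi^{\ov\cP}$ is then derived from this convolution isomorphism exactly as in \cite[Thm. A]{MRV3}. Your proof should be repaired by replacing the ``propagation of equivalence to the special fiber'' step with this kernel-convolution argument; the fiberwise equivalences you invoke over codimension $\leq 1$ points are useful only as input to verifying that sheaf identity, not as a substitute for it.
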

This is shown for fine classical compactified Jacobians in \cite[Thm. A]{MRV3}, where the result is also proved for reduced curves with locally planar singularities. 

\begin{proof}
This is derived as in \cite[Thm. A]{MRV3} from the isomorphism 
$$
Rp_{13*}\left(p_{12}^*(\ov \cP^{\rm un})^{\vee}\otimes p_{23}^*(\ov \cP^{\rm un}) \right)\xrightarrow{\cong} \O_{\Delta^{\rm{un}}}
$$
in $D^b_{\rm coh}(\ov J_{\bX/\Spec R_X}(\frak n)\times_{\Spec R_X}  \ov J_{\bX/\Spec R_X}(\frak n))$, where $\O_{\Delta^{\rm{un}}}$ is the structure sheaf of the diagonal, 
$p_{ij}$ is the projection of $\ov J_{\bX/\Spec R_X}(\frak n)\times_{\Spec R_X} \ov J_{\bX/\Spec R_X}(\frak n') \times_{\Spec R_X} \ov J_{\bX/\Spec R_X}(\frak n)$ onto the $i$-th and $j$-th factors, and $\ov \cP^{\rm un}$ denotes the universal Poincar\'e sheaf, which is proved as in \cite[Thm. 6.2]{MRV3}.
\end{proof}

\bibliographystyle{apsr}

\begin{thebibliography}{EGKH02}


\bibitem[AAPT23]{AAPT} A. Abreu, S. Andria, M. Pacini, D. Taboada: \emph{A universal tropical Jacobian over $M_g^{\rm trop}$}. J. Lond. Math. Soc. 107 (2023), 511--547.

\bibitem[Ale04]{alexeev} V.Alexeev: \emph{Compactified Jacobians and Torelli map}.  Publ. RIMS Kyoto Univ. {\bf 40} (2014), 1241-1265.

\bibitem[AN99]{AN} V. Alexeev, I. Nakamura: \emph{On Mumford's construction of degenerating abelian varieties.} Tohoku Math. J. (2) {\bf 51} (1999), no. 3, 399--420.




\bibitem[ABKS14]{ABKS} Y. An, W. Baker, G. Kuperberg, F. Shokrieh: \emph{Canonical representatives for divisor classes on tropical curves and the matrix-tree theorem.}
Forum Math. Sigma {\bf 2} (2014), Paper No. e24, 25 pp.

\bibitem[And99]{And} F. Andreatta: {\it On Mumford uniformization and N\'eron models of Jacobians of semistable curves over complete rings.}   Moduli of abelian varieties (Texel
Island, 1999),  11-126, Progr. Math. 195, Birkh\"auser  Basel  2001


\bibitem[BdlHN97]{BdlHN} R. Bacher, P. de la Harpe, T. Nagnibeda: \emph{The lattice of integral flows and the lattice of integral cuts on a finite graph}. Bull. Soc. Math. France {\bf 125} (1997), 167-198.

\bibitem[BN07]{BN} M. Baker, S. Norine: \emph{Riemann-Roch and Abel-Jacobi theory on a finite graph.}  Adv. Math. {\bf 215} (2007), 766--788.





\bibitem[BMS11]{BMS2} S. Busonero, M. Melo, L. Stoppino: \emph{On the complexity group of stable curves.} Adv. Geom. {\bf 11} (2011), 241-272.

\bibitem[Cap94]{caporaso}  L.Caporaso: \emph{A compactification of the universal Picard variety over the moduli space of stable curves.} 
Journal of the American Mathematical Society {\bf 7} (1994), 589-660.

\bibitem[Cap08a]{capneron} L.Caporaso: \emph{N\'eron models and compactified Picard schemes over the moduli stack of stable curves.} 
Amer. J. Math. {\bf 130} (2008), 1-47.
 
\bibitem[Cap08b]{capsurvey} L.Caporaso: \emph{Compactified Jacobians, Abel maps and Theta divisors.} In
Alexeev, Valery (ed.) et al., Curves and abelian varieties. International conference, Athens, GA, USA, March 30--April 2, 2007.
Providence, RI: American Mathematical Society (AMS). Contemporary Mathematics 465, 1--23 (2008).


\bibitem[Cap10]{capNtype} L.Caporaso: \emph{Compactified Jacobians of N\'eron type.} Rendiconti Lincei: Matematica e Applicazioni {\bf 21} (2010), 1-15.

\bibitem[CC19]{CC} L. Caporaso, C. Christ: \emph{Combinatorics of compactified universal Jacobians}.  Adv. Math. {\bf 346} (2019), 1091--1136.




\bibitem[CMKV15]{CMKV} Y. Casalaina-Martin, J. Kass, F. Viviani: \emph{The local structure of compactifed Jacobians.}
Proc. Lond. Math. Soc. (3) {\bf 110} (2015), 510-542.


\bibitem[Chi]{Chi} A. Chiodo: \emph{N\'eron models of $\Pic^0$ via $\Pic^0$}. Preprint arXiv:1509.06483.

\bibitem[CPS23]{CPS} K. Christ, S. Payne, T. Shen: \emph{Compactified Jacobians as Mumford models.} Trans. Amer. Math. Soc. {\bf 376} (2023), 4605--4630.



\bibitem[Est01]{est1} E. Esteves: \emph{Compactifying the relative Jacobian over families of reduced curves.}
Transactions of the American Mathematical Society, {\bf 353} (2011), 3045-3095.



\bibitem[Fav]{Fav} M. Fava: \emph{On a combinatorial classification of fine compactified universal Jacobians}. Preprint arXiv:2403.17871. 

\bibitem[FPV1]{FPV1} M. Fava, N. Pagani, F. Viviani: \emph{A new class of compactified Jacobians for families of reduced curves.} Preprint arXiv:2505.08609v1.

\bibitem[FPV2]{FPV2} M. Fava, N. Pagani, F. Viviani: \emph{A complete theory of smoothable compactified Jacobians of nodal curves.} Preprint arXiv:2412.03532v2.

\bibitem[FPV3]{FPV3} M. Fava, N. Pagani, F. Viviani: \emph{A complete classification of compactified universal Jacobians.} In preparation.



\bibitem[Hol21]{Hol} D. Holmes: \emph{A N\'eron model of the universal Jacobian.} Ann. H. Lebesgue {\bf 4} (2021), 1727--1766.

\bibitem[Igu56]{igusa} J.Igusa: \emph{Fiber systems of Jacobian varieties.} Amer. J. Math  {\bf 78} (1956), 171--199.

\bibitem[Ish78]{Ish} M.N.Ishida: \emph{Compactifications of a family of generalized Jacobian varieties.} 
Proceedings of the International Symposium on Algebraic Geometry (Kyoto Univ., Kyoto, 1977),  
pp. 503--524, Kinokuniya Book Store, Tokyo, 1978.
 

\bibitem[Yuen18]{Yuen} C. H. Yuen: \emph{Geometric bijection of graphs and regular matroids}. PhD thesis, Georgia Institute of Technology, 2018. 


\bibitem[Kas13]{Kas2} J.L. Kass: \emph{Two ways to degenerate the Jacobian are the same}. Algebra Number Theory {\bf 7} (2013), 379--404.



\bibitem[KP19]{KP} J.L. Kass, N. Pagani: \emph{The stability space of compactified universal Jacobians.} Trans. Amer. Math. Soc. {\bf 372} (2019), 4851--4887.









\bibitem[MM64]{MM} A.Mayer, D.Mumford: \emph{Further comments on boundary points.} Unpublished lecture notes
distributed at the Amer. Math. Soc. Summer Institute, Woods Hole, 1964.


 \bibitem[Mel17]{Mel2} M. Melo: \emph{Universal N\'eron models for Jacobians of curves with marked points.}
Boll. Unione Mat. Ital. {\bf 10} (2017), 321--334.

\bibitem[Mel19]{Mel1} M. Melo:  \emph{Universal compactified Jacobians.} Port. Math. {\bf 76} (2019), 101-122.


\bibitem[MV12]{MV} M.Melo, F. Viviani: \emph{Fine compactified Jacobians.} Math. Nachr. {\bf 285} (2012), 997-1031.

\bibitem[MRV17]{MRV} M. Melo, A. Rapagnetta, F. Viviani: \emph{Fine compactified Jacobians of reduced curves.}
Trans. Amer. Math. Soc. {\bf 369} (2017), 5341-5402.

\bibitem[MRV19a]{MRV2} M. Melo, A. Rapagnetta, F. Viviani: \emph{Fourier-Mukai and autoduality for compactified Jacobians I.}  J. Reine Angew. Math. {\bf 755} (2019), 1--65.

\bibitem[MRV19b]{MRV3} M. Melo, A. Rapagnetta, F. Viviani: \emph{Fourier-Mukai and autoduality for compactified Jacobians II.} Geometry \& Topology {\bf 23-5} (2019), 2335--2395.

\bibitem[MSV21]{MSV} L. Migliorini, V. Shende, F. Viviani:  \emph{A support theorem for Hilbert schemes of planar curves, II}
Compos. Math. {\bf 157} (2021), 835-882.






\bibitem[OS79]{OS} T.Oda, C.S.Seshadri: \emph{Compactifications of the generalized Jacobian variety.}
Transaction of American Mathematical Society  {\bf 253} (1979), 1-90.

\bibitem[PT23]{PT1} N. Pagani, O. Tommasi: \emph{Geometry of genus one fine compactified universal Jacobians.}
Int. Math. Res. Not. IMRN (2023), no. 10, 8495--8543.

\bibitem[PT24]{PT2} N. Pagani, O. Tommasi: \emph{Stability conditions for line bundles on nodal curves.} Forum of Mathematics, Sigma 12 (2024) e87.



\bibitem[Ray70]{Ray} M. Raynaud: \emph{Sp\'ecialisation du foncteur de Picard.} Inst. Hautes \'Etudes Sci. Publ. Math. {\bf 38} (1970), 27--76.





\bibitem[Sim94]{simpson} C.T.Simpson: \emph{Moduli of representations of the fundamental group of a smooth projective variety.}
Inst. Hautes \'Etudes Sci. Publ. Math. {\bf 80} (1994), 5-79.

\bibitem[Stacks]{ST} {\em The Stacks Project.} http://stacks.math.columbia.edu/



\end{thebibliography}

\end{document}